\newcommand{\eps}{\varepsilon}
\newcommand{\opnm}{\operatorname}
\newcommand{\BigO}{\mathcal{O}}
\newtheorem{theorem}{Theorem}%[section]
\newtheorem{proposition}[theorem]{Proposition}
\newtheorem{corollary}[theorem]{Corollary}
\newtheorem{lemma}[theorem]{Lemma}
\theoremstyle{definition}
\theoremstyle{remark}
\newtheorem{remark}[theorem]{Remark}
\newtheorem*{remark*}{Remark}
\newtheorem{example}[theorem]{Example}
\newtheorem*{acknowledgements}{Acknowledgements}
\numberwithin{equation}{section}
\numberwithin{theorem}{section}
\numberwithin{figure}{section}
\newcommand{\Hm}[1]{\leavevmode{\marginpar{\tiny%
$\hbox to 0mm{\hspace*{-0.5mm}$\leftarrow$\hss}%
\vcenter{\vrule depth 0.1mm height 0.1mm width \the\marginparwidth}%
\hbox to
0mm{\hss$\rightarrow$\hspace*{-0.5mm}}$\\\relax\raggedright #1}}}
\title[Weak solution operators for evolution equations]{On weak and strong solution operators for evolution equations coming from quadratic operators}
\author{Alexandru Aleman}
\email{aleman@maths.lth.se}
\address{Lund University, Mathematics, Faculty of Science, P.O. Box 118, S-221 00 Lund, Sweden}
\author{Joe Viola}
\email{Joseph.Viola@univ-nantes.fr}
\address{Laboratoire de Math\'ematiques J. Leray, UMR 6629 du CNRS, Universit\'e de Nantes, 2, rue de la
Houssini\`ere, 44322 Nantes Cedex 03, France}
\begin{document}

\begin{abstract}
	We identify, through a change of variables, solution operators for evolution equations with generators given by certain simple first-order differential operators acting on Fock spaces.  This analysis applies, through unitary equivalence, to a broad class of supersymmetric quadratic multiplication-differentiation operators acting on $L^2(\Bbb{R}^n)$ which includes the elliptic and weakly elliptic quadratic operators.  We demonstrate a variety of sharp results on boundedness, decay, and return to equilibrium for these solution operators, connecting the short-time behavior with the range of the symbol and the long-time behavior with the eigenvalues of their generators. This is particularly striking when it allows for the definition of solution operators which are compact and regularizing for large times for certain operators whose spectrum is the entire complex plane.
\end{abstract}

\maketitle

\setcounter{tocdepth}{1}
\tableofcontents

\section{Introduction}

\subsection{Background and summary of results}\label{subsec.intro}

Evolution equations of the form
\begin{equation}\label{eq.evolution.general}
	\left\{\begin{array}{l} \partial_t u + Pu = 0,\\ u(0,x) = u_0\end{array}\right.
\end{equation}
appear throughout mathematical physics.  A fundamental example comes from the harmonic oscillator
\begin{equation}\label{eq.def.h.o}
	Q_0u = \frac{1}{2}(-\Delta + |x|^2 - n) u,
\end{equation}
chosen here to satisfy $\opnm{Spec}Q_0 = \Bbb{N}$. Solving the evolution problem for $Q_0$, as well as the Schr\"odinger evolution problem for $iQ_0$, through the spectral decomposition of $Q_0$ as a self-adjoint operator on $L^2(\Bbb{R}^n)$ is one of the most important model systems in quantum mechanics.  The analysis of the harmonic oscillator through its decomposition into creation-annihilation operators is also one of the primary motivations behind the study of Fock spaces; see for instance \cite[Ch.~1]{FoBook} or \cite{Bargmann_1961}.

When studying non-selfadjoint operators, approximations which are quadratic in $(x,-i\partial_x)$ retain significant power as microlocal models for more general operators.  The spectral theory of these operators under an ellipticity assumption was resolved in \cite{Sj1974}, \cite{BdM1974}.  The semigroups generated by quadratic operators under a definite or semidefinite assumption have been extensively studied in many works including \cite{Ho1995}, \cite{Boulton2002}, \cite{PS2008a}, \cite{HiPS2009}, \cite{OtPaPS2012}.  Because of applications including stochastic partial differential equations, there has been recent interest in situations where positivity only appears after averaging, as discussed in \cite{HeSjSt2005}, \cite{HeNiBook}, \cite{VillaniBook} among many others.

It has been known for some time that, in the non-selfadjoint case, relaxing the semidefiniteness assumption is catastrophic for the definition of the semigroup from the point of view of the numerical range. From works such as \cite{Da1999}, \cite{PS2008b}, and \cite{DeSjZw2004}, we can find broad classes of operators $P$ acting on $L^2(\Bbb{R}^n)$ for which
\begin{equation}\label{eq.pseudomodes.general}
	Pu_k = z_ku_k + \BigO(e^{-|z_k|/C})
\end{equation}
for sequences $\{z_k\}_{k \in \Bbb{N}}$ of complex numbers with $\Re z_k \to -\infty$ and pseudomodes $u_k \in C_0^\infty(\Bbb{R}^n)$ which are normalized in $L^2(\Bbb{R}^n)$.  These pseudomodes show that the resolvent norm at $z_k$ explodes and that the numerical range of $P$ extends indefinitely into the left half-plane, so the standard methods of constructing a semigroup such as the Hille-Yosida theorem fail. This situation can easily arise even when, from the spectral point of view, $P$ is well-behaved, having a compact resolvent and spectrum contained in a sector 
\[
	\opnm{Spec} P \subset \{|\Im \lambda| \leq C \Re \lambda \}
\]
for some $C > 0$.

In this work we study evolution equations with quadratic generators which may be written as
\begin{equation}\label{eq.supersymmetric}
	Q = B(D_x - A_- x) \cdot (D_x - A_+x), \quad D_{x_j} = -i\partial_{x_j}
\end{equation}
for matrices $B, A_+,$ and $A_-$ with $A_{\pm}$ symmetric, $A_\pm^\top = A_\pm$, and having positive and negative definite imaginary parts, $\pm \Im A_\pm > 0$. For example, the harmonic oscillator $Q_0$ in \eqref{eq.def.h.o} may be written with $A_+ = \overline{A_-} = i$ and $B = 1/2$.

This is a supersymmetric structure in the sense of \cite[Def.~1.1]{HeHiSj2014}, in that
\[
	Q = Bd_{\varphi_-}^* d_{\varphi_+}
\]
with $d_{\varphi_\pm} = e^{\varphi_\pm}D_x e^{-\varphi_\pm}$ and
\[
	\varphi_+(x) = \frac{i}{2}A_+ x\cdot x, \quad \varphi_-(x) = \frac{i}{2}\overline{A_-}x\cdot x.
\]
This resembles \cite[Eq.\ (11), (12)]{Wi1982} but allows the operator to be non-selfadjoint in two ways: the matrix $B$ may not be self-adjoint, and the functions $\varphi_+$ and $\varphi_-$ may be different. For any operator
\[
	q(x,D_x) = \sum_{|\alpha + \beta| = 2} q_{\alpha \beta} x^\alpha D_x^\beta, \quad q_{\alpha\beta} \in \Bbb{C},
\]
we have in Proposition \ref{prop.which.q} below necessary and sufficient conditions for existence of a decomposition \eqref{eq.supersymmetric}, up to an additive constant. Such a decomposition is known to exist when the symbol $q(x,\xi)$ is elliptic
\[
	\Re q(x,\xi) \geq \frac{1}{C}|(x,\xi)|^2
\]
or when $\Re q(x,\xi) \geq 0$ and, in addition, the zero set of the real part excepting the origin, $(\Re q)^{-1}(\{0\}) \backslash \{0\}$, contains no integral curve of the Hamilton vector field $H_{\Im q} = (\partial_\xi \Im q, -\partial_x \Im q)$. Following \cite{HiPS2009}, this latter condition is equivalent to insisting that
\begin{equation}\label{eq.intro.ell.part}
	\sum_{j=0}^{k_0} \Re q(H_{\Im q}^j(x,\xi)) \geq \frac{1}{C}|(x,\xi)|^2
\end{equation}
for some $0 \leq k_0 \leq 2n-1$, which we will assume is chosen minimal. (The expression \eqref{eq.supersymmetric} can be deduced from \cite{Sj1974} in the elliptic case, and under the weaker hypothesis \eqref{eq.intro.ell.part} the same proof suffices following, for example, \cite[Prop.~2.1]{Vi2013}.)

For $Q$ as in \eqref{eq.supersymmetric}, we recall in Theorem \ref{thm.eigen.core} and Proposition \ref{prop.which.q} the proof \cite[Thm.~3.5]{Sj1974} that there are complex numbers
\[
	\lambda_1, \dots, \lambda_n \in q(\Bbb{R}^{2n})
\]
and polynomials $p_\alpha(x)$ of degree $|\alpha|$ for all $\alpha \in \Bbb{N}^n$ such that 
\[
	u_\alpha(x) = p_\alpha(x) e^{\frac{i}{2}A_+ x \cdot x}
\]
is a generalized eigenfunction of $Q$ with eigenvalue
\[
	\lambda_\alpha = \sum_{j=1}^n \alpha_j \lambda_j.
\]

There are four central goals of the present work.  First, we show that there is a simple computable criterion for boundedness and compactness of the closed densely defined operator $\exp(-tQ)$, for $t \in\Bbb{C}$, on $L^2(\Bbb{R}^n)$, which may be realized as a graph closure beginning with the span of the eigenfunctions $\{u_\alpha\}_{\alpha \in \Bbb{N}}$. Second, we improve the characterizations of compactness, regularization, and decay for these solution operators by comparing with a solution operator for the harmonic oscillator $Q_0$.  Third, we show that the boundedness and compactness for small $|t|$ depends essentially on the range $q(\Bbb{R}^{2n})$ instead of on the eigenvalues $\{\lambda_j\}$.  Finally, we show that for $t > 0$ large the boundedness and compactness of $\exp(-tQ)$ depends essentially only on the real parts of the eigenvalues $\{\lambda_j\}$, which is also reflected in return to equilibrium.

While the results in the body of the paper generally have more precise information, we sum up these four results as follows.  Throughout the remainder of this section, $Q$ is assumed to be written in the form \eqref{eq.supersymmetric} with $A_\pm$ symmetric and $\pm \Im A_\pm > 0$. The eigenvalues $\{\lambda_j\}_{j=1}^n$ are as above.

\begin{theorem}\label{thm.intro.computable}
The solution operator $\exp(-tQ)$, for all $t \in \Bbb{C}$, exists as a closed densely defined operator on $L^2(\Bbb{R}^n)$ with a core given by the span of the generalized eigenfunctions $\{u_\alpha\}$. There exist $\Phi:\Bbb{C}^n \to \Bbb{R}$ real-quadratic and strictly convex and a matrix $M$ with $\opnm{Spec}M = \{\lambda_1,\dots,\lambda_n\}$ such that $\exp(-tQ)$ is bounded if and only if the function
\begin{equation}\label{eq.weight.diff}
	\Phi(e^{tM}z) - \Phi(z)
\end{equation}
is convex and is compact if and only if the function is strictly convex.
\end{theorem}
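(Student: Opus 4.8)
The plan is to reduce the problem to a model on a Fock (Bargmann) space via an explicit change of variables, where the generator $Q$ becomes a simple first-order differential operator whose flow can be written down. First I would use the supersymmetric structure \eqref{eq.supersymmetric} together with the Bargmann-type transform adapted to the weight $\varphi_+(x) = \tfrac{i}{2}A_+x\cdot x$: the map $u \mapsto e^{-\varphi_+} u$ pushed into a space of entire functions should intertwine $Q$ with an operator of the form $\tilde Q = \sum M_{jk} z_j \partial_{z_k}$ (plus constants), acting on a weighted Fock space $\mathcal{F}_\Phi$ of entire functions square-integrable against $e^{-2\Phi(z)}$, where $\Phi$ is the real-quadratic strictly convex weight announced in the statement. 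The spectrum of $\tilde Q$ is read off from the eigenvalues of $M$, which are the $\{\lambda_j\}$, and the monomials $z^\alpha$ correspond to the eigenfunctions $u_\alpha$; density of their span and the core property follow from standard Fock-space facts (this is essentially the content of Theorem \ref{thm.eigen.core} and Proposition \ref{prop.which.q}, which I may assume). This step is where the strict convexity of $\Phi$ and the identification $\opnm{Spec}M = \{\lambda_j\}$ are pinned down.

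Next, on the model side the evolution is transparent: the flow of $\tilde Q = \sum M_{jk} z_j\partial_{z_k}$ is the pullback by the linear map $z \mapsto e^{-tM}z$ (up to the scalar factor coming from the additive constant and the matrix $B$, which affects only a harmless bounded prefactor and can be normalized away). Thus $\exp(-t\tilde Q)$ acts on entire functions by $f(z) \mapsto c(t) f(e^{-tM}z)$, and the question of $L^2$-boundedness of $\exp(-tQ)$ becomes the question of boundedness of the composition operator $C_{e^{-tM}}: \mathcal{F}_\Phi \to \mathcal{F}_\Phi$. By the change of variables $w = e^{-tM}z$ (so $z = e^{tM}w$), $\|C_{e^{-tM}} f\|_{\mathcal{F}_\Phi}^2$ is a constant multiple of $\int |f(w)|^2 e^{-2\Phi(e^{tM}w)}\,dw$, and comparing this with $\|f\|_{\mathcal{F}_\Phi}^2 = \int |f(w)|^2 e^{-2\Phi(w)}\,dw$ reduces everything to estimating the ratio $e^{-2(\Phi(e^{tM}w) - \Phi(w))}$, i.e.\ to the weight difference \eqref{eq.weight.diff}.

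The final step is the dictionary between boundedness of such a weighted composition/multiplication phenomenon and convexity of the exponent. One direction is easy: if $\Phi(e^{tM}z) - \Phi(z)$ is convex (hence bounded below, being real-quadratic), then $e^{-2(\Phi(e^{tM}w)-\Phi(w))}$ is bounded above and $C_{e^{-tM}}$ is bounded; moreover, if it is \emph{strictly} convex then the weight difference tends to $+\infty$, so the embedding of $\mathcal{F}_\Phi$ into itself with the heavier weight $e^{-2\Phi(e^{tM}w)}$ is compact by a standard Rellich-type argument for Fock spaces (polynomial growth of reproducing kernels against faster Gaussian decay). For the converse, I would argue by testing on normalized reproducing kernels (coherent states) $k_{w_0}$ of $\mathcal{F}_\Phi$: these concentrate near $w_0$ with Gaussian profile governed by the Hessian of $\Phi$, so $\|C_{e^{-tM}} k_{w_0}\|$ behaves like $e^{-(\Phi(e^{tM}w_0) - \Phi(w_0)) + O(1)}$ as $|w_0|\to\infty$; if the quadratic form $\Phi\circ e^{tM} - \Phi$ fails to be positive semidefinite it is $\to -\infty$ along some direction, forcing unboundedness, and if it is positive semidefinite but degenerate, it stays bounded along a subspace, forcing non-compactness. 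The main obstacle — and the place requiring genuine care rather than soft arguments — is making the coherent-state asymptotics precise in the non-selfadjoint, anisotropic setting: one must control that the reproducing kernel of $\mathcal{F}_\Phi$ really has Gaussian concentration with the right quadratic exponent, that $C_{e^{-tM}}$ preserves holomorphy (automatic) and that no cancellation in the entire-function integral spoils the lower bound $\|C_{e^{-tM}} k_{w_0}\| \gtrsim e^{-(\Phi(e^{tM}w_0)-\Phi(w_0))-\epsilon|w_0|^2}$; this is handled by the explicit Gaussian form of $\Phi$ and of the kernel, but it is the technical heart of the equivalence.
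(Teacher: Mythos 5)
Your proposal follows essentially the same route as the paper: reduction to $P=Mz\cdot\partial_z$ acting on $H_\Phi$ via Proposition \ref{prop.which.q}, realization of the solution operator as the composition operator $u\mapsto u(e^{-tM}z)$ (Proposition \ref{prop.solve.exptP}), and the boundedness/compactness dichotomy for the resulting weighted embedding by testing on reproducing kernels, exactly as in Proposition \ref{prop.embedding} and Theorem \ref{thm.boundedness.0}. The one technical point you flag --- the lower bound on $\|C_{e^{-tM}}k_{w_0}\|$ with ``no cancellation'' --- is sidestepped in the paper by the exact adjoint identity $\iota^*k_w^{(2)}=k_w^{(1)}$ together with $\|k_w^{(j)}\|_{\Phi_j}^2=k_w^{(j)}(w)$, which yields the necessity of $\Phi(e^{tM}z)\ge\Phi(z)$ (and, via weak convergence of normalized kernels, strictness under compactness) with no integral estimate; note also that the core property is not a ``standard Fock-space fact'' but the content of Proposition \ref{prop.polys.core}, which you are nevertheless entitled to invoke through Theorem \ref{thm.eigen.core} as the paper itself does.
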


When $\exp(-tQ)$ is compact, we have very strong decay, regularization, and compactness properties which follow from comparison with semigroup coming from the harmonic oscillator \eqref{eq.def.h.o}.  What is more, in Theorem \ref{thm.mixed.h.o}, we use these techniques to obtain sharp results on how solution operators coming from different harmonic oscillators --- meaning different positive definite self-adjoint operators in the form \eqref{eq.supersymmetric} --- relate to one another under composition.

\begin{theorem}\label{thm.intro.ho.compare}
Let $Q_0$ be as in \eqref{eq.def.h.o}. Whenever $\exp(-tQ)$ is compact, there exists some $\delta = \delta(t) > 0$ such that
\begin{equation}\label{eq.intro.ho.compare}
	\exp(\delta Q_0)\exp(-tQ) \in \mathcal{L}(L^2(\Bbb{R}^n)),
\end{equation}
meaning that the operator is bounded on $L^2(\Bbb{R}^n)$.
\end{theorem}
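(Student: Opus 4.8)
The plan is to pass to the Fock-space model of Theorem~\ref{thm.intro.computable}, read off from the compactness hypothesis a quantitative gain of Gaussian decay and analyticity for the range of $\exp(-tQ)$, and then invoke the closed graph theorem together with a two-weight comparison of Fock spaces. Fix $t$ with $\exp(-tQ)$ compact. Recall from the analysis underlying Theorem~\ref{thm.intro.computable} that there is a unitary $\mathcal{T}\colon L^2(\Bbb R^n)\to\mathcal{H}_\Phi$ onto the Fock space $\mathcal{H}_\Phi=\{f\text{ entire on }\Bbb C^n:\ \int_{\Bbb C^n}|f|^2e^{-2\Phi}\,L(dz)<\infty\}$, with $\Phi$ a positive definite real-quadratic form, conjugating $\exp(-tQ)$ to the weighted composition operator $C_tf(z)=f(e^{-tM}z)$ (up to a constant), where $\opnm{Spec}M=\{\lambda_j\}$. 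A change of variables gives $\|C_tf\|_\Phi^2=|\det e^{tM}|^2\int|f|^2e^{-2\Psi}e^{-2\Phi}\,L(dz)$ with $\Psi(z):=\Phi(e^{tM}z)-\Phi(z)$, and by Theorem~\ref{thm.intro.computable} compactness of $\exp(-tQ)$ is exactly strict convexity of $\Psi$; being a quadratic form, $\Psi$ is then positive definite, so $\Psi(z)\ge\frac1C|z|^2$. The same change of variables identifies $\opnm{Ran}(C_t)$, as a set, with the Fock space $\mathcal{H}_{\Phi_1}$ for $\Phi_1:=\Phi\circ e^{-tM}$, and $\Phi-\Phi_1=\Psi\circ e^{-tM}$ is again positive definite, hence $\ge\frac1{C'}|z|^2$. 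Thus $\opnm{Ran}(\exp(-tQ))=\mathcal{T}^{-1}\mathcal{H}_{\Phi_1}$ is the image of a Fock space whose weight falls below $\Phi$ by a definite Gaussian.

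For the comparison operator, I would use the standard Bargmann transform $\mathcal{B}\colon L^2(\Bbb R^n)\to\mathcal{F}=\mathcal{H}_{\frac12|z|^2}$, which diagonalizes $Q_0$ as the number operator, so that $\mathcal{B}\exp(\delta Q_0)\mathcal{B}^{-1}$ is $f(z)\mapsto f(e^{\delta}z)$; a change of variables shows this operator has domain exactly $\mathcal{H}_{\frac12e^{-2\delta}|z|^2}$. Since $\exp(\delta Q_0)$ is closed and $\exp(-tQ)$ is bounded, the product $\exp(\delta Q_0)\exp(-tQ)$ is a closed operator, and by the closed graph theorem it lies in $\mathcal{L}(L^2(\Bbb R^n))$ precisely when it is everywhere defined, i.e.\ when $\opnm{Ran}(\exp(-tQ))\subseteq\opnm{Dom}(\exp(\delta Q_0))$. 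Transporting everything to $\mathcal{F}$ by the unitary transition operator $\mathcal{W}:=\mathcal{B}\mathcal{T}^{-1}\colon\mathcal{H}_\Phi\to\mathcal{F}$, and using the elementary inclusion $\mathcal{H}_{\Phi_1}\subseteq\mathcal{H}_{\Phi-\frac1{C'}|z|^2}$, it suffices to prove that
\[
	\mathcal{W}\big(\mathcal{H}_{\Phi-\frac1{C'}|z|^2}\big)\ \subseteq\ \mathcal{H}_{(\frac12-c')|z|^2}
\]
for some $c'>0$; indeed then $\opnm{Ran}(\exp(-tQ))\subseteq\opnm{Dom}(\exp(\delta Q_0))$ for every $\delta>0$ with $e^{-2\delta}\ge1-2c'$, in particular for some $\delta=\delta(t)>0$ small, which is the assertion.

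The heart of the matter --- and the step I expect to be the main obstacle --- is this last inclusion: the transition operator $\mathcal{W}$ between the two Bargmann/FBI realizations of $L^2(\Bbb R^n)$ must be shown to respect the scale of Gaussian-perturbed Fock weights, with quantitative control of the gain $c'=c'(\Phi)>0$. Here $\mathcal{W}$ is a Fourier integral operator with quadratic phase over the identity canonical transformation, and although it is unitary between $\mathcal{H}_\Phi$ and $\mathcal{F}$, its boundedness on the perturbed subspaces is governed by that phase and does not reduce to a pointwise comparison of weights; one establishes it either through the standard FBI-transform calculus or, concretely, by testing against coherent states and estimating the resulting Gaussian integrals. (An alternative route, staying within the class \eqref{eq.supersymmetric} and ending with an application of Theorem~\ref{thm.mixed.h.o} to a suitable pair of harmonic oscillators, requires the analogous comparison of realizations.) The remaining verifications --- that $\mathcal{T}\exp(-tQ)\mathcal{T}^{-1}$ is the stated composition operator, the change-of-variables identities for the weights, and the closed-graph wrap-up --- are routine given the earlier sections, and I would not carry them out here.
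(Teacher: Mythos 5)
Your setup is sound and structurally parallel to the paper's: reduce to the Fock model, observe that compactness gives $\Phi(e^{tM}z)-\Phi(z)\geq |z|^2/C$, so that the range of $\exp(-tQ)$ sits inside a Fock space whose weight falls below $\Phi$ by a definite Gaussian, and then ask whether that subspace lands in the domain of $\exp(\delta Q_0)$. The closed-graph reformulation is also fine. But the step you yourself flag as ``the main obstacle'' --- that the transition operator $\mathcal{W}=\mathcal{B}\mathcal{T}^{-1}$ maps $H_{\Phi-|z|^2/C'}$ into $H_{(1/2-c')|z|^2}$ --- is precisely the entire mathematical content of the theorem beyond Theorem \ref{thm.boundedness.delta}, and you do not prove it. Unwinding the unitaries, your inclusion is equivalent to $\exp(-\eps\tilde Q_0)L^2(\Bbb{R}^n)\subset \exp(-\delta Q_0)L^2(\Bbb{R}^n)$, i.e.\ to boundedness of $\exp(\delta Q_0)\exp(-\eps\tilde Q_0)$, where $\tilde Q_0=\mathcal{U}_\chi^*Q_0\mathcal{U}_\chi$ is the harmonic oscillator adapted to $Q$ (the one whose Fock-side conjugate is $P_0$, by Proposition \ref{prop.Hermite}). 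This is exactly part (i) of Theorem \ref{thm.mixed.h.o}, and its proof in the paper is not a routine FBI estimate: since the two harmonic oscillators cannot be put in normal form on the same Fock space, the paper introduces an auxiliary non-normal bridging operator $Q_3=B_3(D_x-\overline{A_2}x)\cdot(D_x-A_1x)$, performs two separate Fock reductions, and carries out the weight expansions \eqref{eq.mixed.h.o.Phi2.expand} and \eqref{eq.mixed.h.o.Phi1.expand}. As you note, the question does not reduce to a pointwise comparison of weights, so gesturing at ``the standard FBI-transform calculus'' or ``testing against coherent states'' leaves the key estimate unestablished.

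Your parenthetical remark is also backwards: the route through Theorem \ref{thm.mixed.h.o} does not require any further ``comparison of realizations.'' That theorem is stated and proved entirely on $L^2(\Bbb{R}^n)$ for two positive definite real quadratic symbols; applying it with $Q_1=\tilde Q_0$ and $Q_2=Q_0$ and composing with the boundedness of $\exp(\eps\tilde Q_0)\exp(-tQ)$ supplied by Theorem \ref{thm.boundedness.delta} finishes the proof --- this is the paper's argument, via Remark \ref{rem.Lipschitz}. So either you cite Theorem \ref{thm.mixed.h.o}, in which case the detour through $\mathcal{W}$ is unnecessary, or you must supply an independent proof of your inclusion, which you have not done. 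As written, the proposal has a genuine gap at its central step.
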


Writing 
\[
	\exp(-tQ) = \exp(-\delta Q_0)\left(\exp(\delta Q_0)\exp(-tQ)\right)
\]
therefore gives regularity and decay for $\exp(-tQ)u$ when $u \in L^2(\Bbb{R}^n)$, and also implies that the singular values of $\exp(-tQ)$ decay exponentially rapidly like those of $\exp(-\delta Q_0)$,
\[
	s_j(\exp(-tQ)) \leq C\exp\left(-\frac{j^{1/n}}{C}\right).
\]

We have that, as $t \to 0^+$, the boundedness and compactness properties of $\exp(-tQ)$ can be read off from the ellipticity properties of the symbol $q(x,\xi)$.

\begin{theorem}\label{thm.intro.small.t}
The solution operator $\exp(-tQ)$ is bounded for all $t \in [0, \infty)$ if and only if $\Re q(x,\xi) \geq 0$. Furthermore, $\exp(-tQ)$ is compact for all $t \in (0, \infty)$ if and only if \eqref{eq.intro.ell.part} holds, and in this case for $k_0$ minimal in \eqref{eq.intro.ell.part} and 
\begin{equation}\label{eq.def.delta.star}
	\delta^*(t) = \sup\{\delta\in\Bbb{R} \::\: \exp(\delta Q_0)\exp(-tQ) \in \mathcal{L}(L^2(\Bbb{R}^n))\},
\end{equation}
we have
\[
	\delta^*(t) \asymp t^{2k_0+1}, \quad t \to 0^+,
\]
in the sense that the ratio is bounded above and below by positive constants.
\end{theorem}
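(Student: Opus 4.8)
The plan is to run every assertion through the convexity criterion of Theorem~\ref{thm.intro.computable}, reducing it to linear algebra, and then to transfer the resulting conditions back to the symbol $q$ via the normal form behind that theorem. Write $\Phi(z)=\langle Sz,z\rangle$ with $S$ a positive real symmetric form on $\Bbb C^n$ viewed as $\Bbb R^{2n}$, let $E_t$ be the real matrix of the $\Bbb C$-linear map $e^{tM}$, and set $\Gamma=M^\top S+SM$ (transpose taken in the real realization). Then $\Phi(e^{tM}z)=\langle E_t^\top SE_t\,z,z\rangle$, so the function \eqref{eq.weight.diff} has matrix $G(t):=E_t^\top SE_t-S$; by Theorem~\ref{thm.intro.computable}, $\exp(-tQ)$ is bounded iff $G(t)\ge0$ and compact iff $G(t)>0$. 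Since $G(0)=0$ and $\frac{d}{dt}G(t)=E_t^\top\Gamma E_t$, we obtain the Gramian identity $G(t)=\int_0^tE_s^\top\Gamma E_s\,ds$.

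From this identity, $G(t)\ge0$ for all $t\ge0$ is equivalent to $\Gamma\ge0$: sufficiency is clear, and if $\langle\Gamma v,v\rangle<0$ then $\langle G(t)v,v\rangle<0$ for small $t>0$. When $\Gamma\ge0$, $\langle G(t)v,v\rangle=\int_0^t\|\Gamma^{1/2}E_sv\|^2\,ds$ vanishes exactly when $\Gamma^{1/2}E_sv\equiv0$ on $[0,t]$, hence for all $s$ by analyticity, that is, exactly when $v\in V:=\{v:\Gamma M^jv=0 \text{ for all } j\ge0\}$, the largest $e^{sM}$-invariant subspace of $\ker\Gamma$, which by Cayley--Hamilton equals $\bigcap_{j=0}^{2n-1}\ker(\Gamma M^j)$. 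Consequently $G(t)>0$ for one (equivalently, for every) $t>0$ iff $\Gamma\ge0$ and $V=\{0\}$; and if $\Gamma\ge0$ but $V\ne\{0\}$, then $G(t)$ is nonnegative and degenerate along the fixed subspace $V$, so $\exp(-tQ)$ is bounded but never compact.

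It remains to match $\Gamma\ge0$ with $\Re q\ge0$ and $V=\{0\}$ with \eqref{eq.intro.ell.part}. This is supplied by the construction underlying Theorem~\ref{thm.intro.computable}, Proposition~\ref{prop.which.q} and Theorem~\ref{thm.eigen.core} (following \cite{Sj1974}): the linear change of variables conjugating $Q$ to its Fock-space realization carries the real quadratic form $\Re q$ on $\Bbb R^{2n}$ to a congruent copy of $\langle\Gamma\,\cdot\,,\cdot\rangle$ and carries the linear vector field $H_{\Im q}$, $X\mapsto NX$, to $X\mapsto MX$. Since congruence preserves signature, $\Re q\ge0\iff\Gamma\ge0$, which is the first assertion; moreover $\sum_{j=0}^k\Re q(H_{\Im q}^j(x,\xi))$ goes to a congruent copy of $X\mapsto\sum_{j=0}^k\langle\Gamma M^jX,M^jX\rangle$, whose positivity amounts to $\bigcap_{j=0}^k\ker(\Gamma M^j)=\{0\}$, so \eqref{eq.intro.ell.part} holds for some $k_0\le2n-1$ iff $V=\{0\}$, with the minimal admissible $k_0$ coinciding on both sides. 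Combined with the previous paragraph this yields the boundedness and compactness criteria. The main obstacle is precisely this dictionary: one must verify, by tracking the complex-linear change of variables of Theorem~\ref{thm.intro.computable}, that $\Re q$ and the iterated H\"ormander sums are carried, up to congruence, to the forms built from $\Gamma$ and the powers $M^j$, \emph{with the same minimal index} $k_0$; the accretivity half is robust, but the bracket-by-bracket part is the subelliptic gain of \cite{HiPS2009}, \cite{Vi2013} rewritten in these coordinates.

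For the rate, assume \eqref{eq.intro.ell.part} with $k_0$ minimal, so that $\delta^*(t)>0$ by Theorem~\ref{thm.intro.ho.compare}. For a unit vector $v$ with $\Gamma M^jv=0$ for $j<k_0$ and $\Gamma M^{k_0}v\ne0$ one has $\Gamma^{1/2}E_sv=\tfrac{s^{k_0}}{k_0!}\Gamma^{1/2}M^{k_0}v+\BigO(s^{k_0+1})$, so the Gramian integrand $\|\Gamma^{1/2}E_sv\|^2$ first becomes positive at order $s^{2k_0}$ and $\langle G(t)v,v\rangle\asymp t^{2k_0+1}$; since $k_0$ is the largest index for which such a $v$ exists, the standard Gramian estimate gives $\lambda_{\min}\big(S^{-1/2}G(t)S^{-1/2}\big)\asymp t^{2k_0+1}$ as $t\to0^+$ (while $\|G(t)\|\asymp t$). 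Finally $\exp(\delta Q_0)$ fits the same framework with a weight whose Hessian is $\asymp I$, and arguing as in the proofs of Theorem~\ref{thm.intro.computable} and Theorem~\ref{thm.intro.ho.compare} (cf.\ Theorem~\ref{thm.mixed.h.o}), $\exp(\delta Q_0)\exp(-tQ)$ is bounded exactly when a form $G(t)-\delta R(t,\delta)\ge0$, with $R(t,\delta)$ continuous near $(0,0)$ and $R(0,0)>0$; comparing with $cI\le S^{-1/2}R(t,\delta)S^{-1/2}\le CI$ for small $(t,\delta)$ then gives $\delta^*(t)\asymp\lambda_{\min}\big(S^{-1/2}G(t)S^{-1/2}\big)\asymp t^{2k_0+1}$, as claimed.
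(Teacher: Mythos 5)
Your Gramian argument for the Fock-space half is correct and is a genuinely different route from the one taken here. Where the paper computes the Taylor coefficients of $t\mapsto\Phi(e^{tM}z)$ by an induction with combinatorial identities (Theorem \ref{thm.subelliptic.escape}) and then runs a compactness argument (Theorem \ref{thm.subell.decay}), you write $G(t)=\int_0^t E_s^\top\Gamma E_s\,ds$ and read everything off the controllability-type subspace $V=\bigcap_{j}\ker(\Gamma M^j)$. This is cleaner, and it even reproduces the exact leading constant: your $\frac{1}{(k_0!)^2(2k_0+1)}\|\Gamma^{1/2}M^{k_0}v\|^2$ is the paper's $\frac{1}{(2I+1)!}\binom{2I}{I}\Theta(M^Iz)$. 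Two points in this half are stated rather than proved, though both are true: the uniform lower bound $\lambda_{\min}\asymp t^{2k_0+1}$ is not a one-line ``standard Gramian estimate'' but needs the compactness argument of the proof of Theorem \ref{thm.subell.decay} (or a filtration argument on the spaces $\bigcap_{i\leq j}\ker(\Gamma M^i)$); and the replacement of the adapted oscillator $P_0$ by the fixed $Q_0$ in \eqref{eq.def.delta.star} genuinely requires the two-sided Lipschitz comparison of Theorem \ref{thm.mixed.h.o} and Remark \ref{rem.Lipschitz}, which you correctly cite but which is not automatic from ``Hessian $\asymp I$'', since $Q_0$ does not conjugate to a multiple of $P_0$ under the adapted FBI transform unless the ground states match.

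The genuine gap is the dictionary you yourself flag as ``the main obstacle.'' The claim that the change of variables carries $\sum_{j\leq k}\Re q(H_{\Im q}^j(x,\xi))$ to a congruent copy of $\sum_{j\leq k}\langle\Gamma M^jX,M^jX\rangle$ cannot be obtained by transporting quadratic forms under a linear map, because the conjugating transformation is a \emph{complex} canonical transformation $\mathcal{K}$: real phase space is carried to $\Lambda_\Phi=\{(z,-2i\Phi_z'(z))\}$, and while the $k=0$ case ($\Re q(x,\xi)=\Theta(z)$, i.e.\ $\Gamma\geq0\iff\Re q\geq0$) is Remark \ref{rem.Fock.phase.space}, the intertwining of the iterates of $\Im F$ on $\Bbb{R}^{2n}$ with the iterates of $M$ on $\Bbb{C}^n$ is exactly the content of Proposition \ref{prop.subell.relation}, proved in Appendix \ref{sec.appendix.subell} by a two-stage induction through the symplectic quantities $\Re\sigma(F^k\cdot,F^{k+1}\cdot)$; the outcome is not a congruence but the identity \eqref{eq.subell.equiv.value} with the factor $4^{-I(z)}$ (harmless for the vanishing statement, but evidence that the identification is not formal). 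Without this, the equivalence of \eqref{eq.intro.ell.part} with $V=\{0\}$, and the equality of the minimal indices $k_0$ on the two sides, remain unestablished. So your proposal is a correct and attractive reorganization of the Fock-space analysis, but it still needs the appendix computation (or an equivalent argument) to connect to the hypotheses on $q$.
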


We recall following \cite[Thm.~1.2]{Vi2013} that the eigenfunctions $\{u_\alpha\}_{\alpha \in \Bbb{N}}$ give a natural decomposition of $L^2(\Bbb{R}^n)$ in energy levels $\opnm{Span}\{u_\alpha \::\: |\alpha| = m\}$, though these may not be orthogonal. We therefore introduce the associated projections
\[
	\Pi_m : L^2(\Bbb{R}^n) \to \opnm{Span}\{u_\alpha \::\: |\alpha| \leq m\},
\]
which commute with $Q$ and one another, which may be deduced from \eqref{eq.def.Pi} below.  The question of return to equilibrium generally concerns $\exp(-tQ)(1-\Pi_0)$, since the range of $\Pi_0$ is $\opnm{Span}\{u_0\}$ and $u_0$ is $\exp(-tQ)$ invariant. We obtain a sharp estimate valid for any $\Pi_m$.

Note that $\opnm{Spec} M \subset \{\Re \lambda > 0\}$ implies that $\|e^{-tM}\| \to 0$  exponentially rapidly as $t \to \infty$ for $t \in \Bbb{R}$. Note also that if $\Re \lambda_j < 0$ for some $j$ then $\exp(-tQ)$ is never bounded for $t > 0$ since $k\lambda_j$ is an eigenvalue of $Q$ for all $k \in \Bbb{N}$.

\begin{theorem}\label{thm.intro.return}
Suppose that $\Re \lambda_j > 0$ for all $j = 1,\dots,n$. Then there exists $T > 0$ sufficiently large such that $\exp(-tQ)$ is compact for all $t \geq T$. Furthermore, with $\rho = \min\{\Re \lambda_j\}$ and $J \in \Bbb{N}$ the size of the largest Jordan block in $M$ for an eigenvalue where $\Re \lambda_j = \rho$,
\[
	\|\exp(-tQ)(1-\Pi_m)\|_{\mathcal{L}(L^2(\Bbb{R}^n))} \asymp \|e^{-tM}\|^{m+1} \asymp \left(t^{J-1} e^{-\rho t}\right)^{m+1}, \quad t > T,
\]
in the sense that the ratios are bounded from above and below by positive constants.
\end{theorem}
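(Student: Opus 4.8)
\emph{Overview and compactness for large $t$.} The plan is to deduce both parts from the realization of $\exp(-tQ)$ as a composition operator on a Fock space, the same realization that underlies Theorem~\ref{thm.intro.computable}. For the first assertion I would note that, since $\rho=\min_j\Re\lambda_j>0$, the Jordan form of $M$ shows each entry of $e^{-tM}$ to be a polynomial in $t$ times a factor of modulus $\le e^{-\rho t}$, so $\|e^{-tM}\|\to 0$ as $t\to+\infty$; consequently, writing $G\succ 0$ for the Hessian of $\Phi$ on $\Bbb{C}^n\cong\Bbb{R}^{2n}$, the operator norm of $e^{-tM}$ in the inner product $\langle G\,\cdot\,,\cdot\rangle$ is $<1$ for all $t$ beyond some $T$. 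The quadratic part of the function \eqref{eq.weight.diff} equals $\tfrac12\langle((e^{tM})^{\top}Ge^{tM}-G)z,z\rangle$, and the substitution $w=e^{tM}z$ shows this to be positive definite precisely when $\|e^{-tM}w\|_G<\|w\|_G$ for all $w\ne 0$. Hence for $t\ge T$ the function \eqref{eq.weight.diff} is strictly convex, and $\exp(-tQ)$ is compact by Theorem~\ref{thm.intro.computable}.

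\emph{The two-sided estimate.} On the Fock side $\exp(-tQ)$ is unitarily equivalent to the composition operator $C_tf=f\circ e^{-tM}$ on the Fock space $\mathcal{H}_\Phi$ associated with $\Phi$. This operator preserves the grading $\mathcal{H}_\Phi=\bigoplus_{j\ge 0}\mathcal{H}^{(j)}$ by homogeneous polynomial degree; the energy levels $\opnm{Span}\{u_\alpha:|\alpha|=j\}$ correspond to the spaces $\mathcal{H}^{(j)}$, so $1-\Pi_m$ corresponds to the bounded projection onto $W:=\overline{\bigoplus_{j\ge m+1}\mathcal{H}^{(j)}}$, and since $1-\Pi_m$ is a fixed bounded operator, $\|\exp(-tQ)(1-\Pi_m)\|\asymp\|C_t|_W\|$. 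On each level $C_t|_{\mathcal{H}^{(j)}}$ is the $j$-th symmetric power of $C_t|_{\mathcal{H}^{(1)}}$, the latter being the transpose of $e^{-tM}$ acting on linear forms, with norm $\asymp\|e^{-tM}\|$. For the lower bound I would test $C_t$ on $(\ell')^{m+1}\in\mathcal{H}^{(m+1)}\subset W$ with $\ell'$ a linear form nearly attaining the norm of $C_t|_{\mathcal{H}^{(1)}}$, using $\|(\ell')^{m+1}\|_{\mathcal{H}_\Phi}\asymp\|\ell'\|_{\mathcal{H}^{(1)}}^{m+1}$ uniformly in $\ell'$ (a Gaussian-moment comparison, since the weight defining $\mathcal{H}_\Phi$ is Gaussian); this gives $\gtrsim\|e^{-tM}\|^{m+1}$. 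For the upper bound, after enlarging $T$ so that $\|C_t|_{\mathcal{H}^{(1)}}\|<1$ for $t\ge T$, the level norms $\asymp\|e^{-tM}\|^{j}$ are decreasing in $j$ and --- modulo the point addressed below --- the term $j=m+1$ dominates, giving $\lesssim\|e^{-tM}\|^{m+1}$. Finally $\|e^{-tM}\|\asymp t^{J-1}e^{-\rho t}$ as $t\to+\infty$ by a standard computation with the Jordan form, the dominant block having eigenvalue of real part $\rho$ and size $J$; this is the last equivalence in the statement.

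\emph{Main obstacle.} The subtle step is controlling $\|C_t|_W\|$ by the individual level norms, that is, going from $\mathcal{H}^{(m+1)}\subset W$ (which only yields the lower bound) to all of $W$. When $\Phi$ has a holomorphic quadratic part the degree grading is not orthogonal in $\mathcal{H}_\Phi$, so one must either bound the grading projections uniformly in the degree or first strip the holomorphic part by a Gaussian multiplier, passing to a Hermitian weight; the latter turns $C_t$ into a weighted composition operator and sends $1-\Pi_m$ to a projection onto a nonstandard subspace, so one must then track the multiplier, which tends as $t\to+\infty$ to a fixed bounded operator. In the Hermitian (standard-weight) case all of the identifications above are exact equalities, $\|\opnm{Sym}^jL\|=\|L\|^j$, $W$ is the orthogonal complement of the low levels, and the argument closes immediately; the remaining ingredients --- the convexity criterion and the linear-algebra asymptotics for $\|e^{-tM}\|$ --- are already available or routine.
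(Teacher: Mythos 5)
Your reduction to the composition operator $C_tf=f\circ e^{-tM}$, the compactness argument via positive definiteness of $(e^{tM})^{\top}Ge^{tM}-G$ for the full real Hessian $G$ of $\Phi$, the lower bound obtained by testing on $(\ell')^{m+1}$ for a near-extremal linear form $\ell'$, and the Jordan-form asymptotics $\|e^{-tM}\|\asymp t^{J-1}e^{-\rho t}$ are all sound; your lower bound is in fact a clean variant of the paper's Proposition \ref{prop.return.convergence}, which instead tests on monomials in Jordan coordinates. The gap is exactly where you locate it --- the upper bound when $\Phi$ has a nonzero pluriharmonic part $-\Re h$ --- and neither of your two proposed repairs works as stated. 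First, the grading projections are not uniformly bounded in the degree: as recalled in Section \ref{subsec.paths.not.taken}, their norms can grow like $e^{C|\alpha|}$, and for the same reason the level norms $\|C_t|_{\mathcal{H}^{(j)}}\|$ are comparable to $\|e^{-tM}\|^{j}$ only up to constants degrading exponentially in $j$, so a level-by-level summation cannot close. Second, conjugating by $\mathcal{U}=\mathcal{W}_{-h}\mathcal{V}_G$ turns $\exp(-tP)$ into $Su(z)=u(Bz)e^{h_1(Bz)+h_2(z)}$ with $B=Ge^{-tM}G^{-1}$, $h_1(w)=-h(G^{-1}w)$, $h_2(z)=h(G^{-1}z)$ (this is \eqref{eq.exptP.conjugated}); the multiplier does tend to $e^{h(G^{-1}z)}$, but multiplication by $e^{h(G^{-1}z)}$ is an \emph{unbounded} operator on $H_\Psi$ whenever $h\neq 0$, since by Proposition \ref{prop.embedding} boundedness would force $\Re h(G^{-1}z)\le 0$ everywhere, which is impossible for a nonzero pluriharmonic quadratic form. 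So the multiplier cannot be decoupled from the contraction and treated as ``a fixed bounded operator.''

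The missing ingredient is the paper's Lemma \ref{lem.return}: for $u\in\mathcal{M}_N^\Psi$, expanding in the orthonormal basis $\{f_\alpha\}_{|\alpha|\ge N}$ and applying Cauchy--Schwarz pointwise gives $|u(Bz)|^2\le K_N\|u\|^2\|B\|^{2N}|z|^{2N}e^{|Bz|^2}$, and the coupled hypothesis $|Bz|^2+2\Re(h_1(Bz)+h_2(z))\le c_1|z|^2$ with $c_1<1$ absorbs the growth of the multiplier into the Gaussian weight, yielding $\|Su\|_\Psi\le C\|B\|^{N}\|u\|_\Psi$. That hypothesis, for $\delta=0$, amounts to $\delta_0(-t)$ being bounded away from zero, which holds for $t$ large by Proposition \ref{prop.maximal.smoothing}; this is Theorem \ref{thm.return.by.PiN}. (Note also that $\mathcal{U}$ preserves the order of vanishing at the origin, $\mathcal{U}(\mathcal{M}^\Psi_{N})=\mathcal{M}^\Phi_{N}$, so the relevant subspace is not ``nonstandard'' after conjugation.) Your Hermitian-case argument is complete and coincides with Theorem \ref{thm.orthog.return}, but as written the proposal does not prove the upper bound in the general case.
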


\begin{proof}
By Proposition \ref{prop.which.q}, any operator of the form \eqref{eq.supersymmetric} is equivalent to 
\[
	P = Mz\cdot \partial_z
\]
acting on a weighted space of holomorphic functions $H_\Phi$; see Section \ref{subsec.Definitions} for definitions. The corresponding solution operator is given by a change of variables (Proposition \ref{prop.solve.exptP}). Theorem \ref{thm.intro.computable} then follows from Theorems \ref{thm.boundedness.0} and \ref{thm.eigen.core}. That Theorem \ref{thm.intro.ho.compare} holds for \emph{some} harmonic oscillator is the content of Theorem \ref{thm.boundedness.delta} and Proposition \ref{prop.Hermite}; we obtain the result for $Q_0$ because of the Lipschitz relation between harmonic oscillator semigroups near $t = 0$ given by Theorem \ref{thm.mixed.h.o} and Remark \ref{rem.Lipschitz}. Theorem \ref{thm.intro.small.t} is the same as Theorem \ref{thm.subell.decay} in view of Proposition \ref{prop.subell.relation}. Finally, the compactness claim in Theorem \ref{thm.intro.return} is essentially obvious since \eqref{eq.weight.diff} holds automatically when $e^{-tM}\to 0$, but it may be viewed as a special case of Theorem \ref{thm.bounded.tau}, which considers all $t \in \Bbb{C}$ simultaneously. The rest of Theorem \ref{thm.intro.return} is Theorem \ref{thm.return.by.PiN} in the case $\delta = 0$.
\end{proof}

Under the symmetry assumption $A_+ = \overline{A_-}$ in \eqref{eq.supersymmetric}, discussed in Section \ref{subsec.orthogonal}, one can obtain even stronger results: in particular, after a reduction to $A_+ = -A_- = i$, Theorems \ref{thm.intro.ho.compare}, \ref{thm.intro.small.t}, and \ref{thm.intro.return} are linked by
\begin{equation}\label{eq.intro.short.results}
	\|e^{-tM}\| = e^{-\delta^*(t)} = \|\exp(-tQ)(\Pi_{m+1}-\Pi_m)\|_{\mathcal{L}(L^2(\Bbb{R}))}^{\frac{1}{m+1}}.
\end{equation}
Many of the results under this assumption may be realized with simpler proofs relying only on a standard Bargmann transform, and for this reason, we present these results and the natural singular value decomposition independently in \cite{AlVi2014a}.

The plan of the paper is follows.  For the remainder of the introduction, we illustrate the results to follow with two families of concrete examples and then briefly discuss interesting alternate approaches not used here.  Section \ref{sec.Fock} is devoted to the definition and analysis of our operators on Fock spaces.  Section \ref{sec.real} describes the equivalence between quadratic operators in the form \eqref{eq.supersymmetric} on $L^2(\Bbb{R}^n)$ and the operators considered on Fock spaces, as well as related results.  Finally, Section \ref{sec.return} applies this analysis to the problem of return to equilibrium.

\begin{acknowledgements}
The authors would like to thank Johannes Sj\"ostrand for helpful suggestions, as well as Michael Hitrik and Karel Pravda-Starov for an interesting and useful discussion. The authors would also like to thank the anonymous referee for a careful reading and useful suggestions and corrections. The second author is grateful for the support of the Agence Nationale de la Recherche (ANR) project NOSEVOL, ANR 2011 BS01019 01.
\end{acknowledgements}

\subsection{Examples}\label{subsec.examples}

In order to make our results explicit, we discuss their application to well-studied and simple examples.

\subsubsection{The rotated harmonic oscillator}\label{subsubsec.RHO}

We consider the rotated harmonic oscillator
\begin{equation}\label{eq.RHO}
	\begin{aligned}
	Q_\theta &= \frac{1}{2}(D_x + ie^{i\theta}x)(D_x - ie^{i\theta}x)
		\\ &= \frac{1}{2}\left(-\frac{d^2}{dx^2} + e^{2i\theta}x^2 - e^{i\theta}\right),
	\end{aligned}
\end{equation}
where $\theta \in (-\pi/2,\pi/2)$, as an operator on $L^2(\Bbb{R})$. This operator (or variants thereof) appears in \cite{Ex1983}, \cite{Da1999a}, \cite{Boulton2002}, and many other works. We know that $Q_\theta$ has a compact resolvent and that the spectrum of $Q_\theta$ lies in the right half-plane,
\[
	\opnm{Spec} Q_\theta = e^{i\theta}\Bbb{N}.
\]
The eigenfunctions of $Q_\theta$ come from the analytic continuations of the Hermite functions $h_k$ recalled later in \eqref{eq.def.Hermite}; specifically, a complete set of eigenfunctions is given by the formula 
\[
	g_k(x) = e^{i\theta/4}h_k(e^{i\theta/2}x),
\]
which verify
\begin{equation}\label{eq.RHO.eigensystem}
	Q_\theta g_k = ke^{i\theta}g_k, \quad k \in \Bbb{N}.
\end{equation}
The functions $\{g_k\}_{k \in \Bbb{N}}$ form a complete set in that the closure of their span is $L^2(\Bbb{R})$. They do not, however, form a basis, meaning that not every function in $L^2(\Bbb{R})$ can be uniquely expressed as a norm-convergent expansion in basis vectors with fixed coefficients, because their spectral projections 
\begin{equation}\label{eq.RHO.projections}
	\pi_k u(x) = \langle u, \overline{g_k}\rangle g_k(x)
\end{equation}
have exponentially-growing norms, \cite{DaKu2004}.  For a detailed discussion of this phenomenon, see \cite[Sec.~3.3]{Da2007}.

From \cite{Da1999a} and \cite{DeSjZw2004} we have that pseudomodes for $Q_\theta$ of the type \eqref{eq.pseudomodes.general} exist with, for instance, $z_k = ke^{i\tilde{\theta}}$ when $\tilde{\theta}\in (0, 2\theta)$.  We also have from \cite[Prop.~1]{Boulton2002} that the numerical range of $Q_\theta$ is
\[
	\opnm{Num}(Q_\theta) = \{t_1 + e^{2i\theta}t_2 \in \Bbb{C} \::\: t_1, t_2 \geq 0, t_1t_2 \geq 1/4\}.
\]
Therefore both the pseudospectrum and the numerical range of $Q_\theta$ more or less fill out the sector of complex numbers with argument between $0$ and $2\theta$.

We now apply the results contained in the present work to the solution operators generated by these rotated harmonic oscillators.

Following \cite[Ex.~2.6]{Vi2013} with a change of variables, we see that Theorem \ref{thm.intro.computable} applies to $Q_\theta$ with
\[
	M = e^{i\theta}
\]
and
\begin{equation}\label{eq.RHO.weight}
	\Phi(z) = \frac{1}{2}(|z|^2 - (\sin\theta) \Re z^2).
\end{equation}
The conditions for boundedness and compactness in Theorem \ref{thm.intro.computable} can be easily checked by computer, since we see that $\exp(-tQ_\theta)$ is bounded if and only if
\[
	\Phi(e^{tM}z) - \Phi(z) \geq 0, \quad \forall z \in \Bbb{C},
\]
and is compact if and only if the inequality holds strictly.  Since the left-hand side is a quadratic form in $z \in \Bbb{C} \sim \Bbb{R}^2$, this inequality may be verified by checking the eigenvalues of the corresponding Hessian matrix.

Since $\Phi$ is a strictly convex real-quadratic function on $\Bbb{C}$, the condition for boundedness in Theorem \ref{thm.intro.computable} corresponds to the dynamical condition
\begin{equation}\label{eq.dynamics}
	\{\Phi(e^{tM}z) = 1\} = e^{-tM}\{\Phi(z) = 1\} \subset \{\Phi(z) \leq 1\}.
\end{equation}
The weight $\Phi$ is decreasing along all trajectories $z \mapsto e^{-tM}z$ if and only if $|\theta| \leq \pi/4$, corresponding to the ellipticity condition
\[
	\Re (\xi^2 + e^{2i\theta}x^2) \geq 0, \quad \forall (x,\xi) \in \Bbb{R}^{2n}.
\]
This is reflected in boundedness of $\exp(-tQ_\theta)$ as $t \to 0^+$ by Theorem \ref{thm.intro.small.t}.

Let us consider $\theta = 5\pi/12$, for which the property $\Re Q_\theta \geq 0$ no longer holds.  In Figure \ref{fig.RHO.dyn}, we illustrate the condition \eqref{eq.dynamics} by drawing the fixed ellipse $\{\Phi(z) = 1\}$ as a heavy black curve and drawing the ellipses $e^{-tM}\{\Phi(z)=1\}$ as $t \geq 0$ increases. Since $\Re M = \cos\theta$, the long-time dynamics is an exponential contraction; this reflects the long-time boundedness and compactness in Theorem \ref{thm.intro.return}.  We see that for small times $\exp(-tQ_\theta)$ is unbounded, but becomes bounded again at $t_1 \approx 3.011$, when the major axes of the ellipses are sufficiently close. The operator becomes unbounded again at $t_2 \approx 3.549$ and continues to be unbounded up to $t_3 \approx 5.862$. Beyond $t_3$, the exponential contraction is enough to guarantee that $\exp(-tQ_\theta)$ is bounded and compact for all $t \in (t_3, \infty)$.

\begin{figure}
	\centering
	\includegraphics[width=0.95\textwidth]{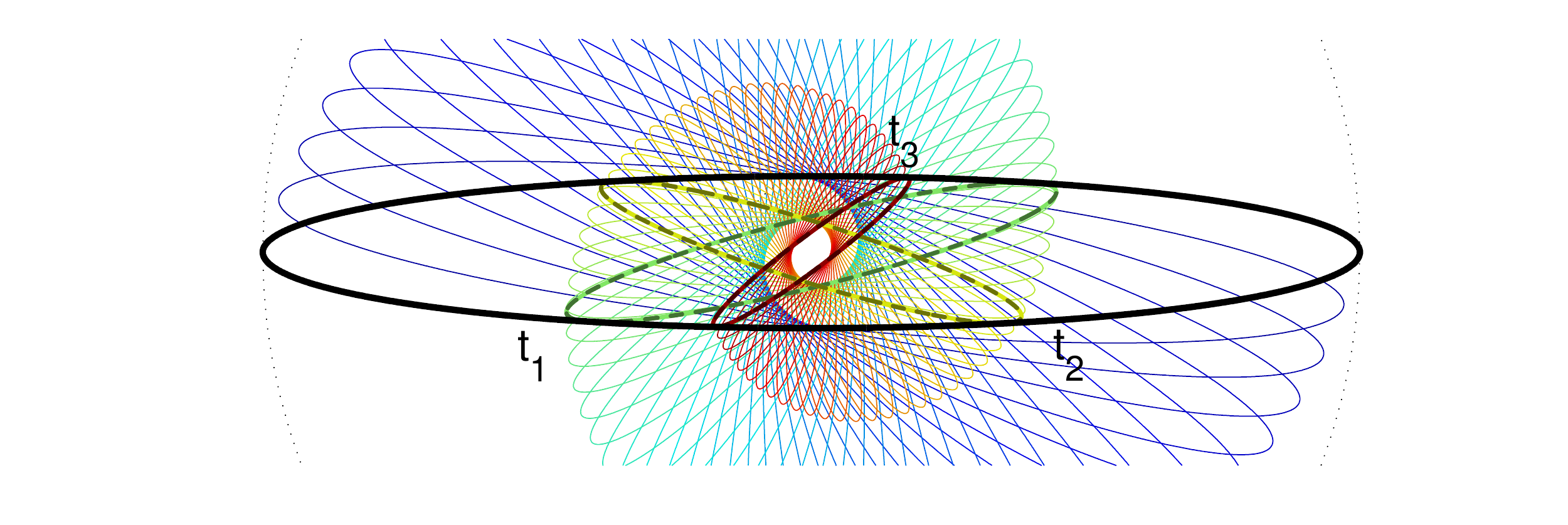}
	\caption{Illustration of \eqref{eq.dynamics} for \eqref{eq.RHO} with $\theta = 5\pi/12$.}
	\label{fig.RHO.dyn}
\end{figure}

Geometrically, it is clear that if we let $\theta \to \pi/2$ from below, the number of times that the operator $\exp(-tQ_\theta)$ for $t > 0$ goes from being unbounded to bounded, and vice versa, goes to infinity, since the rate of contraction tends to zero as the first eccentricity of the ellipses tends to one. Nonetheless, from Theorem \ref{thm.intro.return} we have that, for any $\theta \in (-\pi/2, \pi/2)$, there exists some $T > 0$ where $\exp(-tQ_\theta)$ is compact for all $t \geq T$. Furthermore, for all $u \in L^2(\Bbb{R})$ and $t > T$, the solution operator $\exp(-tQ_\theta)$ is given, up to any fixed order, by the spectral decomposition using \eqref{eq.RHO.projections}:
\[
	\left\|\exp(-tQ_\theta)u - \sum_{k=0}^N e^{-tke^{i\theta}} \pi_k u\right\|_{L^2(\Bbb{R})} = \BigO_N(e^{-t(N+1)\cos\theta}\|u\|_{L^2(\Bbb{R})}).
\]

In fact, Theorem \ref{thm.intro.computable} allows us to easily determine for which $\tau \in \Bbb{C}$ the operator $\exp(-\tau Q_\theta)$ is bounded; for $\theta = 5\pi/12$, we present this set in Figure \ref{fig.ex.Davies} alongside the range of the symbol
\[
	q_\theta(x,\xi) = \xi^2 + e^{2i\theta}x^2
\]
and the eigenvalues of $Q_\theta$, which are $e^{i\theta}\Bbb{N}$. We see that for $|\tau|$ small, the set of $\bar{\tau}$ for which $\exp(\tau Q_\theta)$ is bounded is the sector in opposition to the range of the symbol, which may be defined by
\[
	\{\tau \::\: \Re(\tau q(x,\xi)) \leq 0, ~ \forall (x,\xi) \in \Bbb{R}^2\}.
\]
Formally, this is a consequence of Theorem \ref{thm.intro.small.t}.  For large times, the same role is played by the half-plane in opposition to the spectrum of $Q_\theta$:
\begin{multline*}
	\{\tau \::\: \Re (\tau e^{i\theta}) \leq C_\theta,~\forall \lambda \in \opnm{Spec} Q_\theta\} \subset \{\tau\::\: \exp(\tau Q_\theta) \in \mathcal{L}(L^2(\Bbb{R}))\} 
	\\ \subset \{\tau \::\: \Re (\tau e^{i\theta}) \leq 0,~\forall \lambda \in \opnm{Spec} Q_\theta\}
\end{multline*}
for some $C_\theta > 0$, which is a consequence of Theorem \ref{thm.bounded.tau}.

\begin{figure}
\centering
	\includegraphics[width=0.45\textwidth]{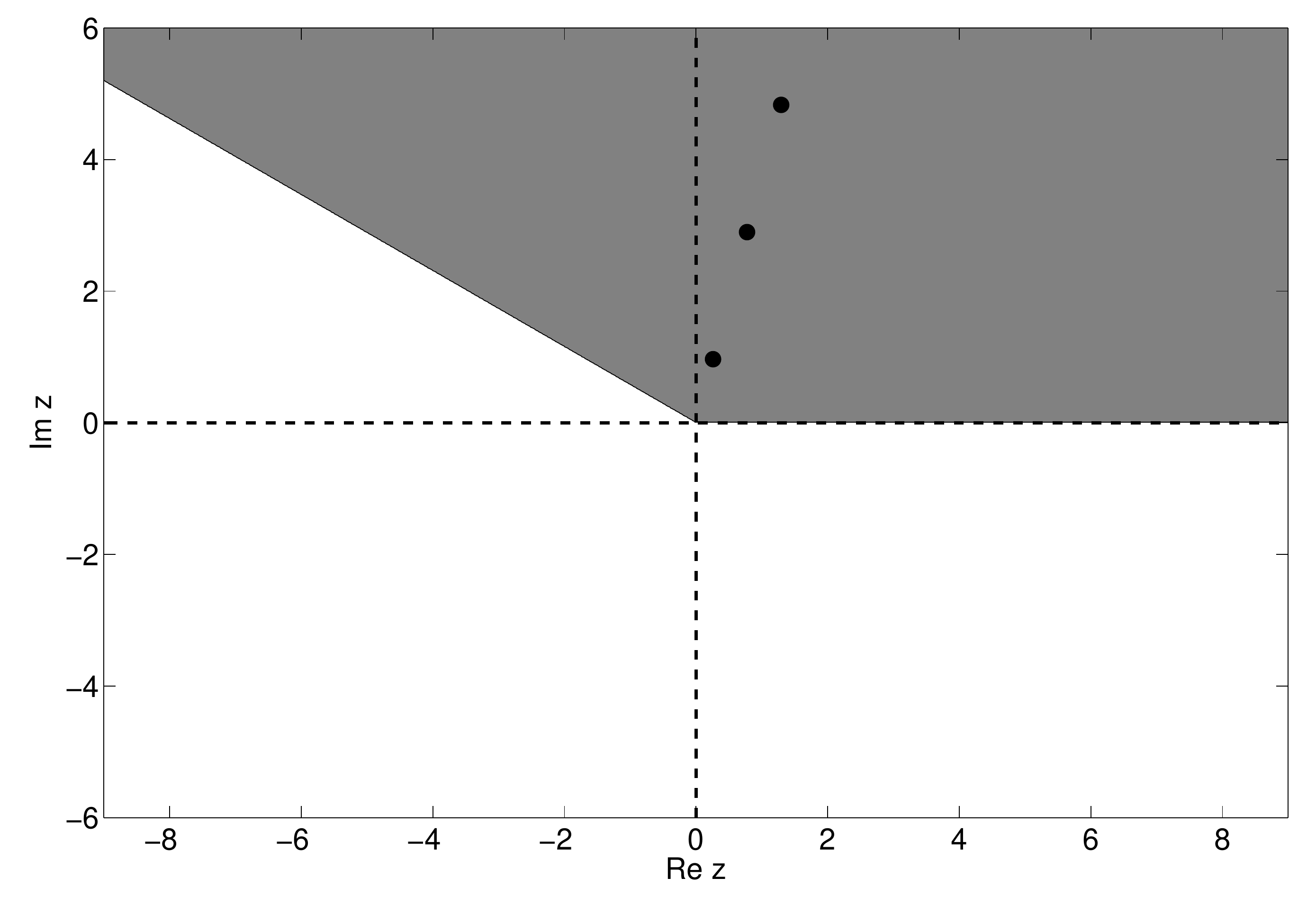}
	\includegraphics[width=0.45\textwidth]{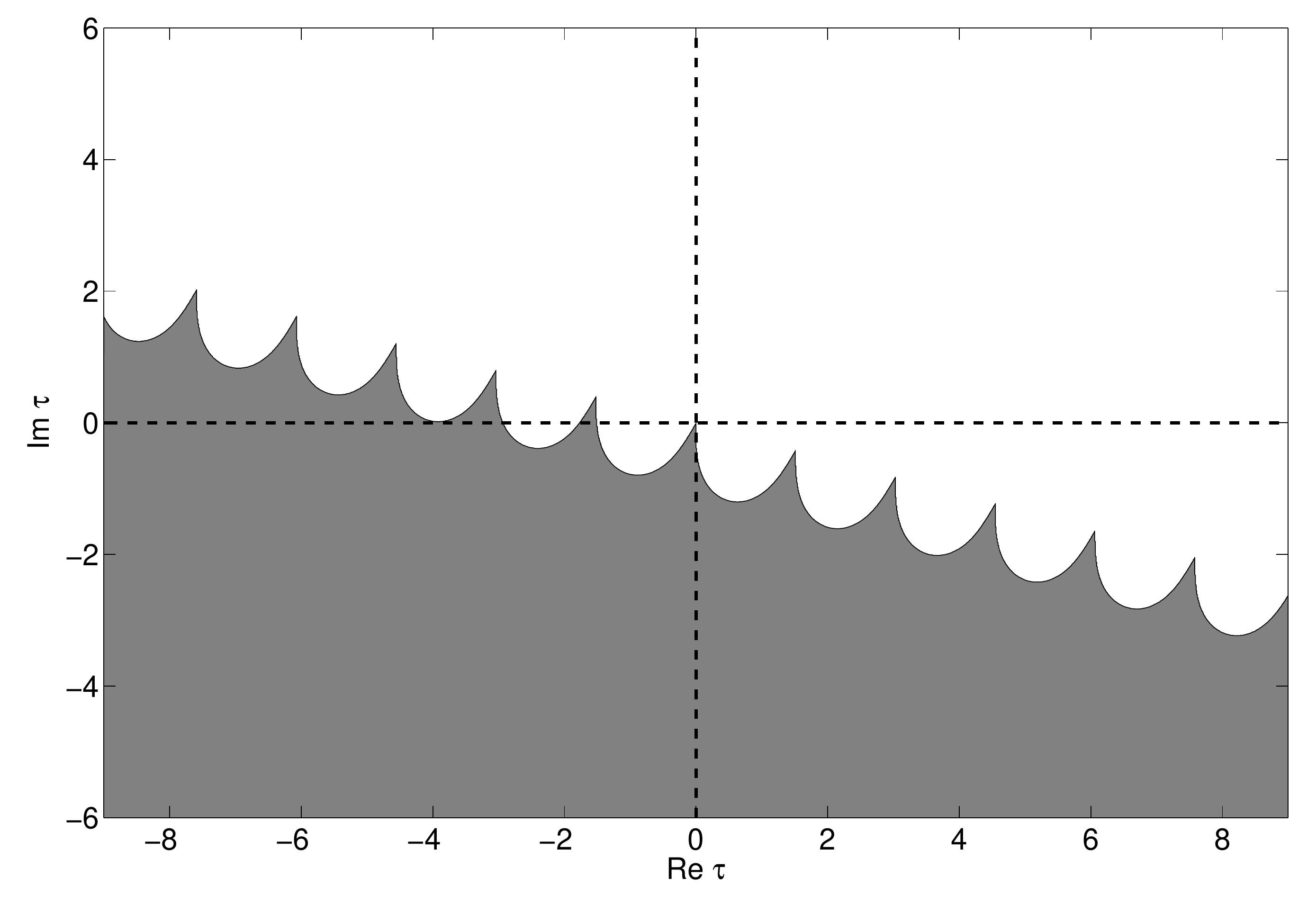}
	\caption{On the left: the range of the symbol of $Q_{\theta_0}$ from \eqref{eq.RHO} for $\theta_0 = 5\pi/12$ and the eigenvalues of $Q_{\theta_0}$; on the right: those $\bar{\tau}$ for which $\exp(\tau Q_{\theta_0})$ is a compact operator.}
	\label{fig.ex.Davies}
\end{figure}

\subsubsection{The Fokker-Planck quadratic model and non-elliptic perturbations}\label{subsubsec.FP}

We also consider the operator
\begin{equation}\label{eq.def.KFP.ex}
	Q_{a,b} = \frac{b}{2}(x_1^2 - \partial_{x_1}^2-1) + \frac{1}{2}(x_2^2 - \partial_{x_2}^2-1) + a(x_1\partial_{x_2} - x_2\partial_{x_1}), \quad a,b\in \Bbb{R}.
\end{equation}
This operator is non-normal whenever $a \neq 0$ and $b \neq 1$ (which we assume henceforth) and when $b = 0$ it coincides with the Fokker-Planck quadratic model \cite[Sec.~5.5]{HeNiBook}.  When $b > 0$, the operator is elliptic in the classical sense. The definition of the semigroup $\exp(-tQ_{a,b})$ for $b \geq 0$ and $t \geq 0$ is well-known and has been the subject of extensive study (see for instance \cite[Sec.~5.5.1]{HeNiBook} and references therein), though we arrive at new results both in this previously-studed situation and in the novel case $b < 0$.  

For $A_\pm = \pm i$ and
\begin{equation}\label{eq.KFP.M}
	M_{a,b} = \left(\begin{array}{cc} b & -a \\ a & 1\end{array}\right),
\end{equation}
we have the following decomposition as in \eqref{eq.supersymmetric}:
\[
	Q_{a,b} = \frac{1}{2}M(D_x + ix)\cdot (D_x - ix).
\]
Note that
\[
	\opnm{Spec} M_{a,b} = \{\lambda_+, \lambda_-\}, \quad \lambda_\pm = \frac{1}{2}(1+b \pm \sqrt{(1-b)^2-4a^2}),
\]
repeated if $(1-b)^2 = 4a^2$. When $b \geq 0$, it is known \cite[Thm.~3.5]{Sj1974}, \cite[Sec.~5.5]{HeNiBook} that
\begin{equation}\label{eq.KFP.spec}
	\opnm{Spec} Q_{a,b} = \{\alpha_1 \lambda_+ + \alpha_2 \lambda_- \::\: \alpha_1, \alpha_2 \in \Bbb{N}\}.
\end{equation}

Since $Q_{a,b}$ leaves invariant the spaces of Hermite functions \eqref{eq.def.Hermite} of fixed degree, meaning
\[
	E_m = \opnm{span}\{h_\alpha(x)\::\: |\alpha| = m\},
\]
it is elementary that $Q_{a,b}$ possesses a complete family of generalized eigenfunctions which may be obtained from the matrix representation of $Q_{a,b}$ on each $E_m$; in fact, the corresponding eigenvalues continue to be given by \eqref{eq.KFP.spec}.  The orthogonal decomposition of $L^2(\Bbb{R}^2)$ into the spaces $E_m$ also lends itself to the family of projections
\begin{equation}\label{eq.KFP.ex.projections}
	\Pi_N u = \sum_{m \leq N} \pi_{E_m} u = \sum_{|\alpha| \leq N} \langle u, h_\alpha\rangle h_\alpha.
\end{equation}

Theorem \ref{thm.intro.computable} applies with the matrix $M$ and the weight $\Phi(z) = \frac{1}{2}|z|^2$ for $z \in \Bbb{C}^2$ (see, e.g.,\ \cite[Ex.~2.7]{Vi2013}), and because $A_+ = \overline{A_-}$, we are in a situation where \eqref{eq.intro.short.results} holds. We have that $\exp(-tQ_{a,b})$ is bounded whenever $\|e^{-tM_{a,b}}\| \leq 1$ and is compact whenever $\|e^{-tM_{a,b}}\| < 1$, and the norm of this matrix exponential gives sharp estimates on decay, regularization, and return to equilibrium.

For $t > 0$, it is clear that $\exp(-tQ_{a,b})$ can only be bounded when $\Re \lambda_\pm \geq 0$. For $b \neq 0$ we have that
\[
	\|e^{-tM_{a,b}}\| = 1-t\min\{b,1\} + \BigO(t^2),
\]
and so $\exp(-tQ_{a,b})$ is bounded for small $t > 0$ if $b > 0$ and unbounded for small $t > 0$ if $b < 0$, which corresponds to ellipticity of $Q_{a,b}$.  That is, the symbol
\[
	q_{a,b}(x,\xi) = \frac{b}{2}(x_1^2 + \xi_1^2) + \frac{1}{2}(x_2^2 + \xi_2^2) - ia(x_1\xi_2 - x_2\xi_1)
\]
has a positive definite real part for $b > 0$, a non-definite real part for $b < 0$, and a positive semidefinite real part when $b = 0$.

When $b = 0$ and $a \neq 0$, we show in Proposition \ref{prop.FP.return} that
\[
	\|e^{-tM_{a,0}}\| = 1 - \frac{a^2}{12}t^3 + \BigO(t^4).
\]
This corresponds to the fact that $k_0 = 1$ in \eqref{eq.intro.ell.part}, which corresponds to small-time regularization by Theorem \ref{thm.intro.small.t} and to small-time decay by \eqref{eq.intro.short.results}.

If $b < 0$ and $a \neq 0$, then $\opnm{Spec} Q_{a,b} = \Bbb{C}$ by Theorem \ref{thm.spectrum.C}. Nonetheless, so long as $\Re \lambda_\pm > 0$, for $t > 0$ sufficiently large one has a strongly regularizing solution operator and exponentially rapid return to equilibrium by Theorem \ref{thm.intro.return}.

These different behaviors can be interpreted in terms of the dynamics of $\dot{z}(t) = M_{a,b}z(t)$, as shown in Figure \ref{fig.KFP.flow}. When $b > 0$, the integral curves which begin on the unit circle depart towards infinity immediately, corresponding to rapid regularization and return to equilibrium.  When $b = 0$, there are integral curves which are tangent to the unit circle, but all tend outwards; this corresponds to regularization and return to equilibrium which begins slowly. When $b < 0$, some level curves penetrate the unit circle, reflecting that the solution operator is wildly unbounded in certain directions of phase space.  On the other hand, the qualitative large-time behavior, where curves tend to infinity reflecting regularization and return to equilibrium, is stable.

\begin{figure}
\centering
	\includegraphics[width=0.32\textwidth]{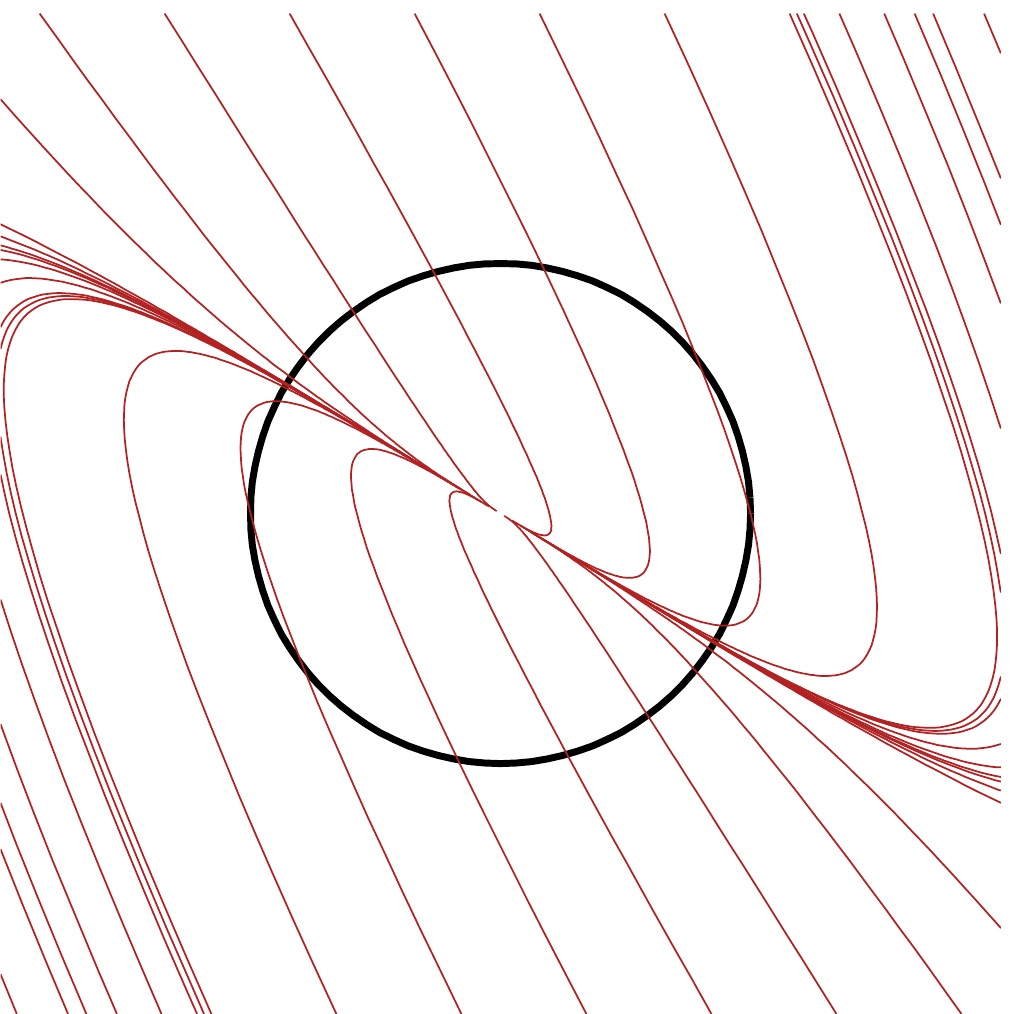}
	\includegraphics[width=0.32\textwidth]{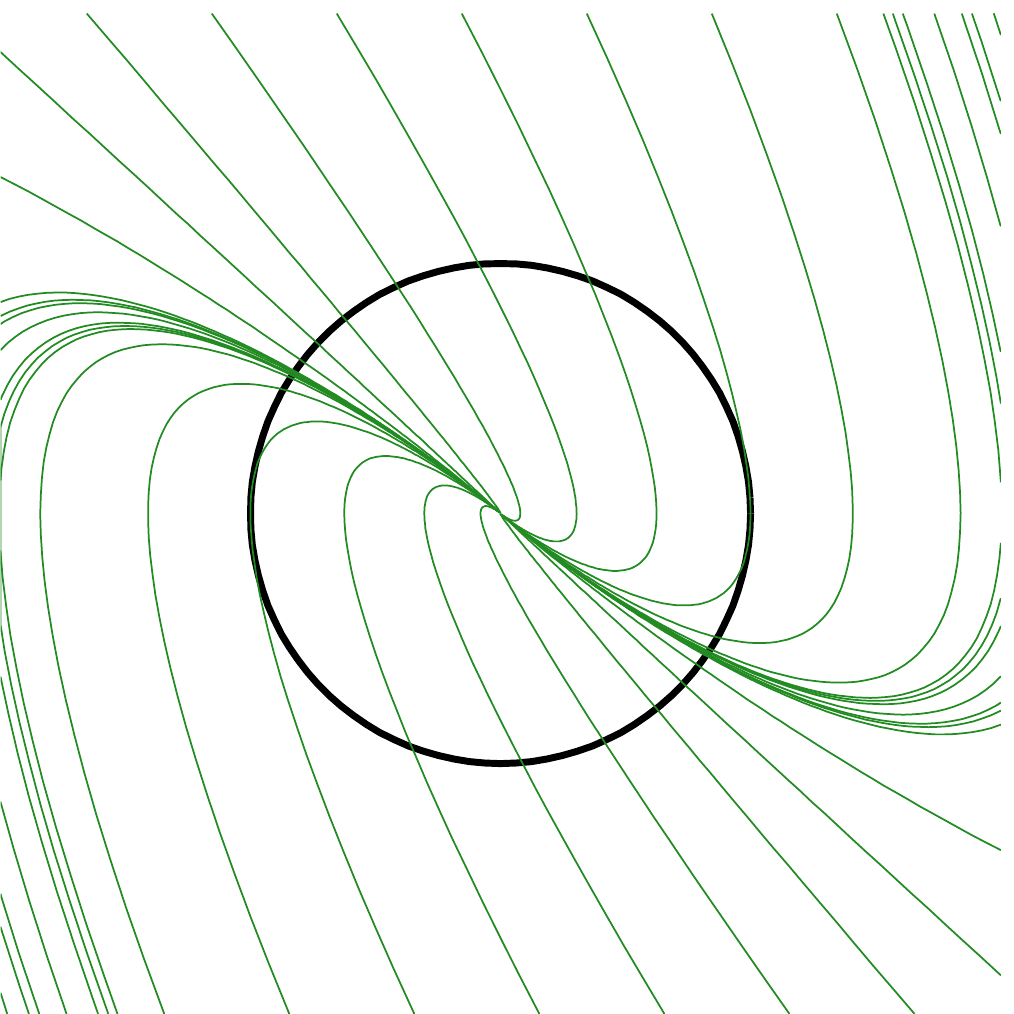}
	\includegraphics[width=0.32\textwidth]{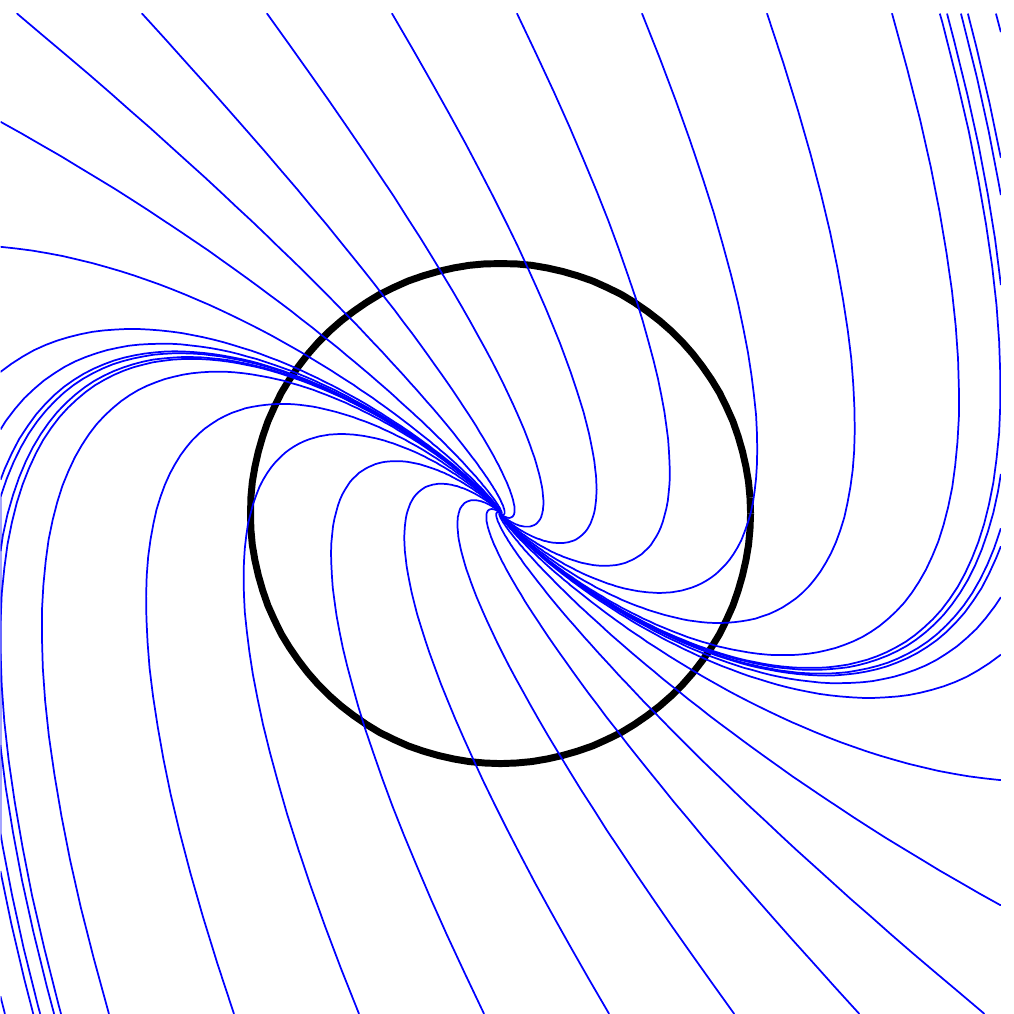}
	\caption{The flow $z \mapsto \exp(tM_{a,b})z$ with $z \in \Bbb{R}^2$ compared with the unit circle for $M_{a,b}$ from \eqref{eq.KFP.M} with $a = 1/2$ and $b = -1/8, 0, 1/8$ left to right.}
	\label{fig.KFP.flow}
\end{figure}

We also can identify the region of $\tau \in \Bbb{C}$ for which $e^{\tau Q_{a,b}}$ is a bounded operator as well as its norm. In Figure \ref{fig.FP.examples}, we study the curves 
\[
	\log\|e^{\tau M_{a,b}}\| = 0, -0.5, -1.0, \dots, -10.0,
\]
appearing from right to left. We only display $\Re \tau \leq 0, \Im \tau \geq 0$ because the norm is invariant under complex conjugation of $\tau$ since $M_{a,b}$ has real entries and because $e^{\tau Q_{a,b}}$ is never bounded when $\Re \tau > 0$. In the left and middle figures, the dotted curves $\{\arg \tau = \arg i\lambda_+\}$ and $\{\Re \tau = -2\log\Im \tau\}$ represent the characterization of the transition from boundedness and unboundedness for large $|\tau|$ from Theorem \ref{thm.bounded.tau}; the corresponding curve for the figure on the right would be the imaginary axis.

\begin{figure}
\centering
	\includegraphics[width=0.2\textwidth]{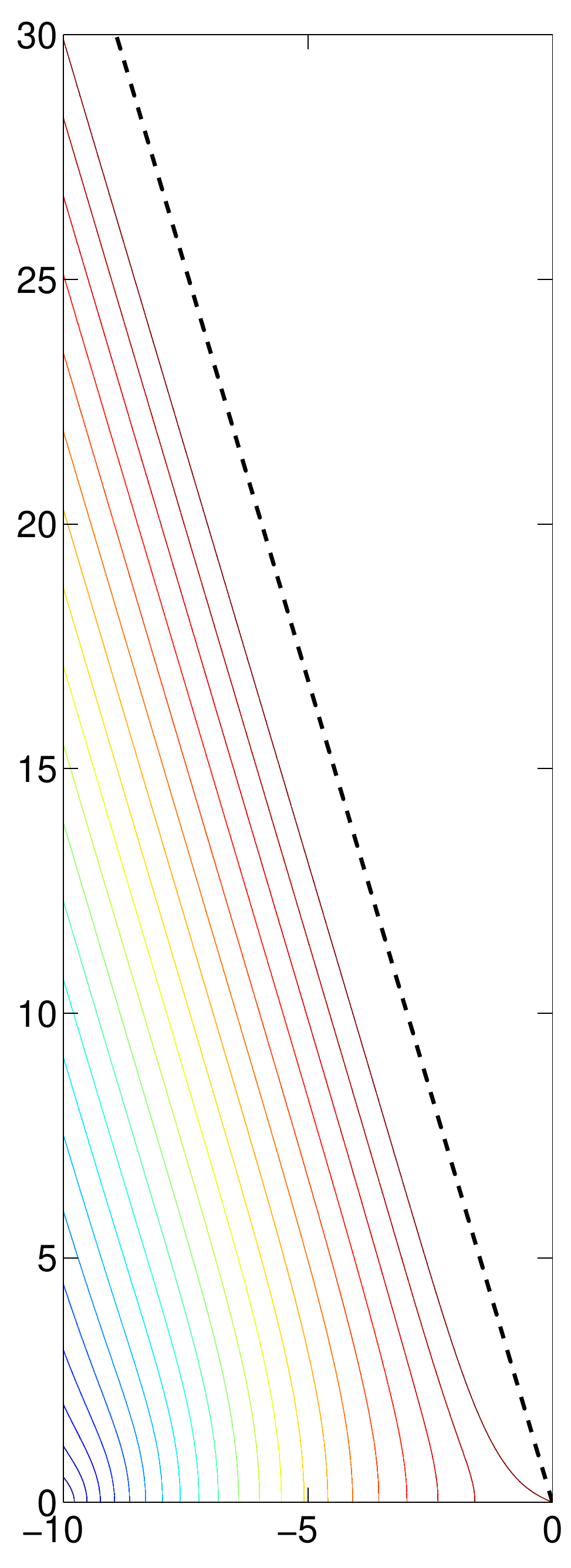}
	\includegraphics[width=0.2\textwidth]{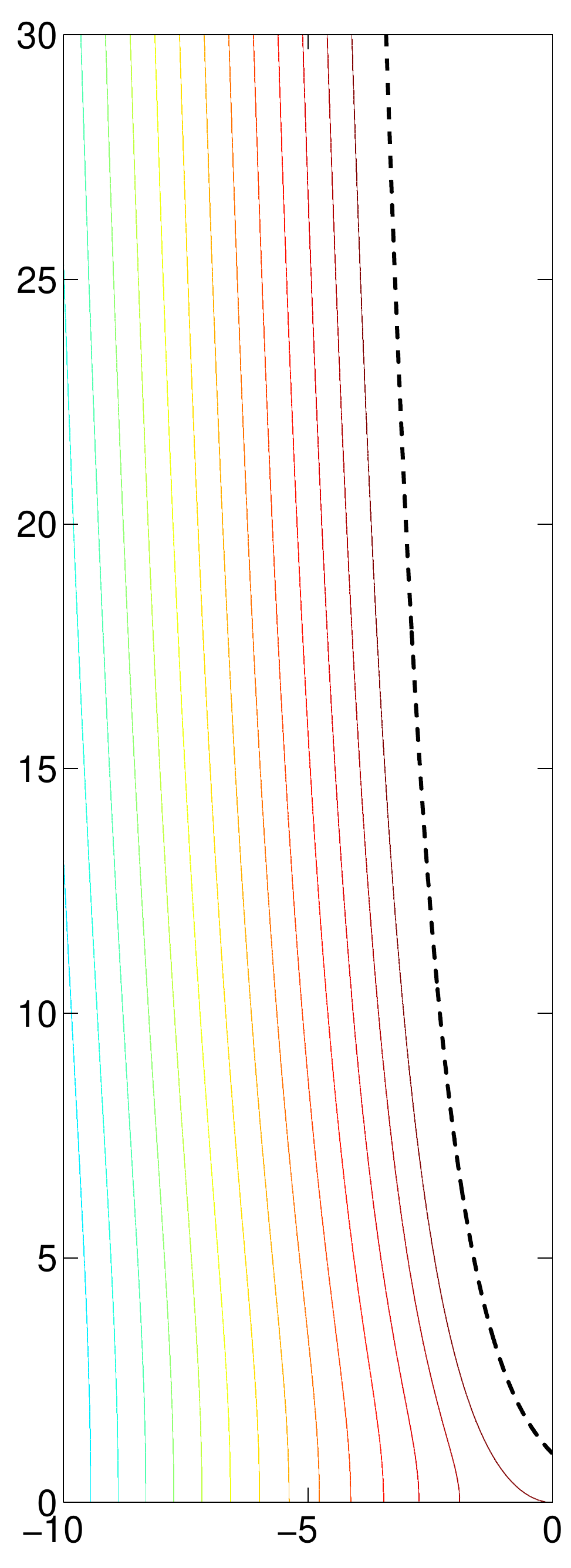}
	\includegraphics[width=0.2\textwidth]{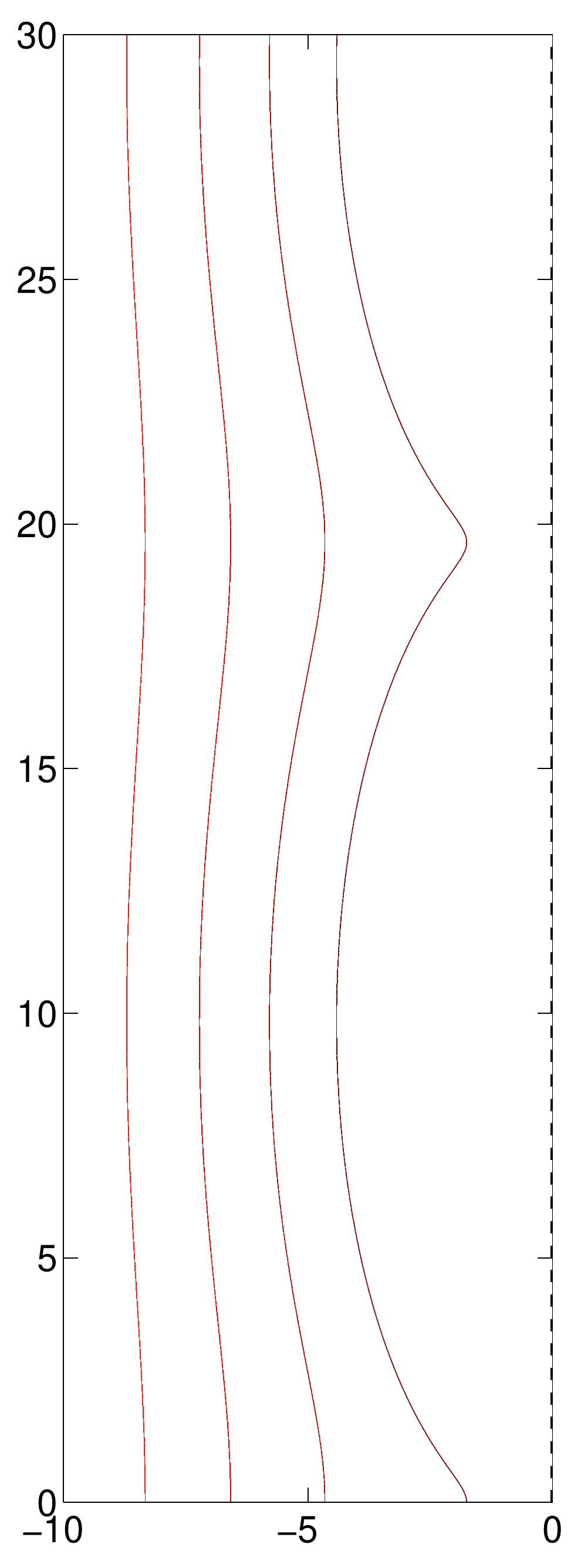}
	\caption{Plots representing boundedness and return to equilibrium for $\exp(\tau Q_{a,b})$ for $a = 1/2$ and $b = 0.05, 0, -0.05$ from left to right.}
	\label{fig.FP.examples}
\end{figure}

\begin{comment}
figure('Position', [0 0 350 960])
set(gca, 'Units', 'pixels')
set(gca, 'Position', [40 40 300 900])
set(gca, 'FontSize', 18)
tR = -10:0.02:0; tI = 0:0.02:30; nGrid = meshgrid(tR, tI);

% KFPell :
B = 0.05; A = 1/2;
M = 2*[B, -A; A, 1];
tic;
for j = 1:length(tR)
for k = 1:length(tI)
nGrid(k,j) = norm(expm((tR(j)+i*tI(k))*M));
end
end
toc;
contour(tR, tI, log(nGrid), -10:0.5:0)
caxis([-10, 0])
hold on;
L = eig(M); L = L(1); L = L/abs(L); LList = 0:40; LList = LList*i*L;
plot(real(LList), imag(LList), 'k--', 'LineWidth', 2)
hold off;

% KFPpart :
B = 0; A = 1/2;
M = 2*[B, -A; A, 1];
tic;
for j = 1:length(tR)
for k = 1:length(tI)
nGrid(k,j) = norm(expm((tR(j)+i*tI(k))*M));
end
end
toc;
contour(tR, tI, log(nGrid), -10:0.5:0)
caxis([-10,0])
hold on;
plot(-1*log(1:0.01:40), 1:0.01:40, 'k--', 'LineWidth', 2)
hold off;

% KFPnon :
B = -0.05; A = 1/2;
M = [B, -A; A, 1];
tic;
for j = 1:length(tR)
for k = 1:length(tI)
nGrid(k,j) = norm(expm((tR(j)+i*tI(k))*M));
end
end
toc;
contour(tR, tI, log(nGrid), -10:0.5:0)
caxis([-10,0])
hold on;
plot(zeros(1,31), 0:30, 'k--', 'LineWidth', 2)
hold off;
\end{comment}

\subsection{Paths not taken}\label{subsec.paths.not.taken}

To finish the introduction, we take a moment to mention alternate approaches which support the results found throughout the present work.  We find that the Fock-space approach used here allows us to provide more precise results more easily, but there certainly may be useful information which can be discovered by following another road.

We recall that under an ellipticity hypothesis, H\"ormander \cite{Ho1995} extended the classical Mehler formula for the harmonic oscillator to the Weyl quantization --- see \eqref{eq.Weyl} --- of quadratic forms $q:\Bbb{R}^n_x \times \Bbb{R}^n_\xi \to \Bbb{C}$ for which $\Re q \geq 0$. Under this assumption, the solution operator $\exp(-tq^w(x,D_x))$ to the evolution equation
\[
	\left\{\begin{array}{l} \partial_t u + q^w(x,D_x)u = 0, \\ u(0,x) = u_0(x) \in L^2(\Bbb{R}^n) \end{array}\right.
\]
was identified as the Weyl quantization of the symbol
\[
	p_t(x,\xi) = (\det \cos tF)^{-1/2} \exp(-\sigma((x,\xi), \tan (tF)(x,\xi))
\]
with the symplectic inner product $\sigma$ in \eqref{eq.def.sigma} and the fundamental matrix $F$ in \eqref{eq.def.F}.

It is possible to define $p_t(x,\xi)$ even without the hypothesis $\Re q \geq 0$.  What is more, one can guess that $\exp(-tq^w(x,D_x))$ should be bounded if and only if
\[
	(x,\xi) \mapsto \sigma((x,\xi), \tan(tF)(x,\xi))
\]
is a positive semidefinite quadratic form on $\Bbb{R}^n_x\times \Bbb{R}^n_\xi$.  Numerically, this apparently agrees with examples in Section \ref{subsec.examples}.  However, it seems more difficult to justify the weak definition when this quadratic form is not positive semidefinite or to describe conditions for positivity of this quadratic form, which involves a matrix tangent and the symplectic inner product, in an intuitive way.  On the other hand, the hypotheses for this Mehler formula do not rely on the symplectic assumptions of Proposition \ref{prop.which.q}, so a deeper study of this approach certainly could be fruitful.

Our approach of recasting a solution operator as a change of weight on a Fock space also appears in \cite{HiPS2009} and \cite{Sj2010}, among other works. In general, the evolved weight $\Phi_t(z)$ solves a Hamilton-Jacobi equation
\begin{equation}\label{eq.HJ.weight}
	\partial_t \Phi_t(z) +\Re p(z, -2i\partial_z\Phi_t(z)) = 0
\end{equation}
for the symbol $p$ of a pseudodifferential operator acting on a Fock space.  The normal form in which we put our operators results in this $t$-dependent weight arising in a very natural and elementary way, and it also allows us to describe the properties of this weight easily, even for long times. In treating more general operators or multiple operators at the same time, which cannot generally be put simultaneously into normal forms, this more general approach has proven very useful.

One could also consider the decomposition in eigenfunctions associated to our operators.  Following the classical theory in \cite[Sec.~3]{Sj1974}, recapitulated in Theorem \ref{thm.eigen.core}, our operators admit a family of eigenfunctions and corresponding eigenvalues parameterized by multi-indices.  If, for the relevant matrix $M$ in \eqref{eq.def.P}, we have $\opnm{Spec} M \subset \{\Re \lambda > 0\}$, then the eigenvalues $\lambda_\alpha$ obey $\Re \lambda_\alpha \geq |\alpha|/C$ for some $C > 0$.  There are natural projections $\Pi_\alpha$ associated with the eigenfunctions, and one has that $\|\Pi_\alpha\| \leq Ce^{C|\alpha|}$ for some $C > 0$, \cite[Cor.~1.6]{Vi2013}.  (This exponential rate of growth is frequently attained.) This supports our finding that, when $\opnm{Spec} M \subset \{\Re \lambda > 0\}$, the operator $\exp(-tP)$ is defined and bounded for sufficiently large real $t$, simply because
\[
	u \mapsto \sum_{\alpha \in \Bbb{N}^n} e^{-t\lambda_\alpha} \Pi_\alpha u
\]
is a norm-convergent series for $t > 0$ large (cf.\ \cite[Cor.~14.5.2]{Da2007}).  On the other hand, this decomposition is very difficult to manipulate, particularly for small $t$. Indeed, this reasoning does not show that, for $Q_\theta$ in \eqref{eq.RHO}, the operator $\exp(-tQ_\theta)$ is bounded for $|\theta| < \pi/4$ and $t > 0$, even though this is well-known \cite{Boulton2002}.

Finally, many of the major features of the right-hand side of Figure \ref{fig.ex.Davies} can be deduced from established results and periodicity.  Specifically, for $Q_\theta$ as in \eqref{eq.RHO} with $0 < \theta < \pi/2$, we have that $e^{i\psi}Q_\theta$ is elliptic if $-\pi/2 < \psi < \pi/2-2\theta$.  Therefore, for $\tau \in \Bbb{C}\backslash \{0\}$, we have boundedness for the solution operator in a sector in the complex plane:
\[
	\arg \tau \in \left(\frac{\pi}{2}, \frac{3\pi}{2}-2\theta\right) \implies \exp(\tau Q_\theta) \in \mathcal{L}(L^2(\Bbb{R})),
\]
and the operator is also Hilbert-Schmidt and regularizing by \cite{Boulton2002} or \cite{PS2008a}. As a consequence, the behavior of $\exp(\tau Q_\theta)$ is determined by the behavior on the complete set of eigenfunctions \eqref{eq.RHO.eigensystem}.  It is clear that for any $k \in \Bbb{N}$ and $j \in \Bbb{Z}$,
\[
	\exp((\tau + i\pi e^{-i\theta}j)Q_\theta)g_k = (-1)^j\exp(\tau Q_\theta)g_k,
\]
revealing that the set where $\exp(\tau Q_\theta)$ is bounded is periodic as seen in Figure \ref{fig.ex.Davies}. Naturally, this approach relies on a periodicity in the eigenvalues which is quite rare in dimension greater than one; furthermore, we improve the description of both the set where $\exp(\tau Q_\theta)$ is bounded or compact as well as the description of its compactness and regularization properties.  It is nonetheless interesting to have this alternate confirmation, and even in higher dimensions there seem to be certain operators exhibiting possible quasi-periodicity phenomena, for example the operator for the rightmost plot of Figure \ref{fig.FP.examples}.

\section{Solution operators for certain quadratic operators on Fock spaces}\label{sec.Fock}

In this section, we begin by defining our Fock spaces and our operators acting on them, leading immediately to a natural weak solution of the corresponding evolution equation.  We then establish a variety of results on the structure of these Fock spaces, in order to better understand the solution operators.  This puts us in a position to establish several sharp results on the boundedness and compactness properties of these operators, and we finish by proving a variety of consequences.

\subsection{Definition of the operator and solution of the evolution equation}\label{subsec.Definitions}

We begin by defining some quadratically weighted Fock spaces and our operators which act on them.  We focus on real-valued weight functions satisfying
\begin{equation}\label{eq.Phi.assumptions}
	\Phi:\Bbb{C}^n \to \Bbb{R} \textnormal{ is real-quadratic and strictly convex}.
\end{equation}
Using $dL(z)$ for Lebesgue measure on $\Bbb{C}^n \sim \Bbb{R}^n_{\Re z} \times \Bbb{R}^n_{\Im z}$, we define the associated Fock space
\[
	H_\Phi = \opnm{Hol}(\Bbb{C}^n) \cap L^2(\Bbb{C}^n, e^{-2\Phi(z)}\, dL(z)).
\]
The norm and inner product on $H_\Phi$ are given by the weighted $L^2$ space, meaning that
\begin{equation}\label{eq.def.norm.HPhi}
	\|u\|_\Phi^2 = \int_{\Bbb{C}^n} |u(z)|^2 e^{-2\Phi(z)}\,dL(z).
\end{equation}
Throughout, we use the subscript $\Phi$ to identify the weight, which changes frequently.  We also use the notation $\Phi(F\cdot)$ in the subscript to mean the weight $\Phi(Fz)$.

For $M= (m_{j,k})_{j,k=1}^n$ any matrix, define
\begin{equation}\label{eq.def.P}
	P = (Mz)\cdot \partial_z = \sum_{j,k=1}^n m_{j,k}z_k\partial_{z_j}.
\end{equation}
Any derivatives of functions on $\Bbb{C}^n$ are assumed to be holomorphic, as in $\partial_z = \frac{1}{2}(\partial_{\Re z} - i \partial_{\Im z})$. If $M$ is not in Jordan normal form, we may put it in Jordan normal form through a change of variables like \eqref{eq.def.V}.

Our object of study is the evolution equation
\begin{equation}\label{eq.evolution}
	\left\{ \begin{array}{r}\partial_t u(t,z) + Pu(t,z) = 0,
		\\ u(0,z) = u_0(z) \in H_\Phi.\end{array}\right.
\end{equation}
We may solve this equation for all real and complex times through a change of variables.

\begin{proposition}\label{prop.solve.exptP}
Let $P$ be as in \eqref{eq.def.P} acting on $H_\Phi$ for $\Phi$ verifying \eqref{eq.Phi.assumptions}.  Then the evolution problem \eqref{eq.evolution} admits the solution
\[
	u(t,z) = u_0(e^{-tM}z),
\]
which is unique in the space of holomorphic functions on $\Bbb{C}_t \times \Bbb{C}^n_z$.  We therefore write henceforth
\[
	\exp(\tau P)u_0(z) = u_0(e^{\tau M}z),
\]
which is a closed densely defined operator on $H_\Phi$ when equipped with its maximal domain 
\[
	\{u_0 \in H_\Phi \::\: u_0(e^{\tau M}z) \in H_\Phi\}.
\]
The norm of $\exp(\tau P)u_0$ may be calculated via the formula
\begin{equation}\label{eq.norm.change.vars}
	\|\exp(\tau P)u_0\|_{\Phi} = \exp(-\Re(\tau \opnm{Tr} M))\|u_0\|_{\Phi_{-\tau}}, \quad \Phi_{-\tau}(z) = \Phi(e^{-\tau M}z).
\end{equation}
\end{proposition}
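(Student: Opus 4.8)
The plan is to verify the closed-form solution directly by the method of characteristics, to obtain uniqueness from the same change of variables, and then to read off the operator-theoretic statements and the norm identity from elementary properties of $H_\Phi$.

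First I would check that $u(t,z) = u_0(e^{-tM}z)$ solves \eqref{eq.evolution}. Since $(t,z)\mapsto e^{-tM}z$ is entire, $u$ is holomorphic on $\Bbb{C}_t\times\Bbb{C}^n_z$. Writing $w = e^{-tM}z$ and using that $M$ commutes with $e^{-tM}$, the chain rule gives $\partial_t u(t,z) = -(M e^{-tM}z)\cdot(\partial_w u_0)(w) = -(Mw)\cdot(\partial_w u_0)(w)$ and $Pu(t,z) = (Mz)\cdot\partial_z\big(u_0(e^{-tM}z)\big) = \big(e^{-tM}M z\big)\cdot(\partial_w u_0)(w) = (Mw)\cdot(\partial_w u_0)(w)$, so $\partial_t u + Pu = 0$; the initial condition $u(0,z) = u_0(z)$ is immediate. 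For uniqueness within functions holomorphic on $\Bbb{C}_t\times\Bbb{C}^n_z$, if $v$ is another such solution, set $g(t,z) = v(t, e^{tM}z)$; the same computation gives $\partial_t g(t,z) = (\partial_t v + Pv)(t, e^{tM}z) = 0$, whence $g(t,z) = g(0,z) = v(0,z) = u_0(z)$, i.e. $v(t,z) = u_0(e^{-tM}z) = u(t,z)$.

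Next I would record the operator-theoretic assertions for $\exp(\tau P)u_0(z) = u_0(e^{\tau M}z)$ on $H_\Phi$ with the maximal domain. The domain is dense because it contains every polynomial: a polynomial lies in $H_\Phi$ since strict convexity of $\Phi$ forces $e^{-2\Phi}$ to decay like a Gaussian, the composition $p(e^{\tau M}z)$ of a polynomial with a linear map is again a polynomial, and polynomials are dense in $H_\Phi$. Closedness follows from $H_\Phi$ being a reproducing kernel space — point evaluations are continuous, by the mean value property together with local two-sided bounds on $e^{-2\Phi}$ — so that $H_\Phi$-convergence implies locally uniform convergence: if $u_k\to u$ and $u_k(e^{\tau M}\cdot)\to v$ in $H_\Phi$, then $v(z) = \lim_k u_k(e^{\tau M}z) = u(e^{\tau M}z)$, hence $u$ is in the domain and $\exp(\tau P)u = v$.

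Finally, the norm identity \eqref{eq.norm.change.vars} comes from the substitution $w = e^{\tau M}z$ in \eqref{eq.def.norm.HPhi}. The point needing care is the Jacobian: a $\Bbb{C}$-linear map on $\Bbb{C}^n$, viewed as an $\Bbb{R}$-linear map on $\Bbb{R}^{2n}$, has real Jacobian determinant $|\det_{\Bbb{C}}(\cdot)|^2$, so $dL(z) = |\det e^{-\tau M}|^2\, dL(w) = e^{-2\Re(\tau\opnm{Tr} M)}\, dL(w)$; inserting this together with $\Phi(z) = \Phi(e^{-\tau M}w) = \Phi_{-\tau}(w)$ and taking square roots yields the formula. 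I do not expect a genuine obstacle here; the only delicate spots are the commutation $M e^{-tM} = e^{-tM}M$ used in the characteristics computation, the use of the reproducing kernel property for closedness, and the complex-versus-real Jacobian in the change of variables.
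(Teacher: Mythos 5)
Your proposal is correct and follows essentially the same route as the paper: the characteristics computation and the substitution $g(t,z)=v(t,e^{tM}z)$ for uniqueness, density via polynomials, closedness via continuity of point evaluations in the reproducing kernel space $H_\Phi$, and the norm identity via the change of variables with real Jacobian $|\det_{\Bbb{C}} e^{-\tau M}|^2 = e^{-2\Re(\tau\opnm{Tr}M)}$. The paper merely packages the last step as unitarity of the operator $\mathcal{V}_F$ from \eqref{eq.def.V}, which is the same computation.
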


\begin{remark*}
It is then clear from the definition of the norm \eqref{eq.def.norm.HPhi} that $\exp(\tau P)$ is bounded whenever $\Phi_{-\tau} \geq \Phi$; we see in Theorem \ref{thm.boundedness.0} below that this condition is necessary as well. We also show in Theorem \ref{thm.eigen.core} (see also Proposition \ref{prop.polys.core}) that the polynomials form a core for $\exp(\tau P)$; this is a natural minimal domain for $\exp(\tau P)$ because it is a dense subset of $H_\Phi$ which can be realized as the span of the generalized eigenfunctions of $P$.
\end{remark*}

\begin{proof} That $u(t,z)$ is holomorphic and solves \eqref{eq.evolution} is immediate from the fact that $\partial_z u(Fz) = F^\top u'(Fz)$ for any matrix $F \in \Bbb{M}_{n\times n}(\Bbb{C})$ and any holomorphic function $u:\Bbb{C}^n \to \Bbb{C}$.  Unicity follows from noting that any solution $u(t,z)$ must obey $\partial_t(u(t, e^{tM}z)) = 0$ and therefore $u(t,e^{tM}z) = u_0(z)$.

Since $e^{\tau M}$ is invertible, $\exp(\tau P)$ is a linear isomorphism on the space of polynomials which is dense in $H_\Phi$ (see e.g.\ \cite[Rem.~2.5]{Vi2013}), and therefore $\exp(\tau P)$ is densely defined.  Convergence in $H_\Phi$ implies convergence in $L^2_{\textnormal{loc}}(\Bbb{C}^n)$ which, for holomorphic functions, implies pointwise convergence.  (That pointwise evaluation in $H_\Phi$ is continuous means that $H_\Phi$ is a reproducing kernel Hilbert space.)  Therefore if $(u_k, \exp(\tau P)u_k) \to (u,v)$ in $H_\Phi \times H_\Phi$, then $u_k \to u$ pointwise, so $\exp(\tau P)u_k \to u(e^{\tau M}\cdot)$ pointwise.  This identifies that $v = \exp(\tau P)u$, so the graph of $\exp(\tau P)$ is closed.

We have a general fact regarding changes of variables on Fock spaces: if $F \in GL_n(\Bbb{C})$ is an invertible matrix, then
\begin{equation}\label{eq.def.V}
	\mathcal{V}_F: H_{\Phi} \ni u(z) \mapsto |\det F| u(Fz) \in H_{\Phi(F\cdot)}
\end{equation}
is unitary with inverse $\mathcal{V}^*_F = \mathcal{V}_{F^{-1}}$, which follows immediately from a change of variables applied to \eqref{eq.def.norm.HPhi}.  We note also that
\begin{equation}\label{eq.trans.V}
	\mathcal{V}_F P \mathcal{V}_F^* = F^{-1}MFz\cdot \partial_z.
\end{equation}
Then the norm computation \eqref{eq.norm.change.vars} follows from the observations that 
\[
	\mathcal{V}_{e^{-\tau M}}\exp(\tau P) u(z) = |\det e^{-\tau M}|u(z)
\]
and that $|\det e^{-\tau M}| = e^{-\Re \tau \opnm{Tr} M}$.
\end{proof}

\subsection{Results on the structure of Fock spaces}

Next, we collect a series of statements about the structure of Fock spaces $H_\Phi$ for $\Phi$ obeying \eqref{eq.Phi.assumptions}.  To begin, we recall several useful decompositions of the weight function $\Phi$ and, more generally, real quadratic forms on $\Bbb{C}^n$.

\begin{lemma}\label{lem.decomposition}
Let $\Phi:\Bbb{C}^n \to \Bbb{R}$ be a real-valued real-quadratic form on $\Bbb{C}^n$.  
\begin{enumerate}[(i)]
\item Then $\Phi$ may be decomposed into Hermitian and pluriharmonic parts,
\begin{equation}\label{eq.decompose.herm.plh}
	\begin{aligned}
	\Phi(z) & = \Phi_{\textnormal{herm}}(z) + \Phi_{\textnormal{plh}}(z)
	\\ &= \frac{1}{2}(\Phi(z) + \Phi(iz)) + \frac{1}{2}(\Phi(z) - \Phi(iz))
	\\ &= \langle z, \Phi''_{\bar{z}z} z\rangle + \Re (z\cdot \Phi''_{zz} z).
	\end{aligned}
\end{equation}
Because $\Phi$ is real-valued, $\Phi''_{\bar{z}z} = \overline{\Phi''_{z\bar{z}}}$ is a Hermitian matrix.
\item Furthermore, $\Phi$ is convex if and only if
\[
	\Phi_{\textnormal{herm}}(z) \geq |\Phi_{\textnormal{plh}}(z)|, \quad \forall z \in \Bbb{C}^n
\]
and is strictly convex if and only if the inequality is strict on $\{|z| = 1\}$.  Therefore $\Phi''_{\bar{z}z}$ is positive semidefinite whenever $\Phi$ is convex and positive definite whenever $\Phi$ is strictly convex.
\item Whenever $\Phi''_{\bar{z}z}$ is positive semidefinite, we may write
\begin{equation}\label{eq.decompose.Phi}
	\Phi(z) = \frac{1}{2}|Gz|^2 - \Re h(z)
\end{equation}
where $G \in \Bbb{M}_{n\times n}(\Bbb{C})$ may be taken positive semidefinite Hermitian and $h(z) = \frac{1}{2}z\cdot \Phi''_{zz}z$ is holomorphic.
\item\label{it.decomposition.Sigma} Whenever $\Phi''_{\bar{z}z}$ is positive definite we may take $G$ in \eqref{eq.decompose.Phi} to be positive definite Hermitian and there exists a unitary matrix $U$ such that
\[
	\Phi(UG^{-1}z) = \frac{1}{2}\left(|z|^2 - \Re z\cdot \Sigma z\right)
\]
where $\Sigma = (G^{-1})^\top U^\top \Phi''_{zz}UG^{-1}$ is diagonal with entries in $[0,1)$.
\end{enumerate}
\end{lemma}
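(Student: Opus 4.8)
The four parts can be handled in sequence, each reducing to elementary complex linear algebra once $\Phi$ is written in the holomorphic variables $z,\bar{z}$. For (i), I would start from the fact that every real-valued real-quadratic form on $\Bbb{C}^n$ is of the shape $\Phi(z) = \langle z, Az\rangle + \Re(z\cdot Bz)$ with $A$ Hermitian (forced by $\Phi$ being real-valued) and $B$ symmetric, the two matrices being read off as the mixed and holomorphic Wirtinger Hessians $A = \Phi''_{\bar{z}z}$, $B = \Phi''_{zz}$. The substitution $z\mapsto iz$ fixes the Hermitian term and negates the holomorphic one, so averaging $\Phi(z)$ with $\Phi(iz)$ isolates the two pieces and yields the stated formulas; Hermiticity of $\Phi''_{\bar{z}z}$ is just the reality of $\langle z, Az\rangle$.

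For (ii) the key device is to rotate. For fixed $z$, one has $\theta \mapsto \Phi(e^{i\theta}z) = \Phi_{\textnormal{herm}}(z) + \Re(e^{2i\theta}(z\cdot\Phi''_{zz}z))$, and as $\theta$ runs over a period the second term sweeps the full interval $[-r,r]$ with $r = |z\cdot\Phi''_{zz}z|$; since $\Phi_{\textnormal{herm}}$ is rotation-invariant, $\min_\theta\Phi(e^{i\theta}z) = \Phi_{\textnormal{herm}}(z) - \sup_\phi|\Phi_{\textnormal{plh}}(e^{i\phi}z)|$. A real quadratic form is convex exactly when it is everywhere nonnegative, and nonnegativity on all of $\Bbb{C}^n$ is the same as $\min_\theta\Phi(e^{i\theta}z)\ge 0$ for every $z$, which — after the further rotation bringing $\Phi_{\textnormal{plh}}$ to its extreme value — is precisely $\Phi_{\textnormal{herm}} \ge |\Phi_{\textnormal{plh}}|$ everywhere. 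The strictly convex case is the identical argument with strict inequalities, reduced to the sphere $\{|z| = 1\}$ by $2$-homogeneity. Semidefiniteness (resp. definiteness) of $\Phi''_{\bar{z}z}$ is then immediate since $\langle z, \Phi''_{\bar{z}z}z\rangle = \Phi_{\textnormal{herm}}(z) \ge |\Phi_{\textnormal{plh}}(z)| \ge 0$ (resp. $> 0$ for $z \ne 0$).

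For (iii), once $\Phi''_{\bar{z}z} \ge 0$ I would take $G$ to be a suitably scaled positive semidefinite Hermitian square root of $\Phi''_{\bar{z}z}$, so that $\Phi_{\textnormal{herm}}(z) = \tfrac12|Gz|^2$; the remaining pluriharmonic term is visibly $-\Re h$ with $h$ the holomorphic quadratic polynomial built from $\Phi''_{zz}$. For (iv), when $\Phi''_{\bar{z}z}$ is positive definite this $G$ is invertible; the substitution by $G^{-1}$ normalizes the Hermitian part to $\tfrac12|z|^2$ and turns the holomorphic quadratic into $z\cdot Sz$ for a complex symmetric matrix $S$. Now apply the Autonne--Takagi factorization to $S$: there is a unitary $U$ with $U^\top S U$ diagonal with nonnegative entries (the singular values of $S$). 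Since $U$ is unitary it preserves $\tfrac12|z|^2$, so composing with $U$ diagonalizes the pluriharmonic part without disturbing the Hermitian part, producing the asserted normal form with $\Sigma$ diagonal and $\ge 0$; tracking the change of variables gives the stated expression for $\Sigma$. Finally, the bound $\Sigma_{jj} < 1$ returns from strict convexity: the normal form $\tfrac12(|z|^2 - \Re z\cdot\Sigma z)$ is obtained from $\Phi$ by an invertible linear change, hence still strictly convex, and evaluating at $z = e_j$ gives $\tfrac12(1-\Sigma_{jj})$, which must be positive.

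The routine parts are (i) and (iii); the one point needing an external input is (iv), where I rely on the Takagi factorization of a complex symmetric matrix (equivalently, the simultaneous reduction of a positive definite Hermitian form and a holomorphic quadratic), combined with the strict-convexity estimate to pin the diagonal of $\Sigma$ into $[0,1)$. The only subtlety in (ii) is the rotation computation identifying $\min_\theta\Phi(e^{i\theta}z)$; once that is in place the rest is bookkeeping.
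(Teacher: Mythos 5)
Your proof is correct. The paper supplies no argument of its own for this lemma --- it defers to \cite[Sec.~4.1]{Vi2013} --- and what you write is exactly the standard elementary argument that reference carries out: the rotation $z\mapsto e^{i\theta}z$ isolates the Hermitian and pluriharmonic pieces and converts (non-strict or strict) positivity of the quadratic form into $\Phi_{\textnormal{herm}}\geq|\Phi_{\textnormal{plh}}|$ (resp.\ $>$ on the sphere), the Hermitian square root of $\Phi''_{\bar{z}z}$ produces $G$, and Autonne--Takagi diagonalizes the complex symmetric matrix governing the pluriharmonic part. Your one explicit external input and your one caveat are both well placed: the bound $\Sigma_{jj}<1$ in (iv) does require strict convexity of $\Phi$ (positive definiteness of $\Phi''_{\bar{z}z}$ alone only gives $\Sigma_{jj}\geq 0$), which is not literally among the hypotheses of (iv) as printed but is the standing assumption \eqref{eq.Phi.assumptions} in every application, so your reading is the intended one.
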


For proofs, which are more or less elementary, we refer the reader to \cite[Sec.~4.1]{Vi2013} and references therein, but similar statements exist throughout the literature.

We turn to the reproducing kernel of $H_\Phi$. Recall that the reproducing kernel at $w \in \Bbb{C}^n$ for $H_\Phi$ is the function $k_w \in H_\Phi$ such that
\begin{equation}\label{eq.def.reproducing}
	\langle f, k_w\rangle_{\Phi} = f(w), \quad \forall f \in H_\Phi.
\end{equation}
We begin by identifying this reproducing kernel through a reduction to a reference weight
\begin{equation}\label{eq.def.Psi}
	\Psi(z) = \frac{1}{2}|z|^2.
\end{equation}

\begin{lemma}\label{lem.reproducing}
	Let $\Phi$ satisfy \eqref{eq.Phi.assumptions} and recall the decomposition \eqref{eq.decompose.Phi}.
	
	Then the map
	\begin{equation}\label{eq.def.U}
		\mathcal{U}u(z) = |\det G| u(Gz) e^{-h(z)}:H_\Psi \to H_\Phi
	\end{equation}
	is unitary.  Consequently,
	\begin{enumerate}[(i)]
	\item\label{it.reproducing.1} the reproducing kernel at $w \in \Bbb{C}$ for $H_\Phi(\Bbb{C}^n)$ is given by
	\begin{equation}\label{eq.def.kw}
		k_w(z) = \pi^{-n}|\det G|^2\exp\left((Gz)\cdot(\overline{Gw}) - h(z) - \overline{h(w)}\right),
	\end{equation}
	and
	\item\label{it.reproducing.2} the set
	\begin{equation}\label{eq.def.e.alpha}
		\left\{e_\alpha = \frac{|\det G|}{\sqrt{\pi^n \alpha!}}(Gz)^\alpha e^{-h(z)}\::\: \alpha \in \Bbb{N}^n\right\}
	\end{equation}
	forms an orthonormal basis of $H_\Phi$.
	\end{enumerate}
\end{lemma}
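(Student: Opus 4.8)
The plan is to reduce everything to the reference weight $\Psi(z) = \frac{1}{2}|z|^2$, for which the reproducing kernel and orthonormal basis are classical, and then transport the structure through the explicit unitary $\mathcal U$. First I would verify that $\mathcal U$ is unitary. Given the decomposition $\Phi(z) = \frac{1}{2}|Gz|^2 - \Re h(z)$ from Lemma \ref{lem.decomposition}(iii) with $G$ positive definite Hermitian and $h(z) = \frac{1}{2}z\cdot\Phi''_{zz}z$ holomorphic, a direct change of variables $w = Gz$ in the integral \eqref{eq.def.norm.HPhi} gives
\[
	\|\mathcal U u\|_\Phi^2 = \int_{\Bbb C^n} |\det G|^2 |u(Gz)|^2 e^{-2\jvRe h(z)} e^{-|Gz|^2 + 2\jvRe h(z)}\,dL(z) = \int_{\Bbb C^n} |u(w)|^2 e^{-|w|^2}\,dL(w),
\]
where the Jacobian of $z\mapsto Gz$ is $|\det G|^2$ (real Jacobian, since $G$ acts $\Bbb C$-linearly), and the factor $e^{-h(z)}$ together with its conjugate supplies exactly $e^{-2\jvRe h(z)}$ which cancels the pluriharmonic part of the weight. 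Holomorphy of $\mathcal U u$ is clear since $u(Gz)$ and $e^{-h(z)}$ are holomorphic. Surjectivity follows because $\mathcal U$ has the obvious inverse $v(z)\mapsto |\det G|^{-1} v(G^{-1}z) e^{h(G^{-1}z)}$, or more simply because composition with the invertible map and multiplication by a nonvanishing holomorphic function are invertible operations. This shows $\mathcal U: H_\Psi \to H_\Phi$ is unitary.

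Next I would pull back the known facts about $H_\Psi$. For $\Psi = \frac{1}{2}|z|^2$ the Fock space is the classical Bargmann space: the monomials $\{ (\pi^n\alpha!)^{-1/2} z^\alpha \}_{\alpha\in\Bbb N^n}$ form an orthonormal basis, and the reproducing kernel is $K_w^\Psi(z) = \pi^{-n} e^{z\cdot\bar w}$. Applying the unitary $\mathcal U$ to the orthonormal basis of $H_\Psi$ immediately yields that $\{\mathcal U( (\pi^n\alpha!)^{-1/2} z^\alpha)\} = \{ \frac{|\det G|}{\sqrt{\pi^n\alpha!}} (Gz)^\alpha e^{-h(z)} \} = \{e_\alpha\}$ is an orthonormal basis of $H_\Phi$, proving (ii). For (i), the reproducing kernel transforms contravariantly under a unitary: if $\mathcal U^*: H_\Phi \to H_\Psi$ is the adjoint, then $k_w^\Phi = \overline{(\mathcal U^* \text{ applied to evaluation at } w)}$; concretely, for $f \in H_\Phi$ write $f = \mathcal U g$ with $g\in H_\Psi$, so that $f(w) = |\det G|\, g(Gw)\, e^{-h(w)}$. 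Then
\[
	f(w) = |\det G|\, e^{-h(w)} \langle g, K_{Gw}^\Psi\rangle_\Psi = |\det G|\, e^{-h(w)} \langle \mathcal U g, \mathcal U K_{Gw}^\Psi\rangle_\Phi = \langle f, \overline{|\det G|\, e^{-h(w)}}\, \mathcal U K_{Gw}^\Psi\rangle_\Phi,
\]
using unitarity of $\mathcal U$ and that $|\det G|$ is real. Hence $k_w^\Phi(z) = |\det G|\, e^{-\overline{h(w)}}\, (\mathcal U K_{Gw}^\Psi)(z)$, and expanding $\mathcal U K_{Gw}^\Psi(z) = |\det G|\, K_{Gw}^\Psi(Gz) e^{-h(z)} = |\det G|\, \pi^{-n} e^{(Gz)\cdot\overline{(Gw)}} e^{-h(z)}$ gives exactly \eqref{eq.def.kw}.

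I do not expect a genuine obstacle here; the content is bookkeeping. The one point demanding a little care is the Jacobian factor: writing $G = G_1 + iG_2$ and identifying $\Bbb C^n \cong \Bbb R^{2n}$, the real-linear map induced by multiplication by $G$ has determinant $|\det_{\Bbb C} G|^2$, which is why $|\det G|^2$ (with $\det$ meaning the complex determinant, and it is real and positive since $G$ is positive definite Hermitian) appears in the norm computation and $|\det G|$ in the unitary $\mathcal U$ — the square root being distributed so that $\mathcal U$ is an isometry rather than merely bounded. A second minor point is to confirm that the maximal-domain/closedness issues do not arise: $\mathcal U$ is a genuine bijection $H_\Psi \to H_\Phi$ (not just densely defined), so no closure arguments are needed, and the orthonormality of $\{e_\alpha\}$ plus completeness both transfer verbatim. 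Finally, one should remark that $\Phi$ strictly convex guarantees via Lemma \ref{lem.decomposition}(ii),(iv) that $G$ can indeed be chosen positive definite (hence invertible), which is what makes the change of variables legitimate.
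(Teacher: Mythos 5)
Your proposal is correct and follows essentially the same route as the paper: factor (or directly verify) the unitarity of $\mathcal{U}$ as a composition of a change of variables with multiplication by $e^{-h}$, then transport the classical Bargmann-space reproducing kernel and monomial orthonormal basis through $\mathcal{U}$. The only cosmetic difference is that the paper phrases the unitarity via its pre-established building blocks $\mathcal{W}_{-h}$ and $\mathcal{V}_G$ rather than by an explicit integral computation, and obtains the kernel from the identity $\langle u, \mathcal{U}\tilde{k}_{Gw}\rangle_\Phi = (\mathcal{U}^*u)(Gw)$, which is the same calculation you perform with $f = \mathcal{U}g$.
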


\begin{proof}
	In addition to \eqref{eq.def.V}, we record one more transformation between Fock spaces, depending on a holomorphic function $g:\Bbb{C}^n\to \Bbb{C}$:
	\begin{equation}\label{eq.def.W}
		\mathcal{W}_g: H_\Phi \ni u(z) \mapsto u(z)e^{g(z)} \in H_{\Phi + \Re g}.
	\end{equation}
	From the definition \eqref{eq.def.norm.HPhi} of the norm in $H_\Phi$, it is clear that $\mathcal{W}_g$ is unitary with inverse $\mathcal{W}_g^* = \mathcal{W}_{-g}$.  
	For later use, we also note that
	\begin{equation}\label{eq.trans.W}
		\mathcal{W}_g P \mathcal{W}_g^* = Mz\cdot(\partial_z - g'(z)).
	\end{equation}
	Then the fact that $\mathcal{U}:H_\Psi \to H_\Phi$ is unitary with inverse
	\[
		\mathcal{U}^*u(z) = \frac{1}{|\det G|}u(G^{-1}z)e^{h(G^{-1}z)}
	\]
	follows directly from writing $\mathcal{U} = \mathcal{W}_{-h}\mathcal{V}_{G}$.  Since the reproducing kernel at $w$ for $H_\Psi$ is
	\[
		\tilde{k}_w = \pi^{-n}\exp\left(z\cdot \bar{w}\right),
	\]
	we have
	\[
		\langle u, \mathcal{U} \tilde{k}_{Gw}\rangle_{\Phi} = \mathcal{U}^* u(Gw) = \frac{1}{|\det G|}u(w)e^{h(w)}.
	\]
	Therefore the reproducing kernel at $w\in \Bbb{C}^n$ for $H_\Phi$ is given by the formula
	\[
		k_w = |\det G| e^{-h(w)} \mathcal{U} \tilde{k}_{Gw},
	\]
	and a direct computation gives claim (\ref{it.reproducing.1}).

	Claim (\ref{it.reproducing.2}) follows from writing $e_\alpha = \mathcal{U}u_\alpha$, where
	\begin{equation}\label{eq.def.f.alpha}
		f_\alpha(z) = \frac{1}{\sqrt{\pi^n \alpha!}}z^\alpha,
	\end{equation}
	since $\{f_\alpha\}_{\alpha \in \Bbb{N}}$ forms an orthonormal basis in $H_\Psi$.
\end{proof}

We remark again that the injection from $H_{\Phi_1}$ to $H_{\Phi_2}$ is clearly bounded whenever $\Phi_2 \geq \Phi_1 - C$ for some $C \in \Bbb{R}$.  We show now that this is a necessary condition in the setting of weights satisfying \eqref{eq.Phi.assumptions}.

\begin{proposition}\label{prop.embedding}
Let $\Phi_j, j = 1,2$ be quadratic forms on $\Bbb{C}^n$ obeying \eqref{eq.Phi.assumptions}, decomposed according to \eqref{eq.decompose.Phi} with $G_j$ and $h_j$, $j=1,2$.  Then the injection
\begin{equation}\label{eq.def.iota}
	\iota : H_{\Phi_1} \to H_{\Phi_2}
\end{equation}
is bounded if and only if
\begin{equation}\label{eq.embedding.inequality}
	\Phi_2(z) \geq \Phi_1(z),
\end{equation}
If, in addition, the injection $\iota$ is compact, then this inequality must hold strictly on $\{|z| = 1\}$.
\end{proposition}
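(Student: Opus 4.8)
The plan is to reduce everything to the reference weight $\Psi(z) = \frac{1}{2}|z|^2$ using the unitary maps $\mathcal{U}_j : H_\Psi \to H_{\Phi_j}$ from Lemma \ref{lem.reproducing}, so that the boundedness of $\iota : H_{\Phi_1} \to H_{\Phi_2}$ becomes the boundedness of $\mathcal{U}_2^{-1}\iota\,\mathcal{U}_1 : H_\Psi \to H_\Psi$, which is a weighted composition operator with an explicit kernel. Concretely, one computes $\mathcal{U}_2^{-1}\iota\,\mathcal{U}_1 u(z) = c\, u(G_1^{-1}Gz)\, e^{g(z)}$ for an appropriate invertible matrix built from $G_1, G_2$ and a holomorphic quadratic phase $g$, with the norm in $H_\Psi$ of the image directly encoding the weight difference $\Phi_2 - \Phi_1$. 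The sufficiency direction is immediate from \eqref{eq.def.norm.HPhi}: if $\Phi_2 \geq \Phi_1$ pointwise then $e^{-2\Phi_2} \leq e^{-2\Phi_1}$, so the $L^2$ norm only decreases and $\|\iota\| \leq 1$.

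For necessity of \eqref{eq.embedding.inequality}, I would test the injection against the normalized reproducing kernels. Using \eqref{eq.def.kw}, the normalized kernel $\hat{k}_w^{(1)} = k_w^{(1)}/\|k_w^{(1)}\|_{\Phi_1}$ has unit norm in $H_{\Phi_1}$, and one computes $\|\hat{k}_w^{(1)}\|_{\Phi_2}^2$ explicitly. Since $\Phi_j(z) = \frac{1}{2}|G_jz|^2 - \Re h_j(z)$ and the $H_\Phi$-norm of a kernel satisfies $\|k_w\|_\Phi^2 = k_w(w) = \pi^{-n}|\det G|^2 e^{2\Phi(w)}$ (reading off from \eqref{eq.def.kw}), the cross term unwinds so that $\|\hat{k}_w^{(1)}\|_{\Phi_2}^2$ is, up to a constant, $\exp(2(\Phi_2(w) - \Phi_1(w)) + (\text{bounded correction}))$ — more precisely, writing $\Phi_2 - \Phi_1$ as a real quadratic form, one gets that $\log \|\hat{k}_w^{(1)}\|_{\Phi_2}^2$ grows like the quadratic form $2(\Phi_2 - \Phi_1)$ evaluated at $w$, after a Gaussian integral in the fluctuation variable that contributes only a bounded or mildly logarithmic term because both $G_j$ are invertible. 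Hence if $\Phi_2(w_0) - \Phi_1(w_0) < 0$ for some $w_0$, then along the ray $w = s w_0$, $s \to \infty$, the quantity $\|\hat{k}_w^{(1)}\|_{\Phi_2}$ tends to $0$; but boundedness of $\iota$ would force $\|\hat{k}_w^{(1)}\|_{\Phi_2}$ to stay below $\|\iota\|$ — that is consistent. The contradiction must come from the other side: I should instead observe that $\iota^*$ maps $k_w^{(2)}$ to $k_w^{(1)}$, so $\|k_w^{(1)}\|_{\Phi_1} = \|\iota^* k_w^{(2)}\|_{\Phi_1} \leq \|\iota\|\, \|k_w^{(2)}\|_{\Phi_2}$, i.e. $|\det G_1|\, e^{\Phi_1(w)} \leq \|\iota\|\, |\det G_2|\, e^{\Phi_2(w)}$ for all $w$. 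Taking logarithms and letting $|w| \to \infty$ forces $\Phi_1(w) \leq \Phi_2(w)$ for all $w$, which is \eqref{eq.embedding.inequality}.

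For the compactness refinement, I would use the standard fact that the normalized reproducing kernels $\hat{k}_w^{(2)}$ converge weakly to $0$ in $H_{\Phi_2}$ as $|w| \to \infty$ (since $\langle f, \hat{k}_w^{(2)}\rangle_{\Phi_2} = f(w)/\|k_w^{(2)}\|_{\Phi_2} \to 0$ for each fixed $f \in H_{\Phi_2}$, because $|f(w)| = |\langle f, k_w^{(2)}\rangle| \le \|f\|\,\|k_w^{(2)}\|$ and $\|k_w^{(2)}\|_{\Phi_2} = \pi^{-n/2}|\det G_2| e^{\Phi_2(w)} \to \infty$ as $\Phi_2$ is strictly convex hence coercive). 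Then $\iota^* \hat{k}_w^{(2)}$ has norm $\|k_w^{(1)}\|_{\Phi_1}/\|k_w^{(2)}\|_{\Phi_2} = (|\det G_1|/|\det G_2|)\exp(\Phi_1(w) - \Phi_2(w))$. If $\iota$, hence $\iota^*$, is compact, then $\iota^*\hat{k}_w^{(2)} \to 0$ in norm along $|w| \to \infty$, so $\Phi_1(w) - \Phi_2(w) \to -\infty$. Restricting to the sphere $\{|z| = 1\}$ and scaling $w = sz_0$ gives $s^2(\Phi_1(z_0) - \Phi_2(z_0)) \to -\infty$, which is impossible if $\Phi_1(z_0) - \Phi_2(z_0) = 0$; hence $\Phi_2(z_0) > \Phi_1(z_0)$ strictly on $\{|z| = 1\}$, as claimed.

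The main obstacle is making the reproducing-kernel computation clean: one must verify carefully from \eqref{eq.def.kw} that $\|k_w\|_\Phi^2 = k_w(w) = \pi^{-n}|\det G|^2 e^{2\Phi(w)}$ (using that $(Gw)\cdot\overline{(Gw)} - h(w) - \overline{h(w)} = |Gw|^2 - 2\Re h(w) = 2\Phi(w)$), and that the adjoint $\iota^*$ of the natural injection genuinely sends $k_w^{(2)} \mapsto k_w^{(1)}$ — this follows from $\langle \iota f, k_w^{(2)}\rangle_{\Phi_2} = f(w) = \langle f, k_w^{(1)}\rangle_{\Phi_1}$ for all $f \in H_{\Phi_1}$, valid precisely when $\iota$ is bounded. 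Everything else is bookkeeping with Gaussian weights and the coercivity of strictly convex quadratic forms.
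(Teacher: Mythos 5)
Your argument follows essentially the same route as the paper's proof: sufficiency of \eqref{eq.embedding.inequality} directly from the definition \eqref{eq.def.norm.HPhi} of the norm; necessity from the relation $\iota^* k_w^{(2)} = k_w^{(1)}$ combined with $\|k_w^{(j)}\|_{\Phi_j}^2 = k_w^{(j)}(w) = \pi^{-n}|\det G_j|^2 e^{2\Phi_j(w)}$ and the fact that a real quadratic form bounded above must be bounded above by zero; and the strictness under compactness from the weak-to-norm convergence of $\iota^*$ applied to the normalized kernels together with scaling along rays. The opening reduction to $H_\Psi$ via the maps $\mathcal{U}_j$ and the first attempt with $\hat{k}_w^{(1)}$ tested in $H_{\Phi_2}$ are detours that you correctly abandon; they play no role in the final argument.

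One step is incorrectly justified. You assert that $\hat{k}_w^{(2)} = k_w^{(2)}/\|k_w^{(2)}\|_{\Phi_2} \to 0$ weakly in $H_{\Phi_2}$ because $|f(w)| \leq \|f\|\,\|k_w^{(2)}\|_{\Phi_2}$ while $\|k_w^{(2)}\|_{\Phi_2} \to \infty$; but Cauchy--Schwarz only yields $|\langle f, \hat{k}_w^{(2)}\rangle_{\Phi_2}| \leq \|f\|_{\Phi_2}$, i.e.\ boundedness of the pairing, not decay. The conclusion is a true (and standard) fact, and the paper supplies the missing argument: test against a dense subset, namely the linear span of the kernels $\{k_z^{(2)}\}_{z \in \Bbb{C}^n}$, for which the pairing equals a constant times $k_w^{(2)}(z)e^{-\Phi_2(w)}$ and tends to zero because the exponent of $k_w^{(2)}(z)$ from \eqref{eq.def.kw} is linear in $w$ for fixed $z$ while $\Phi_2(w)$ is a positive definite quadratic; then the uniform bound $\|\hat{k}_w^{(2)}\|_{\Phi_2} = 1$ extends weak convergence to all of $H_{\Phi_2}$. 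With that repair your proof is complete and coincides with the paper's.
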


\begin{proof}
Let $k_w^{(j)}(z)$ be the reproducing kernel for $\Phi_j$ with $j=1,2$ according to Lemma \ref{lem.reproducing}.  Then for all $u \in H_{\Phi_1} \cap H_{\Phi_2}$, a set which includes the polynomials which are dense in both $H_{\Phi_1}$ and $H_{\Phi_2}$, we have
\[
	\langle \iota u, k_w^{(2)}\rangle_{\Phi_2} = u(w) = \langle u, k_w^{(1)}\rangle_{\Phi_1}.
\]
Therefore $\iota^* k_w^{(2)} = k_w^{(1)}$, so if $\iota$ is a bounded operator then
\[
	\|k_w^{(1)}\|_{\Phi_1} \leq \|\iota\| \|k_w^{(2)}\|_{\Phi_2}, \quad \forall w \in \Bbb{C}^n.
\]
We see from \eqref{eq.def.kw} that, for $j=1,2$,
\begin{equation}\label{eq.kww}
	\|k_w^{(j)}\|_{\Phi_j}^2 = k_w^{(j)}(w) = \pi^{-n}|\det G_j|^2 e^{2\Phi_j(w)},
\end{equation}
so if $\iota$ is bounded, then
\[
	\pi^{-n}|\det G_1|^2 e^{2\Phi_1(w)} \leq \|\iota\|^2 \pi^{-n}|\det G_2|^2 e^{2\Phi_2(w)}, \quad \forall w \in \Bbb{C}^n.
\]
The lower bound
\begin{equation}\label{eq.change.Phi.bound}
	\|\iota\| \geq |\det G_2^{-1}G_1| \sup_{w \in \Bbb{C}^n} \exp(\Phi_1(w)-\Phi_2(w))
\end{equation}
for the norm of $\iota$ follows immediately, and this implies \eqref{eq.embedding.inequality} because $\Phi_1(w) - \Phi_2(w)$ is quadratic and therefore must be bounded above by zero if it is bounded above at all.  Sufficiency of \eqref{eq.embedding.inequality} is clear from the definition of the norm on $H_{\Phi_j}$.

For the claim about compactness, we first show that the normalized reproducing kernels $k_w^{(2)}/\|k_w^{(2)}\|_{\Phi_2}$ tend weakly to zero as $|w| \to \infty$.  Since the linear span of $\{k_z^{(w)}\::\: z \in \Bbb{C}^n\}$ is dense in $H_{\Phi_2}$, it suffices to observe that by \eqref{eq.kww} we have, for each $z \in \Bbb{C}^n$,
\[
	\left\langle \frac{k_w^{(2)}}{\|k_w^{(2)}\|_{\Phi_2}}, k_z^{(2)}\right\rangle_{\Phi_2} = \frac{\pi^{n/2}}{|\det G_2|} k_w^{(2)}(z)e^{-\Phi_2(w)} \to 0, \quad |w| \to \infty.
\]

If $\iota$ is compact, then as the compact image of a sequence weakly converging to zero, $\iota^* k_w^{(2)}/\|k_w^{(2)}\|_{\Phi_2}$ converges strongly to zero as $|w|\to \infty$, and by the previous calculations,
\[
	\left\|\iota^* \frac{k_w^{(2)}}{\|k_w^{(2)}\|_{\Phi_2}}\right\|_{\Phi_1} = \frac{\|k_w^{(1)}\|_{\Phi_1}}{\|k_w^{(2)}\|_{\Phi_2}} = |\det G_1^{-1}G_2| \exp(\Phi_1(w)-\Phi_2(w)).
\]
Since this quantity tends to zero as $|w| \to \infty$, this proves that
\begin{equation}\label{eq.change.Phi.compactness}
	\lim_{|w|\to\infty} \exp(\Phi_1(w) - \Phi_2(w)) = 0.
\end{equation}
Using again that $\Phi_1(w) - \Phi_2(w)$ is quadratic, this implies that \eqref{eq.embedding.inequality} must hold strictly on $\{|z| = 1\}$, completing the proof of the proposition.
\end{proof}

To study compactness, it is natural to study a similar class of solution operators to those considered in Proposition \ref{prop.solve.exptP}, except acting on $H_\Psi$ with $\Psi$ from \eqref{eq.def.Psi}.  Realizing these operators via conjugation with $\mathcal{U}$ from \eqref{eq.def.U}, it becomes clear that their effect is to modify the Hermitian part of the weight $\Phi$. In Proposition \ref{prop.Hermite}, we see that there is a correspondence between the special case $P_0$, defined below in \eqref{eq.def.P0}, and $Q_0$ the harmonic oscillator in \eqref{eq.def.h.o}; this relates to to the more or less classical picture in which decay for functions in $H_\Phi$ corresponds to decay and smoothness for functions in $L^2(\Bbb{R}^n)$.

\begin{proposition}\label{prop.h.o.semigroup}
Let $\Phi$ obey \eqref{eq.Phi.assumptions} and be decomposed as in \eqref{eq.decompose.Phi}. Recall also the definition \eqref{eq.def.U} of $\mathcal{U}:H_\Psi \to H_\Phi$ , and for $B \in \Bbb{M}_{n\times n}(\Bbb{C})$, let
\[
	\begin{aligned}
	\mathcal{Q}_B &= Bz\cdot(\partial_z + h'(z))
	\\ &= \mathcal{U}(GBG^{-1}z\cdot \partial_z)\mathcal{U}^*.
	\end{aligned}
\]
Also, for any $\tau \in \Bbb{C}$, let $\exp(\tau \mathcal{Q}_B) = \mathcal{U}\exp(\tau GBG^{-1} z\cdot \partial_z)\mathcal{U}^*$ be defined as in Proposition \ref{prop.solve.exptP}.

Then with 
\begin{equation}\label{eq.def.Phi.tau.B}
	\begin{aligned}
	\Phi^{(\tau), B}(z) & = \frac{1}{2}|Ge^{-\tau B}z|^2 - \Re h(z)
	\\ & = \Phi(z) + \frac{1}{2}\left(|Ge^{-\tau B}z|^2 - |Gz|^2\right),
	\end{aligned}
\end{equation}
we have
\[
	\|\exp(\tau \mathcal{Q}_B)u\|_\Phi = \exp(-\Re(\tau \opnm{Tr} B))\|u\|_{\Phi^{(\tau),B}}.
\]
Furthermore, $\mathcal{Q}_B$ is self-adjoint (resp.\ normal) if and only if $GBG^{-1}$ is self-adjoint (resp.\ normal).
\end{proposition}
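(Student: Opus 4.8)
The plan is to reduce all three assertions to statements about the operator $N z\cdot\partial_z$ with $N := GBG^{-1}$ acting on the reference space $H_\Psi$ (for $\Psi$ as in \eqref{eq.def.Psi}), by conjugating with the unitary $\mathcal{U} = \mathcal{W}_{-h}\mathcal{V}_G$ of \eqref{eq.def.U}; on $H_\Psi$ the monomial basis $\{f_\alpha\}$ of \eqref{eq.def.f.alpha} diagonalizes everything in sight. For the operator identity, since $\mathcal{U} = \mathcal{W}_{-h}\mathcal{V}_G$, conjugation gives
\[
	\mathcal{U}(GBG^{-1}z\cdot\partial_z)\mathcal{U}^* = \mathcal{W}_{-h}\bigl(\mathcal{V}_G(GBG^{-1}z\cdot\partial_z)\mathcal{V}_G^*\bigr)\mathcal{W}_{-h}^*,
\]
and applying \eqref{eq.trans.V} with $F = G$, $M = GBG^{-1}$ turns the inner operator into $Bz\cdot\partial_z$, while \eqref{eq.trans.W} with $g = -h$ then turns it into $Bz\cdot(\partial_z + h'(z)) = \mathcal{Q}_B$. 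The same computation, applied to the change of variables, shows that $\exp(\tau\mathcal{Q}_B)$ as defined in the statement coincides with the operator associated to $\mathcal{Q}_B$ by Proposition \ref{prop.solve.exptP}.

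For the norm formula, use that $\mathcal{U}:H_\Psi\to H_\Phi$ is unitary and apply \eqref{eq.norm.change.vars} with $M$ and $\Phi$ replaced by $GBG^{-1}$ and $\Psi$:
\[
	\|\exp(\tau\mathcal{Q}_B)u\|_\Phi = \|\exp(\tau GBG^{-1}z\cdot\partial_z)\mathcal{U}^*u\|_\Psi = \exp\bigl(-\Re(\tau\opnm{Tr} B)\bigr)\,\|\mathcal{U}^*u\|_{\Psi_{-\tau}},
\]
where $\opnm{Tr}(GBG^{-1}) = \opnm{Tr} B$ and, since $e^{-\tau GBG^{-1}} = Ge^{-\tau B}G^{-1}$, one has $\Psi_{-\tau}(z) := \Psi(e^{-\tau GBG^{-1}}z) = \frac{1}{2}|Ge^{-\tau B}G^{-1}z|^2$. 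It remains to identify $\|\mathcal{U}^*u\|_{\Psi_{-\tau}} = \|u\|_{\Phi^{(\tau),B}}$; I would do this by writing $\mathcal{U}^* = \mathcal{V}_{G^{-1}}\mathcal{W}_h$ and tracking the weights through \eqref{eq.def.W} and \eqref{eq.def.V}, namely $\mathcal{W}_h$ is unitary from $H_{\Phi^{(\tau),B}}$ onto $H_{\Phi^{(\tau),B}+\Re h} = H_{\Psi(Ge^{-\tau B}\cdot)}$ by \eqref{eq.def.Phi.tau.B}, and $\mathcal{V}_{G^{-1}}$ is then unitary from the latter onto $H_{\Psi(Ge^{-\tau B}G^{-1}\cdot)} = H_{\Psi_{-\tau}}$. (Equivalently one substitutes the explicit formula for $\mathcal{U}^*u$ and changes variables $w = G^{-1}z$.)

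For the self-adjointness/normality criterion, since $\mathcal{Q}_B$ and $N z\cdot\partial_z$ ($N := GBG^{-1}$) are unitarily equivalent via $\mathcal{U}$, it suffices to treat $N z\cdot\partial_z$ on $H_\Psi$. Suppose first $N$ is normal, so $N = V^*DV$ with $V$ unitary and $D$ diagonal. Since $\Psi(V^*z) = \Psi(z)$, the map $\mathcal{V}_{V^*}$ of \eqref{eq.def.V} is unitary \emph{on} $H_\Psi$, and by \eqref{eq.trans.V} it conjugates $N z\cdot\partial_z$ to $\sum_j d_j z_j\partial_{z_j}$, which is diagonal in the orthonormal basis $\{f_\alpha\}$ with eigenvalues $\sum_j d_j\alpha_j$; this operator is normal, and self-adjoint precisely when each $d_j\in\Bbb{R}$, i.e.\ when $N$ is Hermitian. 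Conversely, compute the adjoint on the polynomial core: from $z_j^* = \partial_{z_j}$ on $H_\Psi$ (immediate from the basis $\{f_\alpha\}$) one gets $(N z\cdot\partial_z)^* = N^*z\cdot\partial_z$ on polynomials, so self-adjointness forces $N z\cdot\partial_z = N^*z\cdot\partial_z$ on polynomials and hence $N = N^*$ (evaluate on linear functions), while normality forces $\|N z\cdot\partial_z\,p\|_\Psi = \|N^*z\cdot\partial_z\,p\|_\Psi$ for every polynomial $p$, and taking $p(z) = c\cdot z$ linear yields $|Nc| = |N^*c|$ for all $c\in\Bbb{C}^n$, i.e.\ $N$ is normal.

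The first two parts are a mechanical assembly of the transformation rules \eqref{eq.def.V}, \eqref{eq.trans.V}, \eqref{eq.def.W}, \eqref{eq.trans.W} and \eqref{eq.norm.change.vars}. The one place needing care is the last part: matching ``self-adjoint'' and ``normal'' with the maximal/core domain supplied by Proposition \ref{prop.solve.exptP} and the remark after it, so that the reduction of $\sum_j d_j z_j\partial_{z_j}$ to a multiplication operator on $\ell^2$ is legitimate, and keeping the transpose-versus-conjugate bookkeeping straight in the converse computation.
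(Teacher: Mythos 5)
Your proposal is correct and follows essentially the same route as the paper: conjugation to $H_\Psi$ via $\mathcal{U}=\mathcal{W}_{-h}\mathcal{V}_G$, the transformation rules \eqref{eq.trans.V}--\eqref{eq.trans.W} and \eqref{eq.norm.change.vars} for the operator identity and the norm formula (your chain of unitaries is exactly the paper's ``alternate'' derivation at the end of its proof), and the adjoint computation from $z_j^*=\partial_{z_j}$ on $H_\Psi$. The only cosmetic difference is that for normality the paper uses the commutator identity $[M_1z\cdot\partial_z,M_2z\cdot\partial_z]=-[M_1,M_2]z\cdot\partial_z$ in one shot, whereas you split into a diagonalization argument and the $\|Ax\|=\|A^*x\|$ characterization; both are fine at the level of rigor the paper itself adopts.
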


\begin{remark*}
This is operator particularly useful when $B$ is a constant times the identity matrix, or at the very least when $GBG^{-1}$ is positive semi-definite Hermitian.  When $B$ is the identity matrix, we omit $B$ and define, for $\delta \in \Bbb{R}$,
\begin{equation}\label{eq.def.Phi.delta}
	\begin{aligned}
	\Phi^{(\delta)}(z) &= \frac{e^{-2\delta}}{2}|Gz|^2 - \Re h(z)
	\\ & = \Phi(z) + \frac{e^{-2\delta}-1}{2}|Gz|^2.
	\end{aligned}
\end{equation}
This case corresponds to a reference harmonic oscillator adapted to the spaces $H_\Phi$, as shown in Proposition \ref{prop.Hermite}. To refer to this operator throughout, we define
\begin{equation}\label{eq.def.P0}
	P_0 = z\cdot(\partial_z + h'(z))
\end{equation}
and note that, with $e_\alpha$ as in \eqref{eq.def.e.alpha},
\[
	P_0 e_\alpha = |\alpha|e_\alpha.
\]
It is clear that $P_0$ is self-adjoint, and from Proposition \ref{prop.solve.exptP}, we have the norm relation
\[
	\|\exp(\delta P_0)u\|_{\Phi} = e^{-\delta n}\|u\|_{\Phi^{(\delta)}}.
\]
We remark that this relation may also be checked directly on expansions in the orthogonal sets $\{e_\alpha\}_{\alpha \in \Bbb{N}^n}$ and $\{\exp(\delta P_0)e_\alpha\}_{\alpha \in \Bbb{N}^n}$ via a change of variables.
\end{remark*}

\begin{proof}
The alternate expression of $\mathcal{Q}_B$ follows from writing $\mathcal{U} = \mathcal{W}_{-h}\mathcal{V}_G$ and the relations \eqref{eq.trans.V} and \eqref{eq.trans.W}.  Having reduced to an operator acting on $H_\Psi$, we recall that $z_j^* = \partial_{z_j}$ as operators on $H_\Psi$ (more general formulas for adjoints may be found in \cite[Sec.~4.2]{Vi2013}). Therefore, working on $H_\Psi$,
\[
	(GBG^{-1}z\cdot \partial_z)^* = z \cdot \overline{GBG^{-1}}\partial_z = (GBG^{-1})^*z\cdot \partial_z.
\]
Therefore $\mathcal{Q}_B$ is self-adjoint if and only if $GBG^{-1}$ is self-adjoint. For any $M_1, M_2 \in \Bbb{M}_{n\times n}(\Bbb{C})$, we compute the commutator
\[
	[M_1z\cdot \partial_z, M_2z\cdot \partial_z] = -[M_1, M_2]z\cdot \partial_z,
\]
from which it follows that $\mathcal{Q}_B$ is normal if and only if $GBG^{-1}$ is normal.

Using Proposition \ref{prop.solve.exptP} and the definition \eqref{eq.def.U} of $\mathcal{U}$, we then check that
\[
	\exp(\tau \mathcal{Q}_B)u(z) = u(e^{\tau B}z)e^{h(e^{\tau B}z)-h(z)}.
\]
We can then compute the norm equivalence using simple operations and the decomposition \eqref{eq.decompose.Phi} of $\Phi$:
\[
	\begin{aligned}
	\|\exp(\tau \mathcal{Q}_B)u\|_\Phi^2 &= \int |u(e^{\tau B})|^2e^{2\Re h(e^{\tau B}z) -2 \Re h(z) - 2(\frac{1}{2}|Gz|^2 - \Re h(z))}\,dL(z)
	\\ &= \int |u(e^{\tau B} z)|^2 e^{-2(\frac{1}{2}|Gz|^2 - \Re h(e^{\tau B} z))}\,dL(z)
	\\ &= e^{-2\Re \tau \opnm{Tr} B}\int |u(z)|^2 e^{-2(\frac{1}{2}|Ge^{-\tau B}z|^2 - \Re h(z))}\,dL(z).
	\end{aligned}
\]
The definition \eqref{eq.def.Phi.tau.B} of the weight $\Phi^{(\delta), B}$ can be read off from the exponential factor.

Alternately, using the definitions \eqref{eq.def.V} and \eqref{eq.def.W}, we have
\[
	\mathcal{W}_{-h}\mathcal{V}_{e^{-\tau B}}\mathcal{W}_h \exp(\tau \mathcal{Q}_B) u = e^{-\Re \tau \opnm{Tr} B}u
\]
with the image lying in the space $H_{\Phi^{(\tau),B}}$.
\end{proof}

We can now see that the embedding \eqref{eq.def.iota} is not only compact but even has exponentially decaying singular values, so long as \eqref{eq.embedding.inequality} holds strictly on $\{|z|=1\}$.  We here say that a compact operator $A$ has exponentially decaying singular values $\{s_j(A)\}_{j=1}^\infty$ if there exists $C > 0$ such that
\begin{equation}\label{eq.exp.decaying}
	s_j(A) \leq C\exp\left(-\frac{j^{1/n}}{C}\right).
\end{equation}
The dependence on the dimension is unavoidable, since the estimate is sharp for $\exp(-Q_0)$ with $Q_0$ from \eqref{eq.def.h.o}. Note that this implies that $\iota$ is in any Schatten class $\mathfrak{S}_p$, $p \in (0, \infty)$.

\begin{corollary}\label{cor.sing.vals.general}
Let $\Phi_j, j = 1,2$ both satisfy \eqref{eq.Phi.assumptions} and suppose that
\[
	\Phi_2(z) > \Phi_1(z), \quad \forall |z| = 1.
\]
Then the embedding $\iota:H_{\Phi_1}\to H_{\Phi_2}$ used in \eqref{eq.def.iota} is compact and has exponentially decaying singular values in the sense of \eqref{eq.exp.decaying}.
\end{corollary}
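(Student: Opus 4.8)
The plan is to factor the embedding as $\iota = A\circ\exp(-\delta P_0)$ for a suitable small $\delta>0$, where $P_0$ is the reference harmonic oscillator \eqref{eq.def.P0} attached to $\Phi_1$, so that $\exp(-\delta P_0)$ is compact with explicitly computable, exponentially decaying singular values and $A$ is a bounded operator into $H_{\Phi_2}$. The corollary then follows from the submultiplicativity of singular values, $s_j(A\circ\exp(-\delta P_0))\le \|A\|\,s_j(\exp(-\delta P_0))$.

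For the model factor, I would decompose $\Phi_1(z) = \frac12|G_1z|^2 - \Re h_1(z)$ as in Lemma~\ref{lem.decomposition}(iii), with $G_1$ positive definite because $\Phi_1$ is strictly convex, and take $P_0 = z\cdot(\partial_z + h_1'(z))$ on $H_{\Phi_1}$. By the remark following Proposition~\ref{prop.h.o.semigroup} one has $P_0 e_\alpha = |\alpha|e_\alpha$ on the orthonormal basis $\{e_\alpha\}$ of $H_{\Phi_1}$ from Lemma~\ref{lem.reproducing}, so for $\delta>0$ the operator $\exp(-\delta P_0)$ is positive, self-adjoint and compact on $H_{\Phi_1}$ with eigenvalues $\{e^{-\delta|\alpha|}\}_{\alpha\in\Bbb{N}^n}$. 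Since $\#\{\alpha\in\Bbb{N}^n : |\alpha|\le m\} = \binom{m+n}{n}$ is comparable to $m^n$, ordering the eigenvalues decreasingly makes the $j$-th of them comparable to $e^{-\delta c\,j^{1/n}}$, whence $s_j(\exp(-\delta P_0))\le C\exp(-j^{1/n}/C)$; this is the same elementary count that makes \eqref{eq.exp.decaying} sharp for $\exp(-Q_0)$.

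Next I would produce the bounded factor. First note that $\Phi_2 > \Phi_1$ on $\{|z|=1\}$ together with homogeneity gives $\Phi_2 \ge \Phi_1 + c|z|^2$ for some $c>0$, so $\iota$ is at least bounded. Let $A$ be the operator $u(z)\mapsto u(e^\delta z)\exp(h_1(e^\delta z)-h_1(z))$, which is the explicit form of $\exp(\delta P_0)$ recorded in the proof of Proposition~\ref{prop.h.o.semigroup}. Performing the substitution $w=e^\delta z$ in $\|Au\|_{\Phi_2}^2$ and using that $\Phi_2$ and $h_1$ are homogeneous of degree two, one obtains
\[
	\|Au\|_{\Phi_2}^2 = e^{-2n\delta}\int_{\Bbb{C}^n}|u(w)|^2 \exp\!\big(-2\Phi_1(w) + 2F_\delta(w)\big)\,dL(w),
\]
with $F_\delta(w) = \Phi_1(w) + (1-e^{-2\delta})\Re h_1(w) - e^{-2\delta}\Phi_2(w)$, a real quadratic form. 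Since $F_0 = \Phi_1 - \Phi_2 < 0$ on the compact set $\{|w|=1\}$ and $F_\delta \to F_0$ uniformly there as $\delta\to 0^+$, there is $\delta_0>0$ so that $F_\delta < 0$ on $\{|w|=1\}$, hence $F_\delta \le 0$ on all of $\Bbb{C}^n$ by homogeneity, whenever $0<\delta<\delta_0$. Fixing such a $\delta$, the display yields $\|Au\|_{\Phi_2} \le e^{-n\delta}\|u\|_{\Phi_1}$, so $A:H_{\Phi_1}\to H_{\Phi_2}$ is bounded with $\|A\|\le 1$.

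Finally, the exponential factors cancel in $A\circ\exp(-\delta P_0)$, giving $A\circ\exp(-\delta P_0) = \iota$ on $H_{\Phi_1}$, so $s_j(\iota) \le \|A\|\,s_j(\exp(-\delta P_0)) \le C\exp(-j^{1/n}/C)$; in particular $\iota$ is compact, which is the assertion of the corollary. The only step carrying genuine content is the boundedness of $A$: one must check that the quadratic exponent $F_\delta$ produced by the change of variables is nonpositive for small $\delta$, and it is precisely here that the \emph{strict} inequality $\Phi_2 > \Phi_1$ on $\{|z|=1\}$ enters rather than the weaker $\Phi_2\ge\Phi_1$. The singular value count and the cancellation $A\circ\exp(-\delta P_0)=\iota$ are routine manipulations with the formulas from Proposition~\ref{prop.h.o.semigroup}. (One could just as well place the bounded operator on the target side, factoring through $\exp(-\delta P_0')$ with $P_0'$ the reference operator attached to $\Phi_2$; the computation is symmetric.)
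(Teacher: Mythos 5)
Your proof is correct and follows essentially the same route as the paper's: both factor $\iota$ through the compact reference semigroup $\exp(-\delta P_0)$ and a bounded change-of-weight operator, and both use $s_j(AB)\le \|A\|\,s_j(B)$ together with the lattice-point count $\#\{|\alpha|\le m\}\asymp m^n$. The only (immaterial) difference is that the paper attaches $P_0$ to $\Phi_2$ and writes $\iota=\exp(-\delta P_0)\bigl(\exp(\delta P_0)\iota\bigr)$ with the compact factor on the target side, whereas you use the mirror factorization through the source space and, usefully, make explicit the change-of-variables computation that the paper dismisses as ``easy to see.''
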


\begin{proof}
By Proposition \ref{prop.h.o.semigroup}, it is easy to see that there exists $\delta > 0$ such that
\[
	\exp(\delta P_0)\iota:H_{\Phi_1} \to H_{\Phi_2}
\]
is bounded, with $P_0:H_{\Phi_2}\to H_{\Phi_2}$ defined as in \eqref{eq.def.P0} (and depending on the weight $\Phi_2$).  Therefore
\[
	\iota = \exp(-\delta P_0)\exp(\delta P_0)\iota
\]
expresses $\iota$ as the product of a bounded operator from $H_{\Phi_1}$ to $H_{\Phi_2}$ and a compact positive self-adjoint operator on $H_{\Phi_2}$ with
\[
	\opnm{Spec}(\exp(-\delta P_0)) = \{e^{-\delta |\alpha|} \::\: \alpha \in \Bbb{N}^n\},
\]
where the equality includes repetition according to multiplicity.

Since
\[
	\# \left\{\alpha \::\: |\alpha| \leq N\right\} = \frac{1}{n!}N^n(1+\BigO(N^{-1}))
\]
as $N \to \infty$, the singular values of $\exp(-\delta P_0)$ decay exponentially in the sense of \eqref{eq.exp.decaying}.  Since $s_j(AB) \leq s_j(A)\|B\|$ for any operators $A,B$ for which $A$ is compact and $B$ is bounded, this completes the proof of the corollary.
\end{proof}

We turn to the extension of operators on $H_\Phi$ given by changes of variables from their restriction to the space of polynomials. This is motivated by the fact that the space of polynomials appears as the span of the generalized eigenfunctions of $P$, and at least on any element of the span of the generalized eigenfunctions of $P$, the definition of $\exp(\tau P)$ may be realized as a matrix exponential. Since the solution to the evolution equation is unique in the space of holomorphic functions, this realization must agree with the definition in Proposition \ref{prop.solve.exptP}.

We recall that an unbounded operator $A$ acting on a Hilbert space $\mathcal{H}$ with domain $\mathcal{D}_A$ has the set $K \subset \mathcal{H}$ as a core if the closure of the graph
\[
	\{(x, Ax)\::\: x \in K\}
\]
in $\mathcal{H}\times \mathcal{H}$ is 
\[
	\{(x, Ax)\::\: x \in \mathcal{D}_A\}.
\]
Note that this implies that $A$ is a closed operator when equipped with the domain $\mathcal{D}_A$.

Let $F \in GL_n(\Bbb{C})$ be an invertible matrix and define
\begin{equation}\label{eq.def.C.F}
	C_F u(z) = u(Fz), \quad z \in \Bbb{C}^n,
\end{equation}
considered as acting on $H_\Phi$ for $\Phi$ obeying \eqref{eq.Phi.assumptions}. Its maximal domain is
\begin{equation}\label{eq.def.D.F}
	\mathcal{D}_F = \{u\in H_\Phi \::\: C_F u \in H_\Phi\},
\end{equation}
which is closed with respect to the graph norm given by the inner product
\begin{equation}\label{eq.def.i.p.F}
	\begin{aligned}
	\langle u, v\rangle_F &= \langle u, v\rangle_\Phi + \langle C_F u, C_F v\rangle_\Phi
	&= \langle u, v\rangle_\Phi + |\det F|^{-2}\langle u, v\rangle_{\Phi(F\cdot)},
	\end{aligned}
\end{equation}
by the same reasoning as in Proposition \ref{prop.solve.exptP}.

We start with a lemma on the strong continuity of bounded change of variables operators considered as functions depending on the matrix $F$.

\begin{lemma}\label{lem.strong.continuity}
Assume that $\Phi$ obeys \eqref{eq.Phi.assumptions} and recall the definition \eqref{eq.def.C.F}. Let $\{F_k\}_{k\in \Bbb{N}}$ be a sequence in $GL_n(\Bbb{C})$ converging to $F \in GL_n(\Bbb{C})$.  Assume furthermore that $\Phi(F_k z) \leq \Phi(z)$ for all $z \in \Bbb{C}$ and all $k \in \Bbb{N}$.  Then $C_{F_k}$ converges to $C_F$ in the strong operator topology on $\mathcal{L}(H_\Phi)$.
\end{lemma}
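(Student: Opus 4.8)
The plan is to combine a uniform operator bound on the family $\{C_{F_k}\}$ with convergence on the dense subspace of polynomials, and then conclude by a standard $3\varepsilon$ argument.

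First I would record that each $C_{F_k}$ is bounded on $H_\Phi$, with $\|C_{F_k}\|_{\mathcal L(H_\Phi)}\le |\det F_k|^{-1}$. Indeed, the hypothesis $\Phi(F_k z)\le\Phi(z)$ gives $e^{-2\Phi(z)}\le e^{-2\Phi(F_k z)}$, so
\[
	\|C_{F_k}u\|_\Phi^2 = \int_{\Bbb C^n}|u(F_kz)|^2 e^{-2\Phi(z)}\,dL(z) \le \int_{\Bbb C^n}|u(F_kz)|^2 e^{-2\Phi(F_kz)}\,dL(z) = |\det F_k|^{-2}\|u\|_\Phi^2,
\]
the last equality by the change of variables $w=F_kz$ (whose real Jacobian is $|\det F_k|^2$); this is exactly the computation underlying Proposition \ref{prop.solve.exptP} and the map $\mathcal V_F$ in \eqref{eq.def.V}. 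Since $F_k\to F$ in $GL_n(\Bbb C)$ we have $\det F_k\to\det F\neq 0$, so $C_0:=\sup_k\|C_{F_k}\|_{\mathcal L(H_\Phi)}<\infty$. Passing to the pointwise limit, $\Phi(Fz)=\lim_k\Phi(F_kz)\le\Phi(z)$, whence the same estimate gives $\|C_F\|_{\mathcal L(H_\Phi)}\le|\det F|^{-1}<\infty$; in particular $C_F\in\mathcal L(H_\Phi)$, so the statement is meaningful.

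Next I would check strong convergence on polynomials. Fix a polynomial $p$ of degree $d$. Since $\Phi$ is strictly convex, every polynomial lies in $H_\Phi$, and $C_{F_k}p=p(F_k\cdot)$ and $C_Fp=p(F\cdot)$ all belong to the finite-dimensional space $\mathcal P_d\subset H_\Phi$ of polynomials of degree at most $d$. The coefficients of $p(F_k\cdot)$ depend polynomially on the entries of $F_k$, hence converge to those of $p(F\cdot)$ as $F_k\to F$; because all norms on the finite-dimensional space $\mathcal P_d$ are equivalent, $\|C_{F_k}p-C_Fp\|_\Phi\to 0$. Now, given $u\in H_\Phi$ and $\varepsilon>0$, choose a polynomial $p$ with $\|u-p\|_\Phi<\varepsilon$ (polynomials are dense in $H_\Phi$; see e.g.\ \cite[Rem.~2.5]{Vi2013}). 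Then
\[
	\|C_{F_k}u-C_Fu\|_\Phi \le \|C_{F_k}\|\,\|u-p\|_\Phi + \|C_{F_k}p-C_Fp\|_\Phi + \|C_F\|\,\|p-u\|_\Phi \le (C_0+\|C_F\|)\varepsilon + \|C_{F_k}p-C_Fp\|_\Phi,
\]
and letting $k\to\infty$ and then $\varepsilon\to 0$ shows $C_{F_k}u\to C_Fu$ in $H_\Phi$, i.e.\ $C_{F_k}\to C_F$ in the strong operator topology.

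I do not expect a serious obstacle here; the only points needing care are the bookkeeping of the Jacobian factor $|\det F|^{\pm2}$ in the change of variables and the observation that the hypothesis $\Phi(F_kz)\le\Phi(z)$ is precisely what makes the bound $\|C_{F_k}\|\le|\det F_k|^{-1}$ uniform in $k$ — without a uniform bound the $3\varepsilon$ argument would fail, and it is this hypothesis that rules out the pathological unbounded behaviour typical of change-of-variables operators on Fock spaces.
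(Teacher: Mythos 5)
Your proof is correct, but it follows a genuinely different route from the paper's. You establish the uniform bound $\|C_{F_k}\|_{\mathcal{L}(H_\Phi)} \leq |\det F_k|^{-1}$ (the same computation the paper uses to get boundedness), then verify convergence on the dense subspace of polynomials — where it reduces to coefficient convergence in a finite-dimensional space $\mathcal{P}_d$ — and conclude by the standard $3\varepsilon$ argument. The paper instead fixes $u \in H_\Phi$ and argues directly: dominated convergence gives $\|C_{F_k}u\|_\Phi \to \|C_F u\|_\Phi$, pointwise convergence of $u(F_k z)$ tested against reproducing kernels gives weak convergence $C_{F_k}u \rightharpoonup C_F u$, and weak convergence together with convergence of norms yields norm convergence in a Hilbert space. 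Your argument buys elementarity — no dominated convergence, no reproducing kernels, no appeal to the weak-plus-norms-implies-strong fact — at the cost of invoking density of the polynomials in $H_\Phi$, which is a nontrivial input (cited from \cite[Rem.~2.5]{Vi2013}, ultimately \cite[Lem.~3.12]{Sj1974}); the paper's route only needs density of the span of reproducing kernels, which is automatic. Both are complete proofs, and your observation that the hypothesis $\Phi(F_k z) \leq \Phi(z)$ is exactly what makes the operator bound uniform in $k$ is the right diagnosis of where the lemma would otherwise fail.
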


\begin{proof}
Because $F_k \to F$, we have that also $\Phi(Fz) \leq \Phi(z)$ for all $z \in \Bbb{C}^n$.  The same change of variables as \eqref{eq.norm.change.vars} gives that
\[
	\|C_{F_k}u\|_{\Phi}^2 = \int_{\Bbb{C}^n} e^{-2\Re \opnm{Tr} F_k}|u(z)|^2 e^{-2\Phi(F_k^{-1}z)}\,dL(z)
\]
(and similarly for $C_F$).
Since $\Phi(F_k^{-1}z)\geq \Phi(z)$ and $\Phi(F^{-1}z)\geq \Phi(z)$ for all $z \in \Bbb{C}^n$, this shows that $C_{F_k}, C_F \in \mathcal{L}(H_\Phi)$. Furthermore, we may dominate the integrand by 
\[
	e^{-2\Re \opnm{Tr} F_k}|u(z)|^2 e^{-2\Phi(F_k^{-1}z)} \leq A|u|^2 e^{-2\Phi(z)}
\]
uniformly in $k$ for some $A > 0$.  Therefore, by the dominated convergence theorem,
\[
	\forall u \in H_\Phi, \quad \lim_{k\to\infty} \|C_{F_k}u\|_\Phi = \|C_F u\|_\Phi.
\]

Since $F_kz\to Fz$ as $k\to \infty$ for each $z \in \Bbb{C}^n$, we see that $C_{F_k}u \to C_F u$ pointwise, which means $\langle C_{F_k}u, k_w\rangle_\Phi \to \langle C_F u, k_w\rangle_\Phi$ for $k_w$ any reproducing kernel at $w\in\Bbb{C}$ for $H_\Phi$. Since the sequence $\{C_{F_k}u\}_{k \in \Bbb{N}}$ is bounded in $H_\Phi$ for each $u$ and the span of reproducing kernels is dense, this means that $C_{F_k}u\to C_F u$ weakly. Therefore, by the Banach-Steinhaus theorem, $C_{F_k} \to C_F$ strongly.
\end{proof}

We may then prove that the polynomials form a core for every operator on $H_\Phi$ given by an invertible linear change of variables, whether or not it is bounded.

\begin{proposition}\label{prop.polys.core}
Let $F \in GL_n(\Bbb{C})$ and let $\Phi$ obey \eqref{eq.Phi.assumptions}. The polynomials form a core on $H_\Phi$ for $C_F$, defined in \eqref{eq.def.C.F}, on its maximal domain $\mathcal{D}_F$, defined in \eqref{eq.def.D.F}.
\end{proposition}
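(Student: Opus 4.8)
The plan is to first reduce to a clean density question. Identifying the condition $C_F u\in H_\Phi$ with $u\in H_{\Phi\circ F^{-1}}$, the maximal domain is $\mathcal D_F=H_\Phi\cap H_{\Phi'}$ with $\Phi':=\Phi\circ F^{-1}$, and the graph norm is equivalent to $\bigl(\|\cdot\|_\Phi^2+\|\cdot\|_{\Phi'}^2\bigr)^{1/2}$; here $\Phi'$ again satisfies \eqref{eq.Phi.assumptions}, since $\Phi(F^{-1}z)\ge c|F^{-1}z|^2\ge c\|F\|^{-2}|z|^2$. So the proposition is equivalent to: polynomials are dense in $H_\Phi\cap H_{\Phi'}$ for any two weights satisfying \eqref{eq.Phi.assumptions}. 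I would prove this by a dilation reduction followed by a duality argument.

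For the dilation reduction, given $u\in\mathcal D_F$ and $0<r<1$, set $u_r=C_{rI}u$, so $u_r(z)=u(rz)$. Because $\Phi(rz)=r^2\Phi(z)\le\Phi(z)$ and likewise for $\Phi'$, the operator $C_{rI}$ is a contraction on both $H_\Phi$ and $H_{\Phi'}$, hence on $\mathcal D_F$, so $u_r\in\mathcal D_F$; a change of variables even gives $\|u_r\|_{\Phi/\rho^2}\le r^{-n}\|u\|_\Phi$ for every $\rho<1$, so $u_r$ and $C_Fu_r=(C_Fu)_r$ lie in $H_{(1+\eps)\Phi}\cap H_{(1+\eps)\Phi'}$ for all $\eps>0$. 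Since $rI$ commutes with $F$, $C_F$ commutes with $C_{rI}$, so applying Lemma~\ref{lem.strong.continuity} with $F_k=r_kI\to I$ (which shows $C_{r_kI}\to I$ strongly on $H_\Phi$ and on $H_{\Phi'}$) yields $u_r\to u$ in the graph norm as $r\to1^-$. Thus it is enough to approximate each such (very regular) $u_r$ by polynomials in the graph norm.

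For the density of polynomials in $H_\Phi\cap H_{\Phi'}$ I would argue by duality: suppose $g\in H_\Phi\cap H_{\Phi'}$ is orthogonal there to every monomial, i.e.\ $\langle z^\alpha,g\rangle_\Phi+\langle z^\alpha,g\rangle_{\Phi'}=0$ for all $\alpha\in\mathbb N^n$, and show $g=0$. With $\mu:=\bar g\,(e^{-2\Phi}+e^{-2\Phi'})$ this reads $\int_{\mathbb C^n}z^\alpha\mu(z)\,dL(z)=0$ for all $\alpha$. Since $\Phi,\Phi'\gtrsim|z|^2$, Cauchy--Schwarz gives $\mu\in L^1(\mathbb C^n)$ with Gaussian decay, and the same lower bounds force $\int|z|^{|\alpha|}|\mu|\,dL$ to grow slowly enough that $\sum_\alpha\frac{w^\alpha}{\alpha!}\int z^\alpha\mu\,dL$ converges absolutely for every $w\in\mathbb C^n$; hence $\int\mu(z)e^{w\cdot z}\,dL(z)=0$ for all $w$. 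Using the decom
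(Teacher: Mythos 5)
Your setup is right and matches the paper's: $\mathcal D_F=H_\Phi\cap H_{\Phi'}$ with $\Phi'=\Phi\circ F^{-1}$, and the problem reduces to density of the polynomials in this intersection for the graph norm. But the proof stops exactly where the real difficulty begins. The duality argument, as far as it goes, only \emph{restates} the hypothesis: since the exponential series converges absolutely against $|\mu|$, the identity $\int\mu(z)e^{w\cdot z}\,dL(z)=0$ for all $w\in\Bbb{C}^n$ is equivalent to the vanishing of all the moments $\int z^\alpha\mu\,dL$, i.e.\ to the orthogonality you assumed. It does not imply $\mu=0$ or $g=0$: $\mu$ is not holomorphic, and the family $\{e^{w\cdot z}\}_{w\in\Bbb{C}^n}$ consists only of holomorphic exponentials, so it is nowhere near total against general $L^1$ densities on $\Bbb{C}^n\simeq\Bbb{R}^{2n}$. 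Note that already for a \emph{single} weight with nontrivial pluriharmonic part $-\Re h$, totality of the monomials in $H_\Phi$ is the nontrivial statement \cite[Lem.~3.12]{Sj1974} invoked in Theorem \ref{thm.eigen.core}; for the sum of two different weights it is precisely the proposition you are trying to prove. So the missing continuation ``Using the decom[position]\dots'' is the entire content of the argument, and it is not clear it can be completed along these lines without essentially redoing the problem.

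There is also a directional slip in your dilation step: $H_{(1+\eps)\Phi}$ is a \emph{larger} space than $H_\Phi$ (the weight $e^{-2(1+\eps)\Phi}$ is smaller), so placing $u_r$ there is vacuous. What you actually get from $\|u_r\|_{c\Phi}^2=r^{-2n}\int|u|^2e^{-2c\Phi/r^2}\,dL$ is $u_r\in H_{c\Phi}$ for all $c\geq r^2$, in particular for some $c<1$ — but even this does not yield polynomial approximation, because the monomials are not an orthogonal (or even obviously total) system in $H_{c\Phi}$ unless $h=0$. The paper's proof resolves both issues at once: it dilates by a \emph{complex} parameter $\zeta$ with $|\zeta|<1/C_0$ small enough that $T_\zeta u$ lands in $H_{\Psi/C_0}$ with $\Psi=\tfrac12|z|^2$, where the monomials \emph{are} an orthogonal basis and the expansion converges in the graph norm; it then observes that for $u$ orthogonal to all polynomials the function $\zeta\mapsto\langle T_\zeta u,u\rangle_F$ is holomorphic on the connected open set $\Omega\supset\{0<|\zeta|<1/C_0\}\cup(0,1)$, vanishes on the punctured disc, hence vanishes identically, and letting $\zeta\to1$ gives $u=0$. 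That analytic-continuation step is the idea your argument is missing; I would recommend replacing the duality half of your proof with it.
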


\begin{proof}
We begin by considering the dilations
\[
	T_\zeta u(z) = u(\zeta z).
\]
Let
\[
	\Omega = \{\zeta \in \Bbb{C} \backslash \{0\} \::\: |z| = 1 \implies \Phi(z) > \Phi(\zeta z)\},
\]
and note that $\Omega$ is an open subset of $\Bbb{C}\backslash \{0\}$. By Lemma \ref{lem.strong.continuity} we see that, on $\overline{\Omega}\backslash \{0\}$, the the map $\zeta \mapsto T_\zeta$ gives a strongly continuous family of operators from $H_\Phi$ to $H_\Phi$.  It is furthermore clear that $T_\zeta u$ is a holomorphic function of $\zeta \in \Omega$, and by strict convexity of $\Phi$, it is clear that $\Omega$ contains the interval $(0,1)$.

Recall the definition of $\Psi$ from \eqref{eq.def.Psi}. By strict convexity of $\Phi$, there exists some $C_0 > 0$ such that
\begin{equation}\label{eq.Phi.Psi.inequality}
	\frac{1}{C_0}\Psi(z) \leq \Phi(z) \leq C_0 \Psi(z), \quad \forall z \in \Bbb{C}^n;
\end{equation}
since $F$ is invertible, we may take $C_0$ sufficiently large to also ensure that
\begin{equation}\label{eq.PhiF.Psi.inequality}
	\frac{1}{C_0}\Psi(z) \leq \Phi(F^{-1}z) \leq C_0 \Psi(z), \quad \forall z \in \Bbb{C}^n.
\end{equation}
Therefore, so long as $0 < |\zeta| < \frac{1}{C_0}$, we have for $|z| = 1$ that
\[
	\Phi(z) \geq \frac{1}{C_0} \Psi(z) > C_0 \Psi(\zeta z) \geq \Phi(\zeta z),
\]
proving that $\Omega$ contains the punctured neighborhood $\{0 < |\zeta| < 1/C_0\}$.

What is more, when $0 < |\zeta| < 1/C_0$ and $u \in \mathcal{D}_F$, we have that both $T_\zeta u$ and $T_\zeta C_F u$ are in $H_{\Psi/C_0}$, a space in which monomials form an orthogonal basis; see \eqref{eq.def.e.alpha}.  Therefore
\begin{equation}\label{eq.T.zeta.polys}
	T_\zeta u = \lim_{N\to \infty} \sum_{|\alpha| \leq N} \frac{\langle T_\zeta u, z^\alpha\rangle_{\Psi/C_0}}{\|z^\alpha\|_{\Psi/C_0}} z^\alpha
\end{equation}
as a limit in $H_{\Psi/C_0}$.  Since convergence in $H_{\Psi/C_0}$ implies convergence in $H_\Phi$ by \eqref{eq.Phi.Psi.inequality} and in $H_{\Phi(F^{-1}\cdot)}$ by \eqref{eq.PhiF.Psi.inequality}, we see that \eqref{eq.T.zeta.polys} also holds as a limit with respect to the norm $\|\cdot \|_F$ given by \eqref{eq.def.i.p.F} for $0 < |\zeta| < 1/C_0$.

Therefore if $u \in \mathcal{D}_F$ is orthogonal to every polynomial with respect to the inner product \eqref{eq.def.i.p.F}, we have that $\langle T_\zeta u, u\rangle_F$ is a holomorphic function for $\zeta \in \Omega$, continuous on $\overline{\Omega}$, and for $0 < |\zeta| < 1/C_0$,
\[
	\langle T_\zeta u, u\rangle_F = \lim_{N\to\infty} \left\langle \sum_{|\alpha| \leq N} \frac{\langle T_\zeta u, z^\alpha\rangle_{\Psi/C_0}}{\|z^\alpha\|_{\Psi/C_0}} z^\alpha, u \right\rangle_F = 0.
\]
This shows that the function vanishes identically on $\Omega$, and upon taking the limit as $\zeta \to 1$ from within $\Omega$, we see that $u = 0$.  This proves that $\{(p, C_F p)\::\: p \textnormal{ a polynomial}\}$ is dense in $\{(u, C_F u)\::\: u \in \mathcal{D}_F\}$ as subsets of $H_\Phi \times H_\Phi$, which suffices to prove the proposition.
\end{proof}

\subsection{Identification of boundedness and compactness}

We proceed to the following precise description of the set of $\tau \in \Bbb{C}$ for which the map $\exp(\tau P)$ is bounded or compact.

\begin{theorem}\label{thm.boundedness.0}
Let the matrix $M$, the weight $\Phi$, and the operators $P$ and $\exp(\tau P)$ be as in Proposition \ref{prop.solve.exptP}. Then $\exp(\tau P)$ is bounded if and only if
\begin{equation}\label{eq.Phi.increases}
	\Phi(e^{-\tau M}z) \geq \Phi(z), \quad \forall z \in \Bbb{C}^n
\end{equation}
and is compact if and only if the inequality is strict on $\{|z| = 1\}$, in which case $\exp(\tau P)$ has exponentially decaying singular values in the sense of \eqref{eq.exp.decaying}.  On the set of $\tau \in \Bbb{C}$ for which this inequality holds, the family of operators $\exp(\tau P)$ is strongly continuous in $\tau$ and obeys
\begin{equation}\label{eq.norm.bound}
	\|\exp(\tau P)\|\leq e^{-\Re \tau \opnm{Tr}M}.
\end{equation}
\end{theorem}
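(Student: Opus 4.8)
The plan is to reduce everything to the embedding result of Proposition~\ref{prop.embedding} and Corollary~\ref{cor.sing.vals.general}, using the norm identity \eqref{eq.norm.change.vars} from Proposition~\ref{prop.solve.exptP}. Recall that \eqref{eq.norm.change.vars} gives $\|\exp(\tau P)u_0\|_\Phi = e^{-\Re(\tau\opnm{Tr}M)}\|u_0\|_{\Phi_{-\tau}}$ with $\Phi_{-\tau}(z)=\Phi(e^{-\tau M}z)$. Since $e^{-\tau M}$ is invertible and $\Phi$ is strictly convex, $\Phi_{-\tau}$ again satisfies \eqref{eq.Phi.assumptions}. So $\exp(\tau P):H_\Phi\to H_\Phi$ is bounded on its maximal domain exactly when the weighted $L^2$ norm $\|\cdot\|_{\Phi_{-\tau}}$ is dominated by $\|\cdot\|_\Phi$ on a dense subset — that is, when the injection $H_\Phi\hookrightarrow H_{\Phi_{-\tau}}$ is bounded (here one should be slightly careful with the direction of the embedding: $\|u_0\|_{\Phi_{-\tau}}\le C\|u_0\|_\Phi$ is the statement that $H_\Phi\to H_{\Phi_{-\tau}}$ is bounded, which by Proposition~\ref{prop.embedding} with $\Phi_1=\Phi$, $\Phi_2=\Phi_{-\tau}$ is equivalent to $\Phi_{-\tau}(z)\ge\Phi(z)$ for all $z$, i.e.\ \eqref{eq.Phi.increases}). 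This gives both the boundedness criterion \eqref{eq.Phi.increases} and, combined with $|\det e^{-\tau M}|=e^{-\Re\tau\opnm{Tr}M}$, the norm bound \eqref{eq.norm.bound}; indeed when \eqref{eq.Phi.increases} holds one reads off directly from \eqref{eq.norm.change.vars} and the definition \eqref{eq.def.norm.HPhi} that $\|\exp(\tau P)u_0\|_\Phi\le e^{-\Re\tau\opnm{Tr}M}\|u_0\|_\Phi$.

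For the compactness claim, I would again transfer to the embedding picture. First, if $\exp(\tau P)$ is compact then so is the injection $\iota:H_\Phi\to H_{\Phi_{-\tau}}$ (they differ by the unitary $\mathcal{V}_{e^{-\tau M}}$ and a nonzero scalar, up to identifying domains), so Proposition~\ref{prop.embedding} forces \eqref{eq.Phi.increases} to hold strictly on $\{|z|=1\}$. Conversely, if $\Phi(e^{-\tau M}z)>\Phi(z)$ on $\{|z|=1\}$, then Corollary~\ref{cor.sing.vals.general} applied with $\Phi_1=\Phi$, $\Phi_2=\Phi_{-\tau}$ says the injection $\iota:H_\Phi\to H_{\Phi_{-\tau}}$ is compact with exponentially decaying singular values in the sense of \eqref{eq.exp.decaying}. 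Writing $\exp(\tau P)=e^{-\Re\tau\opnm{Tr}M}\,\mathcal{V}_{e^{-\tau M}}^{-1}\circ\iota$ as an operator $H_\Phi\to H_\Phi$, where $\mathcal{V}_{e^{-\tau M}}:H_\Phi\to H_{\Phi(e^{-\tau M}\cdot)}=H_{\Phi_{-\tau}}$ is unitary by \eqref{eq.def.V}, shows $\exp(\tau P)$ is compact with the same decay of singular values, since composing a compact operator with a unitary (or a bounded scalar) preserves singular values up to a constant.

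For the strong continuity statement: on the set $S=\{\tau\in\Bbb{C}:\eqref{eq.Phi.increases}\text{ holds}\}$, I would write $\exp(\tau P)$ in the form $e^{-\Re\tau\opnm{Tr}M}C_{e^{\tau M}}$ with $C_F$ as in \eqref{eq.def.C.F}, and invoke Lemma~\ref{lem.strong.continuity}. The hypothesis of that lemma requires $\Phi(F_kz)\le\Phi(z)$, which is precisely \eqref{eq.Phi.increases} written for $F=e^{\tau M}$ (note $\Phi(e^{-\tau M}z)\ge\Phi(z)$ for all $z$ is equivalent to $\Phi(e^{\tau M}z)\le\Phi(z)$ for all $z$, by substituting $z\mapsto e^{\tau M}z$); so for any sequence $\tau_k\to\tau$ within $S$ we get $e^{\tau_k M}\to e^{\tau M}$ in $GL_n(\Bbb{C})$ and $\Phi(e^{\tau_k M}z)\le\Phi(z)$, hence $C_{e^{\tau_k M}}\to C_{e^{\tau M}}$ strongly, and the scalar factors converge, giving strong continuity of $\tau\mapsto\exp(\tau P)$ on $S$.

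The main obstacle I anticipate is bookkeeping rather than depth: one must be careful that the ``maximal domain'' of $\exp(\tau P)$ on $H_\Phi$ really does contain the polynomials as a core (this is Proposition~\ref{prop.polys.core}, so the density argument needed to pass the norm identity \eqref{eq.norm.change.vars} and the embedding estimates from polynomials to the whole domain is already in hand) and that the equivalences ``$\exp(\tau P)$ bounded/compact'' $\iff$ ``$\iota$ bounded/compact'' are set up with the correct space on each side and the correct direction of the inequality $\Phi_{-\tau}\ge\Phi$. Once the dictionary $\exp(\tau P)\leftrightarrow$ (injection $H_\Phi\hookrightarrow H_{\Phi_{-\tau}}$, up to unitary and scalar) is pinned down, everything follows from Propositions~\ref{prop.embedding}, \ref{prop.polys.core} and Corollary~\ref{cor.sing.vals.general} together with Lemma~\ref{lem.strong.continuity}.
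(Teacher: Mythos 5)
Your proposal is correct and takes essentially the same route as the paper: the paper proves this theorem as the $\delta=0$ case of Theorem \ref{thm.boundedness.delta}, whose proof is precisely your reduction --- the norm identity \eqref{eq.norm.change.vars} exhibits $\exp(\tau P)$ as a scalar times a unitary composed with the embedding $H_\Phi\hookrightarrow H_{\Phi_{-\tau}}$, to which Proposition \ref{prop.embedding} and Corollary \ref{cor.sing.vals.general} apply, with strong continuity supplied by Lemma \ref{lem.strong.continuity}. (One trivial slip: $\exp(\tau P)=C_{e^{\tau M}}$ with no scalar prefactor, since the factor $e^{-\Re\tau\opnm{Tr}M}$ arises in the norm identity rather than in the operator itself, but this does not affect your strong-continuity argument.)
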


\begin{proof}
The norm bound follows immediately from Proposition \ref{prop.solve.exptP}.  The characterization of boundedness and compactness is the special case $\delta = 0$ of the following more general theorem, which places the image of $\exp(\tau P)$ within the family of spaces $\{\exp(\delta P_0)H_\Phi\}_{\delta \in \Bbb{R}}$.  That the family of operators, where bounded, is strongly continuous in $\tau$ follows from Lemma \ref{lem.strong.continuity}.
\end{proof}

We continue with a more general theorem relating the boundedness properties of $\exp(\tau P)$ with those of $\exp(\delta P_0)$ for $P_0$ from \eqref{eq.def.P0}.  While this is natural and very useful to prove properties such as compactness, our principal interest is in the question of boundedness. Therefore, most results throughout may be read for $\delta = 0$, as done in Theorem \ref{thm.boundedness.0} above.

\begin{theorem}\label{thm.boundedness.delta}
Let the matrix $M$, the weight $\Phi$, and the operators $P$ and $\exp(\tau P)$ be as in Proposition \ref{prop.solve.exptP}. For $\Phi^{(\delta)}$ as in \eqref{eq.def.Phi.delta}, let $\delta_0 = \delta_0(\tau) \in \Bbb{R}$ be defined by
\begin{equation}\label{eq.def.delta.0}
	\delta_0 = \sup\{ \delta \in \Bbb{R} \::\: \forall z \in \Bbb{C}^n,~~ \Phi^{(\delta)}(e^{-\tau M}z) \geq \Phi(z)\}
\end{equation}
Then the operator
\begin{equation}\label{eq.compose.bounded.l}
	\exp(\delta P_0)\exp(\tau P),
\end{equation}
with $P_0$ as in \eqref{eq.def.P0}, is bounded on $H_\Phi$ if and only if $\delta \leq \delta_0$ and is compact if and only if $\delta < \delta_0$, in which case it has exponentially decaying singular values in the sense of \eqref{eq.exp.decaying}.
\end{theorem}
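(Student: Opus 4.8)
The plan is to identify the composite $\exp(\delta P_0)\exp(\tau P)$, via an explicit change of variables, with a constant multiple of an isometry between two weighted Fock spaces, and then read off boundedness and compactness from Proposition \ref{prop.embedding} and Corollary \ref{cor.sing.vals.general}. First I would compute the composite: since $\exp(\tau P)u(z) = u(e^{\tau M}z)$ and, by the remark following Proposition \ref{prop.h.o.semigroup}, $\exp(\delta P_0)v(z) = v(e^\delta z)e^{h(e^\delta z)-h(z)}$, and since $h$ is a homogeneous quadratic form so that $h(e^\delta z) = e^{2\delta}h(z)$, one gets $\exp(\delta P_0)\exp(\tau P)u(z) = u(e^\delta e^{\tau M}z)e^{(e^{2\delta}-1)h(z)}$. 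Substituting this into \eqref{eq.def.norm.HPhi}, using $-2\Phi(z) = -|Gz|^2+2\Re h(z)$, and changing variables $w = e^\delta e^{\tau M}z$ (whose Jacobian contributes $e^{-2n\delta-2\Re(\tau\opnm{Tr}M)}$), the exponential weight collapses to exactly $-2\Phi^{(\delta)}(e^{-\tau M}w)$; equivalently one may apply $\|\exp(\delta P_0)v\|_\Phi = e^{-n\delta}\|v\|_{\Phi^{(\delta)}}$ with $v = \exp(\tau P)u$ followed by \eqref{eq.norm.change.vars} for the weight $\Phi^{(\delta)}$. Either way, with $\Phi_1 := \Phi^{(\delta)}(e^{-\tau M}\cdot)$, one obtains, as an identity in $[0,+\infty]$ for every $u \in H_\Phi$,
\[
	\|\exp(\delta P_0)\exp(\tau P)u\|_\Phi = e^{-n\delta-\Re(\tau\opnm{Tr}M)}\|u\|_{\Phi_1}.
\]
Thus $\exp(\delta P_0)\exp(\tau P)$, regarded on $H_\Phi$, is a constant multiple of an isometry $V : H_{\Phi_1}\to H_\Phi$, and as an operator on $H_\Phi$ it is bounded (resp.\ compact) if and only if the identity inclusion $\iota : H_\Phi \to H_{\Phi_1}$ is.

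Next I would locate the threshold. By \eqref{eq.def.Phi.delta}, $\Phi^{(\delta)}(z) = \frac{1}{2}e^{-2\delta}|Gz|^2 - \Re h(z)$ is, for each fixed $z\neq 0$, strictly decreasing in $\delta$ (with $G$ invertible, since $\Phi$ is strictly convex), so $m(\delta):=\min_{|z|=1}\bigl(\Phi^{(\delta)}(e^{-\tau M}z)-\Phi(z)\bigr)$ is continuous and strictly decreasing, and $\delta_0$ in \eqref{eq.def.delta.0} equals $\sup\{\delta : m(\delta)\geq 0\}$ with $m(\delta_0)=0$. For $\delta\le\delta_0$ we have $\Phi_1\ge\Phi\ge|z|^2/C$, so $\Phi_1$ satisfies \eqref{eq.Phi.assumptions} and Proposition \ref{prop.embedding} and Corollary \ref{cor.sing.vals.general} apply to $\iota$; this persists for $\delta$ in a neighborhood of $(-\infty,\delta_0]$. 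When $\delta\le\delta_0$ the inequality $\Phi_1\ge\Phi$ holds everywhere, so the displayed identity bounds $\|\exp(\delta P_0)\exp(\tau P)u\|_\Phi$ by $e^{-n\delta-\Re(\tau\opnm{Tr}M)}\|u\|_\Phi$ for every $u$, whence the maximal domain is all of $H_\Phi$ and the operator is bounded. When $\delta<\delta_0$ the inequality $\Phi_1>\Phi$ holds on $\{|z|=1\}$, so $\iota$ is compact with exponentially decaying singular values by Corollary \ref{cor.sing.vals.general}; writing $\exp(\delta P_0)\exp(\tau P)=V\iota$ and using $s_j(V\iota)\le\|V\|\,s_j(\iota)$ gives compactness with exponentially decaying singular values.

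For sharpness, I would use that the set of $\delta$ with $\exp(\delta P_0)\exp(\tau P)$ bounded (resp.\ compact) is a down-set: for $\delta'<\delta$ one has $\exp(\delta' P_0)\exp(\tau P) = \exp((\delta'-\delta)P_0)\circ\bigl(\exp(\delta P_0)\exp(\tau P)\bigr)$, and $\exp((\delta'-\delta)P_0)$ acts diagonally on the orthonormal basis $\{e_\alpha\}$ with eigenvalues $e^{(\delta'-\delta)|\alpha|}\to 0$, hence is compact; so if the right-hand factor is bounded, the composite is compact, a fortiori bounded (and it does agree with $\exp(\delta'P_0)\exp(\tau P)$, both being everywhere defined). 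Combined with the previous paragraph, $\{\delta\ \text{bounded}\}\supseteq(-\infty,\delta_0]$ and $\{\delta\ \text{compact}\}\supseteq(-\infty,\delta_0)$. If $\exp(\delta P_0)\exp(\tau P)$ were bounded for some $\delta>\delta_0$, the down-set property would make it bounded for some $\delta^*\in(\delta_0,\delta)$ close enough to $\delta_0$ that $\Phi_1$ still satisfies \eqref{eq.Phi.assumptions}; then the boundedness of $\iota$ and Proposition \ref{prop.embedding} would force $\Phi_1\ge\Phi$, i.e.\ $m(\delta^*)\ge0$, contradicting $\delta^*>\delta_0$. Similarly, compactness at $\delta_0$ together with Proposition \ref{prop.embedding} would force $m(\delta_0)>0$, contradicting $m(\delta_0)=0$. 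Hence the operator is bounded exactly for $\delta\le\delta_0$ and compact exactly for $\delta<\delta_0$.

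The proof involves no single difficult estimate; its content lies in correctly carrying the composition through the unitaries $\mathcal{V}_F$, $\mathcal{W}_g$, $\mathcal{U}$ from \eqref{eq.def.V}, \eqref{eq.def.W}, \eqref{eq.def.U} together with the identity $h(e^\delta z)=e^{2\delta}h(z)$ (the first step above), and in the monotonicity and continuity bookkeeping that pins $\delta_0$ to the exact boundary between boundedness and unboundedness. The one place requiring genuine care is that the auxiliary weight $\Phi_1=\Phi^{(\delta)}(e^{-\tau M}\cdot)$ need not be strictly convex once $\delta$ is large, so Proposition \ref{prop.embedding} and Corollary \ref{cor.sing.vals.general} cannot be invoked at an arbitrary $\delta$; this is precisely why, in the sharpness argument, one approaches $\delta_0$ from above through a $\delta^*$ for which $\Phi_1$ is still admissible, rather than working at the given $\delta>\delta_0$ directly.
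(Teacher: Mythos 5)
Your proposal is correct and follows essentially the same route as the paper: reduce $\exp(\delta P_0)\exp(\tau P)$ via the norm identity to a constant multiple of the embedding $H_\Phi \to H_{\Phi^{(\delta)}(e^{-\tau M}\cdot)}$, apply Proposition \ref{prop.embedding} and Corollary \ref{cor.sing.vals.general}, and use monotonicity and continuity of $\Phi^{(\delta)}$ in $\delta$ to pin the threshold at $\delta_0$. Your additional care with the possible failure of strict convexity of $\Phi^{(\delta)}(e^{-\tau M}\cdot)$ for $\delta$ well above $\delta_0$ (approaching $\delta_0$ from above through an admissible $\delta^*$) is a worthwhile refinement that the paper's proof leaves implicit; the only detail you omit that the paper supplies is the check that the set defining $\delta_0$ is nonempty and bounded above, so that $\delta_0\in\Bbb{R}$.
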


\begin{proof}
From Propositions \ref{prop.solve.exptP} and \ref{prop.h.o.semigroup} we see that
\[
	\begin{aligned}
	\|\exp(\delta P_0)\exp(\tau P)u\|_\Phi &= e^{-\delta n}\|u(e^{\tau M}z)\|_{\Phi^{(\delta)}}
	\\ &= e^{-\delta n -\Re(\tau \opnm{Tr} M)}\|u\|_{\Phi^{(\delta)}(e^{-\tau M}\cdot)}.
	\end{aligned}
\]
Therefore the operator \eqref{eq.compose.bounded.l} is, up to a unitary transformation, a factor times the embedding from $H_\Phi$ to $H_{\Phi^{(\delta)}(e^{-\tau M}\cdot)}$. This embedding is bounded if and only if 
\begin{equation}\label{eq.Phi.increases.delta}
	\Phi^{(\delta)}(e^{-\tau M}z) \geq \Phi(z)
\end{equation}
for all $z \in \Bbb{C}^n$ by Proposition \ref{prop.embedding}, which also gives that the inequality must be strict on $\{|z| = 1\}$ in order for the map to be compact. On the other hand, the map is compact with decaying singular values in the sense of \eqref{eq.exp.decaying} if the inequality holds strictly on $\{|z| = 1\}$ by Corollary \ref{cor.sing.vals.general}.

For $\tau \in \Bbb{C}$ and $z\in\Bbb{C}^n$ fixed, $\Phi^{(\delta)}(e^{-\tau M}z)$ is a decreasing function of $\delta$ which tends to $-\Re h(e^{-\tau M}z)$ as $\delta \to \infty$ and to $\infty$ as $\delta \to -\infty$.  As a harmonic function, $-\Re h(e^{-\tau M})$ cannot be positive definite, so the set defining $\delta_0$ must be bounded from above since $\Phi^{(\delta)}(e^{-\tau M}z)$ fails to dominate the strictly convex function $\Phi(z)$ for $\delta$ sufficiently large.  (See also Proposition \ref{prop.maximal.smoothing}.) Since $\Phi^{(\delta)}(e^{-\tau M}z) \to \infty$ as $\delta \to -\infty$, the set defining $\delta_0$ is bounded from below.  Therefore $\delta_0 \in \Bbb{R}$, and from the fact that $\Phi^{(\delta)}(e^{-\tau M}z)$ is decreasing and continuous in $\delta$ we have that \eqref{eq.Phi.increases.delta} holds for $\delta \leq \delta_0$ and holds strictly on $\{|z|=1\}$ for $\delta < \delta_0$, which suffices to identify when the operator \eqref{eq.compose.bounded.l} is bounded or 
compact with exponentially decaying singular values.
\end{proof}

\begin{remark*}
Continuing to use certain standard simple unitary transformations, we may make explicit the unitary transformation relating $\exp(\delta P_0)\exp(\tau P)$ to the (possibly unbounded) embedding from $H_\Phi$ to $H_{\Phi^{(\delta)}(e^{-\tau M}\cdot)}.$ Using the unitary transformation \eqref{eq.def.U} along with Propositions \ref{prop.solve.exptP} and \ref{prop.h.o.semigroup}, we see that
\[
	\begin{aligned}
	\exp(\delta P_0)\exp(\tau P)u(z) &= \mathcal{U}\exp(\delta z\cdot \partial_z)\mathcal{U}^* u(e^{\tau M}z)
	\\ &= \mathcal{U}u(e^\delta e^{\tau M}G^{-1}z)e^{h(e^{\delta}G^{-1}z)}
	\\ &= u(e^{\delta}e^{\tau M}z)e^{h(e^{\delta}z) - h(z)}.
	\end{aligned}
\]
(What is more, we see that $\exp(\delta P_0)$ is particularly convenient precisely because $e^\delta$ commutes with all matrices.) We may then check using \eqref{eq.def.V} and \eqref{eq.def.W} that, with $\iota u = u$ the natural embedding,
\[
	\mathcal{V}_{e^{-\tau M}}\mathcal{W}_{-h}\mathcal{V}_{e^{-\delta}} \mathcal{W}_{h}  \exp(\delta P_0)\exp(\tau P) 
	= e^{-\delta n - \Re(\tau \opnm{Tr} M)}\iota:H_\Phi \to H_{\Phi^{(\delta)}(e^{-\tau M}\cdot)}.
\]
\end{remark*}

We next consider the question of when the solution operator $\exp(-tP)$ is bounded for all $t \geq 0$.  For these operators on Fock spaces, the question is reduced to the question of positivity of a real quadratic form which corresponds to the classical notion of the real part of the symbol of a differential operator (see Remark \ref{rem.Fock.phase.space}).

\begin{theorem}\label{thm.boundedness.Theta}
Let the matrix $M$, the weight $\Phi$, and the operators $P$ and $\exp(\tau P)$ be as in Proposition \ref{prop.solve.exptP}. Then, $\exp(-tP)$ is bounded on $H_\Phi$ for all $t \in [0, \infty)$ if and only if
\begin{equation}\label{eq.ellipticity}
	\Theta(z) \geq 0, \quad \forall z \in \Bbb{C}^n,
\end{equation}
for
\begin{equation}\label{eq.def.Theta}
	\Theta(z) = 2\Re P\Phi(z) = 2\Re\left((Mz)\cdot \Phi'_z(z)\right).
\end{equation}
Moreover, using the decomposition \eqref{eq.decompose.Phi} and with $\delta_0$ defined in \eqref{eq.def.delta.0},
\begin{equation}\label{eq.delta.0.deriv}
	\delta_0(-t) = t\inf_{|z|=1}\frac{\Theta(z)}{|Gz|^2} + \BigO(t^2), \quad t \to 0^+.
\end{equation}
\end{theorem}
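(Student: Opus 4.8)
The plan is to treat the stated equivalence and the small-time expansion of $\delta_0$ separately, in each case reducing to the monotonicity criterion for boundedness already established in Theorem \ref{thm.boundedness.0}.

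For the equivalence, note that by Theorem \ref{thm.boundedness.0} applied with $\tau = -t$, the operator $\exp(-tP)$ is bounded if and only if $\Phi(e^{tM}z)\geq\Phi(z)$ for all $z\in\Bbb{C}^n$. Setting $g_z(s)=\Phi(e^{sM}z)$ and using that for a real-valued quadratic form $\Phi$ one has $\frac{d}{ds}\Phi(\gamma(s))=2\Re\big(\Phi'_z(\gamma(s))\cdot\gamma'(s)\big)$ along any smooth curve $\gamma$, together with $\frac{d}{ds}e^{sM}z=Me^{sM}z$, I get $g_z'(s)=2\Re\big((Me^{sM}z)\cdot\Phi'_z(e^{sM}z)\big)=\Theta(e^{sM}z)$. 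Hence $\Phi(e^{tM}z)-\Phi(z)=\int_0^t\Theta(e^{sM}z)\,ds$. If $\Theta\geq 0$ everywhere this integral is nonnegative for every $t\geq 0$, giving boundedness for all $t$; conversely, boundedness for all $t\geq 0$ forces $t^{-1}\big(\Phi(e^{tM}z)-\Phi(z)\big)\geq 0$, and letting $t\to 0^+$ yields $\Theta(z)=g_z'(0)\geq 0$.

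For the expansion, I would substitute the definition \eqref{eq.def.Phi.delta} of $\Phi^{(\delta)}$ into \eqref{eq.def.delta.0} taken at $\tau=-t$. Writing $a(t,z)=\Phi(e^{tM}z)-\Phi(z)$, $b(t,z)=|Ge^{tM}z|^2$, and $\lambda(\delta)=\frac{1}{2}(1-e^{-2\delta})$, the condition defining $\delta_0(-t)$ reads $a(t,z)\geq\lambda(\delta)\,b(t,z)$ for all $z$. Since $\lambda$ is a strictly increasing bijection of $\Bbb{R}$ onto $(-\infty,1/2)$ and, for $t$ near $0$, the matrix $Ge^{tM}$ is invertible so that $b(t,\cdot)$ is a positive definite quadratic form with a lower bound on $\{|z|=1\}$ uniform in $t$, dividing by $b$ shows the constraint holds for all $z$ exactly when $\lambda(\delta)\leq\lambda^*(t):=\inf_{|z|=1}a(t,z)/b(t,z)$. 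Thus $\delta_0(-t)=\lambda^{-1}(\lambda^*(t))=-\frac{1}{2}\log\big(1-2\lambda^*(t)\big)$ as soon as $t$ is small enough that $\lambda^*(t)<1/2$.

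Finally, $a$ and $b$ are real-quadratic in $z$ with coefficients real-analytic in $t$, so by the first part $a(t,z)=t\,\Theta(z)+\BigO(t^2)|z|^2$ and $b(t,z)=|Gz|^2+\BigO(t)|z|^2$, uniformly in $z$; hence $a(t,z)/b(t,z)=t\big(\Theta(z)/|Gz|^2+\BigO(t)\big)$ uniformly on $\{|z|=1\}$, and taking the infimum gives $\lambda^*(t)=t\inf_{|z|=1}\Theta(z)/|Gz|^2+\BigO(t^2)$. Plugging this into $\delta_0(-t)=-\frac{1}{2}\log(1-2\lambda^*(t))=\lambda^*(t)+\BigO(\lambda^*(t)^2)$ yields \eqref{eq.delta.0.deriv}. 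The one genuinely delicate point is this last uniformity: passing from the pointwise expansion of the ratio $a/b$ to the expansion of its infimum over the sphere, which works cleanly precisely because one keeps $|Gz|^2$ — bounded below on $\{|z|=1\}$ — in the denominator rather than the possibly-vanishing $\Theta(z)$. Everything else is the change-of-variables bookkeeping of Propositions \ref{prop.solve.exptP} and \ref{prop.h.o.semigroup} together with elementary one-variable calculus.
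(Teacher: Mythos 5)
Your proof is correct. The first half (the equivalence of boundedness for all $t\geq 0$ with $\Theta\geq 0$) is exactly the paper's argument: compute $\frac{d}{ds}\Phi(e^{sM}z)=\Theta(e^{sM}z)$ and invoke the criterion of Theorem \ref{thm.boundedness.0}. For the expansion of $\delta_0(-t)$ the routes diverge slightly: the paper Taylor-expands $\Phi^{(\delta)}(e^{tM}z)$ jointly in $(t,\delta)$ as $\Phi(z)+t\Theta(z)-\delta|Gz|^2+\BigO((t^2+\delta^2)|z|^2)$ and then squeezes $\delta_0(-t)$ into $[Rt-Ct^2,\,Rt+Ct^2]$ by testing the two values $\delta=Rt\mp Ct^2$ (the lower one for all $z$, the upper one at a minimizing $z_0$). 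You instead solve the defining constraint exactly, obtaining the closed form $\delta_0(-t)=-\tfrac12\log\bigl(1-2\lambda^*(t)\bigr)$ with $\lambda^*(t)=\inf_{|z|=1}\bigl(\Phi(e^{tM}z)-\Phi(z)\bigr)/|Ge^{tM}z|^2$, and only then expand. This is a clean repackaging: it yields an exact expression for $\delta_0(-t)$ valid whenever $\lambda^*(t)<1/2$ (not only asymptotically), and it removes the need to guess test values of $\delta$; the price is nil, since both arguments rest on the same two ingredients (the first-order expansion $\Phi(e^{tM}z)-\Phi(z)=t\Theta(z)+\BigO(t^2)|z|^2$ and the lower bound for $|Gz|^2$ on the unit sphere). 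The uniformity point you flag is handled correctly, for precisely the reason you give.
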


\begin{proof}
Since $\Phi$ is real-valued, $\Phi'_{\bar{z}} = \overline{\Phi'_z}$, so we compute
\begin{equation}\label{eq.deriv.first}
\begin{aligned}
	\frac{d}{dt}\Phi(e^{tM}z) &= Me^{tM}z\cdot \Phi'_z(e^{tM}z) + \overline{Me^{tM}z}\cdot \Phi'_{\bar{z}}(e^{tM}z)
	\\ &= \Theta(e^{tM}z).
\end{aligned}
\end{equation}
If $\Theta(z_0) < 0$ for some $z_0 \in \Bbb{C}^n$, then \eqref{eq.Phi.increases} fails at $z_0$ for $\tau = -t$ and $t > 0$ small.  If, on the other hand, \eqref{eq.ellipticity} holds, then $\Phi(e^{tM}z)$ is nondecreasing in $t$ for all $z \in \Bbb{C}$, so \eqref{eq.Phi.increases} holds for $\tau = -t$ and any $t \geq 0$. Therefore \eqref{eq.Phi.increases} holds for all $\tau = -t$ with $t > 0$ if and only if \eqref{eq.ellipticity} holds.

From \eqref{eq.deriv.first} and a direct calculation we have that
\[
	\left.\frac{\partial}{\partial t}\Phi^{(\delta)}(e^{tM}z)\right|_{t = \delta = 0} = \Theta(z) \textnormal{  and  } \left.\frac{\partial}{\partial \delta}\Phi^{(\delta)}(e^{tM}z)\right|_{t = \delta = 0} = -|Gz|^2.
\]
Using the fact that $\Phi$ is quadratic along with the Taylor expansion for $e^{2\delta}$ and $e^{tM}$, we estimate
\begin{equation}\label{eq.Theta.degree.1.expand}
	\Phi^{(\delta)}(e^{tM}z) = \Phi(z) + t\Theta(z) - \delta|Gz|^2 + \BigO((\delta^2+t^2)|z|^2)
\end{equation}
for $\delta, t$ small and with error bound uniform for $z \in \Bbb{C}^n$.  Let
\[
	R = \inf_{|z|=1} \frac{\Theta(z)}{|Gz|^2}.
\]
If $\delta = R t - Ct^2$, then
\[
	\Phi^{(\delta)}(e^{tM}z) =  \Phi(z) + C|Gz|^2 t^2 + (\Theta(z)-R|Gz|^2)t + \BigO((t^2 + \delta^2)|z|^2).
\]
By the definition of $R$, the coefficient of $t$ is positive and $\delta^2 = \BigO(t^2)$.  Using also that $|Gz|^2/|z|^2$ is bounded away from zero on $\{|z|=1\}$ because $G$ is invertible, if $C$ is sufficiently large and $t$ is sufficiently small and positive we have that \eqref{eq.Phi.increases.delta} holds with $\tau = -t$. 

On the other hand, by continuity we may select $z_0 \in \Bbb{C}^n$ with $|z_0| = 1$ and where $\Theta(z_0)/|Gz_0|^2 = R$. Taking instead $\delta = R t + Ct^2$ gives
\[
	\Phi^{(\delta)}(e^{tM}z_0) =  \Phi(z_0) - C|Gz_0|^2 t^2 + \BigO(t^2 + \delta^2),
\]
so \eqref{eq.Phi.increases.delta} with $\tau = -t$ fails if $C$ is sufficiently large and $t$ is sufficiently small and positive.  Using again that $\Phi^{(\delta)}(e^{tM}z)$ is decreasing in $\delta$, we conclude that, for some $C$ and for $t$ sufficiently small and positive,
\[
	\delta_0(-t) \in \left[Rt-Ct^2, Rt+Ct^2\right],
\]
which completes the proof of the theorem.
\end{proof}

\begin{remark*}
One could also reverse the order of $P_0$ and $P$ in Theorem \ref{thm.boundedness.delta} and analyze the operator
\[
	\exp(\tau P)\exp(\delta P_0)u(z) = u(e^\delta e^{\tau M} z)e^{h(e^\delta e^{\tau M}z)-h(e^{\tau M}z)}.
\]
We may check boundedness for this operator by using that
\[
	\mathcal{W}_{-h}\mathcal{V}_{e^{-\delta}}\mathcal{W}_h \mathcal{V}_{e^{-\tau M}}\exp(\tau P)\exp(\delta P_0) = e^{-n\delta - \Re \tau \opnm{Tr} M}\iota : H_\Phi \to H_{\tilde{\Phi}}
\]
for 
\[
	\tilde{\Phi}(z) = \tilde{\Phi}(z; \delta, \tau, M) = e^{-2\delta} \Phi(e^{-\tau M}z) + (e^{-2\delta}-1)\Re h(z).
\]
Therefore $\exp(\tau P)\exp(\delta P_0)$ is bounded if and only if $\tilde{\Phi}\geq \Phi$.

This weight $\tilde{\Phi}$ seems less convenient than $\Phi^{(\delta)}(e^{-\tau M}z)$, which is in part explained by the way in which the change of variables associated with $\exp(\tau P)$ changes the harmonic part $\Re h(z)$ of the weight.  Nonetheless, the same reasoning can show that if
\[
	\tilde{\delta}_0(\tau) = \sup \{\delta \::\: \exp(\tau P)\exp(\delta P_0) \in \mathcal{L}(H_\Phi)\},
\]
then
\[
	\tilde{\delta}_0(-t) = t\inf_{|z|=1}\frac{\Theta(z)}{|Gz|^2} + \BigO(t^2), \quad t \to 0^+,
\]
similarly to \eqref{eq.delta.0.deriv}.
\end{remark*}

We now show that the span of the generalized eigenfunctions of $P$ form a core for $\exp(\tau P)$ by identifying those eigenfunctions and observing that their span is the set of polynomials.

To fix notation, let $\tilde{G}$ be an invertible matrix such that $\tilde{G}^{-1}M\tilde{G}$ is in Jordan normal form. Let $\lambda_1,\dots,\lambda_n$ be the spectrum of $M$, repeated for algebraic multiplicity, so that
\begin{equation}\label{eq.Jordan}
	\tilde{M} = \tilde{G}^{-1}M\tilde{G} = \left(\begin{array}{cccc}
		\lambda_1 & \gamma_1 & 0 & 0
		\\ 0 & \ddots & \ddots & 0
		\\ 0 & 0 & \lambda_{n-1} & \gamma_{n-1}
		\\ 0 & 0 & 0 & \lambda_n
		\end{array}\right)
\end{equation}
for $\gamma_j \in \{0,1\}$ for all $j=1,\dots,n-1$. For $e_j$ the standard basis vector with $1$ in the $j$-th position and $0$ elsewhere, let $r_j$ be the order of the generalized eigenvector $e_j$ of $\tilde{M}$, meaning that
\begin{equation}\label{eq.def.rj}
	r_j = \min\{k \in \Bbb{N}^* \::\: (\tilde{M}-\lambda_j)^k e_j = 0\}.
\end{equation}
We define the complementary notion of the distance to the end of the Jordan block:
\begin{equation}\label{eq.def.rj.tilde}
	\tilde{r}_j = \max\{k \in \Bbb{N} \::\: e_j \in (\tilde{M}-\lambda_j)^k(\Bbb{C}^n)\},
\end{equation}
with the usual convention that $(M-\lambda_j)^0 = 1$, the identity matrix.  (These notions do not depend essentially on the Jordan normal form, so long as $e_j$ is replaced by a generalized eigenvector and $\lambda_j$ is replaced by the corresponding eigenvalue.) The definition of $\tilde{r}_j$ becomes useful since the action of $M-\lambda_j$ on a monomial is in the opposite direction from the action of $M-\lambda_j$ on the $e_j$, as we will see shortly.

In the Jordan normal form case, we note that $r_j + \tilde{r}_j$ is the size of the Jordan block containing $e_j$ and that $\gamma_j = 1$ implies that $\lambda_{j+1} = \lambda_j$.  Furthermore, $\gamma_j = 1$ if and only if $r_{j+1} > 1$ if and only if $\tilde{r}_j \geq 1$, and in this case $r_{j+1} = r_j + 1$ and $\tilde{r}_j = \tilde{r}_{j+1} + 1$.

In the following theorem, we identify the complete set of eigenfunctions of $P$, which can be traced back to \cite[Sec.~3]{Sj1974}, and show that the span of these eigenfunctions forms a core for $\exp(\tau P)$, which is novel and follows directly from Proposition \ref{prop.polys.core}.

\begin{theorem}\label{thm.eigen.core}
Let the matrix $M$, the weight $\Phi$, and the operators $P$ and $\exp(\tau P)$ be as in Proposition \ref{prop.solve.exptP}. Furthermore let the matrix $\tilde{G}$ be such that $\tilde{G}^{-1}M\tilde{G}$ is in Jordan normal form; also let the eigenvalues $\{\lambda_j\}_{j=1}^n$, repeated for algebraic multiplicity, and the orders $\{r_j\}_{j=1}^n$ and $\{\tilde{r}_j\}_{j=1}^n$ be as above.  Then
\[
	\{(\tilde{G}^{-1}z)^\alpha\}_{\alpha \in \Bbb{N}^n}
\]
form a complete set in $H_\Phi$ of generalized eigenvectors of $P$ with eigenvalues
\begin{equation}\label{eq.def.lambda.alpha}
	\lambda_\alpha = \sum_{j=1}^n \lambda_j \alpha_j
\end{equation}
and orders
\[
	r_\alpha = 1+\sum_{j=1}^n \tilde{r}_j\alpha_j.
\]
The span of these eigenfunctions (that is, the polynomials) form a core of $\exp(\tau P)$ considered on its maximal domain
\[
	\mathcal{D}_{\exp(\tau P)} = \{u\in H_\Phi \::\: \|u(e^{\tau M}\cdot)\|_{\Phi} < \infty \} = \{u \in H_\Phi \::\: \|u\|_{\Phi_{-\tau}} < \infty\}.
\]
\end{theorem}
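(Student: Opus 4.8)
The statement has two essentially independent parts, and I would dispatch the core claim first since it is immediate. The map $\exp(\tau P)$ is literally the operator $C_{e^{\tau M}}$ of \eqref{eq.def.C.F} with the invertible matrix $F = e^{\tau M}$, and the change of variables \eqref{eq.norm.change.vars} identifies its maximal domain $\mathcal{D}_{e^{\tau M}}$ from \eqref{eq.def.D.F} with the set $\{u \in H_\Phi : \|u\|_{\Phi_{-\tau}} < \infty\}$ displayed in the theorem. Since $z \mapsto \tilde{G}^{-1}z$ is an invertible linear change of variables, the span of $\{(\tilde{G}^{-1}z)^\alpha\}_{\alpha \in \Bbb{N}^n}$ is exactly the space of all polynomials on $\Bbb{C}^n$; Proposition \ref{prop.polys.core} applied with $F = e^{\tau M}$ then says precisely that this span is a core for $\exp(\tau P)$ on $\mathcal{D}_{\exp(\tau P)}$. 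Completeness of $\{(\tilde{G}^{-1}z)^\alpha\}$ in $H_\Phi$ is the same observation combined with the density of polynomials in $H_\Phi$ recalled after Proposition \ref{prop.solve.exptP}.

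For the description of the generalized eigenvectors I would first reduce to the Jordan normal form. Conjugating by the unitary $\mathcal{V}_{\tilde{G}}$ of \eqref{eq.def.V}, relation \eqref{eq.trans.V} turns $P$ into $\tilde{P} = \tilde{M}z \cdot \partial_z$ with $\tilde{M} = \tilde{G}^{-1}M\tilde{G}$ as in \eqref{eq.Jordan}, and $\mathcal{V}_{\tilde{G}}$ sends $(\tilde{G}^{-1}z)^\alpha$ to a nonzero multiple of the monomial $z^\alpha$. As a unitary intertwiner this preserves generalized eigenvalues and their orders, so it suffices to analyze $\tilde{P}$ acting on the monomials. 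A one-line computation gives
\[
	\tilde{P}z^\alpha = \lambda_\alpha z^\alpha + \sum_{j=1}^{n-1} \gamma_j \alpha_j\, z^{\alpha - e_j + e_{j+1}},
\]
and since $\gamma_j = 1$ forces $\lambda_{j+1} = \lambda_j$, every monomial on the right has the same value $\lambda_\alpha = \sum_j \lambda_j \alpha_j$. Hence $\tilde{P}$ preserves each finite-dimensional space $V_\lambda = \opnm{Span}\{z^\alpha : \lambda_\alpha = \lambda\}$, the operator $N = \tilde{P} - \lambda$ is nilpotent on $V_\lambda$, and each $z^\alpha$ is a generalized eigenvector with eigenvalue $\lambda_\alpha$; this is the computation of \cite[Sec.~3]{Sj1974}, and $N$ acts on monomials in the direction opposite to the action of $\tilde{M} - \lambda_j$ on the $e_j$, as already remarked before the theorem.

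It remains to compute the order. I would introduce the potential $w(\alpha) = \sum_{j=1}^n \tilde{r}_j \alpha_j \geq 0$. The term $z^{\alpha - e_j + e_{j+1}}$ appears only when $\gamma_j = 1$, i.e.\ when $j,j+1$ lie in a common Jordan block, and there $\tilde{r}_{j+1} = \tilde{r}_j - 1$; consequently every monomial occurring in $Nz^\alpha$ has potential exactly $w(\alpha) - 1$, which forces $N^{w(\alpha)+1}z^\alpha = 0$. For the matching lower bound I would write $N = \sum_b N_b$, one summand per Jordan block $b$ of $\tilde{M}$; these commute because distinct blocks involve disjoint sets of variables, so in the multinomial expansion of $N^{w(\alpha)}$ only the term with each $N_b$ raised to its individual maximal power on $z^\alpha$ survives. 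Within a single block of size $s$, after relabeling its variables $y_1,\dots,y_s$, one has $N_b = \sum_{i=1}^{s-1} y_{i+1}\partial_{y_i}$, which simply transports exponent mass toward the top variable $y_s$; applying it the maximal number of times produces a \emph{positive-integer} multiple of a power of $y_s$. Thus $N^{w(\alpha)}z^\alpha$ is a nonzero multiple of a single monomial, so $r_\alpha = 1 + w(\alpha)$.

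The bookkeeping with the Jordan structure is routine; the one point demanding care --- and the main obstacle --- is the non-cancellation at the maximal order, i.e.\ verifying that all the paths by which $N^{w(\alpha)}$ reaches the surviving monomial contribute with the same (positive) sign. This is exactly why I would pass to the block decomposition and reduce to the single-block operator $\sum_i y_{i+1}\partial_{y_i}$, for which the positivity is transparent; once that is in hand, everything else is formal.
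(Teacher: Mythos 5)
Your proposal is correct and follows essentially the same route as the paper: reduce to Jordan normal form via $\mathcal{V}_{\tilde{G}}$, compute $(P-\lambda_\alpha)z^\alpha$ as a positive-integer combination of monomials whose ``potential'' $\sum_j \tilde{r}_j\alpha_j$ drops by exactly one, and invoke Proposition \ref{prop.polys.core} together with density of polynomials for the core and completeness claims. Your block-decomposition argument for non-cancellation at the maximal order is a slightly more structured version of the paper's observation that the coefficients at every stage are positive integers, so no cancellation can occur and $(P-\lambda_\alpha)^{r_\alpha-1}z^\alpha$ is a nonzero multiple of the single monomial of potential zero.
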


\begin{proof}
By conjugating $P$ by $\mathcal{V}_{\tilde{G}}$ as in \eqref{eq.def.V}, it suffices to consider $M$ already in Jordan normal form as in \eqref{eq.Jordan}.  Then
\[
	P = \sum_{j=1}^n \lambda_j z_j \partial_{z_j} + \sum_{j=1}^{n-1} \gamma_j z_{j+1}\partial_{z_j},
\]
so
\[
	\begin{aligned}
	(P-\lambda_\alpha)z^\alpha &= \sum_{j=1}^{n-1} \gamma_j\alpha_j z^{\alpha-e_j+e_{j+1}}
	\\ &= \sum_{j\::\: \alpha_j \neq 0, \tilde{r}_j \neq 0} \alpha_j z^{\alpha_j +e_{j+1} -e_j},
	\end{aligned}
\]
using that $\gamma_j = 1$ if and only if $\tilde{r}_j \neq 0$.

We see that $(P-\lambda_\alpha)z^\alpha = 0$ if and only if $r_\alpha = 1$ and that otherwise $(P-\lambda_\alpha)z^\alpha$ is a linear combination, with coefficients in $\Bbb{N}^*$, of those monomials $z^{\alpha + e_{j+1}-e_j}$ for which
\[
	r_{\alpha + e_{j+1} - e_j} - r_\alpha = \tilde{r}_{j+1} - \tilde{r}_j = -1.
\]
When repeating this expansion, there can be no cancellation since the coefficients at each stage are positive, and we conclude by an induction argument that $r_\alpha$ is the minimal $N$ for which $(P-\lambda_\alpha)^N z^\alpha = 0$.  Therefore, for a combinatorial constant $C_\alpha \in \Bbb{N}^*$ which we do not compute here,
\begin{equation}\label{eq.gen.eigen.vanishing}
	(P-\lambda_\alpha)^{r_\alpha - 1}z^\alpha = C_\alpha z^{\tilde{\alpha}}
\end{equation}
for $\tilde{\alpha}$ the multi-index formed by pushing each $\alpha_j$ to the end of the corresponding Jordan block:
\[
	\tilde{\alpha}_j = \left\{\begin{array}{ll} 0, & \tilde{r}_j \neq 0, \\ \sum_{k=0}^{r_j-1} \alpha_j, & \tilde{r}_j = 0.\end{array}\right.
\]

For $M$ already in Jordan normal form, it is automatic that the span of the monomials $\{z^\alpha\}_{\alpha \in \Bbb{N}^n}$ is the set of polynomials.  Conjugation with $\mathcal{V}_{\tilde{G}}$ does not change this, since $\mathcal{V}_{\tilde{G}}$ is an isomorphism on the set of polynomials (or even on each set of homogeneous polynomials of fixed degree). For the claim that the polynomials form a complete set in $H_\Phi$, see \cite[Rem.~2.5]{Vi2013}, which relies essentially on \cite[Lem.~3.12]{Sj1974}.

That the polynomials form a core for $\exp(\tau P)$ is the content of Proposition \ref{prop.polys.core}.
\end{proof}

\subsection{Consequences}

We continue by deducing several consequences of our results on the operators $\exp(\tau P)$. These include necessary conditions for boundedness of $\exp(\tau P)$ based on the spectrum of $M$, a precise description of those $\tau \in \Bbb{C}$ for which $\exp(\tau P)$ is bounded as $|\tau| \to \infty$, a relationship between the Hermitian part of $\Phi$ and the decay of $\exp(-t P)u$ as $t \to \infty$, an analysis of the fragile case when $\opnm{Spec} M \cap i\Bbb{R} \neq \varnothing$, and an extension of the analysis whereby $P$ may essentially absorb linear terms with minimal changes to the character of the family of solution operators.

\begin{proposition}\label{prop.expansive}
Let the matrix $M$, the weight $\Phi$, and the operators $P$ and $\exp(\tau P)$ be as in Proposition \ref{prop.solve.exptP}. Let $\delta_0$ be as in \eqref{eq.def.delta.0}, and let the matrix $G$ be as in the decomposition \eqref{eq.decompose.Phi} of $\Phi$. Then
\begin{equation}\label{eq.expansive.bound}
	\delta_0 \leq -\log\|Ge^{\tau M}G^{-1}\|.
\end{equation}
In particular,
\begin{equation}\label{eq.expansive.Spec}
	\opnm{Spec}(\tau M) \subset \{\Re \lambda \leq -\delta_0\}.
\end{equation}

In addition, if $\exp(-tP)$ is bounded for all $t \in [0, \infty)$, then
\[
	\Re \langle GMG^{-1}z, z\rangle \geq 0, \quad \forall z \in \Bbb{C}^n.
\]
\end{proposition}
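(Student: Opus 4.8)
The plan is to establish the two claims separately, deriving \eqref{eq.expansive.bound} first and then reading off \eqref{eq.expansive.Spec} and the final inequality as consequences.

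For \eqref{eq.expansive.bound}, the key observation is that boundedness of $\exp(\delta P_0)\exp(\tau P)$ forces, via the norm identity in Proposition \ref{prop.solve.exptP} and Proposition \ref{prop.h.o.semigroup}, the embedding inequality $\Phi^{(\delta)}(e^{-\tau M}z) \geq \Phi(z)$ for all $z$. Using the decomposition \eqref{eq.decompose.Phi}, namely $\Phi(z) = \frac{1}{2}|Gz|^2 - \Re h(z)$, and the definition \eqref{eq.def.Phi.delta} of $\Phi^{(\delta)}$, the harmonic parts $-\Re h$ match up (the change of variables $z \mapsto e^{-\tau M}z$ does \emph{not} move the $h$-term in $\Phi^{(\delta)}$, only in the argument of $|G\cdot|^2$), so the inequality reduces to
\[
	e^{-2\delta}|Ge^{-\tau M}z|^2 \geq |Gz|^2, \quad \forall z \in \Bbb{C}^n.
\]
Wait --- one must be careful here: in $\Phi^{(\delta)}(e^{-\tau M}z)$ the term $\Re h$ is evaluated at $e^{-\tau M}z$, not at $z$, so the harmonic parts do not obviously cancel. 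The correct route is instead to quote Theorem \ref{thm.boundedness.delta}: $\delta \leq \delta_0$ iff $\exp(\delta P_0)\exp(\tau P)$ is bounded, and the remark following that theorem identifies this operator, up to unitaries, with the embedding $\iota : H_\Phi \to H_{\Phi^{(\delta)}(e^{-\tau M}\cdot)}$; then by Proposition \ref{prop.embedding} boundedness is equivalent to $\Phi^{(\delta)}(e^{-\tau M}z) \geq \Phi(z)$ everywhere. Now substitute $w = G^{-1}z$ (equivalently, apply the unitary $\mathcal{U}$) to remove the harmonic terms: in the $H_\Psi$ picture the weight $\Phi$ becomes $\Psi(z) = \frac12|z|^2$, the operator $P$ becomes $GMG^{-1}z\cdot\partial_z$, and $P_0$ stays $z\cdot\partial_z$; the embedding inequality becomes $e^{-2\delta}\tfrac12|G e^{-\tau M}G^{-1}z|^2 \geq \tfrac12|z|^2$ for all $z$, i.e. $e^{-2\delta}\|Ge^{-\tau M}G^{-1}z\|^2 \geq \|z\|^2$. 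Taking the infimum over unit $z$ gives $e^{-2\delta} \cdot s_{\min}(Ge^{-\tau M}G^{-1})^2 \geq 1$. Since $s_{\min}(Ge^{-\tau M}G^{-1}) = \|(Ge^{-\tau M}G^{-1})^{-1}\|^{-1} = \|Ge^{\tau M}G^{-1}\|^{-1}$, this reads $e^{-2\delta} \geq \|Ge^{\tau M}G^{-1}\|^2$, hence $\delta \leq -\log\|Ge^{\tau M}G^{-1}\|$; taking the sup over admissible $\delta$ yields \eqref{eq.expansive.bound}.

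For \eqref{eq.expansive.Spec}, the spectral radius of $Ge^{\tau M}G^{-1}$ equals that of $e^{\tau M}$, which is $\max_j |e^{\tau\lambda_j}| = \max_j e^{\Re(\tau\lambda_j)}$; since the spectral radius is bounded by the norm, $e^{\max_j \Re(\tau\lambda_j)} \leq \|Ge^{\tau M}G^{-1}\| \leq e^{-\delta_0}$, so $\Re(\tau\lambda_j) \leq -\delta_0$ for every $j$, which is exactly \eqref{eq.expansive.Spec}. For the last assertion, suppose $\exp(-tP)$ is bounded for all $t \geq 0$; then $\delta_0(-t) \geq 0$ for all $t > 0$, so by \eqref{eq.expansive.bound} we get $\|Ge^{-tM}G^{-1}\| \leq 1$, i.e. $\|e^{-tGMG^{-1}}\| \leq 1$ for all $t \geq 0$. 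This says the semigroup $\{e^{-tN}\}_{t\geq 0}$ with $N = GMG^{-1}$ is a contraction semigroup on $(\Bbb{C}^n, \|\cdot\|)$, which by the Lumer--Phillips theorem (or simply by differentiating $\frac{d}{dt}\|e^{-tN}z\|^2|_{t=0^+} \leq 0$) is equivalent to $\Re\langle Nz, z\rangle \geq 0$ for all $z$, i.e. $\Re\langle GMG^{-1}z, z\rangle \geq 0$.

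The main obstacle is purely bookkeeping: correctly tracking how the harmonic part $\Re h$ of the weight transforms under the two changes of variables (the one implementing $\exp(\tau P)$ and the unitary $\mathcal{U}$), so that the embedding inequality collapses to the clean spectral statement $e^{-2\delta}\|Ge^{-\tau M}G^{-1}z\|^2 \geq \|z\|^2$; once one passes to the $H_\Psi$ model via $\mathcal{U}$ this is automatic, since there $h \equiv 0$. Everything else is a short computation with operator norms, spectral radii, and the elementary characterization of contraction semigroups.
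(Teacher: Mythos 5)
Your overall strategy coincides with the paper's: extract from the weight inequality $\Phi^{(\delta)}(e^{-\tau M}z) \geq \Phi(z)$ the purely Hermitian inequality $e^{-2\delta}|Ge^{-\tau M}z|^2 \geq |Gz|^2$ and read off the matrix-norm bound. But the step where you justify discarding the pluriharmonic terms has a genuine gap. You first (rightly) notice that $\Re h(e^{-\tau M}z)$ and $\Re h(z)$ do not cancel, but then claim that conjugating by $\mathcal{U}$ fixes this because in the $H_\Psi$ picture ``$P$ becomes $GMG^{-1}z\cdot\partial_z$''. That is false: by Proposition \ref{prop.h.o.semigroup} it is $\mathcal{Q}_M = Mz\cdot(\partial_z + h'(z))$, not $P = Mz\cdot\partial_z$, whose $\mathcal{U}$-conjugate is $GMG^{-1}z\cdot\partial_z$. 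Conjugating $\exp(\delta P_0)\exp(\tau P)$ by $\mathcal{U}$ produces the operator in \eqref{eq.exptP.conjugated}, which is a change of variables \emph{times} multiplication by an exponential of a holomorphic quadratic; the pluriharmonic contribution survives, and the boundedness condition on $H_\Psi$ is the full inequality \eqref{eq.return.lem.hyp} (with $c_1=1$), not merely $\|Ge^{\delta}e^{\tau M}G^{-1}\|\leq 1$. So your displayed inequality $e^{-2\delta}|Ge^{-\tau M}G^{-1}z|^2 \geq |z|^2$ is not ``what the embedding inequality becomes'' --- it is a strictly weaker necessary condition, and you have not justified why it is implied.

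The missing idea is Lemma \ref{lem.decomposition}(ii): the difference $\Phi^{(\delta)}(e^{-\tau M}z)-\Phi(z)$ is a nonnegative, hence convex, real quadratic form, so its Hermitian part dominates the modulus of its pluriharmonic part and is in particular positive semidefinite. Since $\Re h(e^{-\tau M}z)$ and $\Re h(z)$ are pluriharmonic, the Hermitian part is exactly $\frac{e^{-2\delta}}{2}|Ge^{-\tau M}z|^2 - \frac12|Gz|^2$, and the inequality you want follows; this is precisely the paper's argument. With that repaired, the rest of your proof goes through: the passage from $e^{-2\delta}s_{\min}(Ge^{-\tau M}G^{-1})^2 \geq 1$ to \eqref{eq.expansive.bound}, the spectral-radius-bounded-by-norm deduction of \eqref{eq.expansive.Spec} (the paper tests on eigenvectors instead, which is equivalent), and the derivation of the final claim by differentiating the contraction property $\|Ge^{-tM}G^{-1}\|\leq 1$ at $t=0$. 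That last step is a legitimate shortcut relative to the paper, which instead decomposes $\Theta$ from Theorem \ref{thm.boundedness.Theta} into Hermitian and pluriharmonic parts --- but note that the paper's route and yours both ultimately rest on the same Lemma \ref{lem.decomposition}(ii) mechanism that your write-up omits.
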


\begin{remark*}
As a special case, we have that if $\exp(\tau P)$ is bounded, then $Ge^{\tau M}G^{-1}$ is a contraction (in the sense that its norm is at most one). In particular, $\exp(\tau P)$ can only be bounded if $\opnm{Spec}(\tau M) \subset \{\Re \lambda \leq 0\}$, as may be seen by testing $Ge^{\tau M}G^{-1}$ on $G$ applied to each eigenvector of $\tau M$.

It is also helpful to make a comparison with the case of a normal operator: if $A$ were a normal operator on a Hilbert space $\mathcal{H}$ with $\opnm{Spec} A$ equal to the eigenvalues of $P$ in \eqref{eq.def.lambda.alpha}, then \eqref{eq.expansive.Spec} with $\delta_0 = 0$ would be an exact description of the boundedness of the solution operator for $A$ in the sense that
\[
	\{\tau \::\: \Re (\tau \lambda_j) \leq 0, ~~j = 1,\dots, n\} = \{\tau \::\: e^{\tau A} \in \mathcal{L}(\mathcal{H})\}.
\]

Finally, note that as a special case of Theorem \ref{thm.bounded.tau}, we have a partial converse: if all eigenvalues of $M$ have strictly positive real parts, then $\exp(-tP)$ is bounded for all $t$ real and sufficiently large.
\end{remark*}

\begin{proof}
It is clear that the Hermitian part, defined in Lemma \ref{lem.decomposition}, of $\Phi^{(\delta_0)}(e^{-\tau M}z) - \Phi(z)$ when $\Phi$ is written using \eqref{eq.decompose.Phi} is
\begin{equation}\label{eq.expansive.proof}
	\frac{e^{-2\delta_0}}{2}|Ge^{-\tau M}z|^2 - \frac{1}{2}|Gz|^2.
\end{equation}
Recall from Lemma \ref{lem.decomposition} and Theorem \ref{thm.boundedness.delta} that this quantity must be nonnegative. Setting $y = Ge^{-\tau M}z$ gives
\[
	\frac{1}{2}\left(e^{-2\delta_0} |y|^2 - |Ge^{\tau M}G^{-1} y|^2\right) \geq 0,
\]
from which 
\[
	\|Ge^{\tau M}G^{-1}\|\leq e^{-\delta_0}.
\]
The estimate \eqref{eq.expansive.bound} follows.

If $\tau M v = \lambda v$ for $v \neq 0$, then
\[
	-\log \frac{|Ge^{\tau M}G^{-1} Gv|}{|Gv|} = -\Re \lambda.
\]
Therefore, by \eqref{eq.expansive.bound}, we see that $\Re \lambda \leq -\delta_0$ for all $\lambda \in \opnm{Spec}(\tau M)$, proving \eqref{eq.expansive.Spec}.

Similarly, the second claim follows from the calculation
\begin{equation}\label{eq.decompose.Theta}
	\begin{aligned}
	\Theta(z) &= 2\Re Mz \cdot \partial_z\left(\frac{1}{2}z\cdot \overline{G^*Gz} - \frac{1}{2}(h(z) + \overline{h(z)})\right)
	\\ &= \Re \left(Mz\cdot \left(\overline{G^*G z} - h'(z)\right)\right).
	\end{aligned}
\end{equation}
Since $Mz\cdot h'(z)$ is quadratic and holomorphic in $z$, the Hermitian part of $\Theta$ is
\[
	\frac{1}{2}\left(\Theta(z) + \Theta(iz)\right) = \Re \left(Mz \cdot \overline{G^*G z}\right),
\]
which must be positive semidefinite since $\Theta$ is by Theorem \ref{thm.boundedness.Theta} The second claim follows from writing this quantity as an inner product, moving the adjoint $G^*$ to the other side, a change of variables $y = Gz$.
\end{proof}

We continue with an observation that, since $\Phi$ is strictly convex, the matrix norm $\|e^{\tau M}\|$ can play a deciding role in determining whether $\exp(\tau P)$, or even $\exp(\delta P_0)\exp(\tau P)$, is bounded as in Theorems \ref{thm.boundedness.0} and \ref{thm.boundedness.delta}.  To begin, it is useful to identify the maximum $\delta$ such that $\Phi^{(\delta)}(z)$ is convex.

\begin{lemma}\label{lem.Delta.0}
	Let $\Phi$ obey \eqref{eq.Phi.assumptions}. Using the decomposition \eqref{eq.decompose.Phi}, we define the matrix
	\begin{equation}\label{eq.def.H}
		H = (G^{-1})^{\top}h''G^{-1}.
	\end{equation}
	For $\Phi^{(\delta)}$ in \eqref{eq.def.Phi.delta}, let $\Delta_0$ be defined by
	\[
		\Delta_0 = \sup\{\delta \in \Bbb{R} \::\: \forall z \in \Bbb{C}^n,~~\Phi^{(\delta)}(z) \geq 0\}.
	\]

	Then
	\begin{equation}\label{eq.def.Delta.0}
		\Delta_0 = -\frac{1}{2}\log\|H\|.
	\end{equation}
\end{lemma}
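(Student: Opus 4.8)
The plan is to reduce the claimed identity to a statement about the Hermitian and pluriharmonic parts of the quadratic form $\Phi^{(\delta)}$, and then to apply the convexity criterion of Lemma \ref{lem.decomposition}(ii). First I would write, using the decomposition \eqref{eq.decompose.Phi}, that
\[
	\Phi^{(\delta)}(z) = \frac{e^{-2\delta}}{2}|Gz|^2 - \Re h(z),
\]
so that the Hermitian part (in the sense of \eqref{eq.decompose.herm.plh}) is $\Phi^{(\delta)}_{\textnormal{herm}}(z) = \frac{e^{-2\delta}}{2}|Gz|^2$, while the pluriharmonic part is $\Phi^{(\delta)}_{\textnormal{plh}}(z) = -\Re h(z) = -\frac{1}{2}\Re(z\cdot h'' z)$, independent of $\delta$. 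By Lemma \ref{lem.decomposition}(ii), $\Phi^{(\delta)}$ is convex if and only if
\[
	\frac{e^{-2\delta}}{2}|Gz|^2 \geq \frac{1}{2}|\Re(z\cdot h'' z)|, \quad \forall z \in \Bbb{C}^n.
\]

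Next I would change variables by setting $y = Gz$, i.e. $z = G^{-1}y$, which is legitimate since $G$ is invertible (strict convexity of $\Phi$; Lemma \ref{lem.decomposition}(iv)). The inequality becomes
\[
	e^{-2\delta}|y|^2 \geq |\Re(y\cdot (G^{-1})^\top h'' G^{-1} y)| = |\Re(y \cdot Hy)|, \quad \forall y \in \Bbb{C}^n,
\]
with $H$ as in \eqref{eq.def.H}. The supremum of $|\Re(y\cdot Hy)|$ over the unit sphere $\{|y| = 1\}$ is exactly the operator norm $\|H\|$: indeed, $H$ may be taken symmetric (replacing $H$ by $\frac{1}{2}(H + H^\top)$ does not change the quadratic form $y\cdot Hy$), and for a complex symmetric matrix $\sup_{|y|=1}|\Re(y\cdot Hy)| = \sup_{|y|=1}|y\cdot Hy| = \|H\|$ — the first equality by choosing the phase of $y$, the second being the Takagi/singular-value characterization $\|H\| = \max_{|y|=1}|y\cdot Hy|$ for symmetric $H$. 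Hence $\Phi^{(\delta)}$ is convex if and only if $e^{-2\delta} \geq \|H\|$, i.e. $-2\delta \geq \log\|H\|$, i.e. $\delta \leq -\frac{1}{2}\log\|H\|$; taking the supremum over such $\delta$ gives $\Delta_0 = -\frac{1}{2}\log\|H\|$, which is \eqref{eq.def.Delta.0}. (One should note that $H$ could be the zero matrix, when $\Phi$ is already Hermitian; then $\Delta_0 = +\infty$ with the usual convention $\log 0 = -\infty$, consistent with $\Phi^{(\delta)}$ being convex for all $\delta$.)

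The main obstacle is the identity $\sup_{|y|=1}|\Re(y\cdot Hy)| = \|H\|$ for complex symmetric $H$; one must be careful that this uses symmetry of $H$ (which follows from symmetry of $h''$, the Hessian of the holomorphic quadratic $h$) and that the presence of $\Re$ is harmless precisely because one is free to rotate $y \mapsto e^{i\theta}y$, so $\sup |\Re(y\cdot Hy)| = \sup |y\cdot Hy|$. Everything else is a routine change of variables and the already-established convexity criterion of Lemma \ref{lem.decomposition}, together with the fact that $\Phi^{(\delta)}$ inherits from \eqref{eq.def.Phi.delta} a Hermitian part that is monotone in $\delta$ and a pluriharmonic part that does not move.
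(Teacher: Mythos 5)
Your proof is correct and takes essentially the same route as the paper's: both reduce, via the substitution $y = Gz$, to the inequality $e^{-2\delta}|y|^2 \geq |\Re(y\cdot Hy)|$ and identify the sharp constant $\|H\|$ by Cauchy--Schwarz in one direction and Takagi's factorization (part (\ref{it.decomposition.Sigma}) of Lemma \ref{lem.decomposition}) in the other, together with monotonicity of $\Phi^{(\delta)}$ in $\delta$. The only difference is presentational: you route explicitly through the Hermitian/pluriharmonic splitting of Lemma \ref{lem.decomposition}(ii), which the paper leaves implicit.
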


\begin{proof}
The lemma follows from recalling that $\Phi^{(\delta)}(z)$ is decreasing in $\delta$ and noting that
\[
	\Phi^{(\Delta_0)}(G^{-1}z) = \frac{1}{2}(\|H\||z|^2 - \Re (Hz\cdot z))
\]
is convex by the Cauchy-Schwarz inequality but is not strictly convex by part (\ref{it.decomposition.Sigma}) in Lemma \ref{lem.decomposition} (which is essentially Takagi's factorization).
\end{proof}

\begin{proposition}\label{prop.norm.necc.suff}
Let $\Phi$ obey \eqref{eq.Phi.assumptions}, fix $\delta < \Delta_0$ with $\Delta_0$ defined in \eqref{eq.def.Delta.0}, and recall the definition \eqref{eq.def.Phi.delta} of the weight $\Phi^{(\delta)}$. Let
\[
	C_0(\delta) = \sqrt{\frac{\inf_{|z|=1} \Phi(z)}{\sup_{|z|=1}\Phi^{(\delta)}(z)}}
\]
and
\[
	C_1(\delta) = \sqrt{\frac{\sup_{|z|=1} \Phi(z)}{\inf_{|z|=1}\Phi^{(\delta)}(z)}}.
\]
Then, in order to have $\delta_0 \geq \delta$ for $\delta_0$ in \eqref{eq.def.delta.0}, it is necessary that
\begin{equation}\label{eq.norm.etM.necc}
	\|e^{\tau M}\| \leq \frac{1}{C_0(\delta)}
\end{equation}
and sufficient that
\begin{equation}\label{eq.norm.etM.suff}
	\|e^{\tau M}\| \leq \frac{1}{C_1(\delta)}.
\end{equation}
\end{proposition}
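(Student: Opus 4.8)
The plan is to bound the embedding norm $\|\iota : H_\Phi \to H_{\Phi^{(\delta)}(e^{-\tau M}\cdot)}\|$ from above and below in terms of $\|e^{\tau M}\|$, using the characterization from Proposition~\ref{prop.embedding} that $\delta_0 \geq \delta$ holds precisely when $\Phi^{(\delta)}(e^{-\tau M}z) \geq \Phi(z)$ for all $z$. So the whole statement reduces to comparing the two quadratic forms $z \mapsto \Phi(z)$ and $z \mapsto \Phi^{(\delta)}(e^{-\tau M}z)$, and the strategy is to sandwich each one between a multiple of $|z|^2$ and trade the comparison for a statement purely about the operator norm $\|e^{-\tau M}\|$ (equivalently $\|e^{\tau M}\|$, after the substitution $w = e^{-\tau M}z$).

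First I would record the elementary bounds valid for any quadratic form $\Psi$ with $\inf_{|z|=1}\Psi$ and $\sup_{|z|=1}\Psi$: namely $\Psi(z)/|z|^2 \in [\inf_{|w|=1}\Psi, \sup_{|w|=1}\Psi]$, where these are positive reals as long as $\Psi$ is positive definite. The point of fixing $\delta < \Delta_0$ is exactly that, by Lemma~\ref{lem.Delta.0}, $\Phi^{(\delta)}$ is then \emph{strictly} convex, hence positive definite, so $\inf_{|z|=1}\Phi^{(\delta)}(z) > 0$ and the constants $C_0(\delta), C_1(\delta)$ are well-defined positive reals. Now for necessity: if $\delta_0 \geq \delta$ then $\Phi^{(\delta)}(e^{-\tau M}z) \geq \Phi(z)$ everywhere. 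Testing this at a point $z$ where $e^{-\tau M}z$ is a unit vector achieving the supremum of $\Phi^{(\delta)}$ on the unit sphere — equivalently, writing $w = e^{-\tau M}z$ with $|w| = 1$ chosen so that $\Phi^{(\delta)}(w) = \sup_{|w|=1}\Phi^{(\delta)}$ and $|z| = |e^{\tau M}w|$ — gives
\[
	\sup_{|w|=1}\Phi^{(\delta)}(w) \;\geq\; \Phi(e^{\tau M}w) \;\geq\; \Big(\inf_{|y|=1}\Phi(y)\Big)\,|e^{\tau M}w|^2.
\]
Maximizing over such $w$ forces $|e^{\tau M}w|^2 \leq C_0(\delta)^{-2}$ for \emph{some} unit $w$, but since we want a bound on the operator norm I would instead argue the contrapositive: if $\|e^{\tau M}\| > C_0(\delta)^{-1}$, pick a unit vector $w$ nearly achieving the norm and the same vector as the argument maximizing... more cleanly, take $w$ a unit vector with $|e^{\tau M}w|$ close to $\|e^{\tau M}\|$, then $\Phi(e^{\tau M}w) \geq (\inf\Phi)\|e^{\tau M}\|^2|w|^2 > (\inf\Phi)C_0(\delta)^{-2} \geq \sup_{|z|=1}\Phi^{(\delta)}(z) \geq \Phi^{(\delta)}(w)$, contradicting the inequality at the point $e^{-\tau M}(e^{\tau M}w) = w$. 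For sufficiency: if $\|e^{\tau M}\| \leq C_1(\delta)^{-1}$, then for every unit $w$,
\[
	\Phi(e^{\tau M}w) \leq \Big(\sup_{|y|=1}\Phi(y)\Big)\|e^{\tau M}\|^2 \leq \frac{\sup_{|y|=1}\Phi(y)}{C_1(\delta)^2} = \inf_{|z|=1}\Phi^{(\delta)}(z) \leq \Phi^{(\delta)}(w),
\]
and homogeneity in $w$ (both sides are quadratic forms) upgrades this to all $w \in \Bbb{C}^n$; substituting $w = e^{-\tau M}z$ gives $\Phi^{(\delta)}(e^{-\tau M}z) \geq \Phi(z)$, i.e. $\delta_0 \geq \delta$.

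The main subtlety — really the only one — is being careful with the direction of the substitution and the fact that $\|e^{-\tau M}\| \neq \|e^{\tau M}\|^{-1}$ in general for non-normal $M$, so one must phrase the inequalities in terms of a single consistent norm; working with $\|e^{\tau M}\|$ throughout and evaluating $\Phi$ on $e^{\tau M}w$ (rather than $\Phi^{(\delta)}$ on $e^{-\tau M}z$) keeps everything on one side. There is no compactness or regularity issue here because $\delta$ is fixed strictly below $\Delta_0$ and $n$ is finite, so all the sphere suprema and infima are attained and positive. The estimates are not sharp — $C_0$ and $C_1$ differ — which is expected since $\Phi$ and $\Phi^{(\delta)}$ need not be proportional; the gap closes exactly in the isotropic case $\Phi = \Psi$, which is the situation treated in \cite{AlVi2014a}.
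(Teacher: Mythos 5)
Your proposal is correct and follows essentially the same route as the paper: both reduce the condition $\Phi^{(\delta)}(e^{-\tau M}z)\geq\Phi(z)$ to a comparison on the unit sphere by homogeneity, sandwich each quadratic form between its infimum and supremum on $\{|z|=1\}$ times $|z|^2$, and convert the result into a bound on $\|e^{\tau M}\|$ (the paper phrases this as the ratio $\Phi(e^{\tau M}z)/\Phi^{(\delta)}(z)\leq 1$, which is your chain of inequalities in disguise). Your use of Lemma \ref{lem.Delta.0} to guarantee $\Phi^{(\delta)}$ is positive definite for $\delta<\Delta_0$, so that $C_0(\delta)$ and $C_1(\delta)$ are well-defined positive numbers, is exactly the paper's opening step as well.
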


\begin{remark*}
Note that if $\delta = 0$ in the lemma above, then $C_1 = \frac{1}{C_0}$ and we obtain a necessary condition and a sufficient condition in order for $\exp(\tau P)$ to be bounded.  For general $\delta$, we obtain a necessary condition and a sufficient condition for the operator \eqref{eq.compose.bounded.l} to be bounded.
\end{remark*}

\begin{proof}
By Lemma \ref{lem.Delta.0}, and that $\Phi^{(\delta)}$ is decreasing in $\delta$, we have that $\Phi^{(\delta)}$ is strictly convex whenever $\delta < \Delta_0$, so the definitions of $C_0(\delta)$ and $C_1(\delta)$ give positive real numbers.

We note that the inequality \eqref{eq.Phi.increases.delta} from the definition \eqref{eq.def.delta.0} of $\delta_0$ is equivalent to the statement
\begin{equation}\label{eq.Phi.decreases.ratio}
	\frac{\Phi(e^{\tau M}z)}{\Phi^{(\delta)}(z)} \leq 1, \quad \forall z \in \Bbb{C}^n\backslash \{0\}.
\end{equation}
We reduce to a comparison on the unit sphere by writing
\[
	\frac{\Phi(e^{\tau M}z)}{\Phi^{(\delta)}(z)} = \frac{|e^{\tau M}z|^2}{|z|^2}\Phi\left(\frac{e^{\tau M}z}{|e^{\tau M}z|}\right)\left(\Phi^{(\delta)}\left(\frac{z}{|z|}\right)\right)^{-1}.
\]
If there exists some $z_0 \in \Bbb{C}^n\backslash \{0\}$ for which $|e^{\tau M}z_0| > C_0(\delta) |z_0|$, then
\[
	\frac{\Phi(e^{\tau M}z_0)}{\Phi^{(\delta)}(z_0)} > \frac{1}{C_0(\delta)^2} \frac{\inf_{|z|=1}\Phi(z)}{\sup_{|z|=1} \Phi^{(\delta)}(z)} = 1,
\]
violating \eqref{eq.Phi.decreases.ratio}.  This proves that \eqref{eq.norm.etM.necc} is necessary to have $\delta_0 \geq \delta$.  On the other hand, if \eqref{eq.norm.etM.suff} holds, then for all $w \in \Bbb{C}^n$ we see that
\[
	\frac{\Phi(e^{\tau M}w)}{\Phi^{(\delta)}(w)} \leq \frac{1}{C_1(\delta)^2}\frac{\sup_{|z|=1} \Phi(z)}{\inf_{|z|=1}\Phi^{(\delta)}(z)} = 1.
\]
This proves sufficiency and completes the proof of the proposition.
\end{proof}

We now show that $\Delta_0$ from \eqref{eq.def.Delta.0} gives the maximal possible decay, in terms of $\exp(\delta P_0)$, for functions in the range of $\exp(\tau P)$. We also show that this maximal decay is attained in the limit whenever $\|e^{\tau M}\| \to 0$.

\begin{proposition}\label{prop.maximal.smoothing}
	Let $\Phi$ obey \eqref{eq.Phi.assumptions}, let $\delta_0 = \delta_0(\tau)$ be as defined in \eqref{eq.def.delta.0}, and recall the definition \eqref{eq.def.Delta.0} of $\Delta_0$. Then, for any $\tau \in \Bbb{C}$,
	\begin{equation}\label{eq.max.delta.0}
		\delta_0(\tau) < \Delta_0
	\end{equation}
	and if $\{\tau_k\}_{k\in \Bbb{N}}$ is a sequence of complex numbers for which $\|e^{\tau_k M}\| \to 0$, then 
	\[
		\lim_{k \to \infty} \delta_0(\tau_k) = \Delta_0.
	\]
\end{proposition}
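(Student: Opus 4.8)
The plan is to read off \eqref{eq.max.delta.0} from the degeneracy of the extremal weight $\Phi^{(\Delta_0)}$, and then to obtain the limit by combining \eqref{eq.max.delta.0} with the sufficient condition of Proposition \ref{prop.norm.necc.suff}. First I would prove \eqref{eq.max.delta.0}. By Theorem \ref{thm.boundedness.delta} we have $\delta_0 = \delta_0(\tau) \in \Bbb{R}$, and (as recorded in the proof of that theorem) the inequality \eqref{eq.Phi.increases.delta} holds at $\delta = \delta_0$, i.e.
\[
	\Phi^{(\delta_0)}(e^{-\tau M}z) \ge \Phi(z), \quad \forall z \in \Bbb{C}^n.
\]
Replacing $z$ by $e^{\tau M}w$ and using that $e^{\tau M}$ is invertible, this is the same as $\Phi^{(\delta_0)}(w) \ge \Phi(e^{\tau M}w)$ for all $w \in \Bbb{C}^n$; since $\Phi$ is strictly convex it is positive definite, so $\Phi(e^{\tau M}w) > 0$ whenever $w \ne 0$, and therefore $\Phi^{(\delta_0)}$ is itself positive definite. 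On the other hand, Lemma \ref{lem.Delta.0} tells us that $\Phi^{(\Delta_0)}$ is convex but not strictly convex, hence is a positive semidefinite quadratic form with nontrivial kernel and in particular vanishes somewhere on $\{|z| = 1\}$. Because $\Phi^{(\delta)}(z) = \Phi(z) + \frac{e^{-2\delta}-1}{2}|Gz|^2$ is strictly decreasing in $\delta$ for each fixed $z \ne 0$ (as $G$ is invertible), the assumption $\delta_0 \ge \Delta_0$ would give $\Phi^{(\Delta_0)}(z) \ge \Phi^{(\delta_0)}(z) > 0$ for all $z \ne 0$, contradicting the previous sentence. Hence $\delta_0(\tau) < \Delta_0$.

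For the limit, \eqref{eq.max.delta.0} applied to each $\tau_k$ already gives $\limsup_{k\to\infty} \delta_0(\tau_k) \le \Delta_0$. For the opposite inequality, fix $\delta < \Delta_0$; then $\Phi^{(\delta)}$ is strictly convex by Lemma \ref{lem.Delta.0}, so the constant $C_1(\delta)$ of Proposition \ref{prop.norm.necc.suff} is finite and positive. The sufficient condition \eqref{eq.norm.etM.suff} of that proposition says that $\|e^{\tau_k M}\| \le 1/C_1(\delta)$ forces $\delta_0(\tau_k) \ge \delta$, and since $\|e^{\tau_k M}\| \to 0$ this holds for all sufficiently large $k$. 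Thus $\liminf_{k\to\infty} \delta_0(\tau_k) \ge \delta$ for every $\delta < \Delta_0$, so $\liminf_{k\to\infty} \delta_0(\tau_k) \ge \Delta_0$, and together with the $\limsup$ bound this gives $\delta_0(\tau_k) \to \Delta_0$.

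The delicate point is the strictness in \eqref{eq.max.delta.0}: a direct comparison of weights only yields $\delta_0(\tau) \le \Delta_0$, and ruling out equality is exactly where one must use that \eqref{eq.Phi.increases.delta} is attained at $\delta = \delta_0$, so that the pulled-back weight $\Phi^{(\delta_0)}$ must \emph{strictly} dominate the strictly convex form $\Phi$ --- a gain that is incompatible with $\Phi^{(\Delta_0)}$ lying exactly on the boundary of the convex cone of weights. Everything else reduces to bookkeeping with Theorem \ref{thm.boundedness.delta}, Lemma \ref{lem.Delta.0} and Proposition \ref{prop.norm.necc.suff}.
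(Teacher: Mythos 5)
Your proof is correct and follows essentially the same route as the paper: the strict inequality $\delta_0(\tau)<\Delta_0$ comes from the degeneracy of $\Phi^{(\Delta_0)}$ (convex but not strictly convex, per Lemma \ref{lem.Delta.0}) clashing with the strict positivity forced by \eqref{eq.Phi.increases.delta} at $\delta=\delta_0$, and the limit comes from the sufficient condition \eqref{eq.norm.etM.suff} of Proposition \ref{prop.norm.necc.suff} applied to each $\delta<\Delta_0$. Your write-up is, if anything, slightly more careful than the paper's (which contains a sign slip, writing $\delta_0\leq\delta$ where $\delta_0\geq\delta$ is meant), so there is nothing to fix.
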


\begin{proof}
Since $\Phi(z)$ is strictly convex, $\Phi^{(\Delta_0)}(z)$ is not convex by Lemma \ref{lem.Delta.0}, and $e^{\tau M}$ is a linear bijection on $\Bbb{C}^n$, it is impossible to have $\Phi^{(\Delta_0)}(e^{\tau M}z) \geq \Phi(z)$ for all $z \in \Bbb{C}^n$ as in \eqref{eq.def.delta.0}. Therefore $\delta_0 < \Delta_0$. 

To prove the second claim, fix any $\delta < \Delta_0$. Since $\Phi^{(\delta)}(z)$ is strictly decreasing as a function of $\delta$ for $z \neq 0$, we see that $\Phi^{(\delta)}$ is strictly convex. Therefore, by Proposition \ref{prop.norm.necc.suff}, $\delta_0 \leq \delta$ for $\|e^{\tau_k M}\|$ sufficiently small, so the final claim of the proposition follows.
\end{proof}

These results motivate our interest in the set of $\tau$ for which $\|e^{\tau M}\|$ becomes small. Because $e^{\tau M}$ is always invertible, we can only have $\|e^{\tau M}\| \to 0$ as $|\tau|\to \infty$.

It is useful at this point to compute explicitly the matrix exponential of $M$ applied to a generalized eigenvector.  We refer to the definitions preceding Theorem \ref{thm.eigen.core}, including the definition of the order $r$ of a generalized eigenvector.

\begin{lemma}\label{lem.exptM.ej}
Let $M \in \Bbb{M}_{n\times n}$ and let $v\in\Bbb{C}^n$ be a generalized eigenvector of order $r$ with eigenvalue $\lambda$.  Then, as $|\tau| \to \infty$,
\[
	e^{\tau M}v = \frac{1}{(r-1)!}e^{\tau\lambda}\tau^{r-1}((M-\lambda)^{r-1}v + \BigO(|\tau|^{-1})).
\]
\end{lemma}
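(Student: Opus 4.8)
The plan is to peel off the scalar part of $M$ and exploit nilpotency. Write $N = M - \lambda I$, so that $M = \lambda I + N$ with the two summands commuting; consequently $e^{\tau M} = e^{\tau \lambda} e^{\tau N}$ as an identity of matrices. Since $v$ is a generalized eigenvector of order exactly $r$, we have $N^r v = 0$ while $N^{r-1}v \neq 0$, so applying this to $v$ the exponential series terminates:
\[
	e^{\tau M}v = e^{\tau\lambda}\sum_{k=0}^{r-1}\frac{\tau^k}{k!}N^k v.
\]
This is the whole computation in essence; everything else is bookkeeping on the orders in $\tau$.

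Next I would isolate the top-order term, $k = r-1$, which contributes $\frac{1}{(r-1)!}e^{\tau\lambda}\tau^{r-1}N^{r-1}v$, and collect the rest. Dividing through by the (nonzero, for $\tau \neq 0$) prefactor $\frac{1}{(r-1)!}e^{\tau\lambda}\tau^{r-1}$ turns the remaining sum into $\sum_{k=0}^{r-2}\frac{(r-1)!}{k!}\,\tau^{k-r+1}N^k v$. This is a finite sum — at most $r-1$ terms — of fixed vectors $N^k v$ multiplied by powers $\tau^{k-r+1}$ whose exponents are all $\leq -1$; hence its norm is $\BigO(|\tau|^{-1})$ as $|\tau|\to\infty$, uniformly in the obvious sense. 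Reassembling gives exactly the claimed expansion $e^{\tau M}v = \frac{1}{(r-1)!}e^{\tau\lambda}\tau^{r-1}\big((M-\lambda)^{r-1}v + \BigO(|\tau|^{-1})\big)$.

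There is no real obstacle here: the statement is a direct consequence of the finite Taylor expansion of $e^{\tau N}$ on the cyclic subspace generated by $v$. The only point deserving an explicit line is that $r$ is the order of $v$, so $(M-\lambda)^{r-1}v \neq 0$ and the leading term does not secretly vanish, which is what makes the $\BigO$ statement meaningful; and that the lower-order terms, once the prefactor is divided out, are genuinely $O(|\tau|^{-1})$ in norm — immediate since the sum is finite with fixed vector coefficients and each surviving power of $\tau$ has exponent at most $-1$. (One could equivalently pass to Jordan form for $M$ and read the same expansion off a single Jordan block, but the commuting splitting above avoids even that.)
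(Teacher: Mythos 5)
Your proof is correct and follows essentially the same route as the paper: factor $e^{\tau M}=e^{\tau\lambda}e^{\tau(M-\lambda)}$, note that the exponential series applied to $v$ terminates at $j=r-1$ by the definition of the order, and identify the $j=r-1$ term as the dominant one. The extra bookkeeping you include (dividing out the prefactor and bounding the finitely many lower-order terms) is a harmless elaboration of what the paper leaves implicit.
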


\begin{proof}
We write
\[
	\begin{aligned}
	e^{\tau M}v &= e^{\tau \lambda}e^{\tau(M-\lambda)}v
	\\ &= e^{\tau \lambda}\sum_{j=0}^\infty \frac{\tau^j}{j!} (M-\lambda)^j v.
	\end{aligned}
\]
By definition of the order $r$, the term $\frac{\tau^{r-1}}{(r-1)!}(M-\lambda)^{r-1}v$ in the sum is the nonvanishing term with the largest power of $\tau$, and the lemma follows.
\end{proof}

In particular, if $M$ is in Jordan normal form for which each standard basis vector $e_j$ is a generalized eigenvector of order $r_j$ with eigenvalue $\lambda_j$, then for all $j=1,\dots,n$
\[
	e^{\tau M}e_j = \frac{1}{(r_j-1)!}e^{\tau \lambda_j}\tau^{r_j-1}\left(e_{j-r_j+1} + \BigO(|\tau|^{-1})\right)
\]
as $|\tau| \to \infty$.

Since we now have a simple expansion for $e^{\tau M}e_j$ as $|\tau| \to \infty$, we can obtain a rather precise description of those $\tau$ for which $\exp(\tau P)$ is a bounded operator on $H_\Phi$, as $|\tau| \to \infty$, using the elementary inequality
\begin{equation}\label{eq.exptM.triangle}
	\max_{j=1,\dots,n} |e^{\tau M}e_j| \leq \|e^{\tau M}\| \leq \sqrt{n}\max_{j=1,\dots, n} |e^{\tau M}e_j|.
\end{equation}
Since
\begin{equation}\label{eq.exptM.ej.expansion}
	|e^{\tau M}e_j| = \exp\left(-\log((r_j-1)!) + (r_j-1)\log|\tau| + \Re(\tau \lambda_j)\right)(1+\BigO(|\tau|^{-1})),
\end{equation}
we see that if, for some $j$, we have $\Re(\tau \lambda_j) \gg \log |\tau|$, then $\|e^{\tau M}\| \to \infty$, so $\delta_0$ of \eqref{eq.def.delta.0} tends to $-\infty$ thanks to Proposition \ref{prop.norm.necc.suff}. Similarly, if $\Re(\tau \lambda_j) \ll -\log|\tau|$ for all $j = 1,\dots, n$, then $\|e^{\tau M}\| \to 0$, so $\delta_0 \to \Delta_0 = -\frac{1}{2}\log\|H\|$ as in Lemma \ref{lem.Delta.0}.

Therefore, if $\opnm{Spec} M$ is not contained in a half-plane, then $\|e^{\tau M}\| \to \infty$ as $|\tau| \to \infty$ regardless. The case where $\opnm{Spec} M$ is contained in a half-plane but no smaller sector is considered in Theorem \ref{thm.imaginary.eigenvalues}. By shifting the argument of $\tau$ if necessary, we assume for what follows that $\opnm{Spec} M \subset \{\Re \lambda > 0\}$. Writing
\[
	\lambda_j = \rho_je^{i\theta_j}
\]
for $\theta_j\in (-\pi/2, \pi/2)$, we may then define
\begin{equation}\label{eq.def.theta.pm}
	\begin{aligned}
	\theta_+ &= \max_{j=1,\dots,n} \theta_j,
	\\ \theta_- &= \min_{j=1,\dots,n} \theta_j.
	\end{aligned}
\end{equation}
If we also write
\[
	\tau = |\tau|e^{i\varphi},
\]
we have 
\begin{equation}\label{eq.Re.tau.lambda.cos}
	\Re(\tau \lambda_j) = |\tau|\rho_j\cos(\varphi + \theta_j).
\end{equation}
In supposing that $\cos(\varphi + \theta_j)$ is negative or small for each $j$, we assume that $\varphi+\theta_j \in [\pi/2-\delta, 3\pi/2+\delta]$ for all $j$ and for $\delta > 0$ small.  As a result,
\begin{equation}\label{eq.cos.maximized}
	\max_{j=1,\dots,n} \cos(\varphi+\theta_j) = \max\{\cos(\varphi + \theta_+), \cos(\varphi+\theta_-)\}.
\end{equation}

Of those eigenvalues $\lambda_j$ for which $\theta_j = \theta_+$ or $\theta_j = \theta_-$, we can identify the largest coefficient of the logarithmic correction coming from \eqref{eq.exptM.ej.expansion}:
\begin{equation}\label{eq.def.b.pm}
	b_\pm = \max_{\{j\::\:\theta_j = \theta_{\pm}\}} \frac{r_j - 1}{\rho_j}.
\end{equation}
In the regime $|\tau|\to \infty$, we record how the leading term of this expansion can determine whether $\|e^{\tau M}\| \to 0$ or $\|e^{\tau M}\|\to \infty$, depending principally on the argument of $\tau$.

\begin{proposition}\label{prop.exptM.asymptotic}
Suppose that $M \in \Bbb{M}_{n\times n}(\Bbb{C})$ is an invertible matrix in Jordan normal form for which $\opnm{Spec} M \subset \{\Re \lambda > 0\}$. Therefore write 
\[
	\opnm{Spec} M = \{\lambda_j = \rho_j e^{i\theta_j}\::\: j=1,\dots,n\},
\]
repeated for algebraic multiplicity, with $\theta_j \in (-\pi/2, \pi/2)$ and with orders of generalized eigenvectors $\{r_j\}_{j=1}^n$ as in \eqref{eq.def.rj}.  Let $\theta_\pm$ be as in \eqref{eq.def.theta.pm} and $b_\pm$ be as in \eqref{eq.def.b.pm}.

Then, for every $C_0 > 0$, there exists some $R_0, R_1 > 0$ such that
\[
	\|e^{\tau M}\| \leq \frac{1}{C_0}
\]
whenever $|\tau| \geq R_0$ and, for both signs,
\[
	\cos(\varphi + \theta_\pm) \leq \frac{1}{|\tau|}(-b_\pm \log|\tau| - R_1).
\]

Similarly, for every $C_0 > 0$, there exists some $R_0, R_1 > 0$ such that
\[
	\|e^{\tau M}\| \geq C_0
\]
whenever $|\tau| \geq R_0$ and, for at least one sign,
\[
	\cos(\varphi + \theta_\pm) \geq \frac{1}{|\tau|}(-b_\pm \log|\tau| + R_1).
\]
\end{proposition}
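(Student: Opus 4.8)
The strategy is to use the sandwich \eqref{eq.exptM.triangle} to reduce both assertions to estimates on the individual quantities $|e^{\tau M}e_j|$, $j=1,\dots,n$, for which \eqref{eq.exptM.ej.expansion} together with \eqref{eq.Re.tau.lambda.cos} gives, writing $\tau=|\tau|e^{i\varphi}$,
\[
	\log|e^{\tau M}e_j| = \rho_j|\tau|\cos(\varphi+\theta_j) + (r_j-1)\log|\tau| + \BigO(1),
\]
where the $\BigO(1)$ (which absorbs the constant $-\log((r_j-1)!)$ and the bounded term $\log(1+\BigO(|\tau|^{-1}))$) is uniform over the finitely many $j$ and over $|\tau|\ge R_0$, once $R_0$ is large enough that the factor $1+\BigO(|\tau|^{-1})$ in \eqref{eq.exptM.ej.expansion} lies in $[\tfrac12,2]$. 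Throughout I take $R_0\ge 1$ and $R_1>0$, so the right-hand sides of the hypotheses are $\le 0$; in particular $\cos(\varphi+\theta_+)\le 0$ and $\cos(\varphi+\theta_-)\le 0$, and since $0\le\theta_+-\theta_-<\pi$ this forces $\varphi+\theta_+$ and $\varphi+\theta_-$ to lie in a common component of $\{\psi:\cos\psi\le 0\}$, which after shifting $\varphi$ by a multiple of $2\pi$ we may take to be $[\pi/2,3\pi/2]$.

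For the contraction statement I would split the indices into those with $\theta_j\in\{\theta_-,\theta_+\}$ and the rest. In the first group $\rho_j b_\pm\ge r_j-1$ by the definition \eqref{eq.def.b.pm}, so the hypothesis $\cos(\varphi+\theta_\pm)\le\frac1{|\tau|}(-b_\pm\log|\tau|-R_1)$ gives $\rho_j|\tau|\cos(\varphi+\theta_j)\le -(r_j-1)\log|\tau|-\rho_j R_1$, hence $\log|e^{\tau M}e_j|\le -\rho_j R_1+\BigO(1)\le-\log(\sqrt n\,C_0)$ once $R_1$ is chosen large (using that $\rho_j$ is bounded below). For $j$ with $\theta_-<\theta_j<\theta_+$ the crude convexity bound \eqref{eq.cos.maximized} is not enough, and this is the one point that requires more: such a $\theta_j$ sits at a fixed positive distance $\gamma:=\min\{\theta_k-\theta_-,\ \theta_+-\theta_k : \theta_-<\theta_k<\theta_+\}$ from both extreme arguments, so with $\varphi+\theta_\pm\in[\pi/2,3\pi/2]$ as above we get $\varphi+\theta_j\in[\pi/2+\gamma,3\pi/2-\gamma]$ and therefore $\cos(\varphi+\theta_j)\le-\sin\gamma<0$. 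Then $\log|e^{\tau M}e_j|\le -\rho_j(\sin\gamma)|\tau|+(r_j-1)\log|\tau|+\BigO(1)$, which is $\le-\log(\sqrt n\,C_0)$ once $|\tau|\ge R_0$ with $R_0$ large enough (depending on $C_0$). Feeding $\max_j|e^{\tau M}e_j|\le 1/(\sqrt n\,C_0)$ back into \eqref{eq.exptM.triangle} yields $\|e^{\tau M}\|\le 1/C_0$.

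For the expansion statement it suffices, again by \eqref{eq.exptM.triangle}, to bound one term $|e^{\tau M}e_{j_0}|$ from below. In the sign for which the hypothesis holds, say $+$, choose $j_0$ realizing the maximum in \eqref{eq.def.b.pm}, so that $\theta_{j_0}=\theta_+$ and $\rho_{j_0}b_+ = r_{j_0}-1$ \emph{exactly}. Then $\cos(\varphi+\theta_+)\ge\frac1{|\tau|}(-b_+\log|\tau|+R_1)$ gives
\[
	\log|e^{\tau M}e_{j_0}| \ge \big(r_{j_0}-1-\rho_{j_0}b_+\big)\log|\tau| + \rho_{j_0}R_1 + \BigO(1) = \rho_{j_0}R_1 + \BigO(1),
\]
the logarithmic terms cancelling by the choice of $j_0$; taking $R_1$ large makes the right-hand side $\ge\log C_0$, so $\|e^{\tau M}\|\ge|e^{\tau M}e_{j_0}|\ge C_0$.

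The only genuine obstacle is the middle group of indices in the first statement: a logarithmic correction $(r_j-1)\log|\tau|$ attached to an eigenvalue of argument strictly between $\theta_-$ and $\theta_+$ cannot be absorbed using \eqref{eq.cos.maximized} alone, which controls $\cos(\varphi+\theta_j)$ merely by the larger of the two boundary cosines. Resolving it requires the quantitative angular separation described above — as soon as both boundary cosines are nonpositive, every intermediate $\varphi+\theta_j$ is pushed a fixed distance into the interior of $[\pi/2,3\pi/2]$, so its cosine stays bounded away from $0$ and the linear-in-$|\tau|$ decay dominates any polynomial-in-$\log|\tau|$ growth. The boundary indices are handled by the inequality $\rho_j b_\pm\ge r_j-1$, and the expansion statement is essentially immediate once $j_0$ is chosen to make this an equality.
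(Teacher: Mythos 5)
Your proof is correct and follows essentially the same route as the paper: reduce to the individual quantities $|e^{\tau M}e_j|$ via the sandwich \eqref{eq.exptM.triangle}, apply the expansion \eqref{eq.exptM.ej.expansion}, and separate the extreme-angle indices (where $\rho_j b_\pm \geq r_j-1$ does the work, with equality for the witness in the lower bound) from the intermediate ones. Your quantitative angular-separation estimate $\cos(\varphi+\theta_j)\leq -\sin\gamma$ for the intermediate indices makes explicit a step the paper compresses into the assertion that the maximum of $|e^{\tau M}e_j|$ is asymptotically attained at some $j$ with $\theta_j\in\{\theta_+,\theta_-\}$ and $\tfrac{r_j-1}{\rho_j}=b_\pm$.
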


\begin{remark*}
We may dispense with the hypothesis that $M$ is in Jordan normal form by taking into account the condition number of a matrix $\tilde{G}$ such that $\tilde{G}M\tilde{G}^{-1}$ is in Jordan normal form.  So long as the spectrum of $M$ is in a proper half-plane, we may obtain similar asymptotics by applying the proposition to $e^{i\theta_0} M$ for some $\theta_0 \in [0, 2\pi)$.  If the spectrum of $M$ is not contained in a half-plane, then $e^{\tau M} \to \infty$ exponentially rapidly as $|\tau| \to \infty$ since then there exists some $C > 0$ where every $\tau$ admits a $j$ with $\Re \tau \lambda_j \geq |\tau|/C$.  Some discussion of the situation when $\opnm{Spec}M$ is contained in a half-plane but no smaller sector appears in Theorem \ref{thm.imaginary.eigenvalues}. If $0 \in \opnm{Spec}M$ then $\|e^{\tau M}\| \geq 1$ always, and if a Jordan block corresponds to the zero eigenvalue, then $\|e^{\tau M}\| \to \infty$ at least polynomially rapidly as $|\tau| \to \infty$.
\end{remark*}

\begin{proof}
Since otherwise $\|e^{\tau M}\|\to \infty$, we may certainly assume that $\cos(\varphi+\theta_j) \in [-1, 1/2]$, in which case \eqref{eq.cos.maximized} holds.  By the expansion \eqref{eq.exptM.ej.expansion} and \eqref{eq.Re.tau.lambda.cos}, 
\[
	|e^{\tau M}e_j| = \exp\left(\frac{1}{\rho_j|\tau|}\left(\frac{r_j - 1}{\rho_j}\frac{\log|\tau|}{|\tau|} + \cos(\varphi+\theta_j)+\BigO(|\tau|^{-1})\right)\right)(1+\BigO(|\tau|^{-1})).
\]
As $|\tau| \to \infty$, the maximum of this quantity, ignoring the $\BigO(|\tau|^{-1})$ terms, for $j=1,\dots, n$ is attained for some $j$ where $\theta_j \in \{\theta_+, \theta_-\}$ and where $\frac{r_j-1}{\rho_j} = b_\pm$ accordingly.  The result then follows from \eqref{eq.exptM.triangle}.
\end{proof}

Up to shifting by constants, this allows us to describe the set of $\tau$ with $|\tau|$ large for which $\exp(\tau P)$ is bounded as in Theorem \ref{thm.boundedness.0} or even bounded after composing with $\exp(\delta P_0)$ as in Theorem \ref{thm.boundedness.delta}.

\begin{theorem}\label{thm.bounded.tau} Let the matrix $M$, the weight $\Phi$, and the operators $P$ and $\exp(\tau P)$ be as in Proposition \ref{prop.solve.exptP}. Recall the definitions \eqref{eq.def.delta.0} of $\delta_0$ and \eqref{eq.def.Delta.0} of $\Delta_0$.  Suppose in addition that $\opnm{Spec} M \subset \{\Re \lambda > 0\}$. For every $\delta \in (-\infty, \Delta_0)$ there exists $C_1, C_2 \in \Bbb{R}$ and $C_0 > 0$ such that $\delta_0 \geq \delta$ whenever $|\tau| \geq C_0$ and, for both signs,
\[
	\cos(\varphi + \theta_\pm) \leq \frac{1}{|\tau|}(-b_\pm \log|\tau| - C_1)
\]
and $\delta_0 \leq \delta$ whenever $|\tau| \geq C_0$ and, for at least one sign,
\[
	\cos(\varphi + \theta_\pm) \geq \frac{1}{|\tau|}(-b_\pm \log|\tau| + C_2).
\]
\end{theorem}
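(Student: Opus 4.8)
The plan is to combine Proposition \ref{prop.norm.necc.suff}, which converts the inequality $\delta_0 \geq \delta$ into a two-sided comparison with $\|e^{\tau M}\|$, with Proposition \ref{prop.exptM.asymptotic}, which controls $\|e^{\tau M}\|$ through the quantities $\cos(\varphi+\theta_\pm)$ as $|\tau| \to \infty$. Fix $\delta \in (-\infty, \Delta_0)$; since $\Phi^{(\delta)}$ is then strictly convex by Lemma \ref{lem.Delta.0}, the constants $C_0(\delta), C_1(\delta) > 0$ of Proposition \ref{prop.norm.necc.suff} are well-defined, and that proposition says $\delta_0 \geq \delta$ follows from $\|e^{\tau M}\| \leq 1/C_1(\delta)$ and, conversely, implies $\|e^{\tau M}\| \leq 1/C_0(\delta)$ (so $\|e^{\tau M}\| > 1/C_0(\delta)$ forces $\delta_0 < \delta$).

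First I would dispose of the Jordan-normal-form hypothesis present in Proposition \ref{prop.exptM.asymptotic} but absent here, as indicated in the remark following that proposition. Writing $\tilde M = \tilde G^{-1} M \tilde G$ in Jordan normal form as in \eqref{eq.Jordan}, the data $\theta_\pm$ of \eqref{eq.def.theta.pm} and $b_\pm$ of \eqref{eq.def.b.pm} depend only on the eigenvalues and the orders of generalized eigenvectors, which are conjugation-invariant, while $\|e^{\tau M}\| = \|\tilde G e^{\tau \tilde M}\tilde G^{-1}\|$ and $\|e^{\tau \tilde M}\|$ differ in either direction by at most the fixed factor $\kappa = \|\tilde G\|\,\|\tilde G^{-1}\|$. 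Hence $\|e^{\tau \tilde M}\| \leq 1/(\kappa C_1(\delta))$ implies $\|e^{\tau M}\| \leq 1/C_1(\delta)$, and $\|e^{\tau \tilde M}\| \geq 2\kappa/C_0(\delta)$ implies $\|e^{\tau M}\| > 1/C_0(\delta)$.

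Then I would apply Proposition \ref{prop.exptM.asymptotic} twice. For the first assertion, apply its first part with the constant $\kappa C_1(\delta)$ in the role of $C_0$: there are $R_0, R_1 > 0$ with $\|e^{\tau \tilde M}\| \leq 1/(\kappa C_1(\delta))$ — hence $\|e^{\tau M}\| \leq 1/C_1(\delta)$ and so $\delta_0 \geq \delta$ by Proposition \ref{prop.norm.necc.suff} — whenever $|\tau| \geq R_0$ and $\cos(\varphi+\theta_\pm) \leq \frac{1}{|\tau|}(-b_\pm\log|\tau| - R_1)$ for both signs; take $C_0 = R_0$, $C_1 = R_1$. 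For the second assertion, apply the second part with the constant $2\kappa/C_0(\delta)$: there are $R_0', R_1' > 0$ with $\|e^{\tau \tilde M}\| \geq 2\kappa/C_0(\delta)$ — hence $\|e^{\tau M}\| > 1/C_0(\delta)$, which by the contrapositive of Proposition \ref{prop.norm.necc.suff} gives $\delta_0 < \delta$, in particular $\delta_0 \leq \delta$ — whenever $|\tau| \geq R_0'$ and $\cos(\varphi+\theta_\pm) \geq \frac{1}{|\tau|}(-b_\pm\log|\tau| + R_1')$ for at least one sign; enlarging $C_0$ to $\max\{R_0, R_0'\}$ and setting $C_2 = R_1'$ finishes the proof.

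The argument is essentially bookkeeping; the only point demanding attention is reconciling the Jordan-normal-form hypothesis of Proposition \ref{prop.exptM.asymptotic} with the general matrix $M$ appearing in $P = Mz\cdot\partial_z$, which is handled by absorbing the condition number $\kappa$ of $\tilde G$ into the constants. I would also remark that the standing hypothesis $\opnm{Spec} M \subset \{\Re\lambda > 0\}$ is exactly what makes the angles $\theta_j \in (-\pi/2,\pi/2)$ and hence $\theta_\pm, b_\pm$ meaningful and ensures \eqref{eq.cos.maximized} is available, so no additional case analysis is needed here, the borderline situation where $\opnm{Spec} M$ meets the imaginary axis being treated separately in Theorem \ref{thm.imaginary.eigenvalues}.
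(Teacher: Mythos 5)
Your proposal is correct and follows exactly the route of the paper, whose proof consists of citing Proposition \ref{prop.norm.necc.suff}, Proposition \ref{prop.exptM.asymptotic}, and Lemma \ref{lem.Delta.0}; you have merely spelled out the bookkeeping, including the condition-number reduction to Jordan normal form already indicated in the remark following Proposition \ref{prop.exptM.asymptotic}.
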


\begin{remark*}
We again compare with the case of a normal operator $A$ on a Hilbert space $\mathcal{H}$ for which $\opnm{Spec}A$ is equal to the set of eigenvalues of $P$ given by \eqref{eq.def.lambda.alpha}. In this case,
\[
	\{\tau \::\: e^{\tau A} \in \mathcal{L}(\mathcal{H})\} = \{\tau = |\tau|e^{i\varphi} \::\: \cos(\varphi + \theta_+) \leq 0 \textnormal{  and  } \cos(\varphi + \theta_-) \leq 0\}.
\]
We therefore see that the set of $\tau$ for which $\exp(\tau P)$ is bounded and $\tau$ is large is substantially similar to the same set where $P$ is replaced by a normal operator sharing the eigenvalues of $P$.

In Figure \ref{fig.tau.Example} we have an diagram of a typical region in the complex plane indicated by the theorem.  We have set 
\[
	\opnm{Spec} M = \left\{\frac{5}{3}e^{i \pi/4}, 2e^{-i\pi/6}, \frac{5}{2}\right\},
\]
and the eigenvalues of $P$ are indicated by dots (with circles indicating the eigenvalues of $M$). We suppose that the eigenvalue $\frac{5}{3}e^{i \pi/4}$ is associated with a Jordan block of size 3 while the eigenvalue $2e^{-i\pi/6}$ is not associated with any nontrivial Jordan block.  Then the light grey area indicates the set of $\bar{\tau} \in \Bbb{C}$ where we know that $\exp(\tau P)$ is unbounded, and the dark grey area is the set of $\bar{\tau} \in \Bbb{C}$ where we know that $\exp(\tau P)$ is bounded, with constants $C_0, C_1,$ and $C_2$ chosen by hand.

In order to clarify that the boundary of the sets indicated are effectively the graphs of a logarithm for $|\tau|$ large, we consider
\begin{equation}\label{eq.large.tau.example}
	\left\{\tau = |\tau|e^{i\varphi} \::\: \cos(\varphi + \theta_+) = \frac{1}{|\tau|}(-b_+ \log|\tau| - C_1)\right\}
\end{equation}
for $\theta_+ = 0$ and $\Im \tau > 0$ as $|\tau|\to \infty$.  We therefore have $\cos(\varphi+\theta_+) = \cos(\varphi) = \Re \tau/|\tau|$, and we can write
\[
	\frac{\Im \tau}{|\tau|} = \sqrt{1-\frac{(\Re \tau)^2}{|\tau|^2}} = 1 + \BigO\left(\frac{(\log|\tau|)^2}{|\tau|}\right).
\]
Seeing that, in this case, $\Im \tau \approx |\tau|$ as $|\tau|\to \infty$, we get that the boundary \eqref{eq.large.tau.example} is contained, for $|\tau|$ sufficiently large, in the set
\[
	\left\{\tau\::\: \Re \tau = (-b_+ \log(\Im \tau) - C_1)\left(1+\BigO(\frac{(\Im \tau)^2}{|\tau|^2})\right)\right\}.
\]
\end{remark*}

\begin{proof}
The claim is immediate from Proposition \ref{prop.norm.necc.suff}, Proposition \ref{prop.exptM.asymptotic}, and the identification of $\Delta_0$ in Lemma \ref{lem.Delta.0}.
\end{proof}

\begin{figure}
\centering
	\includegraphics[width=0.70\textwidth]{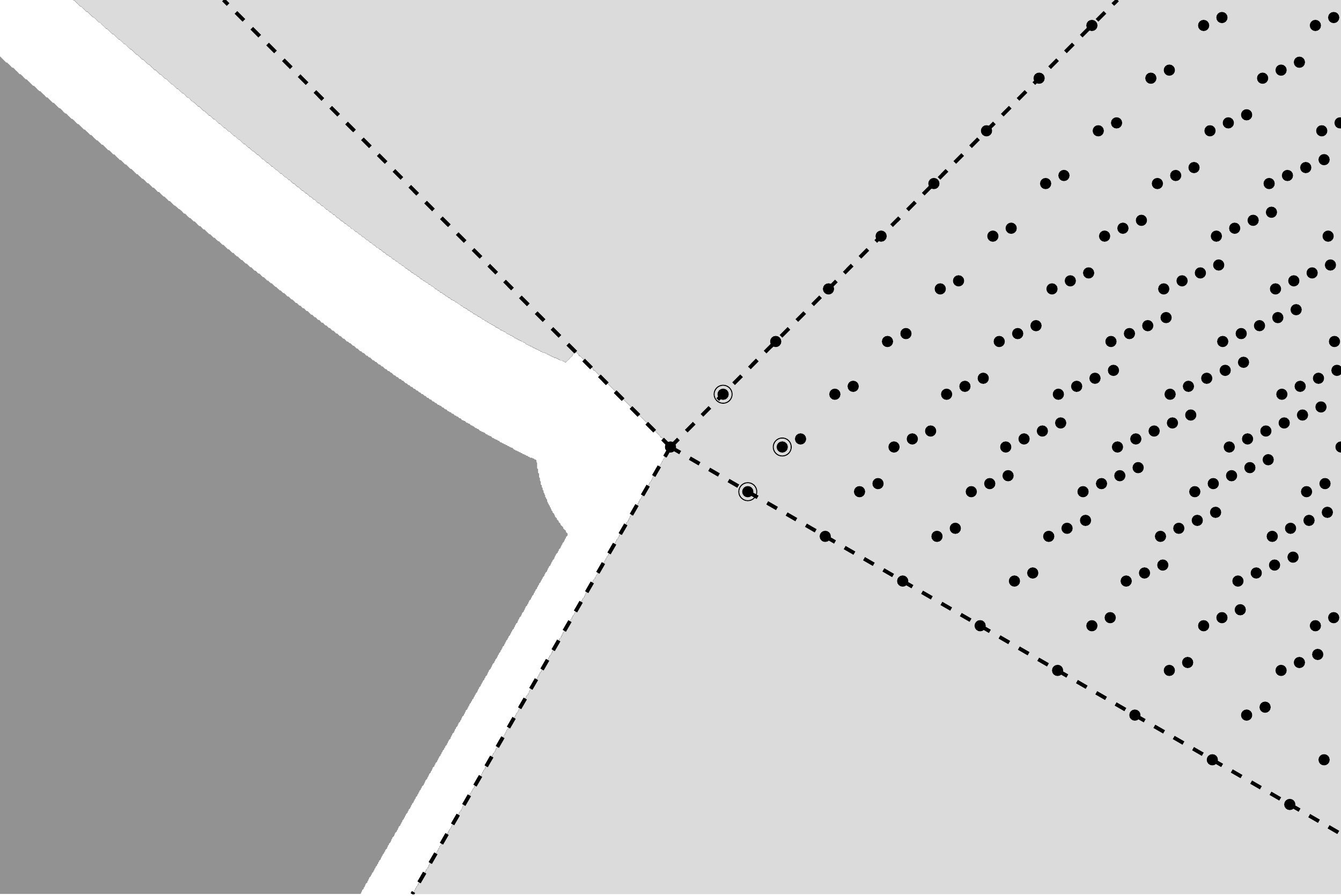}
	\caption{Diagram of a typical set of $\bar{\tau} \in \Bbb{C}$ for boundedness of $\exp(\tau P)$ as indicated in Theorem \ref{thm.bounded.tau}; see the remark following the theorem}
	\label{fig.tau.Example}
\end{figure}

We turn to the question of how imaginary eigenvalues of $M$ affect boundedness of $\exp(-tP)$, particularly for short times $t \to 0^+$ as in Theorem \ref{thm.boundedness.Theta}. We show here that this only occurs when $P$ is skew-adjoint in the variables in $\Bbb{C}^n$ corresponding to imaginary eigenvalues of $M$; see also \cite[Prop.~2.0.1,~(iii)]{HiPS2009} for a similar result in terms of quadratic operators on $L^2(\Bbb{R}^n)$.

We recall from Theorem \ref{thm.boundedness.Theta} that $\exp(-tP)$ is bounded for all $t > 0$ if and only if $\Theta \geq 0$ as in \eqref{eq.ellipticity}, and then from Proposition \ref{prop.expansive} we can conclude that $\opnm{Spec}M \subset \{\Re \lambda \geq 0\}$.  We therefore decompose $\Bbb{C}^n$ into the subspaces of generalized eigenvectors of $M$ corresponding to eigenvalues which are purely imaginary and those which have positive real parts:
\begin{equation}\label{eq.def.imag.V}
	V := \bigoplus_{\lambda \in (\opnm{Spec}M) \cap i\Bbb{R}} \ker(M-\lambda)^n
\end{equation}
and
\begin{equation}\label{eq.def.imag.W}
	W := \bigoplus_{\lambda \in (\opnm{Spec}M) \cap \{\Re \lambda > 0\}} \ker(M-\lambda)^n
\end{equation}

\begin{theorem}\label{thm.imaginary.eigenvalues}
Let $P$ be as in \eqref{eq.def.P} acting on $H_\Phi$ with $\Phi$ verifying \eqref{eq.Phi.assumptions}. Suppose that $\Theta$ from \eqref{eq.def.Theta} obeys \eqref{eq.ellipticity} and therefore define $V$ and $W$ as in \eqref{eq.def.imag.V} and \eqref{eq.def.imag.W} as well as the projection $\pi_W$ such that $\pi_W z \in W$ and $(1-\pi_W)z \in V$.  Let the matrix $G$ and the function $h$ be as in the decomposition \eqref{eq.decompose.Phi}.

Then $GMG^{-1}|_{GV}$ is skew-adjoint, $GV \perp GW$, and 
\begin{equation}\label{eq.M.h.pi.W}
	Mz\cdot \partial_z h(z) = Mz\cdot \partial_z h(\pi_W z)
\end{equation}
with $\pi_W$ the projection onto $W$ defined by $\Bbb{C}^n = V \oplus W$. Furthermore, $\Theta|_V = 0$ and $\Theta|_W \geq 0$.
\end{theorem}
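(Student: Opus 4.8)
The plan is to work with a convenient normalization of the weight $\Phi$ and of the matrix $M$ simultaneously, so that the claimed structural facts can be read off from the nonnegativity of $\Theta$. First I would apply the unitary change of variables $\mathcal{V}_G$ from \eqref{eq.def.V} to reduce to the case $G = I$, so that $\Phi(z) = \tfrac12|z|^2 - \Re h(z)$ and, by \eqref{eq.decompose.Theta}, $\Theta(z) = \Re(Mz\cdot(\bar z - h'(z)))$. The Hermitian part of $\Theta$ is then $\Re(Mz\cdot\bar z) = \Re\langle Mz,z\rangle$, which by Theorem \ref{thm.boundedness.Theta} and Proposition \ref{prop.expansive} is positive semidefinite; call this quadratic form $\theta_{\mathrm{herm}}(z)\ge 0$. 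The key observation is that on the generalized eigenspace $V$ of purely imaginary eigenvalues, $\theta_{\mathrm{herm}}$ must vanish identically: if $\lambda = i\mu\in i\Bbb{R}$ and $v\in\ker(M-\lambda)$ is an honest eigenvector, then $\Re\langle Mv,v\rangle = \Re(i\mu)|v|^2 = 0$, and a short induction up the Jordan chain, using that $\theta_{\mathrm{herm}}\ge 0$ forces the off-diagonal (in $\tau$) terms $\Re\langle M v_k, v_\ell\rangle$ to cancel on $V$, shows $\theta_{\mathrm{herm}}|_V\equiv 0$. Since a positive semidefinite quadratic form vanishing on a subspace is orthogonal to that subspace (its null space is a linear subspace containing $V$), we get that $M|_V$ is skew-adjoint with respect to the standard inner product on the reduced space, i.e.\ $GMG^{-1}|_{GV}$ is skew-adjoint, and that $V\perp W$ in the reduced inner product, i.e.\ $GV\perp GW$.

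Next I would address \eqref{eq.M.h.pi.W}. Write $z = v + w$ with $v = (1-\pi_W)z\in V$, $w = \pi_W z\in W$. Since $h$ is a holomorphic quadratic form, $Mz\cdot\partial_z h(z) = Mz\cdot h'(z)$ is holomorphic quadratic, and it splits into the three pieces $Mv\cdot h'(v)$, $Mv\cdot h'(w) + Mw\cdot h'(v)$, and $Mw\cdot h'(w)$; the claim is that the first two vanish. For the pure-$V$ piece: the holomorphic quadratic form $z\mapsto Mz\cdot h'(z)$ restricted to $V$ contributes to $\Theta$ only through $\Re(Mv\cdot h'(v))$, and since $\theta_{\mathrm{herm}}|_V = 0$ we would deduce $\Theta|_V = -\Re(Mv\cdot h'(v))$; but $\Theta\ge 0$ and a real part of a holomorphic quadratic form cannot be sign-definite unless it is zero, so $\Theta|_V = 0$ and the holomorphic quadratic form $Mv\cdot h'(v)$ vanishes on $V$ (here one uses that $\Theta|_V = -2\Re(\text{holomorphic quadratic})$ being $\ge 0$ forces the holomorphic quadratic to be identically zero on $V$). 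For the cross term, a slightly more careful argument is needed: one expands $\Theta(v + tw)$ for real $t$ and extracts the coefficient linear in $t$, which is a real part of a holomorphic bilinear form in $(v,w)$; nonnegativity of $\Theta$ near the subspace $V$ (where $\Theta$ vanishes) forces this linear-order term to vanish, hence $Mv\cdot h'(w) + Mw\cdot h'(v) \equiv 0$. Combining, $Mz\cdot h'(z) = Mw\cdot h'(w) = M(\pi_W z)\cdot\partial_z h(\pi_W z)$, which together with $M$ leaving $W$ invariant gives \eqref{eq.M.h.pi.W}. Finally, $\Theta|_W\ge 0$ is immediate from $\Theta\ge 0$ by restriction, and $\Theta|_V = 0$ was obtained along the way.

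The main obstacle I expect is the bilinear cross-term vanishing: isolating, from the inequality $\Theta\ge 0$ on all of $\Bbb{C}^n$, the fact that the mixed $V$--$W$ contribution to $\Theta$ vanishes rather than merely being controlled. The clean way to handle this is to note that $\Theta$ is a real quadratic form, $V$ is contained in its null cone (indeed in a maximal isotropic-for-$\Theta$ piece once $\Theta|_V = 0$ is known), and for a nonnegative quadratic form the null set is a linear subspace on which the associated symmetric bilinear form is degenerate; thus $V$ lies in the radical of $\Theta$, which says exactly that $\Theta(v + z') = \Theta(z')$ for all $v\in V$, $z'\in\Bbb{C}^n$, and in particular the $t$-linear term in $\Theta(v + tw)$ vanishes. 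One must be a little careful that $\Theta$ as a real quadratic form on $\Bbb{C}^n\cong\Bbb{R}^{2n}$ has a $\Bbb{C}$-linear structure interacting with the decomposition $\Bbb{C}^n = V\oplus W$ (which is a $\Bbb{C}$-linear, not necessarily orthogonal, splitting), so the radical argument should be phrased over $\Bbb{R}^{2n}$ and then the $\Bbb{C}$-structure used only at the end to rewrite things in terms of $h'$. Everything else is bookkeeping with Jordan chains and the elementary fact, used repeatedly, that the real part of a nonzero holomorphic quadratic form is never sign-definite.
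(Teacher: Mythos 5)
Your proposal is correct and follows essentially the same route as the paper: nonnegativity of the Hermitian part $\Re\langle GMG^{-1}z,z\rangle$ pins down the structure of $M$ on $V$, and nonnegativity of $\Theta$ together with the fact that the real part of a nonzero holomorphic quadratic form is never semidefinite kills the pure-$V$ and cross contributions of $h$; your ``radical of a positive semidefinite form'' packaging is just Cauchy--Schwarz, which is what the paper's argument of varying $\arg\alpha$ and letting $\beta\to 0$ amounts to. One small correction: in your induction up the Jordan chain, nonnegativity of $\Re\langle Mv,v\rangle$ does not make off-diagonal terms ``cancel'' along a nontrivial chain --- it forces the chain to be trivial (a block of size at least $2$ over an imaginary eigenvalue produces the indefinite term $\Re(\bar\alpha\beta|v|^2)$), which is precisely how one shows $M|_V$ is diagonalizable and hence skew-adjoint.
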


\begin{remark*}
The proof implies a reduction to a normal form in which the action of $P$ on the $V$ variables becomes very simple. Specifically, letting
\[
	g(z) = h(z) - h(\pi_W z),
\]
we have that conjugation with $\mathcal{W}_g$ as in \eqref{eq.def.W} eliminates dependence of $h$ on the $V$ variables, and then conjugation with $\mathcal{V}_G^*$ as in \eqref{eq.def.V} reduces $G$ to the identity matrix and replaces $M$ with $GMG^{-1}$.  A final change of variables $\mathcal{V}_U$ for a unitary matrix $U$ then reduces $GV$ to $\{(z',0)\}$ and $GW$ to $\{(0, z'')\}$ while diagonalizing the skew-adjoint matrix $GMG^{-1}|_{GV}$.

As a result we have a unitary equivalence between $P = Mz\cdot \partial_z$ acting on $H_\Phi$ and $M'z\cdot\partial_z$ acting on $H_{\tilde{\Phi}}$. Writing $\opnm{Spec}M \cap i\Bbb{R} = \{i\rho_1, \dots, i\rho_J\}$, counted for multiplicity and with $J = \dim V$, we have
\[
M' = \left(
\begin{array}{ccc|c}
i\rho_1 & & 0 & 0
\\ & \ddots & & \vdots
\\ 0 & & i\rho_J & 0
\\ \hline
0 & \cdots & 0 & M''
\end{array}\right)
\]
for some matrix $M'' \in \Bbb{M}_{(n-J)\times (n-J)}(\Bbb{C})$ and, writing $z = (z', z'') \in \Bbb{C}^J \times \Bbb{C}^{n-J}$,
\[
	\tilde{\Phi}(z) = \frac{1}{2}|z'|^2 + \tilde{\Phi}_2(z'').
\]

After the proof, we illustrate the situation discussed and complications which may arise with two examples.
\end{remark*}

\begin{proof}
To simplify the exposition, let $\tilde{M} = GMG^{-1}$; then $\tilde{V} = GV$ and $\tilde{W} = GW$ form the sums of generalized eigenspaces of $\tilde{M}$ corresponding to purely imaginary eigenvalues and eigenvalues with positive real parts. By Proposition \ref{prop.expansive},
\[
	\Re \langle \tilde{M}z, z\rangle \geq 0, \quad \forall z \in \Bbb{C}^n.
\]

Let $v, x \in \Bbb{C}^n$ and suppose that $\tilde{M}v = i\rho v$ for $\rho \in \Bbb{R}$.  Then for $\alpha, \beta \in \Bbb{C}$,
\[
	\begin{aligned}
	\Re \langle \tilde{M}(\alpha v + \beta x), \alpha v + \beta x\rangle &= \Re\left(i\rho |\alpha v + \beta x|^2 + \beta \langle (\tilde{M} - i\rho)x, \alpha v + \beta x\rangle\right)
	\\ & = \Re \left(\bar{\alpha}\beta\langle (\tilde{M}-i\rho)x, v\rangle + \BigO(|\beta|^2)\right).
	\end{aligned}
\]
This quantity must be non-negative for all $\alpha, \beta \in \Bbb{C}$, so it is clear from allowing $\alpha$ to vary that
\[
	\langle (\tilde{M}-i\rho)x, v\rangle = 0.
\]

This gives the following immediate consequences.  If $\tilde{M}v = i\rho v$ with $\rho \in \Bbb{R}$ and $\tilde{M}\tilde{v} = i\rho \tilde{v} + v$, then 
\[
	|v|^2 = \langle (\tilde{M}-i\rho)\tilde{v}, v\rangle = 0.
\]
Therefore every generalized eigenvector of $\tilde{M}|_{\tilde{V}}$ is an eigenvector, which is to say that $\tilde{M}_{\tilde{V}}$ is diagonalizable.  Similarly, if $\tilde{M}v = i\rho v$ with $\rho \in \Bbb{R}$ and $\tilde{M}w = \mu w$ for $\mu \neq v$, then $v \perp w$.  Therefore $\tilde{M}|_{\tilde{V}}$ has an orthonormal basis of eigenvectors and $\tilde{V}$ is orthogonal to any eigenvector of $M$ lying in $\tilde{W}$.  If we assume that $\tilde{M}v = i\rho v$ with $\rho \in \Bbb{R}$, that $\tilde{M}\tilde{w} = \mu\tilde{w} + w$ for $\mu \neq i\rho$, and that $w \perp v$, then $v \perp \tilde{w}$.  In this way, we see that every such $v$ is orthogonal to every generalized eigenvector of $\tilde{M}$ with a different eigenvalue, and therefore $\tilde{V} \perp \tilde{W}$.

Since $\tilde{M}|_{\tilde{V}}$ has an orthonormal basis of eigenvectors and 
\[
	\opnm{Spec}\tilde{M}|_{\tilde{V}} = \opnm{Spec} M|_V \subset i\Bbb{R},
\]
we see that $\tilde{M}|_{\tilde{V}}$ is skew-adjoint. From the definitions of the matrix $\tilde{M} = GMG^{-1}$ and the subspaces $\tilde{V} = GV$ and $\tilde{W} = GW$, all that remains is to prove that $\Theta|_V = 0$ and \eqref{eq.M.h.pi.W}.

For any $v \in V$ and $w \in W$, we may write $\Theta(v+w)$ as
\[
	\Theta(v+w) = \Re \left(\langle GM(v+w), G(v+w)\rangle - M(v+w)\cdot h'(v+w)\right).
\]
We have that $\langle GMG^{-1}Gv, Gv\rangle$ is purely imaginary since $\tilde{M}|_{\tilde{V}}$ is skew-adjoint.  Since $\tilde{V}$ and $\tilde{W}$ are $\tilde{M}$-invariant and orthogonal,
\[
	\langle GMG^{-1}Gw, Gv\rangle = \langle GMG^{-1}Gv, Gw\rangle = 0.
\]
Therefore, for fixed vectors $v \in V$ and $w \in W$ and for $\alpha, \beta \in \Bbb{C}$,
\[
	\Theta(\alpha v + \beta w) = \Re\left(-\alpha^2 Mv\cdot h'(v) - \alpha\beta (Mv\cdot h'(w) + Mw\cdot h'(v)) + \BigO(|\beta|^2)\right).
\]
Letting the argument of $\alpha$ vary and letting $\beta \to 0$, we discover first that $Mv\cdot h'(v) = 0$, implying that $\Theta|_V = 0$, and then that $Mv\cdot h'(w) + Mw\cdot h'(v) = 0$.  Expanding out $Mz\cdot h'(z)$ for $z = (1-\pi_W)z+\pi_W z \in V \oplus W$, we see that
\[
	Mz\cdot h'(z) = M\pi_W w\cdot h'(\pi_W w).
\]
Since $V$ and $W$ are $M$-invariant, $[M, \pi_W] = 0$. Furthermore, 
\[
	\pi_W^\top h'(\pi_W z) = \partial_z h(\pi_W z),
\]
and this suffices to prove \eqref{eq.M.h.pi.W}.
\end{proof}

\begin{example}
The conclusions of Theorem \ref{thm.imaginary.eigenvalues} do not necessarily hold if one assumes only that $\exp(-tP)$ is unbounded for some, or even infinitely many, positive times.  The natural example is
\[
	P = iz\cdot \partial_z
\]
acting on $H_\Phi$ with
\[
	\Phi(z) = \frac{1}{2}(|z|^2 - a\Re z^2)
\]
for some $a \in (0,1)$.  Following \eqref{eq.RHO.weight}, we see that $P$ is unitarily equivalent to $\frac{i}{2}(Q_\theta-e^{i\theta})$ with $Q_\theta$ from \eqref{eq.RHO} and $\theta = \arcsin a$.

It is then easy to check from Theorem \ref{thm.boundedness.0} that $\exp(-tP)$ is unbounded unless $t/\pi = j \in \Bbb{Z}$, and in this case $\exp(-\pi j P)u = (-1)^ju$.
\end{example}

\begin{example}\label{ex.nontrivial.trivial.h}
While the conclusion of Theorem \ref{thm.imaginary.eigenvalues} does not say that the function $h$ (representing the pluriharmonic part of the weight $\Phi$) does not depend on the $V$ variables, it does say that, due to cancellation from $Mz$, the role of these variables in $h$ does not affect $P$ and may be eliminated with a unitary transformation of type \eqref{eq.def.W}.

A natural, if somewhat degenerate, example, is given by
\[
	M = \left(\begin{array}{cc} i & 0 \\ 0 & -i \end{array} \right)
\]
and
\[
	\Phi(z) = \frac{1}{2}|z|^2 - a\frac{1}{2}\Re z_1z_2, \quad a \in (-1, 1).
\]
Since, in this case,
\[
	Mz\cdot h'(z) = (iz_1, -iz_2)\cdot \frac{1}{2}(z_2, z_1) = 0,
\]
the reduction of Theorem \ref{thm.imaginary.eigenvalues} gives that $P = Mz\cdot \partial_z$ acting on $H_\Phi$ is unitary and unitarily equivalent to $Mz\cdot \partial_z$ acting on $H_\Psi$ with $\Psi(z) = \frac{1}{2}|z|^2$.

We say this example is somewhat degenerate because $\opnm{Spec}P = i\Bbb{Z}$ and, for any $j, k \in \Bbb{Z}$ with $j \geq 0$ and $j \geq k$,
\[
	z_1^{k+j}z_2^j \in \ker(Mz\cdot \partial_z -ik),
\]
and so $\dim \ker (Mz \cdot \partial_z - ik) = \infty$. What is more, so long as $f$ is an entire function on $\Bbb{C}$ for which $z_1^k f(z_1 z_2) \in H_\Phi$, clearly $z_1^k f(z_1z_2) \in \ker (Mz\cdot \partial_z - ik)$.

Setting $g(z) = az_1z_2$ and writing $\mathcal{W}_g$ as in \eqref{eq.def.W} gives that $\mathcal{W}_g:H_\Phi \to H_{\Psi}$ is unitary and that
\[
	\mathcal{W}_g^* Mz\cdot \partial_z \mathcal{W}_g = e^{-az_1z_2}i(z_1\partial_{z_1} - z_2\partial_{z_2})e^{az_1z_2} = Mz\cdot \partial_z.
\]
Again, the fact that $Mz\cdot\partial_z$ is unchanged under conjugation by $\mathcal{W}_g$ is quite special and reflects that $Mz\cdot \partial_z (az_1z_2) = 0$, as in \eqref{eq.M.h.pi.W} with $\pi_W = 0$. After conjugation by $\mathcal{W}_g$, it is clear also that $P$ is unitarily equivalent to $i$ times a harmonic oscillator in the $x_1$ variable plus $-i$ times a harmonic oscillator in the $x_2$ variable, acting on $L^2(\Bbb{R}^2)$, since the classical Bargmann transform relates the harmonic oscillator $Q_0$ to $z\cdot\partial_z$ acting on $H_\Psi$.
\end{example}

We consider finally a more general class of operators where we allow the inclusion of terms which are first-order in $(z,\partial_z)$. It is clear that introducing a constant term would not affect whether the operator $\exp(\tau P)$ is bounded or not; apart for the boundary case where equality holds in \eqref{eq.Phi.increases}, terms which are first-order in $(z,\partial_z)$ do not either.  We do not attempt a particularly deep analysis, and instead content ourselves with a brief illustration that certain more general operators may be analyzed by the approach used in the present work.  We remark that this class of operators corresponds to the Weyl quantization acting on $L^2(\Bbb{R}^n)$ of any degree-2 polynomial in $(x,\xi)$ for which the quadratic part obeys the hypotheses of Proposition \ref{prop.which.q} and for which $0 \notin \opnm{Spec} F$.

\begin{proposition}\label{prop.linear}
Let $M \in GL_n(\Bbb{C})$ be an invertible matrix and let $a,b \in \Bbb{C}^n$. Define
\[
	L = Mz\cdot \partial_z + a\cdot z + b\cdot \partial_z.
\]
Then the evolution equation
\[
	\left\{\begin{array}{l} \partial_t u + Lu = 0,\\ u(0,z) = u_0 \in H_\Phi,\end{array}\right.
\]
for $\Phi$ obeying \eqref{eq.Phi.assumptions}, admits a unique holomorphic solution $u(t,z) = \exp(-tL)u_0$ where
\begin{equation}\label{eq.linear.terms.sol}
	\exp(\tau L)u(z) = e^{a \cdot (M^{-1}(e^{\tau M} -1)(z+ M^{-1}b) - \tau M^{-1} b)}u(e^{\tau M}z + (e^{\tau M}-1)M^{-1}b).
\end{equation}
This operator is bounded on $H_\Phi$ whenever
\begin{equation}\label{eq.linear.Phi.increases}
	\liminf_{|z|\to\infty}\tilde{\Phi}(e^{-\tau M}z) - \tilde{\Phi}(z) > - \infty,
\end{equation}
with
\begin{equation}\label{eq.def.linear.Phi}
	\tilde{\Phi}(z) = \Phi(z - M^{-1}b) + \Re a\cdot M^{-1}z.
\end{equation}
Furthermore, with $P = Mz\cdot \partial_z$ and $\exp(\tau P)$ defined as in Proposition \ref{prop.solve.exptP}, we have that if $\exp(\tau P)$ is unbounded on $H_\Phi$, then $\exp(\tau L)$ is also unbounded, and if $\exp(\tau P)$ is compact with exponentially decaying singular values as in \eqref{eq.exp.decaying}, then $\exp(\tau L)$ is also compact with exponentially decaying singular values.
\end{proposition}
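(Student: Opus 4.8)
The plan is to absorb the first-order part of $L$ by an affine change of variables, reducing $L$ to a scalar perturbation of $P = Mz\cdot\partial_z$, and then to transfer the conclusions through Proposition~\ref{prop.solve.exptP} and Theorem~\ref{thm.boundedness.0}. Set $c_0 = -a\cdot M^{-1}b \in \Bbb{C}$ and $g(z) = a\cdot M^{-1}z$, and let $\mathcal{T}_c u(z) = u(z+c)$, which by the change of variables $w = z+c$ maps $H_\Phi$ unitarily onto $H_{\Phi(\cdot + c)}$. One checks directly that translating by $c = -M^{-1}b$ eliminates the term $b\cdot\partial_z$ at the cost of the additive constant $c_0$, and, using \eqref{eq.trans.W} with $g'(z) = (M^\top)^{-1}a$ (so that $Mz\cdot g'(z) = a\cdot z$), that $\mathcal{W}_g$ eliminates the term $a\cdot z$. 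Consequently
\[
	S := \mathcal{W}_g\,\mathcal{T}_{-M^{-1}b} : H_\Phi \to H_{\tilde\Phi}, \qquad S u(z) = e^{a\cdot M^{-1}z}\,u(z - M^{-1}b),
\]
is unitary, with $\tilde\Phi$ exactly the weight in \eqref{eq.def.linear.Phi}, and $S L S^{-1} = P + c_0$. Since $S$ acts only in $z$, a holomorphic solution of the $L$-evolution equation is carried by $S$ to a holomorphic solution of the $(P+c_0)$-evolution equation, which by Proposition~\ref{prop.solve.exptP} (applied to $\exp(\tau(P+c_0)) = e^{\tau c_0}\exp(\tau P)$) exists and is unique; thus $\exp(\tau L) = e^{\tau c_0} S^{-1}\exp(\tau P) S$, and writing this out with $\exp(\tau P)v(z) = v(e^{\tau M}z)$ reproduces the closed form \eqref{eq.linear.terms.sol}.

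Boundedness then follows by transfer: since $S$ is unitary and $e^{\tau c_0}$ is a scalar, $\exp(\tau L)$ is bounded on $H_\Phi$ if and only if $\exp(\tau P)$ is bounded on $H_{\tilde\Phi}$, which by the change of variables $w = e^{\tau M}z$ is the inequality $\|v\|_{\tilde\Phi(e^{-\tau M}\cdot)} \le C\|v\|_{\tilde\Phi}$. Because $\tilde\Phi(e^{-\tau M}z) - \tilde\Phi(z)$ is continuous on $\Bbb{C}^n$, a finite $\liminf$ at infinity forces it to be bounded below everywhere (it is bounded below off a ball and continuous on the ball), giving $e^{-2\tilde\Phi(e^{-\tau M}z)} \le e^{2C'}e^{-2\tilde\Phi(z)}$ and hence the bound; this is condition \eqref{eq.linear.Phi.increases}.

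For the two inheritance statements I would argue directly with the reproducing kernels $k_w$ of $H_\Phi$. From \eqref{eq.linear.terms.sol}, writing $E(\tau,w)$ for the exponent and $\zeta(w) = e^{\tau M}w + (e^{\tau M}-1)M^{-1}b$, one reads off $\exp(\tau L)^* k_w = \overline{e^{E(\tau,w)}}\,k_{\zeta(w)}$ whenever $\exp(\tau L)$ is bounded; combined with $\|k_w\|_\Phi^2 = \pi^{-n}|\det G|^2 e^{2\Phi(w)}$ from \eqref{eq.kww}, this yields $\Re E(\tau,w) + \Phi(\zeta(w)) - \Phi(w) \le \log\|\exp(\tau L)\|$ for all $w$. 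Since $\Phi$ is homogeneous quadratic while $E$ and $\zeta$ are affine, the left-hand side is a degree-$\le 2$ real polynomial in $(\Re w, \Im w)$ with quadratic part $\Phi(e^{\tau M}w) - \Phi(w)$; being bounded above, that quadratic form is $\le 0$, i.e.\ $\Phi(e^{-\tau M}z) \ge \Phi(z)$, which by Theorem~\ref{thm.boundedness.0} means $\exp(\tau P)$ is bounded on $H_\Phi$ --- the contrapositive is the first inheritance claim. For compactness, suppose $\exp(\tau P)$ is compact on $H_\Phi$, so by Theorem~\ref{thm.boundedness.0} and homogeneity $\Phi(e^{-\tau M}w) - \Phi(w) \ge c|w|^2$ for some $c > 0$. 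Using \eqref{eq.linear.terms.sol}, the identity $\exp(\delta P_0)v(z) = v(e^\delta z)e^{h(e^\delta z) - h(z)}$ (Proposition~\ref{prop.h.o.semigroup}), and $\Phi = \frac{1}{2}|Gz|^2 - \Re h(z)$ from \eqref{eq.decompose.Phi}, a change of variables gives $\|\exp(\delta P_0)\exp(\tau L)u\|_\Phi^2 = \mathrm{const}\cdot\int|u(w)|^2 e^{-2\Phi_\delta(w)}\,dL(w)$ for a quadratic-plus-affine weight $\Phi_\delta$ such that $\Phi_\delta(w) - \Phi(w)$ has quadratic part $\Phi(e^{-\tau M}w) - \Phi(w) - \frac{1 - e^{-2\delta}}{2}|Ge^{-\tau M}w|^2$; this is $\ge (c - \frac{1-e^{-2\delta}}{2}\|Ge^{-\tau M}\|^2)|w|^2 > 0$ once $\delta > 0$ is small, so $\Phi_\delta - \Phi \to +\infty$, hence $\Phi_\delta \ge \Phi - C_3$, and $\exp(\delta P_0)\exp(\tau L)$ is bounded. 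Then
\[
	\exp(\tau L) = \exp(-\delta P_0)\big(\exp(\delta P_0)\exp(\tau L)\big)
\]
writes $\exp(\tau L)$ as a bounded operator composed with the compact positive self-adjoint operator $\exp(-\delta P_0)$, whose singular values $\{e^{-\delta|\alpha|}\}_{\alpha \in \Bbb{N}^n}$ decay exponentially as in \eqref{eq.exp.decaying} by the counting argument of Corollary~\ref{cor.sing.vals.general}; since $s_j(AB) \le s_j(A)\|B\|$, $\exp(\tau L)$ is compact with exponentially decaying singular values.

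The main obstacle I anticipate is bookkeeping: the auxiliary spaces $H_{\tilde\Phi}$ and $H_{\Phi_\delta}$ carry genuinely affine (inhomogeneous) weights, so the pointwise characterizations in Theorem~\ref{thm.boundedness.0} and Proposition~\ref{prop.embedding} do not apply verbatim, and one must work directly with the explicit Gaussian reproducing kernels and with the elementary fact that a continuous function bounded below near infinity is bounded below. Tracking which translation and which multiplier is applied, and in which order, so that $\tilde\Phi$ comes out precisely as in \eqref{eq.def.linear.Phi} and the exponent of \eqref{eq.linear.terms.sol} is reproduced, is the most delicate point; the remaining computations are routine changes of variables.
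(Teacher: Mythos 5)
Your proof is correct and follows essentially the same route as the paper: conjugation by a translation and by $\mathcal{W}_g$ to reduce $L$ to $P - a\cdot M^{-1}b$, identification of the affine weight $\tilde\Phi$, and composition with $\exp(\delta P_0)$ to get exponentially decaying singular values. Your reproducing-kernel argument for the unboundedness inheritance is just a repackaging of the paper's observation that $\tilde\Phi(e^{-\tau M}z)-\tilde\Phi(z)$ and $\Phi(e^{-\tau M}z)-\Phi(z)$ differ by $\BigO(1+|z|)$, so the homogeneous quadratic part decides the behavior under scaling.
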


\begin{proof}
We proceed by a unitary reduction to the case of $\exp(\tau P)$ already studied beginning with Proposition \eqref{prop.solve.exptP}.  For $v \in \Bbb{C}^n$ fixed, we introduce the unitary shift map
\begin{equation}\label{eq.def.Shift}
	\mathcal{S}_v : H_\Phi \ni u(z) \mapsto u(z+v) \in H_{\Phi(\cdot + v)}
\end{equation}
for which, with $P$ as in \eqref{eq.def.P},
\begin{equation}\label{eq.trans.Shift}
	\mathcal{S}_v P \mathcal{S}_v^* = (Mz + Mv)\cdot \partial_z.
\end{equation}
Let $v = -M^{-1}b$ and $g(z) = (M^{-1})^\top a \cdot z$,
\[
	\mathcal{W}_g \mathcal{S}_{v} L \mathcal{S}_{v}^* \mathcal{W}_g^* = P - a\cdot M^{-1}b.
\]

We then may define $\exp(\tau P)u(z) = u(e^{\tau M}z)$ as in Proposition \ref{prop.solve.exptP} and
\[
	\exp(\tau L) = e^{-\tau M^{-1}b \cdot a}\mathcal{S}_v^* \mathcal{W}_g^* \exp(\tau P)\mathcal{W}_g \mathcal{S}_v,
\]
which gives the formula \eqref{eq.linear.terms.sol}.  Therefore $\exp(\tau L) : H_\Phi \to H_\Phi$ may be analyzed as an operator via the relation
\[
	\mathcal{V}_{e^{-\tau M}} \mathcal{W}_g \mathcal{S}_v \exp(\tau L) \mathcal{S}_v^* \mathcal{W}_g^*u(z) = e^{-\tau M^{-1}b\cdot a - \Re \tau \opnm{Tr} M}u(z).
\]
In order to have $\mathcal{S}_v^* \mathcal{W}_g^* u \in H_\Phi$, we take $u \in \mathcal{W}_g \mathcal{S}_v H_\Phi$ which, following \eqref{eq.def.W} and \eqref{eq.def.Shift}, is $H_{\tilde{\Phi}}$ for $\tilde{\Phi}$ in \eqref{eq.def.linear.Phi}. Similarly, the norm of the image is in $\mathcal{V}_{e^{-\tau M}} \mathcal{W}_g \mathcal{S}_v H_\Phi$ which is $H_{\tilde{\Phi}(e^{-\tau M} \cdot)}$.

The same analysis of the reproducing kernel by following the unitary transformations shows that $\exp(\tau L)$ is bounded if and only if \eqref{eq.linear.Phi.increases} holds; and a similar operator $P_0$ shows that $\exp(\tau L)$ is compact with decreasing singular values whenever there exists $C > 0$ such that
\begin{equation}\label{eq.linear.increases.at.infinity}
	\tilde{\Phi}(e^{-\tau M}z) - \tilde{\Phi}(z) \geq \frac{1}{C}|z|^2 - C, \quad \forall z \in \Bbb{C}^n.
\end{equation}

To prove that $\exp(\tau L)$ is unbounded or compact whenever $\exp(\tau P)$ is unbounded or compact, we only need to use that 
\[
	\tilde{\Phi}(e^{-\tau M}z) - \tilde{\Phi}(z) = \Phi(e^{-\tau M}z) - \Phi(z) + \BigO(1+|z|).
\]
Therefore when $\Phi(e^{-\tau M}z_0) < \Phi(z_0)$ for some $z_0 \in \Bbb{C}^n$, then $\tilde{\Phi}(e^{-\tau M}rz_0) < \Phi(rz_0)$ for $r > 0$ sufficiently large.  Similarly, if $\Phi(e^{-\tau M}z) > \Phi(z)$ on the unit sphere $\{|z| = 1\}$, then a scaling argument shows that \eqref{eq.linear.increases.at.infinity} holds.
\end{proof}

\section{Real-side equivalence}\label{sec.real}

The operators given by \eqref{eq.def.P} are unitarily equivalent (up to the addition of a constant) to certain operators on $L^2(\Bbb{R}^n)$ given by the Weyl quantization of quadratic forms.  In this section, we begin by recalling basic definitions and facts about these Weyl quantizations.  We then discuss the aforementioned unitary equivalence with the operators on Fock spaces considered in the previous section. Afterwards, we consider the purely self-adjoint question of comparing the semigroups of two operators of harmonic oscillator type.  Then, for reference, we present a corollary collecting many results from Section \ref{sec.Fock} applied to real-side operators.  Finally, we perform explicit computations and discuss illustrations related to the examples in Section \ref{subsec.examples}.

\subsection{Real-side quadratic operators}\label{subsec.real-side}

 Much of the following discussion can be found in previous works including \cite{Sj1974}, \cite{HiPS2009}, \cite{HiSjVi2013}, and \cite{Vi2013}. Let $q(x,\xi):\Bbb{R}^{2n}\to\Bbb{C}$ be a quadratic form. We define the Weyl quantization by replacing the $\xi$ variables with the self-adjoint derivatives $D_x = -i\partial_x$ as follows:
\begin{equation}\label{eq.Weyl}
	q^w(x,D_x) = \sum_{|\alpha + \beta| = 2} \frac{q''_{\alpha\beta}}{2}(x^\alpha D_x^\beta + D_x^\beta x^\alpha).
\end{equation}

For comparison, our operator $P$ in \eqref{eq.def.P} may also be realized as a Weyl quantization:
\begin{equation}\label{eq.P.symbol}
	\begin{aligned}
	P &= p^w(z,D_z) - \frac{1}{2}\opnm{Tr} M,
	\\ p(z,\zeta) &= (Mz)\cdot(i\zeta).	
	\end{aligned}
\end{equation}

The Weyl quantization of quadratic forms are often studied under an ellipticity hypothesis
\begin{equation}\label{eq.real.ell.hypothesis}
	q(x,\xi) = 0 \implies (x,\xi) = 0
\end{equation}
and the additional assumption in dimension $n = 1$
\begin{equation}\label{eq.real.ell.hypothesis.dim1}
	q(\Bbb{R}^2) \neq \Bbb{C}.
\end{equation}
Following \cite[Lem.~2.1]{PS2007}, we have that multiples of rotated harmonic oscillators $-(d/dx)^2+ e^{2i\theta}x^2$ are the only possible dimension-one operators satisfying the ellipticity assumption; this continues to be true for the operators considered here, since any weight in dimension one can be reduced to a weight of the form $\eqref{eq.RHO.weight}$ after a change of variables.

We turn to the spectral theory for quadratic operators obeying either \eqref{eq.real.ell.hypothesis} and, in dimension one, \eqref{eq.real.ell.hypothesis.dim1} or obeying \eqref{eq.real.semidef} and \eqref{eq.real.subell.hypothesis} introduced below. Under these assumptions, the spectral decomposition of the operator is determined by the spectral decomposition of the fundamental matrix
\begin{equation}\label{eq.def.F}
	F = F(q) = \frac{1}{2}\left(\begin{array}{cc} q''_{\xi x} & q''_{\xi\xi} \\ -q''_{xx} & -q''_{x \xi}\end{array}\right),
\end{equation}
described in for instance \cite[Sec.\ 21.5]{HoALPDO3}.  The role of the fundamental matrix is analogous to that of the Hessian matrix of second derivatives of $q$, except the usual inner product is replaced by the symplectic inner product
\begin{equation}\label{eq.def.sigma}
	\sigma((x,\xi),(y,\eta)) = \xi\cdot y - \eta \cdot x.
\end{equation}
The matrix $F$ is then determined uniquely by the conditions that
\begin{equation}\label{eq.F.sigma1}
	\sigma((x,\xi), F(x,\xi)) = q(x,\xi), \quad \forall (x,\xi) \in \Bbb{R}^{2n}
\end{equation}
and
\begin{equation}\label{eq.F.sigma2}
	\sigma((x,\xi), F(y,\eta)) = -\sigma(F(x,\xi), (y,\eta)), \quad \forall (x,\xi), (y,\eta) \in \Bbb{R}^{2n}.
\end{equation}

For our analysis of the eigenspaces of $F$, it is essential to introduce the concept of a positive or negative definite Lagrangian plane.  A Lagrangian plane $\Lambda$ is an $n$-dimensional subspace of $\Bbb{C}^{2n}$ for which $\sigma|_{\Lambda \times \Lambda} \equiv 0$; nondegeneracy of $\sigma$ implies that $\Lambda$ is maximal with respect to the vanishing of $\sigma$.  We say that a Lagrangian plane $\Lambda$ is positive if
\[
	-i\sigma((x,\xi), \overline{(x,\xi)}) > 0, \quad \forall (x,\xi) \in \Lambda \backslash \{0\}.
\]
This is equivalent to requiring that 
\begin{equation}\label{eq.Lagrangian.graph}
	\Lambda = \{(x,Ax) \::\: x \in \Bbb{C}^n\}
\end{equation}
for some $A \in \Bbb{M}_{n\times n}(\Bbb{C})$ which is symmetric, $A^\top = A$, and has positive definite imaginary part, $\Im A > 0$.  Negative Lagrangian planes are defined analogously with inequalities reversed.

It is a deep fact proven in \cite[Prop.~3.3]{Sj1974} that for $q(x,\xi):\Bbb{R}^{2n}\to \Bbb{C}$ quadratic obeying \eqref{eq.real.ell.hypothesis} and, in dimension one, \eqref{eq.real.ell.hypothesis.dim1} there exist Lagrangian planes $\Lambda^\pm$ which are $F$-invariant and where $\Lambda^+$ is positive and $\Lambda^-$ is negative.  Specifically, $\Lambda^+$ may be realized as the span of the generalized eigenspaces of $F$ corresponding to eigenvalues with $\lambda/i$ in $q(\Bbb{R}^{2n})$, and $\Lambda^-$ is similarly the span of the generalized eigenspaces of $F$ corresponding to eigenvalues which obey $-\lambda/i$ in $q(\Bbb{R}^{2n})$.  The proof can be adapted to cover the case of weakly elliptic operators obeying \eqref{eq.real.semidef} and \eqref{eq.real.subell.hypothesis} introduced below; details may be found in \cite[Prop.~2.1]{Vi2013}.  In Proposition \ref{prop.which.q} below, we prove that it is precisely the presence of these subspaces $\Lambda^\pm$ which determines whether we can construct a unitary 
equivalence between 
$q^w(
x,D_x)$ acting on $L^2(\Bbb{R}^n)$ and an operator $P$ as in \eqref{eq.def.P} acting on a space $H_\Phi$ for $\Phi$ obeying \eqref{eq.Phi.assumptions}.

In order to study certain operators such as the Fokker-Planck quadratic model, the hypotheses of ellipticity need to be weakened, as discussed in such works as \cite{HiPS2009} and \cite{HeSjSt2005}.  In this setting, one retains the hypothesis
\begin{equation}\label{eq.real.semidef}
	\Re q(x,\xi) \geq 0, \quad \forall (x,\xi) \in \Bbb{R}^{2n},
\end{equation}
but one only assumes definiteness of $\Re q$ after averaging along the flow of the Hamilton vector field $H_{\Im q} = 2\Im F$.  In \cite{HiPS2009}, this condition was put in terms of an index depending on the fundamental matrix \eqref{eq.def.F}:
\begin{equation}\label{eq.real.subell.index}
	J(x,\xi) = \min\{k \in \Bbb{N} \::\: \Re F (\Im F)^k (x,\xi) \neq 0\}.
\end{equation}
Under the hypothesis 
\begin{equation}\label{eq.real.subell.hypothesis}
	J(x,\xi) < \infty, \quad \forall (x,\xi) \in \Bbb{R}^{2n}\backslash \{0\},
\end{equation}
the semigroup $\exp(-tq^w(x,D_x))$, for $t > 0$, possesses strong regularization properties.

In Section \ref{subsec.subellipticity}, we arrive at a natural weak ellipticity condition in terms of the dynamics of $\Phi(e^{tM}z)$ as a function of $t$.  It is unsurprising, but worthy of note, that these two conditions are identical and their associated coefficients are closely related, as formulated in Proposition \ref{prop.subell.relation} below.

To finish the discussion of operators on the real side, we demonstrate, by appealing to a well-known pseudomode construction, the non-existence of the resolvent for a quadratic operator for which the so-called bracket condition fails at some $(x_0, \xi_0) \in q^{-1}(\{0\})$. Many of the essential ideas were present in the fundamental work of H\"ormander \cite{Ho1960}, as noted in \cite{Zw2001}, and here we rely on the celebrated work \cite{DeSjZw2004}. We recall that the Poisson bracket of two symbols $f,g : \Bbb{R}^{2n}\to \Bbb{C}$ is
\[
	\{f,g\} = \sum_{j=1}^n \frac{\partial f}{\partial \xi}\cdot \frac{\partial g}{\partial x} - \frac{\partial g}{\partial \xi} \cdot \frac{\partial f}{\partial x}.
\]
We recall from \cite[Lem.~2]{PS2008a} that this has a simple expression in the quadratic case using the fundamental matrix defined in \eqref{eq.def.F}: if $q_1, q_2:\Bbb{R}^{2n} \to \Bbb{C}$ are quadratic, then
\[
	F(\{q_1, q_2\}) = -2[F(q_1), F(q_2)].
\]
When the symbol $f$ of a Weyl quantization is homogeneous (and obeys appropriate hypotheses) and $\{\Im f, \Re f\} > 0$ for all $(x,\xi) \in \Omega$ an appropriate open set, a scaling argument and \cite[Thm.~1.2]{DeSjZw2004} shows that the resolvent of $f^w(x,D_x)$ either has a rapidly-growing norm or does not exist in $h^{-1}f(\Omega)$ as $h \to 0^+$. Following this route, we see that the resolvent of the Weyl quantization of a quadratic form $q$ cannot exist anywhere if the bracket fails to vanish on $q^{-1}(\{0\})$.

\begin{theorem}\label{thm.spectrum.C}
Let $q:\Bbb{R}^{2n} \to \Bbb{C}$ be a quadratic form such that there exists $(x_0, \xi_0) \in \Bbb{R}^{2n}$ for which
\[
	q(x_0, \xi_0) = 0
\]
and
\begin{equation}\label{eq.Twist.Condition}
	\{\Im q, \Re q\}(x_0, \xi_0) \neq 0.
\end{equation}
Then, for the maximal realization of $q^w(x,D_x)$ on $L^2(\Bbb{R}^n)$,
\[
	\opnm{Spec} q^w(x,D_x) = \Bbb{C}.
\]
\end{theorem}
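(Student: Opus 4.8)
The plan is to reduce the statement to a semiclassical pseudomode construction and then invoke the quoted result of Dencker--Sj\"ostrand--Zworski \cite[Thm.~1.2]{DeSjZw2004}. The key point is that the bracket condition \eqref{eq.Twist.Condition} at a point of the zero set of $q$ is exactly the hypothesis needed to produce, for \emph{every} $z \in \Bbb{C}$, a sequence of approximate eigenfunctions $u_k \in C_0^\infty(\Bbb{R}^n)$, normalized in $L^2$, with $\|(q^w(x,D_x) - z)u_k\|_{L^2} \to 0$; the existence of such quasimodes at every spectral parameter forces the spectrum (for the maximal realization) to be all of $\Bbb{C}$, since a point in the resolvent set would give a uniform lower bound $\|(q^w - z)u\| \geq \|(q^w-z)^{-1}\|^{-1}\|u\|$.

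The main steps are as follows. First I would exploit the quadratic homogeneity: if $q(x_0,\xi_0) = 0$ and $\{\Im q, \Re q\}(x_0,\xi_0) \neq 0$, then by homogeneity the same holds along the ray $(tx_0, t\xi_0)$ for $t > 0$, and the symbol $q$ restricted to a conic neighborhood of this ray satisfies the geometric condition $\{\Re q, \Im q\} \neq 0$ on $q^{-1}(0)$ that triggers the DeSjZw pseudomode machinery. Second, I would introduce a semiclassical parameter $h \to 0^+$ by the rescaling that turns a fixed target $z \in \Bbb{C}$ into $h^{-1}\zeta$ with $\zeta$ in the image $q(\Omega)$ of a suitable open conic set $\Omega$: because $q$ is homogeneous of degree $2$ and we may slide the base point out to infinity along the ray, the value $z$ can be realized as $q$ evaluated at a point where the bracket is nonvanishing, after the appropriate dilation. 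Third, I would apply \cite[Thm.~1.2]{DeSjZw2004} (as recalled in the paragraph preceding the theorem) to conclude that $(q^w(x,D_x) - z)$ has no bounded inverse, i.e.\ that the resolvent either fails to exist or has norm growing faster than any power of $h^{-1}$; since this holds for arbitrary $z \in \Bbb{C}$, every point is in the spectrum.

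Two points need care. The cleanest way to handle the rescaling is to note that the map $(x,\xi)\mapsto q(x,\xi)$ is scale-covariant, $q(tx,t\xi) = t^2 q(x,\xi)$, so that for any $z \neq 0$ there is $t > 0$ with $t^2 q(x_0,\xi_0 + v) = z$ for a suitable small perturbation $v$ inside the open set where the bracket is nonzero (using $q(x_0,\xi_0)=0$ together with $\nabla q(x_0,\xi_0)\neq 0$, which follows from the nonvanishing bracket); then conjugating by the unitary dilation $U_t f(x) = t^{n/2}f(tx)$ relates $q^w(x,D_x)$ to $t^2 q^w(x,D_x)$ and reduces the problem to building a quasimode for $q^w$ near the parameter $t^{-2}z$ as $t \to \infty$, which is the $h = t^{-2} \to 0^+$ semiclassical regime. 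The case $z = 0$ is covered directly by a quasimode at $(x_0,\xi_0)$ itself. The \textbf{main obstacle} is making the application of the Dencker--Sj\"ostrand--Zworski theorem rigorous in the quadratic (rather than merely smooth symbol of principal type) setting: one must check that the geometric hypotheses of that theorem — in particular that one is at a point of the boundary of the numerical range where the Poisson bracket has the correct sign, and that there is no obstruction from subprincipal terms — are met, which is why the paper cites \cite{Ho1960}, \cite{Zw2001} for the underlying ideas; the Weyl quantization of a quadratic form has no lower-order terms, so the subprincipal symbol vanishes and this subtlety is in fact benign, but it should be stated explicitly.
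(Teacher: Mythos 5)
Your strategy is the paper's own: localize at $(x_0,\xi_0)$, apply \cite[Thm.~1.2]{DeSjZw2004} to obtain an exponentially large semiclassical resolvent, and use the degree-$2$ homogeneity of $q$ to convert this into unboundedness of $(q^w(x,D_x)-z)^{-1}$ for every fixed $z\in\Bbb{C}$. However, the scaling step is wrong as written. The dilation $U_tf(x)=t^{n/2}f(tx)$ satisfies $U_txU_t^*=tx$ but $U_tD_xU_t^*=t^{-1}D_x$, so it conjugates $q^w(x,D_x)$ to $q^w(tx,t^{-1}D_x)$, \emph{not} to $t^2q^w(x,D_x)$; no unitary change of variables can dilate $x$ and $D_x$ in the same direction. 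The correct device is to introduce the semiclassical quantization first: with $\tilde{\mathcal V}_{\sqrt h}u(x)=h^{n/4}u(\sqrt hx)$ one has $\tilde{\mathcal V}_{\sqrt h}q^w(x,hD_x)\tilde{\mathcal V}_{\sqrt h}^*=hq^w(x,D_x)$, hence $q^w(x,D_x)-z$ is unitarily equivalent to $h^{-1}(q^w(x,hD_x)-hz)$. Fixing $r_0,r_1>0$ with the bracket bounded below on $B((x_0,\xi_0),r_0)$ and $B(0,r_1)\subset q(B((x_0,\xi_0),r_0))$, one has $hz\in B(0,r_1)$ for any fixed $z$ once $h$ is small, and the bound $\|(q^w(x,hD_x)-hz)^{-1}\|\geq C^{-1}e^{1/(Ch)}$ yields $\|(q^w(x,D_x)-z)^{-1}\|\geq (h/C)e^{1/(Ch)}\to\infty$. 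This replaces your vaguer ``slide the base point to infinity along the ray'' and is what the paper does.

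The second gap is the sign of the bracket. The hypothesis is only $\{\Im q,\Re q\}(x_0,\xi_0)\neq 0$, while the pseudomode construction of \cite{DeSjZw2004} applies for one sign of the bracket only. You flag ``the correct sign'' as something to verify but do not say what to do when it has the wrong sign: one must pass to the adjoint, whose Weyl symbol is $\overline q$, so that $\{\Im\overline q,\Re\overline q\}=-\{\Im q,\Re q\}$; the argument then shows the resolvent set of $q^w(x,D_x)^*$ is empty, which forces the resolvent set of $q^w(x,D_x)$ to be empty as well. Your closing observation --- that the symbol is examined only on a fixed compact neighbourhood of $(x_0,\xi_0)$ where it is a perfectly good symbol of principal type, and that the Weyl quantization of a quadratic form has no subprincipal term --- is correct and matches the paper's implicit use of \cite{DeSjZw2004}.
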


\begin{proof}
	We show that if $\{\Im q, \Re q\}(x_0, \xi_0) > 0$, then $\|(z-q^w(x,D_x))^{-1}\| = \infty$ for all $z \in \Bbb{C}$.  If $\{\Im q, \Re q\}(x_0, \xi_0) < 0$, then we recall \cite[p.~426]{Ho1995} that the symbol of the adjoint $q^w(x,D_x)^*$ is $\overline{q(x,\xi)}$. Since $\{\Im \bar{q}, \Re \bar{q}\} = - \{\Im q, \Re q\}$, we see that $\|(\bar{z} - q^w(x,D_x)^*)^{-1}\| = \infty$ for all $z \in \Bbb{C}$, which suffices to show that the resolvent set is empty.
	
We therefore assume that
	\begin{equation}\label{eq.Twist.Condition.2}
		\{\Im q, \Re q\}(x_0, \xi_0) > 0.
	\end{equation}
	As a consequence, $\nabla \Im q(x_0, \xi_0)$ and $\nabla \Re q(x_0, \xi_0)$ are linearly independent.  Using also that \eqref{eq.Twist.Condition.2} is an open condition in $(x_0, \xi_0)$, let $r_0, r_1, c > 0$ be sufficiently small such that
	\[
		\{\Im q, \Re q\}(x, \xi) \geq c, \quad \forall (x,\xi) \in B((x_0, \xi_0), r_0)
	\]
	and such that
	\[
		B(0, r_1) \subset q(B((x_0, \xi_0), r_0)) \subset \Bbb{C}.
	\]

	Then, by \cite[Thm.~1.2]{DeSjZw2004}, there exist $h_0 > 0$ sufficiently small and $C > 0$ sufficiently large such that
	\[
		\|(q^w(x,hD_x) - z)^{-1}\| \geq \frac{1}{C}e^{1/(Ch)}, \quad \forall h \in (0, h_0], \quad \forall z \in B(0,r_1).
	\]
	(As usual, we write $\|(A-z)^{-1}\| = +\infty$ if $z \in \opnm{Spec}A$.) Using the standard scaling
	\[
		\tilde{\mathcal{V}}_{\sqrt{h}} u(x) = h^{n/4}u(\sqrt{h}x),
	\]
	which is unitary on $L^2(\Bbb{R}^n)$ and for which
	\[
		\tilde{\mathcal{V}}_{\sqrt{h}} q^w(x,hD_x) \tilde{\mathcal{V}}_{\sqrt{h}}^* = hq^w(x,D_x),
	\]
	we see that
	\[
		\|(q^w(x,D_x) - z)^{-1}\| = \left\| \left(\frac{1}{h}(q^w(x,hD_x) - hz)\right)^{-1}\right\| \geq \frac{h}{C}e^{1/(Ch)}
	\]
	so long as $|z| < r_1/h$ and $0 < h \leq h_0$.  Since $(h/C)e^{1/(Ch)}\to \infty$ and $r_1/h \to \infty$ as $h \to 0^+$, this shows that the resolvent cannot be a bounded operator for any $z \in \Bbb{C}$.
\end{proof}

\subsection{Unitary equivalence with Fock spaces}

We now summarize a method of reducing certain quadratic operators $q^w(x,D_x)$ acting on $L^2(\Bbb{R}^n)$ to operators on Fock spaces $H_\Phi$ of the form $P = Mz\cdot\partial_z$ as in \eqref{eq.def.P}, up to an additive constant.  If such a reduction exists, as determined in Proposition \ref{prop.which.q}, one can apply the results of Section \ref{sec.Fock} to find the eigenvalues of $q^w(x,D_x)$ as well as the weak definition of $\exp(\tau q^w(x,D_x))$ for $\tau \in \Bbb{C}$ and its properties.

For $\Phi$ obeying \eqref{eq.Phi.assumptions} decomposed as in \eqref{eq.decompose.Phi}, let the symmetric matrix $H$ be as in \eqref{eq.def.H} so that
\[
	\Phi(G^{-1}z) = \frac{1}{2}\left(|z|^2 - \Re (z\cdot H z)\right).
\]
In order to associate the space $H_\Phi$ with $L^2(\Bbb{R}^n)$, we follow \cite[Sec.~2.2,~4.1]{Vi2013} in creating an adapted Fourier-Bros-Iagolnitzer (FBI) transform.  For details as well as deeper analysis and applications, the reader may consult among others the works \cite[Ch.~13]{ZwBook}, \cite{MaBook}, or \cite{SjSAM}.

To define this transform, let
\begin{equation}\label{eq.def.FBI.A}
	A = i(1+H)^{-1}(1-H),
\end{equation}
where it follows automatically that $\Im A > 0$ in the sense of positive definite matrices because $\|H\| < 1$; see Lemma \ref{lem.decomposition}. Let the holomorphic quadratic phase $\varphi$ be defined by
\[
	\varphi(z,x) = \frac{i}{2}(z-x)^2 - \frac{1}{2}z\cdot\left((1-iA)^{-1}A z\right).
\]
Then for $v \in L^2(\Bbb{R}^n)$, we define the FBI transform
\begin{equation}\label{eq.FBI.initial}
	\mathcal{T}_0 v(z) = C_\varphi \int_{\Bbb{R}^n} e^{i\varphi(z,x)}v(x)\,dx.
\end{equation}
For the correct choice of $C_\varphi$, the map $\mathcal{T}_0$ is unitary from $L^2(\Bbb{R}^n)$ to $H_{\Phi_0}(\Bbb{C}^n)$ with
\[
	\begin{aligned}
	\Phi_0(z) &= \sup_{x \in \Bbb{R}^n} \left(-\Im \varphi(z,x)\right)
	\\ &= \frac{1}{4}\left(|z|^2 - \Re (z\cdot Hz)\right).
	\end{aligned}
\]
We may compose this transform with the unitary change of variables $\mathcal{V}_{\sqrt{2}G}$ as in \eqref{eq.def.V} to arrive at $\Phi$ as in \eqref{eq.decompose.Phi}. We therefore let
\begin{equation}\label{eq.def.FBI.final}
	\mathcal{T} = \mathcal{V}_{\sqrt{2}G} \circ \mathcal{T}_0 : L^2(\Bbb{R}^n) \to H_\Phi.
\end{equation}

The role here of conjugation by the FBI transform is to simplify the symbols of Weyl quantizations.  From \cite[Eq.~(12.37)]{SjLoR} we have that
\[
	\mathcal{T}_0 a^w(x,D_x)\mathcal{T}_0^* = (a\circ \kappa_0^{-1})^w(z,D_z)
\]
for symbols $a:\Bbb{R}^{2n}\to \Bbb{C}$ in standard symbol classes, certainly including polynomials of degree two, with the canonical transformation $\kappa_0$ defined via
\[
	\kappa_0(x, -\varphi'_x(x,z)) = (z, \varphi'_z(x,z)), \quad \forall x,z \in \Bbb{C}^n.
\]
Conjugating with the change of variables $\mathcal{V}_{\sqrt{2}G}$ can be seen a more elementary fashion to act on symbols by composing with the canonical transformation $\kappa_{\sqrt{2}G}(z,\zeta) = ((\sqrt{2}G)^{-1}z, (\sqrt{2}G)^\top \zeta)$.  Composing the two, we get
\begin{equation}\label{eq.canonical.relation}
	\begin{aligned}
	q_1^w(x,D_x) &= \mathcal{T}^* p^w(z,D_z)\mathcal{T},
	\\ q_1 &= p\circ \kappa,
	\end{aligned}
\end{equation}
for the complex linear canonical transformation
\begin{equation}\label{eq.def.kappa}
	\kappa = \frac{1}{\sqrt{2}}\left(\begin{array}{cc} G^{-1} & -iG^{-1}
		\\ -2 G^\top A(1-iA)^{-1} & 2 G^\top(1-iA)^{-1}\end{array}\right)
\end{equation}
with $A$ as in \eqref{eq.def.FBI.A}. For future reference, we therefore write
\begin{equation}\label{eq.def.Q}
	Q_1 = \mathcal{T}^*P\mathcal{T} + \frac{1}{2}\opnm{Tr} M.
\end{equation}

This may be regarded as a partial analogue, for complex linear canonical transformations, of the well-known fact \cite[Lem.~18.5.9]{HoALPDO3} that, when $\chi$ is a real linear canonical transformation, we may always find a simple unitary operator $\mathcal{U}_\chi: L^2(\Bbb{R}^n) \to L^2(\Bbb{R}^n)$ such that
\begin{equation}\label{eq.canonical.real}
	\mathcal{U}_\chi q^w(x,D_x)\mathcal{U}_\chi^* = (q\circ\chi^{-1})^w(x,D_x).
\end{equation}
More specifically, this operator can be decomposed as a composition of changes of variables, multiplication by exponentials of imaginary quadratic forms, and partial Fourier transforms.

\begin{remark}\label{rem.Fock.phase.space}
We recall that there is a classical equivalence between the values of the symbol on the real and the Fock space sides: for any $(x,\xi) \in \Bbb{R}^{2n}$, we have that
\[
	\kappa(x,\xi) = (z,-2i\Phi'_z(z))
\]
for some $z \in \Bbb{C}^n$, and in fact the map $(x,\xi)\mapsto z$ formed by composing $\kappa$ with projection onto the first coordinate is a real-linear bijection; see \cite[Sec.\ 1]{Sj1996}.  This shows that conditions \eqref{eq.ellipticity} and \eqref{eq.real.semidef} are equivalent if the symbols $p(z,\zeta) = (Mz)\cdot(i\zeta)$ and $q_1(x,\xi)$ are related by \eqref{eq.canonical.relation}.  Furthermore, \eqref{eq.real.semidef} is invariant under composition of $q$ with real canonical transformations, so \eqref{eq.ellipticity} and \eqref{eq.real.semidef} are equivalent.
\end{remark}

We now have established the required vocabulary to identify the real-side symbols which may be treated in the framework of this paper.

\begin{proposition}\label{prop.which.q}
Let $q(x,\xi):\Bbb{R}^{2n}\to\Bbb{C}$ be quadratic.  Then the following are equivalent:
\begin{enumerate}[(i)]
	\item \label{it.which.q.reduction} there exists a unitary transformation $\mathcal{U}_\chi:L^2(\Bbb{R}^n) \to L^2(\Bbb{R}^n)$ of the form in \eqref{eq.canonical.real} and an FBI transform $\mathcal{T}$ of the form in \eqref{eq.def.FBI.final} such that
	\begin{equation}\label{eq.q.p.unitary}
		\mathcal{T}\mathcal{U}_\chi q^w(x,D_x) \mathcal{U}^*_\chi \mathcal{T}^* = p^w(z,D_z)
	\end{equation}
	for $p(z,\zeta) = Mz\cdot(i\zeta)$ as in \eqref{eq.P.symbol},
	\item \label{it.which.q.subspaces} there exist two invariant subspaces $\Lambda^+$ and $\Lambda^-$ of the fundamental matrix $F = F(q)$ which are positive and negative definite Lagrangian planes as in \eqref{eq.Lagrangian.graph}, and
	\item there exist matrices $A_\pm \in \Bbb{M}_{n\times n}(\Bbb{C})$, with $A_\pm^\top = A_\pm$ and $\pm \Im A_\pm > 0$ in the sense of positive definite matrices, and a matrix $B \in \Bbb{M}_{n\times n}(\Bbb{C})$ for which
	\begin{equation}\label{eq.real.symbol.ip.form}
		q(x,\xi) = B(\xi-A_- x)\cdot (\xi - A_+ x).
	\end{equation}
\end{enumerate}
\end{proposition}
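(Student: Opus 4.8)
The plan is to prove the implications $(iii)\Rightarrow(ii)$, $(ii)\Rightarrow(iii)$, $(i)\Rightarrow(ii)$ and $(ii)\Rightarrow(i)$; the equivalence of $(ii)$ and $(iii)$ is elementary linear algebra with the fundamental matrix, while the equivalence with $(i)$ consists of tracking how the complex linear canonical transformations underlying $\mathcal{U}_\chi$ and $\mathcal{T}$ act on Lagrangian planes. For $(iii)\Rightarrow(ii)$: given $q(x,\xi)=B(\xi-A_-x)\cdot(\xi-A_+x)$, set $\Lambda^{\pm}=\{(x,A_\pm x):x\in\Bbb{C}^n\}$. These are Lagrangian since $A_\pm^\top=A_\pm$, and a short computation gives $-i\sigma(v,\bar v)=2\langle\Im A_\pm\, x,x\rangle$ for $v=(x,A_\pm x)$, so by $\pm\Im A_\pm>0$ the plane $\Lambda^+$ is positive and $\Lambda^-$ is negative. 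The factorization shows $q$ vanishes identically on each $\Lambda^\pm$, hence so does its polarization, so by \eqref{eq.F.sigma1} and \eqref{eq.F.sigma2} the vector $F(q)v$ is $\sigma$-orthogonal to $\Lambda^\pm$; since a Lagrangian plane is its own $\sigma$-orthogonal complement, $F(q)\Lambda^\pm\subseteq\Lambda^\pm$. For $(ii)\Rightarrow(iii)$ one writes the given planes as $\Lambda^\pm=\{(x,A_\pm x)\}$ via \eqref{eq.Lagrangian.graph}; positivity of $\Lambda^+$ and negativity of $\Lambda^-$ force $\Lambda^+\cap\Lambda^-=\{0\}$, so $\Bbb{C}^{2n}=\Lambda^+\oplus\Lambda^-$ and $A_+-A_-$ is invertible, and the same isotropy argument gives $q|_{\Lambda^\pm}\equiv 0$; decomposing $(x,\xi)=v^++v^-$ with $v^\pm\in\Lambda^\pm$ and polarizing, $q(x,\xi)=2q(v^+,v^-)$, which upon parametrizing $v^\pm$ by $\pm(A_+-A_-)^{-1}(\xi-A_\mp x)$ becomes $B(\xi-A_-x)\cdot(\xi-A_+x)$ for a suitable matrix $B$.

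For $(i)\Rightarrow(ii)$: assuming \eqref{eq.q.p.unitary}, reading \eqref{eq.canonical.real} and \eqref{eq.canonical.relation} at the level of symbols gives $q=p\circ\kappa\circ\chi$, hence $F(q)=(\kappa\circ\chi)^{-1}F(p)(\kappa\circ\chi)$. From \eqref{eq.P.symbol} one reads off that $F(p)$ is block-diagonal and preserves the coordinate Lagrangian planes $\Lambda_0^+=\{(z,0)\}$ and $\Lambda_0^-=\{(0,\zeta)\}$. A direct computation from \eqref{eq.def.kappa} shows $\kappa^{-1}(\Lambda_0^+)=\{(x,Ax)\}$ with $A=i(1+H)^{-1}(1-H)$ as in \eqref{eq.def.FBI.A}, a positive Lagrangian plane since $\Im A>0$, and $\kappa^{-1}(\Lambda_0^-)=\{(x,-ix)\}$, a negative Lagrangian plane (both independent of the matrix $G$). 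Since $\chi$ is real and symplectic it preserves positivity and negativity of Lagrangian planes, so $\Lambda^\pm:=\chi^{-1}\kappa^{-1}(\Lambda_0^\pm)$ are positive and negative Lagrangian planes, and they are $F(q)$-invariant because $F(p)$ preserves $\Lambda_0^\pm$.

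For $(ii)\Rightarrow(i)$, the substantive step: given $F(q)$-invariant positive and negative (hence transverse) Lagrangian planes $\Lambda^\pm$, I will produce $\chi\in\opnm{Sp}(2n,\Bbb{R})$, a weight $\Phi$ obeying \eqref{eq.Phi.assumptions} --- equivalently a symmetric $H$ with $\|H\|<1$, the matrix $G$ of \eqref{eq.decompose.Phi} being irrelevant and taken equal to the identity --- and a matrix $M$, so that $q=p\circ\kappa\circ\chi$. By transitivity of $\opnm{Sp}(2n,\Bbb{R})$ on the negative definite Lagrangian planes (a classical fact about the Siegel half-space), choose $\chi$ with $\chi(\Lambda^-)=\{(x,-ix)\}$; then $\chi(\Lambda^+)$ is a positive Lagrangian plane, hence the graph of some symmetric $A_0$ with $\Im A_0>0$, and the inverse Cayley transform $H=(i-A_0)(i+A_0)^{-1}$ is symmetric with $\|H\|<1$ and realizes $\kappa^{-1}(\Lambda_0^+)=\{(x,A_0x)\}=\chi(\Lambda^+)$. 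Therefore $(\kappa\circ\chi)^{-1}(\Lambda_0^\pm)=\Lambda^\pm$, on which $q$ vanishes, so $q\circ(\kappa\circ\chi)^{-1}$ is a quadratic form vanishing on the complementary coordinate planes $\Lambda_0^\pm$ and hence equals $Cz\cdot\zeta$ for a matrix $C$; setting $M=-iC$ gives $q\circ(\kappa\circ\chi)^{-1}=p$ with $p(z,\zeta)=(Mz)\cdot(i\zeta)$, which by \eqref{eq.canonical.real} and \eqref{eq.canonical.relation} is exactly \eqref{eq.q.p.unitary}.

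The main obstacle is this last step, and within it the bookkeeping that the single FBI parameter $H$ together with the real symplectic group supplies precisely the freedom needed to carry the ordered pair $(\Lambda_0^+,\Lambda_0^-)$ to an arbitrary ordered transverse pair of positive and negative Lagrangian planes: the plane $\kappa^{-1}(\Lambda_0^-)=\{(x,-ix)\}$ is rigid under the FBI transform and must be moved by $\chi$, while the positive plane is then pinned down by the Cayley correspondence $A_0\leftrightarrow H$. This is the linear-algebraic core of \cite[Thm.~3.5]{Sj1974} together with its weakly elliptic refinement \cite[Prop.~2.1]{Vi2013}, here recast so as to reach the precise normal form $p(z,\zeta)=(Mz)\cdot(i\zeta)$; the remaining points --- that conjugation by $\mathcal{U}_\chi$ and by $\mathcal{T}$ acts on degree-two symbols by composition with $\chi^{-1}$ and $\kappa^{-1}$ with no lower-order corrections, and that a different positive definite Hermitian $G$ merely rescales $M$ --- are routine given the machinery recalled in the preceding subsection.
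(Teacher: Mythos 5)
Your proof is correct, and it diverges from the paper's in two places worth noting. First, you establish (ii)$\Leftrightarrow$(iii) intrinsically: the observation that $\sigma(v,F w)$ is the polarization of $q$ turns $F$-invariance of a Lagrangian plane into the vanishing of $q$ on it, and the factorization \eqref{eq.real.symbol.ip.form} then drops out of polarizing across $\Bbb{C}^{2n}=\Lambda^+\oplus\Lambda^-$ and inverting $A_+-A_-$ (invertible since $\Im(A_+-A_-)>0$). The paper instead routes (iii) through (i), pushing the linear forms $\xi-A_\pm x$ forward by $\kappa\circ\chi$ to identify them with multiples of $\zeta$ and $z$, and separately computing $F(q)$ in the factored form to get (iii)$\Rightarrow$(ii); your route is more self-contained and avoids the canonical transformations for this equivalence, while the paper's yields the explicit relation $B=G_+^\top MG_-$ between the factorization and the normal form. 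Second, for (ii)$\Rightarrow$(i) the paper simply cites \cite[Sec.~2]{HiSjVi2013} and \cite[Prop.~2.2]{Vi2013} for the construction of $\chi$ and $\kappa$ achieving \eqref{eq.Lambda.pm.reduction}, whereas you unpack it: transitivity of $\opnm{Sp}(2n,\Bbb{R})$ on negative Lagrangian planes normalizes $\Lambda^-$ to the rigid plane $\{(x,-ix)\}=\kappa^{-1}(\{z=0\})$, after which the Cayley correspondence $A_0\leftrightarrow H$ (with $H$ symmetric, $\|H\|<1$ precisely because $\Im A_0>0$) pins down the FBI weight so that $\kappa^{-1}(\{\zeta=0\})=\chi(\Lambda^+)$. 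This is exactly the content of the cited construction, so the substance agrees; your version has the additional merit of making explicit that, once $\Lambda^-$ has been normalized by a real symplectic map, the single parameter $H$ in the FBI phase supplies all the remaining freedom needed.
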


\begin{remark*}
Since the intersection of a positive and a negative Lagrangian plane must be trivial, it follows automatically that $\Lambda^+ \oplus \Lambda^- = \Bbb{C}^{2n}$.
\end{remark*}

\begin{remark*}
Following Proposition \ref{prop.linear}, we may also obtain some results for the Weyl quantization of any polynomial of degree 2 including linear and constant terms, so long as the quadratic part satisfies the hypotheses of Proposition \ref{prop.which.q} above.
\end{remark*}

\begin{proof}
From \eqref{eq.F.sigma1} and \eqref{eq.F.sigma2}, if $\mathcal{K}$ is a canonical linear transformation, then
\begin{equation}\label{eq.F.transform.K}
	F(q\circ \mathcal{K}) = \mathcal{K}^{-1} F(q) \mathcal{K}.
\end{equation}
The property of being a Lagrangian subspace is preserved by all linear canonical transformations; the property that a Lagrangian plane is positive or negative definite is preserved by all real linear canonical transformations (meaning those that preserve $\Bbb{R}^{2n}$ or equivalently those given by matrices with real entries).

We note that, for $p(z,\zeta)$ in \eqref{eq.P.symbol}, we have
\begin{equation}\label{eq.F.p}
	F(p) = \frac{1}{2}\left(\begin{array}{cc} M & 0 \\ 0 & -M^\top \end{array}\right)
\end{equation}
which has the invariant subspaces $\{(z,\zeta)\::\: \zeta = 0\}$ and $\{(z,\zeta) \::\: z = 0\}$. If the reduction in (\ref{it.which.q.reduction}) exists, $F(q_1) = F(p\circ \kappa)$ has invariant subspaces
\[
	\begin{aligned}
	\Lambda_1^+ &:= \kappa^{-1}(\{\zeta = 0\}) = \{(x,Ax)\}_{x \in \Bbb{C}^n},
	\\ \Lambda_1^- &:= \kappa^{-1}(\{z = 0\}) = \{(x, -ix)\}_{x \in \Bbb{C}^n}.
	\end{aligned}
\]
That $\Im A > 0$ is equivalent to strict convexity of $\Phi$; see \cite[Eq.~(2.8)]{Vi2013}. That $\Lambda^\pm_1$ are positive and negative definite Lagrangian planes then follows from \eqref{eq.Lagrangian.graph}.  These properties persist for $\Lambda^\pm := \chi^{-1}(\Lambda^\pm_1)$, which are invariant subspaces of $F(q)$, proving that the existence of $\Lambda^\pm$ is a necessary condition for the reduction to an operator $P$ described in the statement of the proposition.

Conversely, if $\Lambda^\pm$ exist, the construction of $\chi$ and $\kappa$ for which
\begin{equation}\label{eq.Lambda.pm.reduction}
	\begin{aligned}
	\kappa\circ\chi(\Lambda^+) &= \{\zeta = 0\},
	\\ \kappa\circ\chi(\Lambda^-) &= \{z = 0\}
	\end{aligned}
\end{equation}
may be found in \cite[Sec.~2]{HiSjVi2013} or with a few more details in \cite[Prop.~2.2]{Vi2013}; both essentially follow the ideas of \cite[Sec.~3]{Sj1974}.  The fact that $p := q \circ \chi^{-1}\circ \kappa^{-1}$ is of the form $Mz\cdot i\zeta$ follows from checking through \eqref{eq.F.sigma1} that $p''_{zz} = p''_{\zeta\zeta} = 0$ since $\{\zeta = 0\}$ and $\{z = 0\}$ are Lagrangian and $F(p)$-invariant.  If desired, one may put $M$ in Jordan normal form through a change of variables.

In order to establish that it is necessary and sufficient that $q(x,\xi)$ can be put in the form \eqref{eq.real.symbol.ip.form}, begin by supposing that the decomposition \eqref{eq.real.symbol.ip.form} holds and let
\[
	\ell_{\pm}(x,\xi) = \xi - A_\pm x,
\]
noting that these are linear maps of rank $n$ from $\Bbb{C}^{2n}$ to $\Bbb{C}^n$ with kernels
\[
	\ker \ell_{\pm}(x,\xi) = \Lambda^\pm := \{\xi = A_\pm x\}.
\]
Therefore, using $\chi$ and $\kappa$ from \eqref{eq.Lambda.pm.reduction},
\[
	k_\pm(z,\zeta) := \ell_\pm\circ\chi^{-1}\circ\kappa^{-1}(z,\zeta)
\]
are two rank-$n$ linear forms from $\Bbb{C}^{2n}$ to $\Bbb{C}^n$ with kernels $\ker k_+ = \{\zeta = 0\}$ and $\ker k_- = \{z = 0\}.$ Therefore $k_+ = F_+\zeta$ and $k_- = F_-z$ for some invertible matrices $F_\pm$, proving that
\[
	q\circ\chi^{-1}\circ\kappa^{-1}(z,\zeta) = (F_+^\top B F_-z)\cdot \partial_z,
\]
establishing that (\ref{it.which.q.reduction}) is satisfied.

Alternatively, we compute that, under the form \eqref{eq.real.symbol.ip.form},
\[
	F(q) = \frac{1}{2}\left(\begin{array}{cc} -B^\top A_+ - B A_- & B + B^\top \\ -A_+ B A_- - A_- B^\top A_+ & A_+ B + A_- B^\top\end{array}\right).
\]
From there it is easy to check directly that $\{(x,A_\pm x)\}$ are invariant subspaces of $F(q)$, because for instance
\[
	F(q)(x,A_+ x) = \frac{1}{2}(B(A_+-A_-)x, A_+B(A_+-A_-)x).
\]
This establishes (\ref{it.which.q.subspaces}) instead.

Conversely, supposing that (\ref{it.which.q.reduction}) holds, we simply reverse the process with $\tilde{k}_+ = \zeta$ and $\tilde{k}_- = z$.  With
\[
	\tilde{\ell}_\pm(x,\xi) = \tilde{k}_\pm\circ\kappa\circ\chi(x,\xi)
\]
we have two rank-$n$ linear forms with kernels 
\[
	\ker \tilde{\ell}_+ = \chi^{-1} \kappa^{-1} (\{\zeta = 0\})
\]
and
\[
	\ker \tilde{\ell}_- = \chi^{-1} \kappa^{-1} (\{z = 0\}).
\]
Since these must be positive and negative definite Lagrangian planes, we can write
\[
	\Lambda^\pm := \ker \tilde{\ell}_\pm = \{\xi = A_\pm x\}
\]
for symmetric matrices $A_\pm$ with sign-definite imaginary parts.  As a consequence, $G_\pm := (\tilde{\ell}_\pm)'_\xi$ must be invertible, so we can check that 
\[
	G_\pm^{-1}\tilde{\ell}_\pm(x,\xi) = \xi - A_\pm x
\]
since the coefficient of $\xi$ is the identity matrix and the coefficient of $x$ is then identified by the kernel. Since $p = M\tilde{k}_- \cdot \tilde{k}_+$, we have that 
\[
	\begin{aligned}
	q(x,\xi) & = p \circ \kappa \circ \chi(x,\xi)
	\\ & = M\tilde{\ell}_-(x,\xi) \cdot \tilde{\ell}_+(x,\xi)
	\\ & = MG_-(\xi - A_-x) \cdot G_+(\xi - A_+x).
	\end{aligned}
\]
This proves that \eqref{eq.real.symbol.ip.form} holds with $B = G_+^\top MG_-$.
\end{proof}

\begin{corollary}\label{cor.Lambda.pm.spectra}
If $q(x,\xi) : \Bbb{R}^{2n} \to \Bbb{C}$ is a quadratic form obeying condition (\ref{it.which.q.subspaces}) in Proposition \ref{prop.which.q}, then, with the fundamental matrix $F$ as in \eqref{eq.def.F},
\[
	\opnm{Spec} F|_{\Lambda^+} = -\opnm{Spec} F|_{\Lambda^-},
\]
including algebraic and geometric multiplicities.  Furthermore, under the relation between $q(x,\xi)$ and $p(z,\zeta) = Mz\cdot i\zeta$ in part (\ref{it.which.q.reduction}) of Proposition \ref{prop.which.q}, we have 
\[
	\opnm{Spec}M = \frac{2}{i}\opnm{Spec}F|_{\Lambda^+}.
\]
\end{corollary}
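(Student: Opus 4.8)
The plan is to extract everything from the relation $F(q\circ\mathcal{K}) = \mathcal{K}^{-1}F(q)\mathcal{K}$ in \eqref{eq.F.transform.K} together with the explicit block form \eqref{eq.F.p} of $F(p)$. First I would observe that, by part (\ref{it.which.q.reduction}) of Proposition \ref{prop.which.q}, there is a canonical linear transformation $\mathcal{K} = \kappa\circ\chi$ with $q_1 := q\circ\chi^{-1}\circ\kappa^{-1} = p$, and that this $\mathcal{K}$ carries $\Lambda^+$ to $\{\zeta = 0\}$ and $\Lambda^-$ to $\{z = 0\}$, as in \eqref{eq.Lambda.pm.reduction}. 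Conjugation by $\mathcal{K}$ is a similarity of $F(q)$ which respects these invariant subspaces, so $F(q)|_{\Lambda^+}$ is similar to $F(p)|_{\{\zeta=0\}}$ and $F(q)|_{\Lambda^-}$ is similar to $F(p)|_{\{z=0\}}$, including algebraic and geometric multiplicities (similarity preserves Jordan structure). From \eqref{eq.F.p} these restrictions are $\frac12 M$ and $-\frac12 M^\top$ respectively.

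The second step is then purely linear-algebraic: $M$ and $M^\top$ have the same Jordan normal form, hence the same eigenvalues with the same algebraic \emph{and} geometric multiplicities, since $(\lambda - M^\top)^k = ((\lambda - M)^k)^\top$ has the same rank as $(\lambda - M)^k$ for every $k$. Therefore
\[
	\opnm{Spec} F|_{\Lambda^-} = -\tfrac12\opnm{Spec} M^\top = -\tfrac12 \opnm{Spec} M = -\opnm{Spec} F|_{\Lambda^+},
\]
with all multiplicities, which is the first claim. The second claim, $\opnm{Spec} M = \frac{2}{i}\opnm{Spec} F|_{\Lambda^+}$, is not quite immediate from the above because the reduction \eqref{eq.canonical.relation}--\eqref{eq.def.kappa} involves the FBI transform and hence a factor coming from the definition $p(z,\zeta) = (Mz)\cdot(i\zeta)$; I would track this factor of $i$ carefully. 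Concretely, from \eqref{eq.F.p} one has $F(p)|_{\{\zeta = 0\}} = \frac12 M$, but the statement of the corollary phrases things in terms of $F|_{\Lambda^+}$ for the \emph{original} $q$; under the similarity this is $\frac12 M$ as well, so one would get $\opnm{Spec} M = 2\opnm{Spec} F|_{\Lambda^+}$. To reconcile with the stated $\frac{2}{i}$, I would recompute $F(p)$ directly from \eqref{eq.F.sigma1} using $p(z,\zeta) = (Mz)\cdot(i\zeta) = i\,(Mz)\cdot\zeta$: writing $p$ as a quadratic form in $(z,\zeta)$, its mixed second-derivative block contributes $F(p)$ with a factor of $i$ absorbed into the $\zeta$-$z$ coupling, so in fact $F(p)|_{\{z = 0\}}$ and $F(p)|_{\{\zeta = 0\}}$ carry that factor; the bookkeeping gives $\opnm{Spec} F(p)|_{\{\zeta = 0\}} = \frac{i}{2}\opnm{Spec} M$, and hence $\opnm{Spec} M = \frac{2}{i}\opnm{Spec} F|_{\Lambda^+}$.

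The main obstacle I anticipate is exactly this sign-and-$i$ bookkeeping in the second claim: one must be scrupulous about whether $F$ is built from $p$ as written (with the explicit $i$) or from the ``real'' normalization, and about the orientation convention in \eqref{eq.Lambda.pm.reduction} sending $\Lambda^+$ to $\{\zeta = 0\}$ rather than $\{z = 0\}$. The first claim, by contrast, is robust: it needs only that $\mathcal{K}$ intertwines $F(q)$ with $F(p)$ while permuting the two canonical Lagrangian planes, plus the elementary fact that $M$ and $-M^\top$ have opposite spectra with identical Jordan data. I would present the first claim in one short paragraph via \eqref{eq.F.transform.K}, \eqref{eq.F.p}, \eqref{eq.Lambda.pm.reduction}, and the $M \leftrightarrow M^\top$ observation, and then devote a second paragraph to the careful computation of $F(p)$ pinning down the factor $\frac{2}{i}$.
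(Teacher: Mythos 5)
Your proposal is correct and follows essentially the same route as the paper: conjugate $F(q)$ by $\mathcal{K}=\kappa\circ\chi$ using \eqref{eq.F.transform.K} and \eqref{eq.Lambda.pm.reduction}, so that $F|_{\Lambda^+}$ and $F|_{\Lambda^-}$ become similar to the diagonal blocks of $F(p)$, and use that $M$ and $M^\top$ share their Jordan data. Your careful recomputation of the factor of $i$ is exactly the right instinct: since $p(z,\zeta)=(Mz)\cdot(i\zeta)$, the definition \eqref{eq.def.F} gives $F(p)=\frac{i}{2}\opnm{diag}(M,-M^\top)$, which is what the paper's own proof of the corollary uses (the display \eqref{eq.F.p} earlier in the text drops the $i$, harmlessly for the argument in Proposition \ref{prop.which.q} but not for the constant $\frac{2}{i}$ here).
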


\begin{proof}
Using the reduction (\ref{it.which.q.reduction}) from Proposition \ref{prop.which.q} and writing $\mathcal{K} = \kappa \circ \chi$, we have that
\[
	F(p) = \frac{i}{2}\left(\begin{array}{cc} M & 0 \\ 0 & -M^\top \end{array}\right) = \mathcal{K} F(q) \mathcal{K}^{-1}.
\]
Since $\mathcal{K}:\Lambda^+ \to \{\zeta = 0\}$ and $\mathcal{K}:\Lambda^- \to \{z = 0\}$ are linear bijections, we have that $F|_{\Lambda^+}$ is similar to $\frac{i}{2}M$ and $F|_{\Lambda^-}$ is similar to $-\frac{i}{2}M^\top$.  The result follows.
\end{proof}

Under the natural assumption that $\opnm{Spec} F|_{\Lambda^+}$ is contained in a proper half-plane --- which appears in, for instance, Proposition \ref{prop.exptM.asymptotic} --- we have that the hypothesis in Proposition \ref{prop.which.q} is stable.

\begin{corollary}
Let $q(x,\xi) : \Bbb{R}^{2n} \to \Bbb{C}$ be a quadratic form obeying the conditions in Proposition \ref{prop.which.q} and for which 
\[
	\opnm{Spec}F|_{\Lambda^+}(q) \subset \{\Re e^{i\theta}\lambda > 0\}
\]
for some $\theta \in \Bbb{R}$. Then there exists some $\eps > 0$ such that, if $\tilde{q}:\Bbb{R}^{2n}\to\Bbb{C}$ is another quadratic form with $\|\tilde{q}''\| \leq \eps$, then $q+\tilde{q}$ also obeys the conditions in Proposition \ref{prop.which.q}.
\end{corollary}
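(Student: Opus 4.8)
The plan is to verify that condition (\ref{it.which.q.subspaces}) of Proposition \ref{prop.which.q} --- the existence of a positive definite and a negative definite $F$-invariant Lagrangian plane --- is stable under small perturbations of the quadratic form, the half-plane hypothesis on $\opnm{Spec} F|_{\Lambda^+}$ serving precisely to produce a two-sided spectral gap for $F(q)$. First I would record the spectral picture: by Corollary \ref{cor.Lambda.pm.spectra} we have $\opnm{Spec} F|_{\Lambda^-} = -\opnm{Spec} F|_{\Lambda^+} \subset \{\Re(e^{i\theta}\lambda) < 0\}$, and since $\Lambda^+ \oplus \Lambda^- = \Bbb{C}^{2n}$ this gives $\opnm{Spec} F(q) = (\opnm{Spec} F|_{\Lambda^+}) \sqcup (\opnm{Spec} F|_{\Lambda^-})$. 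Thus the finite set $\opnm{Spec} F(q)$ lies at a positive distance from the line $\ell = \{\lambda \::\: \Re(e^{i\theta}\lambda) = 0\}$, with exactly $n$ eigenvalues (counted with algebraic multiplicity) strictly on each side. Since $F$ depends linearly on the entries of $q''$, one has $F(q+\tilde q) = F(q) + F(\tilde q)$ with $\|F(\tilde q)\| \leq C\|\tilde q''\|$; by upper semicontinuity of the spectrum there is $\eps_0 > 0$ so that for $\|\tilde q''\| \leq \eps_0$ the spectrum of $F(q+\tilde q)$ is still disjoint from $\ell$ with $n$ eigenvalues on each side. Fixing a contour $\gamma$ encircling $\opnm{Spec} F|_{\Lambda^+}$ inside $\{\Re(e^{i\theta}\lambda) > 0\}$, the Riesz projection
\[
	\Pi(\tilde q) = \frac{1}{2\pi i}\oint_\gamma (z - F(q+\tilde q))^{-1}\,dz
\]
is then well defined for $\|\tilde q''\| \leq \eps_0$, has rank $n$, depends continuously on $\tilde q$, and reduces at $\tilde q = 0$ to the projection onto $\Lambda^+$ along $\Lambda^-$. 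Set $\Lambda^+_{\tilde q} = \opnm{Ran}\Pi(\tilde q)$ and $\Lambda^-_{\tilde q} = \ker\Pi(\tilde q)$; these are $F(q+\tilde q)$-invariant subspaces, each of dimension $n$, with $\Lambda^\pm_{\tilde q} \to \Lambda^\pm$ as $\tilde q \to 0$.

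The next step is to check that $\Lambda^\pm_{\tilde q}$ are Lagrangian and sign definite. The matrix $F(q+\tilde q)$ still obeys the skew relation \eqref{eq.F.sigma2}, so generalized eigenvectors of $F(q+\tilde q)$ attached to eigenvalues $\lambda$ and $\mu$ with $\lambda + \mu \neq 0$ are $\sigma$-orthogonal (a standard consequence of moving powers of $F-\lambda$ across $\sigma$, using that $F+\lambda$ is invertible on the generalized eigenspace for $\mu$ when $\lambda + \mu \neq 0$). Every eigenvalue appearing in $\Lambda^+_{\tilde q}$ lies in $\{\Re(e^{i\theta}\lambda) > 0\}$, so for any two such eigenvalues $\Re(e^{i\theta}(\lambda+\mu)) > 0$ and in particular $\lambda + \mu \neq 0$; hence $\sigma$ vanishes on $\Lambda^+_{\tilde q}$, and being isotropic of dimension $n = \tfrac{1}{2}\dim\Bbb{C}^{2n}$ it is Lagrangian. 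The same reasoning gives that $\Lambda^-_{\tilde q}$ is Lagrangian. For sign definiteness, recall from \eqref{eq.Lagrangian.graph} that $\Lambda^+$ is the graph $\{(x, Ax) \::\: x \in \Bbb{C}^n\}$ of a symmetric matrix with $\Im A > 0$; since being transverse to $\{x = 0\}$ is an open condition on $n$-planes, for $\tilde q$ small $\Lambda^+_{\tilde q}$ is again a graph $\{(x, A_{\tilde q}x)\}$ with $A_{\tilde q} \to A$, where $A_{\tilde q}$ is symmetric because $\Lambda^+_{\tilde q}$ is Lagrangian, and $\Im A_{\tilde q} > 0$ for $\tilde q$ small because $\Im A > 0$. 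Hence $\Lambda^+_{\tilde q}$ is a positive definite Lagrangian plane and, symmetrically, $\Lambda^-_{\tilde q}$ a negative definite one. Choosing $\eps \leq \eps_0$ small enough that all the ``for $\tilde q$ small'' requirements hold at once, $q + \tilde q$ satisfies condition (\ref{it.which.q.subspaces}) of Proposition \ref{prop.which.q}, hence all the equivalent conditions.

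The main obstacle is not a single computation but the bookkeeping of where each hypothesis is used: without the half-plane assumption, Corollary \ref{cor.Lambda.pm.spectra} alone does not separate $\opnm{Spec} F(q)$ from a line, and the perturbed $F$ could fail to possess an $n$-dimensional invariant subspace whose spectrum lies in an open half-plane, so the Riesz projection construction would have nothing to anchor it. Once the two-sided gap and the projections $\Pi(\tilde q)$ are in hand, the Lagrangian property is automatic from the infinitesimally symplectic structure \eqref{eq.F.sigma2} of the fundamental matrix, and the only place that genuinely consumes the smallness of $\|\tilde q''\|$ is the passage from $\Im A > 0$ to $\Im A_{\tilde q} > 0$, that is, the preservation of sign definiteness.
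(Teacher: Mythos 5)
Your proof is correct and follows essentially the same route as the paper: reduce to a spectral gap across the line $\{\Re(e^{i\theta}\lambda)=0\}$ via Corollary \ref{cor.Lambda.pm.spectra}, realize $\Lambda^{\pm}$ as range and kernel of a Riesz projection that depends continuously on the coefficients of $q$, and invoke openness of the positive/negative Lagrangian conditions. The only difference is that where the paper cites \cite[p.~97]{Sj1974} for that openness, you prove it directly (isotropy of the perturbed invariant subspaces from \eqref{eq.F.sigma2} and $\lambda+\mu\neq 0$, then stability of the graph representation with $\Im A>0$), which is a harmless and self-contained elaboration.
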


\begin{proof}
We follow \cite[p.~97]{Sj1974}. We may assume without loss of generality that $\theta = 0$, and by Corollary \ref{cor.Lambda.pm.spectra} we have
\[
	\opnm{Spec}F(q)|_{\Lambda^\pm} = \opnm{Spec}F(q) \cap \{\pm \Re \lambda > 0\}.
\]	
Then $\Lambda^+(q)$ may be realized as the image of
\[
	P(q) = \frac{1}{2\pi i}\int_\Gamma (z-F(q))^{-1}\,dz
\]
for $\Gamma = i[-R, R] \cup \{|z| = R, \Re z > 0\}$ for $R$ sufficiently large that $\Gamma$ surrounds all the eigenvalues of $F|_{\Lambda^+}$.  We can express $\Lambda^-$ similarly.  That $\Lambda^+$ and $\Lambda^-$ are positive and negative Lagrangian planes is an open condition in $F$ (again referring to \cite[p.~97]{Sj1974}), as is the fact that the eigenvalues of $F|_{\Lambda^+}$ are contained in the right half-plane.  Therefore a sufficiently small change in the coefficients of $q$ cannot change condition (\ref{it.which.q.subspaces}) in Proposition \ref{prop.which.q}, and the corollary follows.
\end{proof}

As an illustration of \eqref{eq.canonical.relation} and to understand how decay in Fock spaces is related to smoothness and decay on the real side, we study the Hermite functions
\begin{equation}\label{eq.def.Hermite}
	h_\alpha(x) = \frac{1}{\sqrt{2^{|\alpha|} \alpha! \sqrt{\pi^n}}}(x-\partial_x)^\alpha e^{-x^2/2},
\end{equation}
which form an orthonormal basis of eigenfunctions for the harmonic oscillator $Q_0$ defined in \eqref{eq.def.h.o}.

\begin{proposition}\label{prop.Hermite}
With $\mathcal{T}$ in \eqref{eq.def.FBI.final}, the Hermite functions $\{h_\alpha\}_{\alpha \in \Bbb{N}^n}$ in \eqref{eq.def.Hermite}, and the orthonormal basis $\{e_\alpha\}_{\alpha \in \Bbb{N}^n}$ defined in \eqref{eq.def.e.alpha}, there exists some constant $c \in \Bbb{C}$ with $|c| = 1$ such that
\begin{equation}\label{eq.prop.hermite.1}
	\mathcal{T}h_\alpha = ce_\alpha.
\end{equation}
Furthermore, with $Q_0$ from \eqref{eq.def.h.o} and $P_0$ from \eqref{eq.def.P0},
\begin{equation}\label{eq.prop.hermite.2}
	\mathcal{T}Q_0\mathcal{T}^* = P_0.
\end{equation}
\end{proposition}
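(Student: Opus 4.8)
The plan is to determine how $\mathcal{T}$ acts on the creation and annihilation operators of $Q_0$, and then to pass from the ground state up to all of the Hermite functions. Write $a_j = \frac{1}{\sqrt{2}}(x_j + \partial_{x_j})$ and $a_j^{*} = \frac{1}{\sqrt{2}}(x_j - \partial_{x_j})$, so that $Q_0 = \sum_{j=1}^{n} a_j^{*} a_j$, and so that \eqref{eq.def.Hermite} reads $h_\alpha = \frac{1}{\sqrt{\alpha!}}\prod_{j=1}^{n} (a_j^{*})^{\alpha_j} h_0$ while $a_j h_0 = 0$ for every $j$. On the Fock side, with $\mathcal{U} = \mathcal{W}_{-h}\mathcal{V}_G$ as in \eqref{eq.def.U}, set $b_j^{*} := \mathcal{U} z_j \mathcal{U}^{*}$ and $b_j := \mathcal{U}\partial_{z_j}\mathcal{U}^{*}$; from \eqref{eq.def.V} and \eqref{eq.def.W} one computes that $b_j^{*}$ is multiplication by $(Gz)_j$ and that $b_j = \sum_{k}((G^{-1})^{\top})_{jk}\big(\partial_{z_k} + (h'(z))_k\big)$. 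Since $z_j^{*} = \partial_{z_j}$ on $H_\Psi$, the operators $b_j$ and $b_j^{*}$ are mutually adjoint on $H_\Phi$, satisfy $b_j^{*} b_j = \mathcal{U}(z_j\partial_{z_j})\mathcal{U}^{*}$ --- hence $\sum_{j} b_j^{*} b_j = \mathcal{U}(z\cdot\partial_z)\mathcal{U}^{*} = P_0$ by Proposition \ref{prop.h.o.semigroup} with $B$ the identity --- and, by \eqref{eq.def.e.alpha}, give $e_\alpha = \frac{1}{\sqrt{\alpha!}}\prod_{j=1}^{n}(b_j^{*})^{\alpha_j} e_0$ together with $\bigcap_j \ker b_j = \Bbb{C} e_0$.

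The computational heart of the argument is the pair of identities $\mathcal{T} a_j^{*}\mathcal{T}^{*} = b_j^{*}$ and $\mathcal{T} a_j \mathcal{T}^{*} = b_j$. By the conjugation relation \eqref{eq.canonical.relation}, valid in particular for polynomials of degree $\leq 2$, it suffices to express the linear symbols $\frac{1}{\sqrt{2}}(x_j \mp i\xi_j)$ --- the Weyl symbols of $a_j^{*}$ and $a_j$ --- in the canonical coordinates $(z,\zeta) = \kappa(x,\xi)$ with $\kappa$ the matrix \eqref{eq.def.kappa}. The first block of \eqref{eq.def.kappa} gives $\frac{1}{\sqrt{2}}(x - i\xi) = Gz$, so $\frac{1}{\sqrt{2}}(x_j - i\xi_j) = (Gz)_j$ depends on $z$ alone and $\mathcal{T} a_j^{*}\mathcal{T}^{*}$ is multiplication by $(Gz)_j$, i.e. $b_j^{*}$. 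For $a_j$ one inverts \eqref{eq.def.kappa} for $\frac{1}{\sqrt{2}}(x + i\xi)$: the definition \eqref{eq.def.FBI.A} yields $1 - iA = 2(1+H)^{-1}$, equivalently $(1+H)A = i(1-H)$, and combined with $H = (G^{-1})^{\top} h'' G^{-1}$ a short computation gives $\frac{1}{\sqrt{2}}(x_j + i\xi_j) = (HGz)_j + i\big((G^{-1})^{\top}\zeta\big)_j$; since $HGz = (G^{-1})^{\top} h'(z)$, the Weyl quantization in $(z, D_z)$ of this symbol is precisely $b_j$. Substituting the two identities into $Q_0 = \sum_j a_j^{*} a_j$ gives at once $\mathcal{T} Q_0 \mathcal{T}^{*} = \sum_j b_j^{*} b_j = P_0$, which is \eqref{eq.prop.hermite.2}.

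For \eqref{eq.prop.hermite.1}, unitarity of $\mathcal{T}$ and $a_j h_0 = 0$ for all $j$ force $b_j(\mathcal{T} h_0) = \mathcal{T}(a_j h_0) = 0$ for all $j$, hence $\mathcal{T} h_0 \in \bigcap_j \ker b_j = \Bbb{C} e_0$; since $\|\mathcal{T} h_0\|_\Phi = \|h_0\|_{L^2(\Bbb{R}^n)} = 1 = \|e_0\|_\Phi$, we obtain $\mathcal{T} h_0 = c e_0$ for a constant $c$ with $|c| = 1$. Applying $\frac{1}{\sqrt{\alpha!}}\prod_j(\mathcal{T} a_j^{*}\mathcal{T}^{*})^{\alpha_j} = \frac{1}{\sqrt{\alpha!}}\prod_j(b_j^{*})^{\alpha_j}$ to both sides and using $h_\alpha = \frac{1}{\sqrt{\alpha!}}\prod_j(a_j^{*})^{\alpha_j} h_0$ and $e_\alpha = \frac{1}{\sqrt{\alpha!}}\prod_j(b_j^{*})^{\alpha_j} e_0$ yields $\mathcal{T} h_\alpha = c e_\alpha$ with the \emph{same} unimodular constant $c$ for every $\alpha \in \Bbb{N}^n$, as required.

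I expect the only real difficulty to be the linear algebra in the second paragraph: after the change of coordinates $\kappa$, one must correctly sort which half-variables $\frac{1}{\sqrt{2}}(x_j \mp i\xi_j)$ become multiplication operators on $H_\Phi$ and which become constant-coefficient holomorphic vector fields, and verify the cancellations, which use $1 - iA = 2(1+H)^{-1}$, the identity $H = (G^{-1})^{\top} h'' G^{-1}$, and the telescoping of the $G$-factors in pairings such as $(Gz)\cdot\big((G^{-1})^{\top}\zeta\big) = z\cdot\zeta$. It is helpful that, in these coordinates, the relevant symbols contain no $z\zeta$ cross terms, so their Weyl quantizations involve no ordering corrections and the additive constant $-\frac{n}{2}$ present in both $Q_0$ and $P_0$ matches automatically. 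Granting the second paragraph, the passage from the ground state to general $h_\alpha$ and the relation \eqref{eq.prop.hermite.2} are purely formal.
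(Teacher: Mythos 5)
Your proposal is correct and follows essentially the same route as the paper: both compute the images of the creation and annihilation operators under conjugation by $\mathcal{T}$ via the canonical transformation $\kappa$ (obtaining multiplication by $(Gz)_j$ and the twisted derivatives $(G^{-1})^\top(\partial_z + h'(z))$), identify $\mathcal{T}h_0$ with $e_0$ through the kernel of the annihilation operators, and then propagate to all $\alpha$ and factor $Q_0$ to obtain $P_0$. The only differences are cosmetic (the $1/\sqrt{2}$ normalization of $a_j, a_j^*$ versus the paper's $\frac12(x-\partial_x)\cdot(x+\partial_x)$).
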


\begin{proof}
The Hermite functions are uniquely determined, up to a constant multiple of modulus one, by the creation operators $(x-\partial_x)$, regarded as an $n$-vector of operators, and the fact that $h_0$ is an $L^2(\Bbb{R}^n)$-normalized function in the kernel of the annihilation operators $(x+\partial_x)$.

Inverting $\kappa$ in \eqref{eq.def.kappa}, we see that the Weyl symbol of the creation operators is
\begin{equation}\label{eq.creation.Fock}
	\begin{aligned}
	\left.(x-i\xi)\right|_{(x,\xi) = \kappa^{-1}(z,\zeta)} & = \sqrt{2}\left((1-iA)^{-1} Gz + \frac{i}{2}(G^\top)^{-1}\zeta\right) 
		\\ &~\quad - i\sqrt{2}\left(A(1-iA)^{-1} Gz + \frac{1}{2}(G^\top)^{-1}\zeta\right)
	\\ &= \sqrt{2}G z.
	\end{aligned}
\end{equation}
Recalling the definition of $A$ in \eqref{eq.def.kappa}, the Weyl symbol of the annihilation operators may be computed similarly: 
\begin{equation}\label{eq.annihilation.Fock}
	\left.(x+i\xi)\right|_{(x,\xi) = \kappa^{-1}(z,\zeta)} = \sqrt{2}\left(HGz+i(G^\top)^{-1}\zeta\right).
\end{equation}

From \eqref{eq.decompose.Phi} and the definition \eqref{eq.def.H} of $H$, we see that the annihilation operators
\[
	\sqrt{2}(G^\top)^{-1}\left(G^\top H G z + \partial_z\right) = \sqrt{2}(G^\top)^{-1}\left(h'(z) + \partial_z\right)
\]
applied to $e_0$ give zero and we know that $\|e_0\|_{\Phi} = 1$.  Therefore
\[
	e_0 = c\mathcal{T}h_0
\]
for some $c$ with $|c| = 1$. We therefore have \eqref{eq.prop.hermite.1} since
\[
	\begin{aligned}
	e_\alpha &= \frac{1}{\sqrt{2^{|\alpha|} \alpha!}}(\sqrt{2}Gz)^\alpha e_0
	\\ &= \frac{1}{\sqrt{2^{|\alpha|}\alpha!}}\mathcal{T}(x-\partial_x)^\alpha\mathcal{T}^* c\mathcal{T}h_0
	\\ &= c\mathcal{T}h_\alpha.
	\end{aligned}
\]
The equivalence \eqref{eq.prop.hermite.2} follows from the computation
\[
	\begin{aligned}
	\mathcal{T}Q_0\mathcal{T}^* &= \mathcal{T} \frac{1}{2}(x -\partial_x)\cdot(x+\partial_x)\mathcal{T}^*
	\\ &= \frac{1}{2}\mathcal{T}(x-\partial_x)\mathcal{T}^* \cdot \mathcal{T}(x+\partial_x)\mathcal{T}^*
	\\ &= \frac{1}{2}\sqrt{2}Gz \cdot \sqrt{2}(G^\top)^{-1}(\partial_z + h'(z))
	\\ &= P_0.
	\end{aligned}
\]
\end{proof}

We now state the equivalence between the real and Fock space weak ellipticity conditions.

\begin{proposition}\label{prop.subell.relation}
	Let $p(z,\zeta) = Mz\cdot (i\zeta)$ and $q(x,\xi)$ be related through $q = p \circ \mathcal{K}$ with $\mathcal{K} = \kappa \circ \chi$ as in part (\ref{it.which.q.reduction}) of Proposition \ref{prop.which.q}. Recall the definition of $\Theta$ from \eqref{eq.def.Theta}, the real-side index $J(x,\xi)$ from \eqref{eq.real.subell.index} above, and the Fock-space index $I(z)$ from \eqref{eq.def.subellipticity} below.  Assume that \eqref{eq.real.semidef}, or equivalently \eqref{eq.ellipticity}, holds.  
	
	Then, for every $(x,\xi) \in \Bbb{R}^{2n}\backslash \{0\}$,
	\[
		J(x,\xi) = I(z),
	\]
	where $(x,\xi)$ and $z$ are related by
	\begin{equation}\label{eq.subell.reln.K}
		(z, -2i\Phi'_z(z)) = \mathcal{K}(x,\xi),
	\end{equation}
	recalling that $(x,\xi) \stackrel{\kappa}\mapsto (z,\zeta) \mapsto z$ is a real linear bijection from $\Bbb{R}^{2n}$  to $\Bbb{C}^n$ and therefore so is $(x,\xi) \stackrel{\mathcal{K}}\mapsto (z,\zeta) \mapsto z$. Furthermore,
	\begin{equation}\label{eq.subell.equiv.value}
		\Re q((\Im F)^{J(x,\xi)}(x,\xi)) = 4^{-I(z)}\Theta(M^{I(z)}z).
	\end{equation}
\end{proposition}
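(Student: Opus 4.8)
The plan is to transfer both sides to the Fock picture. By part~(\ref{it.which.q.reduction}) of Proposition~\ref{prop.which.q}, $q = p\circ\mathcal{K}$ with $\mathcal{K}=\kappa\circ\chi$ and $p(z,\zeta)=Mz\cdot(i\zeta)$, so \eqref{eq.F.transform.K} gives $F(q)=\mathcal{K}^{-1}F(p)\mathcal{K}$, and a direct computation from \eqref{eq.F.sigma1} (as in Corollary~\ref{cor.Lambda.pm.spectra}) gives $F(p)=\tfrac{i}{2}\opnm{diag}(M,-M^\top)$. For $(x,\xi)\in\Bbb{R}^{2n}$ one has $\mathcal{K}(x,\xi)=(z,-2i\Phi'_z(z))$ with $z$ the real-linear bijective image from \eqref{eq.subell.reln.K}, so restricting $p$ to $\mathcal{K}(\Bbb{R}^{2n})=\Lambda_\Phi:=\{(z,-2i\Phi'_z(z))\}$ yields $p|_{\Lambda_\Phi}(z)=2(Mz)\cdot\Phi'_z(z)$. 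Taking real and imaginary parts gives the base identity $\Re q(x,\xi)=2\Re((Mz)\cdot\Phi'_z(z))=\Theta(z)$, which is \eqref{eq.subell.equiv.value} in the case $I(z)=0$, together with $\Im q(x,\xi)=2\Im((Mz)\cdot\Phi'_z(z))$. In particular $\kappa_{\Bbb{R}}\colon(x,\xi)\mapsto z$ is an isometry from $(\Bbb{R}^{2n},\Re q)$ onto $(\Bbb{C}^n,\Theta)$, so under the standing hypothesis $\Re q\ge0$ (equivalently $\Theta\ge0$) the radical of the quadratic form $\Re q$ is $\ker\Re F=\{v:\Re q(v)=0\}$, the radical of $\Theta$ is $\{z:\Theta(z)=0\}$, and $\kappa_{\Bbb{R}}$ maps the first onto the second.

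Next I would recast both indices through Taylor expansion. Polarizing and using $\exp(tH_{\Im q})=\exp(2t\,\Im F)$ together with the $F$-skewness \eqref{eq.F.sigma2}, the coefficient of $t^m$ in $t\mapsto\Re q\bigl(\exp(tH_{\Im q})(x,\xi)\bigr)$ is a sum of terms $\Re q_{\mathrm{bil}}\bigl((\Im F)^k(x,\xi),(\Im F)^l(x,\xi)\bigr)$ with $k+l=m$; since $\Re q\ge0$, every iterate $(\Im F)^j(x,\xi)$ with $j<J(x,\xi)$ lies in $\ker\Re F$, i.e.\ in the radical of $\Re q$, and annihilates the corresponding bilinear term. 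Hence this function vanishes to order exactly $2J(x,\xi)$ at $t=0$ with leading coefficient $\tfrac{4^{J}}{(J!)^2}\Re q\bigl((\Im F)^{J}(x,\xi)\bigr)$; symmetrically, using the equivalence of \eqref{eq.def.subellipticity} with $I(z)=\min\{k:\Theta(M^kz)\ne0\}$, the function $t\mapsto\Theta(e^{tM}z)$ vanishes to order exactly $2I(z)$ with leading coefficient $\tfrac{1}{(I!)^2}\Theta(M^{I}z)$. Consequently it suffices to show that these two functions of $t$ agree at $t=0$ up to and including order $2\min\bigl(J(x,\xi),I(z)\bigr)$: comparing orders of vanishing then forces $J(x,\xi)=I(z)$, and comparing leading coefficients gives $\Re q\bigl((\Im F)^{J}(x,\xi)\bigr)=4^{-I}\Theta(M^{I}z)$, i.e.\ \eqref{eq.subell.equiv.value}. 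Note that the two functions need not agree beyond that order — already for the harmonic oscillator $\Im q\equiv0$ while $\Theta(e^{tM}z)=e^{2t}\Theta(z)$ — so this is precisely the right statement.

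The heart of the matter, and the step I expect to be the main obstacle, is this jet comparison, which I would prove by induction on $k$ in the form: if $(\Im F)^j(x,\xi)\in\ker\Re F$ for all $j<k$, then $\kappa_{\Bbb{R}}\bigl((\Im F)^k(x,\xi)\bigr)\equiv2^{-k}M^kz$ modulo the radical of $\Theta$, hence $\Re q\bigl((\Im F)^k(x,\xi)\bigr)=4^{-k}\Theta(M^kz)$. For the inductive step, transport $H_{\Im q}=2\Im F(q)=\tfrac{1}{i}\bigl(F(q)-\overline{F(q)}\bigr)$ through $\mathcal{K}$: for $v\in\Bbb{R}^{2n}$ and $w=\mathcal{K}v\in\Lambda_\Phi$ one obtains $\mathcal{K}\bigl(2\Im F(q)v\bigr)=\tfrac{1}{i}\bigl(F(p)w-F(\check p)w\bigr)$ with $\check p=\bar q\circ\mathcal{K}^{-1}$, and the conjugation $R=\overline{\mathcal{K}}\mathcal{K}^{-1}$ — which restricts to ordinary complex conjugation on $\Lambda_\Phi$ because $\mathcal{K}(\Bbb{R}^{2n})=\Lambda_\Phi$ — gives $F(\check p)w=\overline{R\,F(p)w}$. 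Decomposing along the $F(p)$-invariant splitting $\Bbb{C}^{2n}=\mathcal{K}\Lambda^+\oplus\mathcal{K}\Lambda^-=\{\zeta=0\}\oplus\{z=0\}$ furnished by part~(\ref{it.which.q.subspaces}) of Proposition~\ref{prop.which.q}, on which $F(p)$ acts by $\tfrac{i}{2}M$ and $-\tfrac{i}{2}M^\top$ respectively, and using the inductive hypothesis to pin down the $\mathcal{K}\Lambda^-$-component of the previous iterate modulo the radical (this is exactly where $\Re q\ge0$ is indispensable), one computes that the $z$-component of $\tfrac{1}{i}\bigl(F(p)w-\overline{R\,F(p)w}\bigr)$ equals $Mz$ modulo the radical of $\Theta$. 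The genuine difficulty is that the two structures on $\Bbb{C}^{2n}$ in play are incompatible: the $\Bbb{C}$-linear splitting $\Lambda^+\oplus\Lambda^-$ adapted to $F(q)$ and $\mathcal{K}$, versus the real form $\Lambda_\Phi$ adapted to conjugation; reconciling them to identify the $z$-component correctly — with the factor governed by $M$ rather than by $\Im M$ or $\tfrac{i}{2}M$ — forces explicit use of the formula \eqref{eq.def.kappa} for $\kappa$ and of the normalization $\Lambda^-=\{\xi=-ix\}$. Everything else is routine bookkeeping with Taylor coefficients.
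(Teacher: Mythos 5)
Your strategy is sound and genuinely different from ours: we never transport the vectors $(\Im F)^k(x,\xi)$ through $\mathcal{K}$, but instead work with the symplectically invariant scalars $\Re\sigma(F^k(x,\xi),F^{k+1}(x,\xi))$, proving by two separate Cauchy--Schwarz inductions that these equal $(-1)^k\Re q((\Im F)^k(x,\xi))$ on the real side and $(-1)^k4^{-k}\Theta(M^kz)$ on the Fock side. Your route replaces both inductions with a single vector-level congruence, and your base identity $\Re q(x,\xi)=\Theta(z)$, the Taylor-coefficient bookkeeping (the leading coefficients $\tfrac{4^J}{(J!)^2}\Re q((\Im F)^J(x,\xi))$ and $\tfrac{1}{(I!)^2}\Theta(M^Iz)$ are both correct and consistent with Theorem \ref{thm.subelliptic.escape}), and the reduction of the proposition to your key lemma are all fine.

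The one step you leave unproved is the key lemma, and here you have badly overestimated the difficulty: there is no need for the conjugation $R=\overline{\mathcal{K}}\mathcal{K}^{-1}$, no need to reconcile the splitting $\Lambda^+\oplus\Lambda^-$ with the real form $\Lambda_\Phi$, and no need for the explicit formula \eqref{eq.def.kappa}. The point is that the hypothesis $(\Im F)^j(x,\xi)\in\ker\Re F$ for $j<k$ lets you replace $\Im F$ by $-iF$ along the whole orbit: if $u\in\ker\Re F$ then $Fu=i\,\Im F u$, i.e.\ $\Im Fu=-iFu$, so by induction $(\Im F)^k(x,\xi)=(-i)^kF^k(x,\xi)$. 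Now $F^k$ transports through $\mathcal{K}$ exactly, $\mathcal{K}F(q)^k=F(p)^k\mathcal{K}$, and $F(p)=\tfrac{i}{2}\opnm{diag}(M,-M^\top)$ is block diagonal, so the $z$-component of $\mathcal{K}((\Im F)^k(x,\xi))=(-i)^kF(p)^k(z,-2i\Phi'_z(z))$ is $(-i)^k(\tfrac{i}{2})^kM^kz=2^{-k}M^kz$. This is an exact equality, with no ``modulo the radical of $\Theta$'' and no use of $\Re q\ge 0$ beyond the identification $\ker\Re F=(\Re q)^{-1}(\{0\})$; combined with your base identity it gives $\Re q((\Im F)^k(x,\xi))=\Theta(2^{-k}M^kz)=4^{-k}\Theta(M^kz)$, from which $J=I$ and \eqref{eq.subell.equiv.value} follow by the induction you describe. (Your proposed detour through $R$ would also close, because on $\ker\Re F$ one has $\overline{F(q)v}=-F(q)v$, which collapses the term $R^{-1}\overline{F(p)w}$ to $-F(p)w$; but the direct argument above is shorter.) With that step filled in, your proof is complete and arguably cleaner than the one in the appendix.
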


In order to take advantage of tools introduced in Section \ref{subsec.subellipticity}, we reserve the proof for  Appendix \ref{sec.appendix.subell}.

\subsection{Comparison of operators of harmonic oscillator type}\label{subsec.compare.h.o}

Consider $q:\Bbb{R}^{2n}\to \Bbb{C}$ quadratic and satisfying the hypotheses of Proposition \ref{prop.which.q}. Combining Theorem \ref{thm.boundedness.delta} with Propositions \ref{prop.which.q} and \ref{prop.Hermite} allows us to describe the set of $\delta \in \Bbb{R}$ depending on $\tau \in \Bbb{C}$ for which
\[
	\exp(\delta \tilde{Q}_0)\exp(\tau q^w(x,D_x)) \in \mathcal{L}(L^2(\Bbb{R}^n)),
\]
with $\tilde{Q}_0$ a self-adjoint operator unitarily equivalent to the harmonic oscillator \eqref{eq.def.h.o}.  Specifically,
\[
	\tilde{Q}_0 = \mathcal{U}_\chi^* Q_0 \mathcal{U}_\chi
\]
with $\mathcal{U}_\chi$ taken from (\ref{it.which.q.reduction}) in Proposition \ref{prop.which.q}.  Since the Weyl symbol of $Q_0$ is 
\[
	\frac{1}{2}(x^2 + \xi^2 - n),
\]
we conclude from \eqref{eq.canonical.real} that
\[
	\tilde{Q}_0 = \tilde{q}_0^w(x,D_x) - \frac{n}{2}
\]
with 
\[
	\tilde{q}_0(x,\xi) = \left.\frac{1}{2}(y^2 + \eta^2)\right|_{(y,\eta) = \chi(x,\xi)}.
\]

It is not immediately apparent how regularization properties of $\exp(\delta \tilde{Q}_0)$ depend on $\tilde{Q}_0$ and specifically $\chi$.  We therefore consider families of spaces $\{\exp(\delta Q) \::\: \delta \in \Bbb{R}\}$ for $Q$ of harmonic oscillator type, focusing on the question of whether and to what extent this family of spaces depends on the choice of $Q$.  When saying that $Q$ is of harmonic oscillator type, we here mean that $Q$ is the Weyl quantization as in \eqref{eq.Weyl} of a real-valued positive definite quadratic form on $\Bbb{R}^{2n}$.

For $Q_1, Q_2$ both of harmonic oscillator type, we consider $\delta_1, \delta_2 > 0$ and study sufficient conditions to have
\begin{equation}\label{eq.mixed.h.o}
	\exp(\delta_2 Q_2)\exp(-\delta_1 Q_1) \in \mathcal{L}(L^2(\Bbb{R}^n)).
\end{equation}
The operator $\exp(\delta_2 Q_2)$ is certainly unbounded but may be understood weakly either in the sense of Proposition \ref{prop.solve.exptP} after a conjugation like in Proposition \ref{prop.which.q} or as a formal sum extended from the span of its orthonormal basis of eigenvectors.  If \eqref{eq.mixed.h.o} holds, then 
\begin{equation}\label{eq.mixed.h.o.set.compare}
	\exp(-\delta_1 Q_1)L^2(\Bbb{R}^n) \subset \exp(-\delta_2 Q_2)L^2(\Bbb{R}^n),
\end{equation}
since we can realize any element in the set on the left-hand side as the product of $\exp(-\delta_2 Q_2)$ times the aforementioned bounded operator applied to an element of $L^2(\Bbb{R}^n)$.

We cannot perform a Fock-space reduction on both $Q_1$ and $Q_2$ simultaneously. We may, however, bridge the gap between $Q_1$ and $Q_2$ by introducing an operator $Q_3$, generally non-normal, where for certain $\delta_1, \delta_2, t \in \Bbb{R}$ we have
\begin{equation}\label{eq.mixed.h.o.right}
	\exp(tQ_3)\exp(-\delta_1 Q_1) = (\exp(-\delta_1 Q_1)\exp(tQ_3^*))^* \in \mathcal{L}(L^2(\Bbb{R}^n))
\end{equation}
and
\begin{equation}\label{eq.mixed.h.o.left}
	\exp(\delta_2 Q_2)\exp(-t Q_3) \in \mathcal{L}(L^2(\Bbb{R}^n)),
\end{equation}
from which \eqref{eq.mixed.h.o} follows. (In the proof which follows, we justify the equality in \eqref{eq.mixed.h.o.right} by checking against dense subsets of $L^2(\Bbb{R}^n)$.) This strategy, combined with the Fock-space analysis already established, yields the following theorem, which gives sufficient conditions for \eqref{eq.mixed.h.o} to hold for $\delta_1, \delta_2$ small and a sharp characterization of the maximum $\delta_2$ for which \eqref{eq.mixed.h.o} can hold.

\begin{theorem}\label{thm.mixed.h.o}
Let $q_j :\Bbb{R}^{2n}\to\Bbb{R}$, for $j = 1, 2$, be two real-valued quadratic forms which are positive definite in the sense that $q_j(x,\xi) > 0$ for all $(x,\xi) \in \Bbb{R}^{2n} \backslash \{0\}$.  Write $Q_j = q_j^w(x,D_x)$. Let $u_{0, j} \neq 0$ be ground states for the operators $Q_j$, meaning that
\[
	Q_j u_{0, j} = \mu_{0,j}u_{0,j}, \quad \mu_{0,j} = \min \opnm{Spec} Q_j.
\]
\begin{enumerate}[(i)]
\item\label{it.mixed.small} There exist constants $C, \delta_0 > 0$ such that
\begin{equation}\label{eq.mixed.h.o.result}
	\exp(\frac{\delta}{C} Q_2) \exp(-\delta Q_1) \in \mathcal{L}(L^2(\Bbb{R}^n)), \quad \forall \delta \in [0, \delta_0).
\end{equation}
\item\label{it.mixed.h.o.match} If $Q_1$ and $Q_2$ share ground states, meaning that $\opnm{span}u_{0,1} = \opnm{span} u_{0,2}$, then we may take $\delta_0 = \infty$ in \eqref{eq.mixed.h.o.result}.
\item\label{it.mixed.large} If $\opnm{span}u_{0,1} \neq \opnm{span} u_{0,2}$, then there exists $\tilde{\Delta}_0 > 0$ such that
\begin{equation}\label{eq.mixed.ground.state.explodes}
	\exp(\tilde{\Delta}_0 Q_2) u_{0,1} \notin L^2(\Bbb{R}^n)
\end{equation}
and such that, for every $\delta_2 < \tilde{\Delta}_0$, there exists $\delta_1 > 0$ such that \eqref{eq.mixed.h.o} holds.
\end{enumerate}
\end{theorem}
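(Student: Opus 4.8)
<br>

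The plan is to reduce everything to the Fock-space picture of Section \ref{sec.Fock} by choosing a convenient intermediate operator $Q_3$. Since $q_1$ is real-valued and positive definite, it satisfies the ellipticity hypothesis \eqref{eq.real.ell.hypothesis}, hence the hypotheses of Proposition \ref{prop.which.q}. Applying the reduction there to $q_1$, we obtain an FBI transform $\mathcal{T}$ and a unitary $\mathcal{U}_\chi$ conjugating $Q_1$ (up to a constant) to the self-adjoint operator $P_0 = z\cdot(\partial_z + h'(z))$ on some $H_\Phi$, with $Q_1$ corresponding to $P_0 + \mu_{0,1}$ by Proposition \ref{prop.Hermite} (possibly after a further real canonical transformation). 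Under the same conjugation, $Q_2$ becomes an operator $\tilde{Q}_2$ whose Weyl symbol $q_2\circ\chi^{-1}\circ(\text{FBI data})$ is again a positive definite real-quadratic form, so on the Fock side it is unitarily equivalent to $\mathcal{U}(GBG^{-1}z\cdot\partial_z)\mathcal{U}^*$ for some $B$ with $GBG^{-1}$ positive definite Hermitian, i.e.\ an operator of the form $\mathcal{Q}_B$ from Proposition \ref{prop.h.o.semigroup} plus a constant. Then $\exp(\delta_2\tilde Q_2)\exp(-\delta_1 P_0)$ is, up to unitary transformations and scalar factors, the embedding from $H_\Phi$ into $H_{\Phi^{(\delta_1),\,I}(\,\cdot\,)}$ composed through $H_{\Phi^{(\delta_2,-1),\,B}}$; more precisely, combining the norm formulas \eqref{eq.norm.change.vars} and \eqref{eq.def.Phi.tau.B}, $\exp(\delta_2\tilde Q_2)\exp(-\delta_1 P_0)$ is bounded if and only if the quadratic form
\[
	\Phi^{(\delta_2,-1),B}(z) - e^{-2\delta_1}|Gz|^2 + \Re(2h(z))\ge 0,\quad\forall z,
\]
or rather, setting things up cleanly, if and only if $\tfrac12|Ge^{\delta_2 B}e^{\delta_1}z|^2 - \tfrac12|Gz|^2 \le 0$ after absorbing $h$. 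This reduces the whole theorem to a statement purely about the positive definite Hermitian matrices $G^*G$ and $e^{\delta_2 B^*}G^*G e^{\delta_2 B}$.

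For part (\ref{it.mixed.small}): with $\Phi$ and $B$ as above, $\exp(\tfrac{\delta}{C}\tilde Q_2)\exp(-\delta P_0)$ is bounded iff $e^{2\delta/C}\,G e^{(\delta/C)B}\! \cdots$ — concretely, iff $\|Ge^{(\delta/C)B}G^{-1}\|\le e^{-2\delta/C}$ after passing to the Hermitian parts, which by the remark form reduces to $\|e^{(\delta/C)(GBG^{-1} - 2)}\|\le 1$. Since $GBG^{-1}$ is positive definite Hermitian, $GBG^{-1} - \tfrac{2}{C}\cdot\tfrac{1}{\,\cdot\,}$... the point is simply: for $C$ large enough that $GBG^{-1} \preceq C\cdot(\text{scalar})$, we can guarantee $\tfrac{1}{C}GBG^{-1} \preceq 2\cdot(\text{something dominating } G^*G\text{-normalization})$, giving boundedness for all small $\delta$; I would phrase this via Lemma \ref{lem.decomposition}(ii) and the fact that $\Phi^{(\delta)}(z)$ stays convex for $\delta < \Delta_0$, together with a first-order-in-$\delta$ expansion as in \eqref{eq.Theta.degree.1.expand}. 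For part (\ref{it.mixed.h.o.match}): if $Q_1,Q_2$ share a ground state, then after the reduction both have $e_0$ (up to phase) as ground state by Proposition \ref{prop.Hermite}; since $\tilde Q_2$ on the Fock side is $\mathcal{Q}_B$ with $GBG^{-1}$ positive definite Hermitian, sharing the ground state $e_0 = \mathcal{U} f_0$ forces $h$ to be compatible and, after the unitary $\mathcal{U}$, both operators act on $H_\Psi$ as $z\cdot\partial_z$ and $GBG^{-1}z\cdot\partial_z$ with commuting... actually the decisive fact is that $\exp(\delta_2 GBG^{-1}z\cdot\partial_z)\exp(-\delta_1 z\cdot\partial_z)$ is bounded on $H_\Psi$ for all $\delta_1=\delta_2/C$, all $\delta_2$, since the relevant weight difference $\tfrac12(e^{-2\delta_1}|z|^2 - |e^{\delta_2 GBG^{-1}}z|^2)$ can be made $\le 0$ for all $\delta_2>0$ by choosing $C \ge \|GBG^{-1}\|/(\min\text{-eigenvalue})$-type constant, uniformly; here the absence of a pluriharmonic obstruction (because the ground states match) is exactly what lets $\delta_0=\infty$.

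For part (\ref{it.mixed.large}): when the ground states differ, on the Fock side $\tilde Q_2$ corresponds to $\mathcal{Q}_B$ whose associated weight $\Phi$ — or rather the weight adapted to $\tilde Q_2$ — has a nonzero pluriharmonic part relative to the $P_0$-weight, equivalently the matrix $H$ in \eqref{eq.def.H} for the combined picture is nonzero, so $\Delta_0 < \infty$ by Lemma \ref{lem.Delta.0}. Taking $\tilde\Delta_0$ to be (a positive multiple of) this $\Delta_0$, the statement $\exp(\tilde\Delta_0 Q_2)u_{0,1}\notin L^2$ translates via $\mathcal{T}$ into $\exp(\tilde\Delta_0 \tilde Q_2)e_0 \notin H_\Phi$, which holds precisely because applying $\mathcal{Q}_B$'s flow to the $P_0$-ground state $e_0$ produces a function whose weight exceeds the borderline convex weight $\Phi^{(\Delta_0)}$; this is the content of Proposition \ref{prop.maximal.smoothing} applied in this setting, or can be seen directly by computing $\|\exp(\tilde\Delta_0\tilde Q_2)e_0\|$ via Proposition \ref{prop.h.o.semigroup} and observing the integral diverges once $\tilde\Delta_0$ passes the threshold where $\Phi^{(\tilde\Delta_0, -1),B}$ fails to be convex. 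Finally, for $\delta_2 < \tilde\Delta_0$ we have $\Phi^{(\delta_2,-1),B}$ strictly convex, hence $> \Phi$ after a sufficiently small positive smoothing, so by Proposition \ref{prop.norm.necc.suff}-type reasoning (with $\delta_1$ playing the role of the compensating parameter) there is $\delta_1>0$ small enough that the composed embedding is bounded, giving \eqref{eq.mixed.h.o}.

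The main obstacle I anticipate is bookkeeping the three weights cleanly: we have the $P_0$-weight $\Phi$ adapted to $Q_1$, the harmonic-oscillator-type operator $\tilde Q_2$ which is \emph{not} $P_0$ for this $\Phi$ and so must be written as $\mathcal{Q}_B$ with a genuinely non-scalar $B$, and then the composition forces us to compare $e^{-2\delta_1}|Gz|^2$ with $|Ge^{\delta_2 B}z|^2$ \emph{plus} the pluriharmonic terms $\Re h$ that do not cancel. Keeping track of which $\Re h$ terms survive — and recognizing that they survive precisely when the ground states differ — is the crux, and it is exactly parts (ii) versus (iii) of the theorem. Once the problem is correctly reduced to the inequality between the two positive definite Hermitian forms $G^*G$ and $e^{\delta_2 B^*}G^*G\,e^{\delta_2 B}$ together with a pluriharmonic remainder, the three parts follow from, respectively, a first-order expansion, uniform boundedness in the absence of the remainder, and the sharp threshold $\Delta_0 = -\tfrac12\log\|H\|$ from Lemma \ref{lem.Delta.0} combined with Proposition \ref{prop.maximal.smoothing}.
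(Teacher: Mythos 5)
Your overall strategy --- reduce to Fock spaces and compare weights --- is the right family of ideas, but the reduction you propose at the outset does not exist, and this is precisely the difficulty the theorem is designed to overcome. You conjugate by the FBI transform adapted to $q_1$, so that $Q_1$ becomes $P_0+\mu_{0,1}$ on $H_\Phi$, and then assert that $Q_2$ becomes $\mathcal{U}(GBG^{-1}z\cdot\partial_z)\mathcal{U}^*$, i.e.\ an operator of the form $\mathcal{Q}_B=Bz\cdot(\partial_z+h'(z))$ from Proposition \ref{prop.h.o.semigroup}. That cannot hold in general: any operator of the form $\mathcal{Q}_B$ annihilates $e^{-h(z)}=c\,e_0$, which under the $q_1$-adapted unitaries corresponds to the ground state $u_{0,1}$ of $Q_1$. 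So writing the conjugated $Q_2$ as $\mathcal{Q}_B+\mathrm{const}$ forces $\opnm{span}u_{0,1}=\opnm{span}u_{0,2}$ --- exactly the situation excluded in part (iii) and not assumed in part (i). Geometrically, the canonical transformation adapted to $q_1$ sends $\Lambda^\pm(q_1)=\{\xi=A_1x\},\{\xi=\overline{A_1}x\}$ to $\{\zeta=0\},\{z=0\}$, but it does not straighten $\Lambda^\pm(q_2)$ unless $A_1=A_2$; consequently the conjugated symbol of $Q_2$ has a nonzero $\zeta\cdot\zeta$ part, its quantization contains a genuine $\partial_z\cdot\partial_z$ term, and its exponential is not a change of variables composed with multiplication by $e^{g(z)}$. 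None of Propositions \ref{prop.solve.exptP}, \ref{prop.h.o.semigroup}, or \ref{prop.embedding} then applies, so the weight inequality you write down (comparing $e^{-2\delta_1}|Gz|^2$ with $|Ge^{\delta_2B}z|^2$ plus pluriharmonic remainders) is not the norm of the composed operator. Your closing paragraph senses this tension but resolves it in the wrong direction: the pluriharmonic terms are not merely awkward bookkeeping; their presence means the normal form you are relying on is unavailable.

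The missing idea is an intermediate, genuinely non-normal operator bridging the two ground states: $Q_3=q_3^w(x,D_x)$ with $q_3(x,\xi)=B_3(\xi-\overline{A_2}x)\cdot(\xi-A_1x)$, built from the annihilation factor of $q_1$ and the creation factor of $q_2$. One then factors $\exp(\delta_2Q_2)\exp(-\delta_1Q_1)=\bigl(\exp(\delta_2Q_2)\exp(-tQ_3)\bigr)\bigl(\exp(tQ_3)\exp(-\delta_1Q_1)\bigr)$ and uses \emph{two} FBI transforms: one adapted to $Q_3$, in which $Q_3$ becomes $z\cdot\partial_z$ and $Q_2$ becomes $\tilde B_2z\cdot(\partial_z+h_2'(z))+\mu_{0,2}$ (this works because $q_2$ and $q_3$ share the negative Lagrangian plane $\{\xi=\overline{A_2}x\}$), and one adapted to $Q_3^*$, which handles the factor involving $Q_1$ via the adjoint identity $\exp(tQ_3)\exp(-\delta_1Q_1)=(\exp(-\delta_1Q_1)\exp(tQ_3^*))^*$, checked on the dense span of generalized eigenfunctions. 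Each factor is then an honest weight comparison of the type you intended, and the threshold $\tilde\Delta_0$ in part (iii) arises as the convexity threshold of $\Phi_2^{(\delta),\tilde B_2}$; the unboundedness \eqref{eq.mixed.ground.state.explodes} uses that the $Q_3$-adapted transform sends $u_{0,1}$ to a \emph{constant} (not to $e^{-h}$), since $u_{0,1}\in\ker(D_x-A_1x)$ and that factor is mapped to $D_z$. Without this two-transform splitting, parts (i) and (iii) do not go through as written; part (ii) is the one case where your single-transform reduction is actually legitimate.
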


\begin{remark}\label{rem.Lipschitz}
The claim (\ref{it.mixed.small}) easily strengthens to a Lipschitz relation for $\delta_1, \delta_2$ near zero: specifically, if
\[
	\delta_2^*(\delta_1) = \sup\{\delta_2 \in \Bbb{R} \::\: \exp(\delta_2 Q_2)\exp(-\delta_1 Q_1) \in \mathcal{L}(L^2(\Bbb{R}^n))\},
\]
then
\[
	\delta_2^*(\delta_1) \asymp \delta_1, \quad \delta_1 \to 0^+,
\]
in the sense of the ratio being bounded from above and below by positive constants.  The lower bound is claim (\ref{it.mixed.small}). The upper bound follows from the same claim, which gives the existence of $C' > 0$ for which, when $\delta_2 > C'\delta_1$, the operator
\[
	\exp(\delta_1 Q_1)\exp(-\delta_2 Q_2) = \exp((\delta_1 - \frac{\delta_2}{C'}) Q_1 )\left(\exp(\frac{\delta_2}{C'}Q_1)\exp(-\delta_2 Q_2)\right)
\]
would give a compact inverse for $\exp(\delta_2 Q_2)\exp(-\delta_1 Q_1)$, which must therefore be unbounded.

We also observe that the small-time Lipschitz relation could also be analyzed via an FBI transform not specially adapted to the operators $Q_1$ and $Q_2$. As mentioned in Section \ref{subsec.paths.not.taken}, the small-time evolution is known to correspond on the FBI side to a change of weight where the weight $\Phi_t$ solves the Hamilton-Jacobi equation \eqref{eq.HJ.weight}, as discussed in \cite{Sj2010}, \cite{HiPS2009}, or \cite{HiPSVi2015b}. For any FBI transform $\mathcal{T}$ with quadratic phase of the type discussed here, expanding \eqref{eq.HJ.weight} to first order as $\delta_1, \delta_2 \in \Bbb{R}$ are small gives that
\[
	\mathcal{T}\exp(\delta_2Q_2)\exp(-\delta_1 Q_1)\mathcal{T}^*:H_\Phi \to H_{\Phi_{\delta_1, \delta_2}}
\]
is bounded, with
\begin{equation}\label{eq.HJ.mixed.h.o}
	\Phi_{\delta_1, \delta_2}(z) = \Phi(z) + \delta_1 \Re p_1(z, -2i\partial_z \Phi(z)) - \delta_2 \Re p_2(z, -2i\partial_z\Phi_z) + \BigO((\delta_1^2 + \delta_2^2)|z|^2).
\end{equation}
Here, $p_j(z, \zeta)$ are the FBI-side symbols of $Q_j$ obtained via composition with the canonical transformation corresponding to $\mathcal{T}$, and they are therefore positive definite along $\Lambda_\Phi = \{(z, -2i\partial_z \Phi(z))\}$. The relation \eqref{eq.mixed.h.o.result} follows, because for $C$ sufficiently large and $\delta >0$ sufficiently small we can guarantee that $\Phi_{\delta, \delta/C} \geq \Phi$.

We have a detailed proof of Theorem \ref{thm.mixed.h.o} below, including large-time behavior. Particularly in short times, however, the idea remains essentially the same, as may be seen by comparing \eqref{eq.HJ.mixed.h.o}, \eqref{eq.mixed.h.o.Phi2.expand}, and \eqref{eq.mixed.h.o.Phi1.expand}.
\end{remark}

\begin{remark*}
A consequence of claim (\ref{it.mixed.large}) is that, unless the ground states of $Q_1$ and $Q_2$ agree, we cannot take $\delta_1, \delta_2 \to \infty$ in \eqref{eq.mixed.h.o.set.compare}, because in fact
\[
	\exp(\tilde{\Delta}_0 Q_2)\exp(-\delta_1 Q_1) \notin \mathcal{L}(L^2(\Bbb{R}^n))
\]
and
\[
	\exp(-\delta_1 Q_1)L^2(\Bbb{R}^n) \not\subset \exp(-\tilde{\Delta}_0 Q_2)L^2(\Bbb{R}^n)
\]
for any $\delta_1 > 0$.

We also note that, when $\opnm{span}u_{0,1} = \opnm{span} u_{0,2}$, we demonstrate the exact characterization that \eqref{eq.mixed.h.o} holds if and only if
\begin{equation}\label{eq.mixed.h.o.match.exact}
	\|e^{\delta_2 \tilde{B}_2}e^{-\delta_1 \tilde{B}_1}\| \leq 1
\end{equation}
for certain positive definite Hermitian matrices $\tilde{B}_j$, $j=1,2$. Part (\ref{it.mixed.h.o.match}) is then an easy consequence.
\end{remark*}

\begin{proof}
The symbols $q_1$ and $q_2$ are elliptic, so by \cite{Sj1974} it is classical that they satisfy the hypotheses of Proposition \ref{prop.which.q}.  By \cite[Thm.~1.4]{Vi2013} their corresponding stable manifolds 
\[
	\Lambda^\pm(q_j) := \mathop{\bigoplus_{\lambda \in \opnm{Spec} F(q_j)}}_{\pm \Im \lambda > 0} \ker(F(q_j) - \lambda)^{2n},
\]
the same as in Proposition \ref{prop.which.q}, must be complex conjugates of one another, meaning that $\Lambda^+(q_j) = \overline{\Lambda^-(q_j)}.$ Therefore we appeal to the decomposition \eqref{eq.real.symbol.ip.form} and write henceforth 
\begin{equation}\label{eq.def.qj}
	q_j(x,\xi) = B_j(\xi - \overline{A_j}x)\cdot (\xi - A_j x), \quad j=1,2,
\end{equation}
for matrices $A_j, B_j \in \Bbb{M}_{n\times n}(\Bbb{C})$ with $A_j^\top = A_j$ and $\Im A_j > 0$ in the sense of positive definite matrices.  Since $q_j$ are real-valued and positive definite, we may take $B_j$ self-adjoint and positive definite.  We also recall from the proof of \cite[Thm.~3.5]{Sj1974} that the ground states of $Q_j$ are determined by the matrices $A_j$: there exist constants $a_j \in \Bbb{C}\backslash \{0\}$ such that
\begin{equation}\label{eq.mixed.Gaussians}
	u_{0,j}(x) = a_je^{\frac{i}{2}A_jx\cdot x}.
\end{equation}

In order to establish \eqref{eq.mixed.h.o.right} and \eqref{eq.mixed.h.o.left}, we introduce $Q_3 = q_3^w(x,D_x)$ for
\begin{equation}\label{eq.def.q3}
	q_3(x,\xi) = B_3(\xi - \overline{A_2}x)\cdot (\xi - A_1 x),
\end{equation}
where the matrix $B_3$ is to be determined.

Following the proof of Proposition \ref{prop.which.q}, there exists a strictly convex weight $\Phi_2$, a transformation $\mathcal{T}_2 : L^2(\Bbb{R}^n) \to H_{\Phi_2}$, and a choice of the matrix $B_3$ such that
\[
	\mathcal{T}_2 Q_3 \mathcal{T}_2^* = z\cdot \partial_z.
\]
The fact that the canonical transformation associated with $\mathcal{T}_2$ takes $\{(x, \overline{A_2} x)\}$ to $\{(0, \zeta)\}$ implies that, for some matrix $\tilde{B}_2$ and writing $h_2(z) = \frac{1}{2}z\cdot (\Phi_2)''_{zz}z$,
\[
	\mathcal{T}_2 Q_2 \mathcal{T}_2^* = \tilde{B}_2 z\cdot (\partial_z + h_2'(z)) + \mu_{0,2}.
\]
The eigenvalue $\mu_{0,2}$ appears because we can identify the ground state of $\mathcal{T}_2 Q_2 \mathcal{T}_2^*$ via 
\[
	\tilde{B}_2z\cdot(\partial_z + h_2'(z))e^{-h_2(z)} = 0.
\]
From the definition of the Weyl quantization, we can deduce that $\opnm{Tr}\tilde{B}_2 = 2\mu_{0, 2}$, but this can also be deduced from invariance of the spectrum of the fundamental matrix when $q_2$ is composed with a canonical transformation.

We remark similarly that, identifying the ground state $u_{0,1}(x) = a_1e^{\frac{i}{2}A_1x\cdot x}$ with the kernel of $D_x - A_1x$, we see that $\mathcal{T}_2 u_{0,1}$ lies in the kernel of $D_z$ and is therefore constant.

By modifying Theorem \ref{thm.boundedness.delta} to account for the matrix $\tilde{B}_2$, \eqref{eq.mixed.h.o.left} holds if and only if
\[
	\Phi^{(\delta_2), \tilde{B}_2}_2 (e^t z) \geq \Phi_2(z), \quad \forall z \in \Bbb{C}^n.
\]
Using the expression \eqref{eq.def.Phi.tau.B}, with $G_2 = ((\Phi_2)''_{\bar{z}z})^{1/2}$ and with $\delta, t \in \Bbb{R}$ and small, we obtain the following analogue of \eqref{eq.Theta.degree.1.expand}:
\begin{equation}\label{eq.mixed.h.o.Phi2.expand}
	\begin{aligned}
	\Phi_2^{(\delta_2), \tilde{B}_2}(e^t z) - \Phi_2(z) &= \frac{e^{2t}}{2}(|G_2e^{-\delta_2 \tilde{B}_2}z|^2 - |G_2z|^2) + (e^{2t}-1)\Phi_2(z)
	\\ &= 2t\Phi_2(z) - \delta_2 \Re\langle G_2z, \tilde{B}_2 G_2z\rangle + \BigO((\delta_2^2 + t^2)|z|^2).
	\end{aligned}
\end{equation}
Strict convexity of $\Phi_2$ means that we can ensure that \eqref{eq.mixed.h.o.left} holds for $\delta_2 = t/C$ for $0 \leq t \leq t_0$ sufficiently small.

Furthermore, as in Lemma \ref{lem.Delta.0}, let
\[
	\tilde{\Delta}_0 = \sup\{\delta \in \Bbb{R} \::\: \forall z \in \Bbb{C}^n,~~\Phi_2^{(\delta), \tilde{B}_2} \geq 0\}.
\]
Since $\mathcal{T}_2 u_{0,2}(z) = c_2 e^{-h_2(z)}$ and because $G_2\tilde{B}_2 G_2^{-1}$ is positive definite Hermitian following Proposition \ref{prop.h.o.semigroup}, we can easily check that $\tilde{\Delta}_0 = \infty$ if and only if $h_2(z) = 0$ if and only if $\opnm{span} u_{0,1} = \opnm{span} u_{0,2}$. In this special case, we have that $\mathcal{T}_2 Q_1\mathcal{T}_2 = \tilde{B_1}z\cdot \partial_z$ and we are free to take $\Phi_2(z) = \frac{1}{2}|z|^2$ since $\Phi_2$ has no pluriharmonic part; part (\ref{it.mixed.h.o.match}) of the theorem as well as \eqref{eq.mixed.h.o.match.exact} follow immediately.

Following Proposition \ref{prop.maximal.smoothing}, we see that there exists a $t > 0$ such that \eqref{eq.mixed.h.o.left} holds if and only if $\delta_2 < \tilde{\Delta}_0$. Recalling that $\mathcal{T}_2 u_{0,1}$ is constant, if $\tilde{\Delta}_0 \neq \infty$, then
\[
	\exp(\tilde{\Delta}_0 Q_2)u_{0,1} \notin L^2(\Bbb{R}^n).
\]

We turn to \eqref{eq.mixed.h.o.right}. Since
\[
	Q_3^* = B_3^*(D_x - \overline{A_1}x)\cdot (D_x - A_2 x)
\]
and since $Q_2$ is self-adjoint, we can reverse the process, finding a weight $\Phi_1$, a transformation $\tilde{T}_1:L^2(\Bbb{R}^n) \to H_{\Phi_1}$, and matrices $\tilde{B}_1, \tilde{B}_3$ such that, writing $h_1(z) = \frac{1}{2}z\cdot (\Phi_1)''_{zz}z$,
\[
	\mathcal{T}_1 Q_3^* \mathcal{T}_1^* = \tilde{B}_3 z\cdot \partial_z
\]
and
\[
	\mathcal{T}_1 Q_1 \mathcal{T}_1^* = \tilde{B}_1 z\cdot (\partial_z + h_1'(z)) + \mu_{0,1}.
\]
We do not seek to write a formula for the matrix $\tilde{B}_3$, but we remark that the symbol $\Re (\tilde{B}_3 z \cdot (\Phi_1)'_z(z))$ is elliptic in the sense of \eqref{eq.ellipticity}. This follows from the exact Egorov theorem and the observation that, on the space $H_{\Phi_2}$, the symbol of $Q_3$ is $\Re (z\cdot (\Phi_2)'_{z}(z)) = \Phi_2(z)$ which is strictly convex.

A similar computation to \eqref{eq.mixed.h.o.Phi2.expand} or \eqref{eq.Theta.degree.1.expand}, this time with $G_1 = ((\Phi_1)''_{\bar{z}z})^{1/2}$, gives that
\begin{multline}\label{eq.mixed.h.o.Phi1.expand}
	\Phi_1^{(-\delta_1), \tilde{B}_1}(e^{-t\tilde{B}_3} z) - \Phi_1(z) 
	\\ = \frac{1}{2}(|G_1 e^{\delta_1\tilde{B}_1}e^{-t\tilde{B}_3} z|^2 - |G_1 e^{-t\tilde{B}_3}z|^2) + \Phi(e^{-t\tilde{B}_3}z)-\Phi(z)
	\\ = \delta_1\Re \langle G_1 \tilde{B}_1 z, G_1 z\rangle - 2t \Re((\tilde{B}_3 z)\cdot (\Phi_1)'_z(z)) + \BigO((\delta_1^2 + t^2)|z|^2).
\end{multline}
Since $\tilde{B}_1$ corresponds to a (positive definite) harmonic oscillator, we have following Proposition \ref{prop.h.o.semigroup} that $G_1 \tilde{B}_1 G_1^{-1}$ is positive definite Hermitian. Therefore
\[
	\exp(-\delta_1 Q_1)\exp(tQ_3^*) \in \mathcal{L}(L^2(\Bbb{R}^n)).
\]
either taking $t = \delta_1/C$ for $C$ sufficiently large and $\delta_1$ sufficiently small, to establish (\ref{it.mixed.small}), or for $\delta_1$ sufficiently large for any $t$, to establish (\ref{it.mixed.large}).

Having already established \eqref{eq.mixed.h.o.left} for $\delta_2 = t/C$ and $t$ sufficiently small or for $t$ sufficiently large for any $\delta_2 < \tilde{\Delta}_0$, all that remains to prove the theorem is to justify the adjoint relation in \eqref{eq.mixed.h.o.right}. This follows by finding dense subsets of $L^2(\Bbb{R}^n)$ for which 
\begin{equation}\label{eq.mixed.adjoints}
	\langle \exp(-\delta_1 Q_1)\exp(t Q_3^*)u, v\rangle = \langle u, \exp(t Q_3)\exp(-\delta_1 Q_1)v\rangle.
\end{equation}
Since $\exp(-\delta_1 Q_1)$ is self-adjoint, it suffices to show that
\begin{equation}\label{eq.mixed.adjoint.check}
	\langle \exp(t Q_3^*)u, \exp(-\delta_1 Q_1) v\rangle = \langle u, \exp(tQ_3)\exp(-\delta_1 Q_1)v\rangle.
\end{equation}
Using the supersymmetric decompositions \eqref{eq.def.qj} and \eqref{eq.def.q3}, let $u$ be in in the span of the generalized eigenfunctions of $Q_3^*$ and let $v$ be in the span of the generalized eigenfunctions of $Q_3$ or of $Q_1$, since these are the same set. Concretely, this is equivalent to assuming that $u/u_{0,2}$ and $v/u_{0,1}$ are polynomials. These sets of $u$ and $v$ are dense and the actions of the semigroups above leave invariant the degree of the polynomial coefficient, so the relation \eqref{eq.mixed.adjoint.check} becomes easy to check. This completes the proof of the theorem.
\end{proof}

\section{Return to equilibrium and regularization for times long and short}\label{sec.return}

The question of return to equilibrium generally concerns the operator
\[
	e^{-tP}(1-\Pi_0),
\]
where $\Pi_0$ is the spectral projection associated with the eigenvalue $0 \in \Bbb{C}$; see for instance \cite[Ch.~6]{HeNiBook} or \cite{VillaniBook}. The operators $P$ given by \eqref{eq.def.P} are associated with natural projections 
\begin{equation}\label{eq.def.Pi}
	\Pi_N u(z) = \sum_{|\alpha| \leq N} \frac{\partial^\alpha u(0)}{\alpha!}z^\alpha:H_\Phi \to H_\Phi.
\end{equation}
It is clear from Theorem \ref{thm.eigen.core} that the image of $\Pi_0$, which is the set of constant functions, is the span of an eigenfunction of $P$ with eigenvalue zero; under a hypothesis such as that the spectrum of $M$ is strictly contained in a half-plane, this eigenfunction with eigenvalue zero is unique up to scalar multiples.

In general, up to some questions of multiplicity of eigenvalues --- and possible non-existence of the resolvent --- the $\Pi_N$ are sums of spectral projections of $P$; see \cite[Thm.~1.2]{Vi2013}. The images of the complements of these projections are the high-energy spaces
\begin{equation}\label{eq.def.MN}
	\begin{aligned}
	\mathcal{M}_{N+1} &= (1-\Pi_{N})H_\Phi
	\\ & = \{u \in H_\Phi \::\: \partial^\alpha u(0) = 0,\ \forall |\alpha| \leq N\}.
	\end{aligned}
\end{equation}
Naturally, we identify $\mathcal{M}_0$ with the space $H_\Phi$ itself. Where the weight needs to be emphasized, we will write $\mathcal{M}_N^\Phi$.

Section \ref{subsec.return} concerns sharp estimates for return to equilibrium for long times.  Roughly, as $|\tau| \to \infty$, the return to equilibrium is governed by $\|Ge^{\tau M}G^{-1}\|$, which following Lemma \ref{lem.exptM.ej} is largely determined by the spectral properties of $M$.  Next, in Section \ref{subsec.subellipticity}, we discuss short time estimates for the regularization $\exp(-tP)$ for $t > 0$ in terms of $\delta_0(-t)$, extending \eqref{eq.delta.0.deriv} in a natural way which turns out to be equivalent to a classical bracket condition.  Finally, in Section \ref{subsec.orthogonal}, we see that in an important special case considered more closely in \cite{AlVi2014a}, estimates for $\delta_0(-t)$ and estimates for return to equilibrium are identical.

\subsection{Return to equilibrium for long times}\label{subsec.return}

To discuss the long-time behavior of $\exp(-tP)$ on the spaces $\mathcal{M}_N$, we begin by using the unitary transformation $\mathcal{U}$ in \eqref{eq.def.U} to reduce to a study on $H_\Psi$ for $\Psi$ as in \eqref{eq.def.Psi}.  We compute that, for $\Phi$ as in \eqref{eq.decompose.Phi} and $\exp(\tau P)$ and $\exp(\delta P_0)$ as in Theorem \ref{thm.boundedness.delta},
\begin{equation}\label{eq.exptP.conjugated}
	\mathcal{U}^* \exp(\delta P_0) \exp(\tau P) \mathcal{U} u(z) = u(Ge^\delta e^{\tau M}G^{-1}z)e^{-h(e^\delta e^{\tau M}G^{-1}z) + h(e^\delta G^{-1}z)}.
\end{equation}
Motivated by the form of this operator, we turn to the following lemma.

\begin{lemma}\label{lem.return}
Fix $c_1 \in [0,1)$. Let $h_1, h_2$ be holomorphic quadratic forms on $\Bbb{C}^n$ and $B \in \Bbb{M}_{n\times n}(\Bbb{C})$ a matrix such that, for all $z \in \Bbb{C}^n$,
\begin{equation}\label{eq.return.lem.hyp}
	|Bz|^2 + 2\Re(h_1(Bz) + h_2(z)) \leq c_1 |z|^2.
\end{equation}
Then the operator
\[
	Su(z) = u(B z)e^{h_1(Bz)+h_2(z)}, \quad u\in H_\Psi,
\]
is bounded as an operator on $H_\Psi$ with $\Psi = \frac{1}{2}|z|^2$ as in \eqref{eq.def.Psi}.  Furthermore, for all $N \in \Bbb{N}$, there exists some $C = C(N,c_1) > 0$ such that
\begin{equation}\label{eq.lem.return.1}
	\|S u\|_\Psi \leq C\|B\|^N \|u\|_\Psi, \quad \forall u \in \mathcal{M}_N^\Psi.
\end{equation}
\end{lemma}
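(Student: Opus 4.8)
The plan is to reduce everything to an orthonormal-basis computation in $H_\Psi$, using that the monomials $f_\alpha(z) = z^\alpha/\sqrt{\pi^n \alpha!}$ form an orthonormal basis (Lemma \ref{lem.reproducing}, with $G = \mathrm{Id}$, $h = 0$). The hypothesis \eqref{eq.return.lem.hyp} is exactly the statement that the weight $\Psi$ dominates the weight $\Psi$ pulled back through the change of variables and twisted by the exponential factor; concretely, writing $\tilde\Psi(z) = \frac12|Bz|^2 + \Re(h_1(Bz) + h_2(z))$ — so that $\|Su\|_\Psi^2 = \int |u(Bz)|^2 e^{2\Re(h_1(Bz)+h_2(z))} e^{-|z|^2}\,dL(z)$ by the same change of variables as in Proposition \ref{prop.solve.exptP} — the hypothesis \eqref{eq.return.lem.hyp} reads $\tilde\Psi(z) \le \frac{c_1}{2}|z|^2 = c_1\Psi(z)$. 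Boundedness of $S$ is then immediate from the definition \eqref{eq.def.norm.HPhi} of the norm, since the integrand is dominated by $|u(Bz)|^2 e^{-(1-c_1)|z|^2}$ times a harmless factor; more carefully, one combines the change of variables with the inequality $\tilde\Psi \le c_1 \Psi$ to get $\|Su\|_\Psi^2 \le e^{-2\Re\operatorname{Tr} B}\int |u(z)|^2 e^{-2(1-c_1)\Psi(z)}\,dL(z)$, and a dilation $z \mapsto z/\sqrt{1-c_1}$ expresses the right side as a multiple of $\|u\|_\Psi^2$.

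For the refined estimate \eqref{eq.lem.return.1} the key point is that $S$ maps $\mathcal{M}_N^\Psi$ — the functions vanishing to order $N$ at the origin — into a space where one gains a factor $\|B\|^N$. The cleanest route is to write $S = S_1 \circ C_B$ where $C_B u(z) = u(Bz)$ and $S_1 = \mathcal{W}_{h_1(B\cdot)}\mathcal{W}_{h_2}$ is a product of the unitary-type weight-twisting maps from \eqref{eq.def.W}; but since we want a bound on $H_\Psi$ itself, it is better to argue directly. First observe that $C_B$ sends $\mathcal{M}_N^\Psi$ into $\mathcal{M}_N$ of the pulled-back weight, and that on homogeneous polynomials of degree $m \ge N$ the operator $C_B$ contracts by at most $\|B\|^m$, with the exponential twist $S_1$ controlled uniformly once \eqref{eq.return.lem.hyp} holds. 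Concretely, I would expand $u \in \mathcal{M}_N^\Psi$ as $u = \sum_{|\alpha| \ge N} c_\alpha f_\alpha$, decompose $u$ into its homogeneous components $u = \sum_{m \ge N} u_m$, and bound $\|Su_m\|_\Psi$ against $\|B\|^m \|u_m\|_\Psi$ times a constant; since $\|B\|^m \le \|B\|^N \max(1, \|B\|)^{m-N}$ is not summable when $\|B\| \ge 1$, this naive splitting fails, and one must instead use orthogonality between the images $Su_m$ coming from the homogeneity structure, which is not automatic because $S_1$ mixes degrees.

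The honest approach, and the one I would commit to: use a scaling/dilation trick. For $0 < r \le 1$, factor $S = S \circ T_r^{-1} \circ T_r$ where $T_r u(z) = u(rz)$; on $\mathcal{M}_N^\Psi$ the operator $T_r^{-1}$ (extended from polynomials) has norm at most $r^{-N}$ when restricted appropriately — more precisely $\|T_r u\|_\Psi \le r^N \|u\|_\Psi$ for $u \in \mathcal{M}_N^\Psi$ after compensating for the weight change, because each monomial $z^\alpha$ with $|\alpha| \ge N$ scales by $r^{|\alpha|} \le r^N$. Then $S \circ T_r^{-1}$, written out, is again an operator of the form $u(Bz/r)e^{\cdots}$ whose defining inequality is \eqref{eq.return.lem.hyp} with $B$ replaced by $B/r$ and $c_1$ replaced by something like $c_1/r^2$; choosing $r$ so that $\|B\|/r$ stays bounded and $c_1/r^2 < 1$ (possible precisely when $\|B\|$ is bounded, which we may assume since otherwise both sides of \eqref{eq.lem.return.1} can be compared trivially), the first factor is bounded by a constant $C(N, c_1)$ via the boundedness already proved, and the second factor contributes $r^{-N} \asymp \|B\|^N$ up to constants. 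Assembling, $\|Su\|_\Psi = \|(S T_r^{-1})(T_r u)\|_\Psi \le C\, r^N \cdot r^{-N}\|B\|^N\|u\|_\Psi$ — one must be careful which power of $r$ lands where, but the bookkeeping closes.

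The main obstacle is precisely this bookkeeping: making the dilation $r$ depend on $\|B\|$ in a way that simultaneously (i) keeps the twisted weight inequality satisfied with a constant strictly below $1$ so the boundedness lemma applies with a constant depending only on $N$ and $c_1$, and (ii) extracts exactly the factor $\|B\|^N$ and no more. When $\|B\| \le 1$ this is trivial (one may take $r = 1$ and $\|B\|^N \le 1$ makes the claim automatic from boundedness); the content is the regime $\|B\|$ large, where one should set $r \sim 1/\|B\|$ (up to the constant $\sqrt{c_1}$ or similar) and verify that $\tilde\Psi(B z/r) $ — which scales like $\|B/r\|^2 \sim 1$ times the original — remains controlled by $c_1' |z|^2$ with $c_1' < 1$ uniformly. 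I would also need to confirm the extension of $T_r$ and of $S$ from polynomials to all of $H_\Psi$ and $\mathcal{M}_N^\Psi$ respects these inequalities, which follows from Proposition \ref{prop.polys.core} since all these operators are of change-of-variables type and polynomials form a core.
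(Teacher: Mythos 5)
There is a genuine gap, and it sits exactly where the content of the lemma is. You split into the cases ``$\|B\|\le 1$ (trivial)'' and ``$\|B\|$ large (hard)'', but this is backwards. First, the hypothesis \eqref{eq.return.lem.hyp} already forces $\|B\|\le\sqrt{c_1}<1$: since $h_1,h_2$ are holomorphic quadratic forms, replacing $z$ by $iz$ flips the sign of $\Re(h_1(Bz)+h_2(z))$ while fixing $|Bz|^2$ and $|z|^2$, and averaging the two resulting inequalities gives $|Bz|^2\le c_1|z|^2$. So the regime to which you devote the scaling argument never occurs. Second, and more importantly, when $\|B\|\le 1$ the bound $\|Su\|_\Psi\le C\|B\|^N\|u\|_\Psi$ is \emph{stronger} than boundedness, not weaker: \eqref{eq.lem.return.1} is a decay estimate, and its whole purpose (see Theorem \ref{thm.return.by.PiN} and Proposition \ref{prop.return.convergence}) is the regime $\|B\|\to 0$, where it quantifies return to equilibrium. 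Dismissing this case as ``automatic from boundedness'' discards the entire content of the statement. Your dilation idea can in fact be salvaged, but with the cases the other way around: when $\|B\|\ge\sqrt{(1-c_1)/2}$ one has $\|B\|^N\ge C(N,c_1)^{-1}$ and plain boundedness suffices; when $\|B\|$ is smaller one takes $r=\|B\|\sqrt{2/(1-c_1)}\le 1$, checks that $ST_r^{-1}$ satisfies \eqref{eq.return.lem.hyp} with $B/r$ in place of $B$ and $c_1'=(1+c_1)/2$ in place of $c_1$, and uses $\|T_ru\|_\Psi\le r^N\|u\|_\Psi$ on $\mathcal{M}_N^\Psi$. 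As you have written it, with $r\sim 1/\|B\|$ small, the twisted-weight inequality for $ST_r^{-1}$ acquires the uncontrolled term $(r^{-2}-1)|Bz|^2$ and fails; you acknowledge yourself that the bookkeeping is not closed.

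Two smaller points. In the boundedness step, the Jacobian of $w=Bz$ on $\Bbb{C}^n$ is $|\det B|^2$, not $e^{2\Re\opnm{Tr}B}$ (that identity is special to $B=e^{\tau M}$); the change of variables also requires $B$ invertible and yields a $|\det B|^{-2}$-dependent constant rather than one depending only on $c_1$. The direct route (no change of variables) works but needs the reproducing-kernel bound $|u(w)|^2\le\pi^{-n}e^{|w|^2}\|u\|_\Psi^2$ to absorb the factor $|u(Bz)|^2e^{-|Bz|^2}$, which you do not state. For comparison, the paper's proof avoids all of this: it expands $u=\sum_{|\alpha|\ge N}\langle u,f_\alpha\rangle f_\alpha$, applies Cauchy--Schwarz pointwise to get $|Su(z)|^2\le\|u\|_\Psi^2\,e^{2\Re(h_1(Bz)+h_2(z))}\sum_{|\alpha|\ge N}|f_\alpha(Bz)|^2$, and bounds the sum by $K_N\|B\|^{2N}|z|^{2N}e^{|Bz|^2}$ using submultiplicativity of the normalized monomials; integrating against $e^{-|z|^2}$ and invoking \eqref{eq.return.lem.hyp} then gives both boundedness (the case $N=0$) and the $\|B\|^N$ gain in one stroke.
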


\begin{proof}
Let $\{f_\alpha\}_{\alpha \in \Bbb{N}^n}$ be the usual orthonormal basis for $H_\Psi$ defined in \eqref{eq.def.f.alpha}; it is easy to see that $\{f_\alpha\}_{|\alpha| \geq N}$ is an orthonormal basis for $\mathcal{M}_N$. (Throughout the proof, we take $\mathcal{M}_N = \mathcal{M}^\Psi_N$.)  We begin with a pointwise estimate for $|Su|$ when $u \in \mathcal{M}_N$.  Write 
\[
	u = \sum_{|\alpha| \geq N} \langle u, f_\alpha\rangle f_\alpha,
\]
and apply the Cauchy-Schwarz inequality to obtain
\[
	|Su(z)|^2 \leq \|u\|_{\Psi}^2 e^{2\Re(h_1(Bz)+h_2(z))}\sum_{|\alpha| \geq N} |f_\alpha(B z)|^2.
\]

We may check from the definition \eqref{eq.def.f.alpha} that $|f_{\alpha + \beta}| \leq \pi^{n/2}|f_\alpha|\,|f_\beta|$ and that $\sum_{\beta\in \Bbb{N}^n} |f_\beta|^2 = \pi^{-n}e^{2\Psi(z)}$. Furthermore, $|(Bz)_j| \leq |Bz| \leq \|B\|\,|z|$, so we compute that
\[
	\begin{aligned}
	\sum_{|\alpha| \geq N}|f_\alpha(Bz)|^2 &\leq \pi^n \sum_{|\alpha|=N} |f_\alpha(Bz)|^2\sum_{\beta\in \Bbb{N}^n} |f_\beta(Bz)|^2
	\\ &= e^{2\Psi(B z)}\sum_{|\alpha|=N} |f_\alpha(Bz)|^2
	\\ &\leq K_N e^{2 \Psi(B z)}\|B\|^{2N}|z|^{2N},
	\end{aligned}
\]
for some positive constant $K_N$ and all $z \in \Bbb{C}^n$.

Thus, for any $u \in \mathcal{M}_N$,
\begin{equation}\label{eq.S.pointwise}
	|S u(z)|^2 \leq 2^N K_N \|u\|_\Psi^2 e^{|Bz|^2 + 2\Re(h_1(Bz) + h_2(z))}\|B\|^{2N}\Psi(z)^N,
\end{equation}
from which we have the estimate
\[
	\frac{\|Su\|_{\Psi}^2}{\|u\|_{\Psi}^2} \leq \|B\|^{2N} 2^N K_N \|u\|_\Psi^2 \int_{\Bbb{C}^n} |z|^{2N} e^{|Bz|^2 + 2\Re(h_1(Bz) + h_2(z))-|z|^2}\,dL(z),
\]
also for all $u \in \mathcal{M}_N$. Therefore, so long as \eqref{eq.return.lem.hyp} holds, then \eqref{eq.lem.return.1} holds with
\[
	C^2 = 2^N K_N \int_{\Bbb{C}^n} |z|^{2N}e^{-(1-c_1)|z|^2}\,dL(z).
\]
The claim that $S$ is bounded is just the special case $N = 0$ of \eqref{eq.lem.return.1}.
\end{proof}

In view of \eqref{eq.exptP.conjugated}, we would like to apply Lemma \ref{lem.return} with the change of variables matrix
\[
	B = Ge^\delta e^{\tau M} G^{-1}
\]
and the harmonic functions
\[
	h_1(z) = -h(G^{-1}z), \quad h_2(z) = h(e^\delta G^{-1}z).
\]
The condition \ref{eq.return.lem.hyp} then becomes
\[
	|G e^\delta e^{\tau M} G^{-1} z|^2 - 2\Re h(e^\delta e^{\tau M} G^{-1}z) \leq c_1 |z|^2 - 2\Re h(e^\delta G^{-1}z).
\]
Making the change of variables $y = e^\delta e^{\tau M}G^{-1}z$, this is equivalent to
\[
	|Gy|^2 - 2\Re h(y) \leq c_1e^{-2\delta} |Ge^{-\tau M}y|^2 - 2\Re h(e^{-\tau M}y).
\]
We then note that the left-hand side is $2\Phi(y)$ and the right-hand side is 
\[
	2\Phi^{(\delta-(\log c_1)/2)}(e^{-\tau M}y).
\]
In conclusion, using the definition \eqref{eq.def.delta.0} of $\delta_0(\tau)$, the condition \eqref{eq.return.lem.hyp} applied to \eqref{eq.exptP.conjugated} is equivalent to
\[
	\delta_0(\tau) \geq \delta - \frac{1}{2}\log c_1.
\]

We arrive at the following theorem.

\begin{theorem}\label{thm.return.by.PiN}
	Let the matrix $M$, the weight $\Phi$, and the operators $P$ and $\exp(\tau P)$ be as in Proposition \ref{prop.solve.exptP}. Also recall the definitions \eqref{eq.def.P0} of $P_0$, \eqref{eq.def.delta.0} of $\delta_0(\tau)$, and \eqref{eq.def.Pi} of the projection $\Pi_N$. Fix any $c_0 > 0$ and $N \in \Bbb{N}$.  Then there exists some $C = C(c_0, N, \Phi) > 0$ for which, whenever
	\[
		\delta \leq \delta_0(\tau) - c_0,
	\]
	we have
	\begin{equation}\label{eq.return.by.PiN}
		\|\exp(\delta P_0)\exp(\tau P)(1-\Pi_N)\|_{\mathcal{L}(H_\Phi)} \leq C\|Ge^\delta e^{\tau M}G^{-1}\|^{N+1}.
	\end{equation}
\end{theorem}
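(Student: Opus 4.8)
The plan is to reduce the statement to the lemma just proved, namely Lemma~\ref{lem.return}, via the explicit conjugated form \eqref{eq.exptP.conjugated} of $\exp(\delta P_0)\exp(\tau P)$. Since $\mathcal{U}:H_\Psi\to H_\Phi$ from \eqref{eq.def.U} is unitary and, as is immediate from the formula $\mathcal{U}u(z) = |\det G|u(Gz)e^{-h(z)}$, it intertwines the projections $\Pi_N$ on $H_\Phi$ with those on $H_\Psi$ (because $\mathcal{U}$ preserves the filtration by degree of the polynomial part), it suffices to estimate $\mathcal{U}^*\exp(\delta P_0)\exp(\tau P)\mathcal{U}$ on $\mathcal{M}_N^\Psi$. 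By \eqref{eq.exptP.conjugated} this operator is exactly the operator $S$ of Lemma~\ref{lem.return} with
\[
	B = Ge^{\delta}e^{\tau M}G^{-1}, \quad h_1(z) = -h(G^{-1}z), \quad h_2(z) = h(e^{\delta}G^{-1}z).
\]

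First I would record that, as computed in the discussion immediately preceding the theorem, the hypothesis \eqref{eq.return.lem.hyp} of Lemma~\ref{lem.return} for this choice of $B, h_1, h_2$ and a constant $c_1 \in [0,1)$ is equivalent to the inequality
\[
	2\Phi(y) \leq 2\Phi^{(\delta - \tfrac12\log c_1)}(e^{-\tau M}y), \quad \forall y \in \Bbb{C}^n,
\]
which by the definition \eqref{eq.def.delta.0} of $\delta_0(\tau)$ holds precisely when $\delta - \tfrac12\log c_1 \leq \delta_0(\tau)$, i.e.\ when $\log c_1 \geq 2(\delta - \delta_0(\tau))$. Given the hypothesis $\delta \leq \delta_0(\tau) - c_0$, one has $2(\delta - \delta_0(\tau)) \leq -2c_0 < 0$, so we may choose $c_1 = e^{-2c_0} \in [0,1)$, a value depending only on $c_0$ and not on $\tau$ or $\delta$. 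With this $c_1$, the hypothesis \eqref{eq.return.lem.hyp} is satisfied for every $\tau, \delta$ in the stated range.

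Then Lemma~\ref{lem.return} applies with this fixed $c_1$ and the given $N$, yielding a constant $C = C(N, c_1) = C(N, c_0)$ — crucially independent of $\tau$ and $\delta$ — such that $\|Su\|_\Psi \leq C\|B\|^N\|u\|_\Psi$ for all $u \in \mathcal{M}_N^\Psi$. Applied with $u$ of the form $\mathcal{U}^*(1-\Pi_N)v$ for $v \in H_\Phi$, and using the unitarity of $\mathcal{U}$ together with $\|B\| = \|Ge^{\delta}e^{\tau M}G^{-1}\|$, this gives
\[
	\|\exp(\delta P_0)\exp(\tau P)(1-\Pi_N)v\|_\Phi \leq C\|Ge^{\delta}e^{\tau M}G^{-1}\|^{N}\|v\|_\Phi.
\]
This is \eqref{eq.return.by.PiN} but with exponent $N$ rather than $N+1$; to reach $N+1$ one simply notes that $(1-\Pi_N) = (1-\Pi_N)(1-\Pi_{N-1})$ (the $\Pi_j$ commute and are increasing), so one applies the estimate twice — once with $\mathcal{M}_N$ giving a factor $\|B\|^N$ from $(1-\Pi_{N-1})$ mapped into $\mathcal{M}_{N-1}$... more cleanly, one invokes Lemma~\ref{lem.return} directly in the form $\|Su\|_\Psi \leq C\|B\|^{N+1}\|u\|_\Psi$ for $u \in \mathcal{M}_{N+1}^\Psi = (1-\Pi_N)H_\Psi$, which is exactly \eqref{eq.lem.return.1} with $N$ replaced by $N+1$. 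Since $(1-\Pi_N)H_\Phi$ corresponds under $\mathcal{U}$ to $\mathcal{M}_{N+1}^\Psi$, this yields precisely \eqref{eq.return.by.PiN}. The main point to get right — and the only real subtlety — is verifying the intertwining $\mathcal{U}^*\Pi_N\mathcal{U} = \Pi_N$, which follows because the change of variables $z \mapsto Gz$ and multiplication by the nonvanishing Gaussian-type factor $e^{-h(z)}$ both preserve the space of polynomials of degree $\leq N$ times $e^{-h}$, i.e.\ the ranges of the respective projections correspond; everything else is a direct substitution into Lemma~\ref{lem.return}.
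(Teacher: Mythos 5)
Your proposal follows the paper's proof essentially verbatim: reduce via $\mathcal{U}$ and \eqref{eq.exptP.conjugated} to the operator $S$ of Lemma \ref{lem.return}, observe that the hypothesis $\delta \leq \delta_0(\tau) - c_0$ lets one take $c_1 = e^{-2c_0}$ in \eqref{eq.return.lem.hyp} by the computation preceding the theorem, and apply \eqref{eq.lem.return.1} with $N+1$ in place of $N$ on $\mathcal{M}_{N+1}$. Two points need repair, however. First, the claimed intertwining $\mathcal{U}^*\Pi_N\mathcal{U} = \Pi_N$ is false unless $h = 0$: the two projections share the kernel $\mathcal{M}_{N+1}$ (since a change of variables and multiplication by the nonvanishing factor $e^{\pm h}$ preserve the order of vanishing at $0$), but the range of $\mathcal{U}^*\Pi_N^{\Phi}\mathcal{U}$ is $\{q(z)e^{h(G^{-1}z)} : \deg q \leq N\}$, not the polynomials of degree $\leq N$. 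Fortunately your argument only ever uses the kernel correspondence $\mathcal{U}(\mathcal{M}^\Psi_{N+1}) = \mathcal{M}^\Phi_{N+1}$, which is exactly what the paper states, so this misstatement does not propagate. Second, your final display replaces $\|(1-\Pi_N)v\|_\Phi$ by $\|v\|_\Phi$ without comment; since $\Pi_N$ is not an orthogonal projection on $H_\Phi$ when $h \neq 0$, this requires knowing that $\|\Pi_N\|_{\mathcal{L}(H_\Phi)} < \infty$, which the paper supplies by citing \cite[Prop.~3.3]{HiSjVi2013} and which is precisely where the dependence of $C$ on $\Phi$ enters. With those two corrections the argument is complete and coincides with the paper's.
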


\begin{proof}
As discussed, the hypothesis $\delta \leq \delta_0(\tau) - c_0$ allows us to apply Lemma \ref{lem.return} which gives that, for some $C_0 > 0$,
\[
	\|\mathcal{U}\exp(\delta P_0)\exp(\tau P)\mathcal{U}^* u\|_\Psi \leq C_0\|Ge^\delta e^{\tau M}G^{-1}\|^{N+1}\|u\|_\Psi, \quad \forall u \in \mathcal{M}^\Psi_{N+1}.
\]
Since $\mathcal{U}$ consists of multiplication by a holomorphic function and a change of variables, $\partial^\alpha u(0) = 0$ for all $|\alpha|\leq N$ if and only if $(\partial^\alpha \mathcal{U}u)(0) = 0$ for all $|\alpha|\leq N$.  That is, by \eqref{eq.def.MN}, 
\[
	\mathcal{U} (\mathcal{M}^\Psi_{N+1}) = \mathcal{M}^\Phi_{N+1}.
\]

Combining this with the triangle inequality, for any $u \in H_\Phi$,
\[
	\|\exp(\delta P_0)\exp(\tau P)(1-\Pi_N)u\|_\Phi = \leq C_0(1+\|\Pi_N\|)\|Ge^\delta e^{\tau M}G^{-1}\|^{N+1}\|u\|_\Phi.
\]
From \cite[Prop.~3.3]{HiSjVi2013} we have that $\|\Pi_N\|$ is bounded (with norm growing at most exponentially quickly in $N$), so the theorem follows.
\end{proof}

To complete this analysis, we describe the action of $\exp(\tau P)$ on the generalized eigenfunctions of $P$ for large $|\tau|$.  Since, by Theorem \ref{thm.eigen.core}, the functions $\{(Gz)^\alpha\}_{|\alpha|\leq N}$ for some invertible matrix $\tilde{G}$ are generalized eigenfunctions of $P$ and span the space $\Pi_N(H_\Phi)$, which is simply the space of polynomials in $n$ variables of degree $N$ or less, this suffices to describe $\exp(\tau P)\Pi_N$.

\begin{proposition}\label{prop.return.eigenfunctions}
	Let $\Phi$, $P$, and $\exp(\tau P)$ be as in Proposition \ref{prop.solve.exptP}, and let $\tilde{G}$ be such that $\tilde{G}^{-1}M\tilde{G}$ is in Jordan normal form. Let $\lambda_\alpha$, $r_\alpha$, $C_\alpha$, and $\tilde{\alpha}$ be as in Theorem \ref{thm.eigen.core} and its proof.  Then
	\[
		\exp(\tau P)(\tilde{G}z)^\alpha = \frac{C_\alpha}{(r_\alpha-1)!}e^{\tau \lambda_\alpha} \tau^{r_\alpha - 1}\left((\tilde{G}z)^{\tilde{\alpha}} + \BigO(|\tau|^{-1})\right)
	\]
	as $|\tau| \to \infty$.
\end{proposition}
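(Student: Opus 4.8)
The plan is to reduce the statement to the elementary matrix-exponential asymptotics of Lemma~\ref{lem.exptM.ej}, applied not to $M$ but to the action of $P$ on a finite-dimensional space of polynomials. First I would conjugate $P$ by a change of variables $\mathcal{V}$ of the form \eqref{eq.def.V}, using \eqref{eq.trans.V}, exactly as at the start of the proof of Theorem~\ref{thm.eigen.core}, so as to reduce to the case where $M$ is already in Jordan normal form and the generalized eigenfunction $(\tilde{G}z)^\alpha$ has become the monomial $z^\alpha$. Since $\mathcal{V}$ restricts to a fixed linear isomorphism of the space $\mathcal{P}_m$ of polynomials of degree at most $m:=|\alpha|$, independently of the weight, it preserves any $\BigO(|\tau|^{-1})$ statement; and in these coordinates Proposition~\ref{prop.solve.exptP} gives $\exp(\tau P)z^\alpha = (e^{\tau M}z)^\alpha$.

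Next I would note that $\mathcal{P}_m$ is finite-dimensional and $P$-invariant, and that $\exp(\tau P)|_{\mathcal{P}_m}$ is exactly the matrix exponential $e^{\tau P|_{\mathcal{P}_m}}$: differentiating $u_0\mapsto u_0(e^{\tau M}\cdot)$ at $\tau=0$ produces $P$, and both sides are one-parameter groups on the finite-dimensional space $\mathcal{P}_m$. Since, by Theorem~\ref{thm.eigen.core}, $z^\alpha$ is a generalized eigenvector of $P|_{\mathcal{P}_m}$ of order $r_\alpha$ with eigenvalue $\lambda_\alpha$, I would write the finite Taylor expansion
\[
	\exp(\tau P)z^\alpha = e^{\tau\lambda_\alpha}e^{\tau(P-\lambda_\alpha)}z^\alpha = e^{\tau\lambda_\alpha}\sum_{j=0}^{r_\alpha-1}\frac{\tau^j}{j!}(P-\lambda_\alpha)^j z^\alpha .
\]
The top-order term is $\frac{\tau^{r_\alpha-1}}{(r_\alpha-1)!}(P-\lambda_\alpha)^{r_\alpha-1}z^\alpha$, which equals $\frac{\tau^{r_\alpha-1}}{(r_\alpha-1)!}C_\alpha z^{\tilde\alpha}$ by \eqref{eq.gen.eigen.vanishing}, while each remaining summand is a fixed element of $\mathcal{P}_m$ times $\tau^j$ with $j\le r_\alpha-2$. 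Factoring out $\frac{C_\alpha}{(r_\alpha-1)!}e^{\tau\lambda_\alpha}\tau^{r_\alpha-1}$ and transporting back through $\mathcal{V}$ then yields the claimed expansion, the error being $\BigO(|\tau|^{-1})$ in any norm since all norms on $\mathcal{P}_m$ are equivalent.

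There is no genuine analytic obstacle here; the argument is just a combination of the change-of-variables formula of Proposition~\ref{prop.solve.exptP}, the combinatorial identity \eqref{eq.gen.eigen.vanishing} taken from the proof of Theorem~\ref{thm.eigen.core}, and the one-line asymptotics of Lemma~\ref{lem.exptM.ej}. The only care required is bookkeeping: tracking the conjugation $\mathcal{V}$ and the index $\tilde\alpha$, and observing that, because everything lives in the finite-dimensional invariant subspace $\mathcal{P}_{|\alpha|}$, the remainder $\BigO(|\tau|^{-1})$ may be read coefficientwise on polynomials, in the $H_\Phi$ norm, or uniformly on compact subsets of $\Bbb{C}^n$ interchangeably.
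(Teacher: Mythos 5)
Your argument is correct and follows essentially the same route as the paper: conjugate by $\mathcal{V}_{\tilde{G}}$ to reduce to Jordan normal form, then expand $e^{\tau\lambda_\alpha}e^{\tau(P-\lambda_\alpha)}z^\alpha$ as a terminating Taylor series whose top term is identified via \eqref{eq.gen.eigen.vanishing}. The paper states this as ``automatic from \eqref{eq.gen.eigen.vanishing} and Lemma \ref{lem.exptM.ej}''; your explicit realization of $\exp(\tau P)$ on the finite-dimensional invariant space of polynomials as a matrix exponential is precisely the mechanism that makes that lemma applicable, so the two proofs coincide in substance.
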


\begin{proof}
	After conjugating by $\mathcal{V}_{\tilde{G}}$ as in the proof of Theorem \ref{thm.eigen.core}, the proposition is automatic from \eqref{eq.gen.eigen.vanishing} and Lemma \ref{lem.exptM.ej}.
\end{proof}

At this point, we have a complete description of the behavior of $\exp(\tau P)$ as $|\tau| \to \infty$ in such a way that $\|e^{\tau M}\| \to 0$; see Proposition \ref{prop.exptM.asymptotic} and the remark following for a discussion of this asymptotic regime.  To illustrate this, we consider the leading-order behavior for return to equilibrium of any order as $\tau = -t \to -\infty$.

For the purposes of notation, let
\[
	\rho(M) = \min_{\lambda \in \opnm{Spec}M} \Re \lambda
\]
be the spectral abscissa of $M$, let
\[
	r(\lambda, M) = \max\{r\in \Bbb{N} \::\: \ker(M-\lambda)^r \backslash \ker(M-\lambda)^{r-1} \neq \varnothing\}
\]
be the maximum size of a Jordan block associated with the eigenvalue $\lambda$, and let
\[
	R(M) = \max\{r(\lambda, M) \::\: \lambda \in \opnm{Spec}M, ~~ \Re \lambda = \rho(M)\}
\]
be the maximum size of a Jordan block associated with an eigenvalue with real part $\rho(M)$.

Finally, we define the natural decay factor
\[
	A(t) = t^{R(M)-1}e^{-t\rho(M)}.
\]
As a consequence of Lemma \ref{lem.exptM.ej} and the triangle inequality, if $\opnm{Spec} M \subset \{\Re \lambda > 0\}$, then, for some $C, T > 0$,
\begin{equation}\label{eq.exptM.A}
	\frac{1}{C} A(t) \leq \|e^{-tM}\| \leq C A(t), \quad \forall t \geq T.
\end{equation}
We see that this elementary asymptotic behavior for $\|e^{-tM}\|$ is repeated in return to equilibrium of every order for $\|\exp(-tP)\|$.

\begin{proposition}\label{prop.return.convergence}
Let the matrix $M$, the weight $\Phi$, and the operators $P$ and $\exp(-tP)$ be as in Proposition \ref{prop.solve.exptP}; assume furthermore that $\opnm{Spec}M \subset \{\Re \lambda > 0\}$.  Fix $N \in \Bbb{N}$ and recall the definition of $\Pi_N$ from \eqref{eq.def.Pi}.  Finally, let $A(t)$ be as above.

Then there exists $T_0, C_0 > 0$ sufficiently large such that, for all $t > T_0$,
\begin{equation}\label{eq.return.convergence.bounds}
	\frac{1}{C_0}A(t)^{N+1} \leq \|\exp(-tP)(1-\Pi_N)\|_{\mathcal{L}(H_\Phi)} \leq C_0A(t)^{N+1}.
\end{equation}
Furthermore, for there to exist $a \in \Bbb{R}$ such that
\[
	e^{iat}A(t)^{-N-1}\exp(-tP)(1-\Pi_N)
\]
converges in the weak operator topology as $t \to \infty$, it is necessary and sufficient that there is only one $\lambda \in \opnm{Spec} M$ for which $\Re \lambda = \rho(M)$ and $r(\lambda, M) = R(M)$; in this case, the convergence is in the operator norm topology.
\end{proposition}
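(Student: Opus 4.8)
The plan is to reduce everything to the matrix exponential $e^{-tM}$, using Theorem \ref{thm.return.by.PiN} for the upper bound, Proposition \ref{prop.return.eigenfunctions} for the lower bound and the limit, and the elementary asymptotics \eqref{eq.exptM.A} together with the comparison $\|Ge^{-tM}G^{-1}\| \asymp \|e^{-tM}\| \asymp A(t)$. First I would note that $\opnm{Spec}M \subset \{\Re\lambda > 0\}$ forces $\|e^{-tM}\| \to 0$, hence $\delta_0(-t) \to \Delta_0$ by Proposition \ref{prop.maximal.smoothing}, and $\Delta_0 > 0$ because strict convexity of $\Phi$ yields $\|H\| < 1$ in Lemma \ref{lem.Delta.0}. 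Fixing any $c_0 \in (0, \Delta_0)$, the hypothesis $0 = \delta \le \delta_0(-t) - c_0$ of Theorem \ref{thm.return.by.PiN} holds for $t$ large (in particular $\exp(-tP)$ is then bounded), and that theorem directly gives $\|\exp(-tP)(1-\Pi_N)\|_{\mathcal{L}(H_\Phi)} \le C\|Ge^{-tM}G^{-1}\|^{N+1} \le C_0 A(t)^{N+1}$, which is the upper bound in \eqref{eq.return.convergence.bounds}.

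For the matching lower bound I would exhibit one eigenfunction that saturates this rate. Pick an index $j_0$ whose generalized eigenvector has eigenvalue $\lambda_{j_0}$ with $\Re\lambda_{j_0} = \rho(M)$ and $\tilde r_{j_0} = R(M)-1$ (such $j_0$ exists since $R(M)$ is the largest Jordan block size among eigenvalues with real part $\rho(M)$), and set $\alpha^\ast = (N+1)e_{j_0}$, so that $|\alpha^\ast| = N+1$, $\Re\lambda_{\alpha^\ast} = (N+1)\rho(M)$, and $r_{\alpha^\ast}-1 = (N+1)(R(M)-1)$. Since $(\tilde Gz)^{\alpha^\ast}$ is homogeneous of degree $N+1$ it lies in $(1-\Pi_N)H_\Phi$, and Proposition \ref{prop.return.eigenfunctions} gives $\|\exp(-tP)(\tilde Gz)^{\alpha^\ast}\|_\Phi \asymp t^{(N+1)(R(M)-1)}e^{-(N+1)\rho(M)t}\|(\tilde Gz)^{\tilde\alpha^\ast}\|_\Phi = A(t)^{N+1}\|(\tilde Gz)^{\tilde\alpha^\ast}\|_\Phi$ as $t \to \infty$, with $(\tilde Gz)^{\tilde\alpha^\ast} \ne 0$; dividing by $\|(\tilde Gz)^{\alpha^\ast}\|_\Phi$ completes \eqref{eq.return.convergence.bounds}.

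For the convergence dichotomy I would split $1-\Pi_N = (\Pi_{N+1}-\Pi_N) + (1-\Pi_{N+1})$. The upper bound above, applied with $N+1$ in place of $N$, gives $A(t)^{-N-1}\|\exp(-tP)(1-\Pi_{N+1})\| \le CA(t) \to 0$, so this term is negligible for any limit, in norm and a fortiori in the weak operator topology. On the finite-dimensional space $V$ of homogeneous polynomials of degree $N+1$, the basis $\{(\tilde Gz)^\alpha\}_{|\alpha|=N+1}$ consists of generalized eigenvectors of $P$, and Proposition \ref{prop.return.eigenfunctions} writes $e^{iat}A(t)^{-N-1}\exp(-tP)(\tilde Gz)^\alpha$ as a nonzero constant times $e^{it(a-\opnm{Im}\lambda_\alpha)}\,t^{\,r_\alpha-1-(N+1)(R(M)-1)}\,e^{-t(\Re\lambda_\alpha-(N+1)\rho(M))}\bigl((\tilde Gz)^{\tilde\alpha}+\BigO(|t|^{-1})\bigr)$. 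Since $\Re\lambda_\alpha \ge (N+1)\rho(M)$, and, whenever that lower bound is attained, the support of $\alpha$ lies in eigenvalues with real part $\rho(M)$ where $\tilde r_j \le R(M)-1$ so that $r_\alpha-1 \le (N+1)(R(M)-1)$ as well, the rate factor $t^{\,r_\alpha-1-(N+1)(R(M)-1)}e^{-t(\Re\lambda_\alpha-(N+1)\rho(M))}$ tends to $0$ unless $\alpha$ is \emph{dominant}, meaning $\Re\lambda_\alpha = (N+1)\rho(M)$ and $r_\alpha-1 = (N+1)(R(M)-1)$; a dominant $\alpha$ is necessarily supported on genuine eigenvectors of maximal-size Jordan blocks attached to eigenvalues with real part $\rho(M)$. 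Hence the rescaled restriction to $V$ converges (automatically in operator norm since $\dim V < \infty$) if and only if $e^{it(a-\opnm{Im}\lambda_\alpha)}$ is constant for every dominant $\alpha$; taking $\alpha$ concentrated on a single such eigenvalue, this forces $a = (N+1)\opnm{Im}\lambda$ for every $\lambda$ with $\Re\lambda = \rho(M)$ and $r(\lambda,M) = R(M)$, and since all these eigenvalues share the real part $\rho(M)$, such an $a$ exists precisely when there is exactly one of them, say $\lambda_0$. In that case $a = (N+1)\opnm{Im}\lambda_0$ trivializes every surviving phase, so combined with the negligible high-energy piece the family $e^{iat}A(t)^{-N-1}\exp(-tP)(1-\Pi_N)$ converges in operator norm; otherwise, testing against $u = (\tilde Gz)^\alpha$ and $v = (\tilde Gz)^{\tilde\alpha}$ for pure dominant multi-indices $\alpha$ built from two distinct such eigenvalues shows $\langle e^{iat}A(t)^{-N-1}\exp(-tP)(1-\Pi_N)u, v\rangle$ cannot converge for any common $a$, so there is no weak-operator limit.

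I expect the main obstacle to be the bookkeeping in this last step: ordering the multi-indices $\alpha$ of length $N+1$ by the decay rate $e^{-t\Re\lambda_\alpha}t^{r_\alpha-1}$ against the benchmark $A(t)^{N+1}$, identifying exactly which are dominant and pinning down their oscillation frequency $\opnm{Im}\lambda_\alpha$, and selecting the test vectors that witness non-convergence. The remaining ingredients are used essentially verbatim: Theorem \ref{thm.return.by.PiN}, Proposition \ref{prop.return.eigenfunctions}, and \eqref{eq.exptM.A}.
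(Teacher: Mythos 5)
Your proposal is correct and follows essentially the same route as the paper's proof: control the $(1-\Pi_{N+1})$ tail with Theorem \ref{thm.return.by.PiN}, analyze the degree-$(N{+}1)$ monomials via Proposition \ref{prop.return.eigenfunctions}, identify the dominant multi-indices (the paper's set $S_{N+1}$), and read the dichotomy off the phases $\Im\lambda_\alpha$. The only cosmetic differences are that you obtain the upper bound in \eqref{eq.return.convergence.bounds} directly from Theorem \ref{thm.return.by.PiN} at level $N$ and the lower bound from the single witness $\alpha^\ast=(N+1)e_{j_0}$, whereas the paper extracts both from the full asymptotic expansion and the triangle inequality.
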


\begin{proof}
As in the proof of Theorem \ref{thm.eigen.core}, we reduce to the case where $M$ is in Jordan normal form after a change of variables. Therefore let $\tilde{G}$ be such that $\tilde{G}^{-1}M\tilde{G}$ is in Jordan normal form, and for $\mathcal{V}_{\tilde{G}}$ from \eqref{eq.def.V}, let
\[
	\tilde{P} = \mathcal{V}_{\tilde{G}}^* P \mathcal{V}_{\tilde{G}} = \tilde{G}^{-1}M\tilde{G}z \cdot \partial_z.
\]
Note that $[\mathcal{V}_{\tilde{G}}, \Pi_N] = 0$, so the claims about $\exp(-tP)(1-\Pi_N)$ may be proven by studying $\exp(-t\tilde{P})(1-\Pi_N)$ instead. Note also that
\[
	\Pi_{N+1}u(z) - \Pi_N u(z) = \sum_{|\alpha| = N+1} \frac{\partial^\alpha u(0)}{\alpha!} z^\alpha.
\]

By Theorem \ref{thm.return.by.PiN}, the observation \eqref{eq.exptM.A}, and Proposition \ref{prop.return.eigenfunctions}, for $t$ sufficiently large,
\[
	\begin{aligned}
	\exp(-t\tilde{P})&(1-\Pi_N)u(z)  
	\\ & =\sum_{|\alpha| = N+1} \exp(-t\tilde{P}) \frac{\partial^\alpha u(0)}{\alpha !}z^\alpha + \exp(-t\tilde{P})(1-\Pi_{N+1})u(z)
	\\ & = \sum_{|\alpha| = N+1} \frac{\partial^\alpha u(0)}{\alpha !}\frac{C_\alpha}{(r_\alpha-1)!}e^{-t \lambda_\alpha} (-t)^{r_\alpha - 1}\left(z^{\tilde{\alpha}} + \BigO(t^{-1})\right) 
	\\ & \qquad \qquad + \BigO(A(t)^{N+2}\|u\|).
	\end{aligned}
\]
From Theorem \ref{thm.eigen.core} and recalling the definition \eqref{eq.def.rj.tilde}, it is clear that the $t$-dependent factor $|e^{-t \lambda_\alpha} (-t)^{r_\alpha - 1}|$ for $|\alpha| = N$ is maximized, as $t \to \infty$, when $\alpha$ is supported only on those indices corresponding to eigenvalues with real part $\rho(M)$ and with $\tilde{r}_j = R(M)-1$.  

Introducing the notation
\begin{equation}\label{eq.def.return.multiindices}
	S_{N+1} = \{\alpha \in \Bbb{N}^n \::\: |\alpha| = N+1,~~\alpha_j \neq 0 \implies (\Re \lambda_j = \rho(M) ~~\& ~~ \tilde{r}_j = R(M)-1)\},
\end{equation}
we see that $\alpha \in S_{N+1}$ if and only if $\Re \lambda_\alpha = (N+1)\rho(M)$ and 
\[
	r_\alpha = (N+1)(R(M)-1)+1.
\]
We see that, when $\alpha \in S_{N+1}$, for $t\geq 1$ and as $t \to \infty$,
\begin{equation}\label{eq.return.maximal.eigenfunction}
	\|\exp(-t\tilde{P})z^\alpha\| = \frac{C_\alpha}{((N+1)(R(M)-1))!} A(t)^{N+1}\|z^{\tilde{\alpha}}\|(1+\BigO(t^{-1}))
\end{equation}
and, when $\alpha \notin S_{N+1}$ but $|\alpha| = N+1$, then
\[
	\|\exp(-t\tilde{P})z^\alpha\| = \BigO(t^{-1}A(t)^{N+1}).
\]

Therefore, for $t$ sufficiently large,
\begin{equation}\label{eq.return.convergence.final}
	\begin{aligned}
	\exp(-tP)&(1-\Pi_N)u(z) 
	\\ &= \sum_{\alpha \in S_{N+1}} \frac{C_\alpha}{((N+1)(R(M)-1))!}A(t)^{N+1}e^{-it \Im \lambda_\alpha}\frac{\partial^\alpha u(0)}{\alpha !}z^{\tilde{\alpha}} 
	\\ &\qquad \qquad + \BigO(t^{-1}A(t)^{N+1}\|u\|)
	\end{aligned}
\end{equation}
Then \eqref{eq.return.convergence.bounds} follows from \eqref{eq.return.maximal.eigenfunction} and the triangle inequality.

The claim about weak convergence comes from the observation that, if $\exp(-t\tilde{P})$ converges weakly, then
\[
	\#\{\Im \lambda_\alpha \::\: \alpha \in S_{N+1}\} = 1;
\]
otherwise, $e^{iat}A(t)^{-N-1}$ contains oscillating factors.  Since this set can be expressed as the collection of sums (allowing repetition) of $N+1$ imaginary parts $\Im \lambda_j$ where $\Re \lambda_j = \rho(M)$ and $\tilde{r}_j = R(M)-1$, this collection consists of one value if and only if
\[
	\#\{\Im \lambda_j \::\: \Re \lambda_j = \rho(M) ~~\&~~ \tilde{r}_j = R(M)-1\} = 1.
\]
Note that, even if this is true, the eigenvalue $\rho(M) + i\Im \lambda_j$ could correspond to many Jordan blocks of the same size.

In the case that there is only one such $a = \Im \lambda_\alpha$, it is clear from \eqref{eq.return.convergence.final} that 
\begin{multline*}
	e^{iat}A(t)^{-N-1}\exp(-t\tilde{P})(1-\Pi_N)u(z) 
	\\ = \sum_{\alpha \in S_{N+1}} \frac{C_\alpha}{((N+1)(R(M)-1))!} \frac{\partial^\alpha u(0)}{\alpha !} z^{\tilde{\alpha}} + \BigO(t^{-1}\|u\|),
\end{multline*}
proving convergence in operator norm. Again, these statements for $\tilde{P} = \mathcal{V}_{\tilde{G}}^* P \mathcal{V}_{\tilde{G}}$ lead immediately to the corresponding statements for $P$, and so the proposition is proven.
\end{proof}

\begin{remark}
The projections
\[
	u(z) \mapsto \frac{\partial^\alpha u(0)}{\alpha !} z^\alpha
\]
can be seen to be bounded on $H_\Phi$ for the same reasons that each $\Pi_N$ from \eqref{eq.def.Pi} is a bounded projection.  A more detailed analysis is carried out in \cite{Vi2013}; while we recall that the norms of these rank-one projections must be bounded by $Ce^{C|\alpha|}$ for some $C > 0$ depending on $\Phi$, we do not pursue this question here. 
\end{remark}

\begin{remark}\label{rem.exact.norm}
When $\opnm{Spec}M \subset \{\Re \lambda > 0\}$, we have from the case $N = 0$ of Proposition \ref{prop.return.convergence} that, for $t$ sufficiently large,
\[
	\exp(-tP)u(z) = u(0) + \BigO(t^{R(M)-1}e^{-t\rho(M)}\|u\|),
\]
with error a function in $H_\Phi$; furthermore, the error estimate is sharp.  There is therefore a large gap between \eqref{eq.norm.bound}, which has exponential growth as an upper bound as $\tau = -t \to -\infty$, and the true behavior which is bounded with an exponentially small error.

This gap is explained under the hypotheses of Section \ref{subsec.orthogonal} where the value of the norm of $\|\exp(-tP)\|$ is known exactly, but the question remains open in the general case.  Because $u(z) \mapsto u(0)$ is an orthogonal projection only when the harmonic part $\Re h(z)$ of $\Phi$ from \eqref{eq.decompose.Phi} vanishes (see Proposition \ref{prop.h.vanishes}), we remark that if $\opnm{Spec} M \subset \{\Re \lambda > 0\}$, then
\[
	\lim_{t \to \infty}\|\exp(-tP)\| = 1 \iff h = 0.
\]
\end{remark}

\subsection{Weak ellipticity and small-time regularization}\label{subsec.subellipticity}

It is already apparent from Theorem \ref{thm.boundedness.0} that the rate of change of $\Phi(e^{tM}z) - \Phi(z)$ as a function of $t$ plays an important role in behavior of the solution operator for small times.  We begin by identifying that rate under the (non-strict) ellipticity hypothesis \eqref{eq.ellipticity}.

\begin{theorem}\label{thm.subelliptic.escape}
Let $\Phi$ satisfy \eqref{eq.Phi.assumptions}. Assume that the non-strict ellipticity condition \eqref{eq.ellipticity} holds for a matrix $M$ and fix $z \in \Bbb{C}^n$. Using the notation \eqref{eq.def.Theta}, let
\begin{equation}\label{eq.def.subellipticity}
	I = I(z) = \min\left \{ k \geq 0 \::\: \Theta(M^k z) \neq 0\right\}.
\end{equation}
Then either $I \leq 2n-2$ and, as $t \to 0$,
\begin{equation}\label{eq.Subelliptic.Escape}
	\Phi(e^{tM}z) - \Phi(z) = \frac{1}{(2I+1)!}\binom{2I}{I}\Theta(M^I z)t^{2I+1} + \BigO(t^{2I+2})|z|^2,
\end{equation}
or $I = \infty$ and 
\begin{equation}\label{eq.Subelliptic.No.Escape}
	\Phi(e^{tM}z) = \Phi(z), \quad \forall t \in \Bbb{R}.
\end{equation}
\end{theorem}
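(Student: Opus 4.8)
The plan is to Taylor expand $\Phi(e^{tM}z)$ in $t$ and identify the first nonvanishing coefficient using the structure of $\Theta$. Since $\Phi$ is a real-quadratic form, $\Phi(e^{tM}z)$ is an entire function of $t$ for each fixed $z$, and by \eqref{eq.deriv.first} its derivative is $\frac{d}{dt}\Phi(e^{tM}z) = \Theta(e^{tM}z)$. So the whole question reduces to understanding the Taylor expansion of $t \mapsto \Theta(e^{tM}z)$ at $t=0$. Writing $\Theta(w) = 2\Re((Mw)\cdot\Phi'_z(w))$, which is itself a real-quadratic form in $w$, and substituting $w = e^{tM}z = \sum_{j\geq 0}\frac{t^j}{j!}M^jz$, I would expand $\Theta(e^{tM}z)$ as a power series in $t$ whose coefficients are real bilinear expressions in the vectors $\{M^jz\}$.

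The key computational step is to show that the coefficient of $t^m$ in $\Theta(e^{tM}z)$ is $\frac{1}{m!}\sum_{a+b=m}\binom{m}{a}\Theta_{\mathrm{bil}}(M^az, M^bz)$, where $\Theta_{\mathrm{bil}}$ is the symmetric real-bilinear form polarizing $\Theta$ — this is just the binomial/Leibniz expansion of a quadratic form composed with a sum. The crucial observation making the statement work is the following \emph{vanishing lemma}: under the non-strict ellipticity hypothesis \eqref{eq.ellipticity}, one has $\Theta_{\mathrm{bil}}(M^az, M^bz) = 0$ whenever $a+b < 2I(z)$. Indeed, $\Theta \geq 0$ means the symmetric bilinear form $\Theta_{\mathrm{bil}}$ is positive semidefinite, so by Cauchy--Schwarz $|\Theta_{\mathrm{bil}}(u,v)|^2 \leq \Theta(u)\Theta(v)$; if $a+b < 2I$ then at least one of $a,b$ is $< I$, hence $\Theta(M^a z)=0$ or $\Theta(M^b z)=0$ by definition of $I$, forcing $\Theta_{\mathrm{bil}}(M^az,M^bz)=0$. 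This kills all coefficients of $t^m$ in $\Theta(e^{tM}z)$ for $m < 2I$, and at $m = 2I$ only the middle term $a=b=I$ survives (any term with $a\neq b$ in $a+b=2I$ has $\min(a,b)<I$), giving coefficient $\frac{1}{(2I)!}\binom{2I}{I}\Theta_{\mathrm{bil}}(M^Iz,M^Iz) = \frac{1}{(2I)!}\binom{2I}{I}\Theta(M^Iz)$. Integrating in $t$ from $0$ (using $\frac{d}{dt}\Phi(e^{tM}z)=\Theta(e^{tM}z)$ and $\Phi(e^{0}z)=\Phi(z)$) then produces the leading term $\frac{1}{(2I+1)!}\binom{2I}{I}\Theta(M^Iz)\,t^{2I+1}$ with the stated error, where the $|z|^2$ in the error bound comes from homogeneity of $\Phi$ and the matrix norm of $e^{tM}$ being bounded on compact $t$-intervals.

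For the dichotomy on $I$: the bound $I \leq 2n-2$ when $I < \infty$ follows from a Cayley--Hamilton type argument. If $\Theta(M^kz) = 0$ for $k = 0,1,\dots, 2n-2$, I want to conclude $\Theta(M^kz)=0$ for all $k$, hence $I=\infty$. The point is that $\Theta$ restricted to the (complex) cyclic subspace generated by $z$ under $M$ has the span of $\{M^jz : 0\le j\le 2n-2\}$ as a spanning set over $\Bbb{C}$ in a suitable real sense — more carefully, the positive semidefinite bilinear form $\Theta_{\mathrm{bil}}$ vanishing on $M^jz$ for $j \leq 2n-2$ means (again by Cauchy--Schwarz) it vanishes on the real span of $\{M^jz, iM^jz : j \leq 2n-2\}$, which is an $M$- and $iM$-invariant real subspace once we note that $M^{2n-1}z$ lies in the complex span of lower powers (the cyclic subspace has complex dimension $\le n$, so real powers up to $2n-2$ already involve enough; one uses the minimal polynomial of $M$ acting on $z$, of degree $\le n$, to write $M^n z$ in terms of $M^{<n}z$, and then an induction propagates vanishing to all powers). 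Then $\Theta(M^kz)=0$ for all $k\geq 0$, so every coefficient in the expansion above vanishes and $\Phi(e^{tM}z) \equiv \Phi(z)$, which is \eqref{eq.Subelliptic.No.Escape}.

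The main obstacle I anticipate is the bookkeeping in the $I \leq 2n-2$ bound: making precise exactly which real subspace is invariant and why vanishing on $\{M^jz : j \leq 2n-2\}$ propagates. The cleanest route is probably to work entirely with the positive semidefinite form $\Theta_{\mathrm{bil}}$ and its null space $\mathcal{N} = \{v : \Theta(v) = 0\} = \{v : \Theta_{\mathrm{bil}}(v,\cdot)\equiv 0\}$ (equality by Cauchy--Schwarz), observe that $I(z)$ is finite iff $z \notin \bigcap_{k\ge0} M^{-k}\mathcal{N}$, and note that $\mathcal{N}$ contains $M^jz$ for $j < I$ — but $\mathcal{N}$ need not be $M$-invariant, so the right object is the largest $M$-invariant real subspace contained in $\mathcal{N}$, call it $\mathcal{N}_\infty$; then $I(z) < \infty$ iff $z \notin \mathcal{N}_\infty$, and since $\mathcal{N}_\infty$ is cut out by the conditions $\Theta(M^k\cdot)=0$, and these stabilize after at most $\dim_{\Bbb{R}}\Bbb{C}^n = 2n$ steps — indeed after $2n-1$ steps by the standard flag/descending-chain argument — one gets $I(z) \leq 2n-2$ (the off-by-one relative to $2n$ is exactly the point to be careful about, matching the $k_0 \le 2n-1$ in \eqref{eq.intro.ell.part}). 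I would also double-check the binomial constant: $\frac{1}{(2I)!}\binom{2I}{I}$ integrated gives $\frac{1}{(2I+1)!}\binom{2I}{I}$ after multiplying by $\frac{1}{2I+1}\cdot\frac{(2I)!}{(2I)!}$, i.e. $\int_0^t \frac{c}{(2I)!}s^{2I}\,ds = \frac{c}{(2I+1)!}t^{2I+1}$ with $c = \binom{2I}{I}\Theta(M^Iz)$, consistent with \eqref{eq.Subelliptic.Escape}.
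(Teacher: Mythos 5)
Your argument is correct and arrives at the same leading coefficient, but it reorganizes the main computation in a way that is genuinely simpler than the paper's proof. The paper polarizes $\Phi$ itself into a symmetric real-bilinear form, writes every derivative $\frac{d^k}{dt^k}\Phi(e^{tM}z)$ via a Leibniz-type expansion, and then runs an induction converting these $\Phi$-bilinear quantities into $\Theta$-bilinear ones through the identity $\Theta(z,\zeta)=\Phi(Mz,\zeta)+\Phi(z,M\zeta)$, finishing with an alternating-sum combinatorial identity to extract $\binom{2I}{I}$. You instead integrate once, using $\frac{d}{dt}\Phi(e^{tM}z)=\Theta(e^{tM}z)$ from \eqref{eq.deriv.first}, and polarize $\Theta$ directly: the coefficient of $t^m$ in $\Theta(e^{tM}z)$ is $\frac{1}{m!}\sum_{a+b=m}\binom{m}{a}\Theta_{\mathrm{bil}}(M^az,M^bz)$, Cauchy--Schwarz for the positive semidefinite form $\Theta_{\mathrm{bil}}$ kills every term with $\min(a,b)<I$, and the unique survivor at $m=2I$ is $\frac{1}{(2I)!}\binom{2I}{I}\Theta(M^Iz)$, which integrates to the stated constant with no induction and no combinatorial identity. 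The case $I=\infty$ also falls out immediately, since then every coefficient of $\Theta(e^{tM}z)$ vanishes; the paper has to route this back through the $\Phi$-bilinear form. Both approaches share the same two engines (polarization plus Cauchy--Schwarz, and a cyclic-subspace argument for finiteness of $I$), so this is a streamlining rather than a new idea, but it is a real improvement in transparency.

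One step to tighten is the bound $I\le 2n-2$. Your intermediate claim that $\Theta_{\mathrm{bil}}$ vanishing at $M^jz$ forces it to vanish at $iM^jz$ is false in general: the null set $\mathcal{N}=\{v:\Theta(v)=0\}$ is only a real subspace of $\Bbb{C}^n\cong\Bbb{R}^{2n}$, not a complex one (consider $\Theta(w)=(\Im w)^2$ on $\Bbb{C}$, which vanishes at $1$ but not at $i$). Your fallback --- the descending chain $V_k=\bigcap_{j\le k}M^{-j}\mathcal{N}$ stabilizing to the largest $M$-invariant real subspace of $\mathcal{N}$ --- is the right framework, but since one only knows $\dim_{\Bbb{R}}\mathcal{N}\le 2n-1$ when $\Theta\not\equiv 0$, the chain argument as you state it cleanly yields $I\le 2n-1$; getting $2n-2$ requires one more observation about when the chain can fail to stabilize at the penultimate step. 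You correctly flag this off-by-one as the delicate point, and the paper's own one-line appeal to Cayley--Hamilton is no more detailed here. None of this affects the expansion \eqref{eq.Subelliptic.Escape} or \eqref{eq.Subelliptic.No.Escape}, which your argument establishes correctly.
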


\begin{remark*}
When $I(z) = \infty$ for some $z \in \Bbb{C}^n \backslash \{0\}$, we conclude that $\exp(-tP)$ is never compact for any $t \in \Bbb{R}$ by Theorem \ref{thm.boundedness.0}.
\end{remark*}

\begin{proof}
Regarding $\Theta$ as a quadratic form in $2n$ real variables, we have that $\Theta$ is positive semidefinite by our assumption \eqref{eq.ellipticity} and therefore its zero set coincides with the kernel of its Hessian matrix.  This is a linear condition, so by the Cayley-Hamilton theorem we have that if $I \geq 2n-1$ then $I = \infty$ for $I$ in \eqref{eq.def.subellipticity}.

To analyze derivatives of $\Phi(e^{tM}z)$, particularly of higher order, it is convenient to associate $\Phi:\Bbb{C}^n \to \Bbb{R}$ with a natural real-valued real-bilinear form acting on $\Bbb{R}^{2n}$. That is, let
	\begin{equation}\label{eq.def.Phi.symmetric}
		\Phi(z,\zeta) = \Re(z\cdot\Phi'_z(\zeta))
	\end{equation}
	denote the unique symmetric real-bilinear form on $\Bbb{C}^{2n}$ such that $\Phi(z,z) = \Phi(z)$. Then we compute that
	\begin{equation}\label{eq.all.derivs}
		\frac{d^k}{dt^k} \Phi(e^{tM}z) = \sum_{j=0}^k \binom{k}{j}\Phi(M^je^{tM}z, M^{k-j}e^{tM}z).
	\end{equation}
It is also useful to similarly extend $\Theta$, which is here a positive semi-definite real-valued real-quadratic form thanks to \eqref{eq.ellipticity}, to a real-valued real-bilinear form.  We note that $\Theta(z) = 2\Phi(Mz, z)$ and therefore we may express the extension of $\Theta$ in terms of that of $\Phi$:
\begin{equation}\label{eq.Theta.symmetric}
	\Theta(z,\zeta) = \Phi(Mz,\zeta) + \Phi(z,M\zeta).
\end{equation}

To establish the theorem, we show that, for $k \in \Bbb{N}$,
\begin{equation}\label{eq.vanishing.derivs}
	\left.\frac{d^j}{dt^j}\Phi(e^{tM}z)\right|_{t=0} = 0, \quad \forall j = 1,\dots, 2k+1
\end{equation}
if and only if
\begin{equation}\label{eq.vanishing.Theta}
	\Theta(M^j z) = 0, \quad j = 0,\dots, k.
\end{equation}
This is obvious for $k = 0$ by \eqref{eq.all.derivs}, and so we proceed by an induction argument assuming that \eqref{eq.vanishing.derivs} and \eqref{eq.vanishing.Theta} are equivalent for $k$ and that either \eqref{eq.vanishing.derivs} or \eqref{eq.vanishing.Theta} holds for $k+1$, meaning that both \eqref{eq.vanishing.derivs} and \eqref{eq.vanishing.Theta} hold for $k$.

We rewrite \eqref{eq.all.derivs} in terms of $\Theta$, using that
\[
	\Theta\left(M^\ell z, M^{j-\ell-1}z\right) = \Phi\left(M^\ell z, M^{j-\ell}z\right) + \Phi\left(M^{\ell+1} z, M^{j-\ell-1}z\right).
\]
We see that
\begin{equation}\label{eq.derivs.Theta}
	\begin{aligned}
	\left.\frac{d^j}{dt^j}\Phi(e^{tM}z)\right|_{t=0} &= \sum_{j=0}^{j-1} a_j\Theta\left(M^\ell z,M^{j-\ell-1}z\right),
	\\ a_\ell &= \binom{j}{\ell} - a_{\ell-1} = \sum_{m=0}^\ell (-1)^{m-\ell}\binom{j}{m}.
	\end{aligned}
\end{equation}
For any $0 \leq \ell \leq k$ and $\zeta \in \Bbb{C}^n$, by the Cauchy-Schwarz inequality we have that
\[
	|\Theta(M^k z, \zeta)|^2 \leq \Theta(M^k z)\Theta(\zeta) = 0
\]
by our induction assumption which implies that \eqref{eq.vanishing.Theta} holds for $k$.  Therefore if $j = 2k+2$ or $j=2k+3$ the only term that survives in \eqref{eq.derivs.Theta} is when $j = 2k+3$ and $\ell = k+1$.  So
\[
	\left.\frac{d^{2k+2}}{dt^{2k+2}}\Phi(e^{tM}z)\right|_{t = 0} = 0,
\]
which also follows from the fact that $\Phi(e^{tM}z)$ is nondecreasing in $t$ by \eqref{eq.ellipticity}, and
\[
	\left.\frac{d^{2k+3}}{dt^{2k+3}}\Phi(e^{tM}z)\right|_{t = 0} = \left(\sum_{m=0}^{k+1} \binom{2k+3}{m}\right)\Theta(M^{k+1}z).
\]
By a standard combinatorial formula,
\[
	(-1)^{k+1}\sum_{m=0}^{k+1} (-1)^m\binom{2k+3}{m} = \binom{2k+2}{k+1}.
\]
Since this coefficient is nonzero, this suffices to prove that \eqref{eq.vanishing.derivs} and \eqref{eq.vanishing.Theta} are equivalent for all $k \in \Bbb{N}$.  What is more, this shows that the leading term in the Taylor expansion of $\Phi(e^{tM}z)-\Phi(z)$ is of order $t^{2I+1}$ and, through identifying the derivative, we have established \eqref{eq.Subelliptic.Escape}.

If $I = \infty$, then by bilinearity of $\Phi(z,\zeta)$ we see that
\[
	\Phi(e^{tM}z) = \sum_{j,k\in\Bbb{N}} \frac{t^{j+k}}{j! k!}\Phi(M^j z, M^k z).
\]
By the Cauchy-Schwarz inequality and the assumption that $I = \infty$, for any $(j,k) \neq (0,0)$ we have $\Phi(M^j z, M^k z) = 0$.  Equation \eqref{eq.Subelliptic.No.Escape} follows immediately, completing the proof of the theorem.
\end{proof}

We finish our analysis by using the rate of increase in Theorem \ref{thm.subelliptic.escape} to find the exact order, in $t$, of the small-time regularization properties $\exp(-tP)$, extending Theorem \ref{thm.boundedness.Theta}.  As we see later in Example \ref{ex.subell.not.sharp}, the bounds are of the correct order in $t$, but the value of the constant may not be given by the Taylor expansion established in Theorem \ref{thm.subelliptic.escape}.

\begin{theorem}\label{thm.subell.decay}
Let the matrix $M$, the weight $\Phi$, and the operators $P$ and $\exp(\tau P)$ be as in Proposition \ref{prop.solve.exptP}. Recall the definition \eqref{eq.def.delta.0} of $\delta_0$ and suppose furthermore that \eqref{eq.ellipticity} holds.

Let 
\[
	I_0 = \max\{I(z)\::\: |z|=1\}
\]
be the maximum of the $I(z)$ defined in \eqref{eq.def.subellipticity} for $|z|=1$.  Assume that $I_0 < \infty$ and let
\[
	k_1 = \frac{1}{(2I_0+1)!}\binom{2I_0}{I_0} \min\left\{\frac{\Theta(M^{I_0}z)}{|Gz|^2}\::\: |z| = 1,\ I(z) = I_0\right\}.
\]
Then, for $\delta_0$ defined in \eqref{eq.def.delta.0}, we have that there exists $C > 0$ for which
\begin{equation}\label{eq.subell.delta.0.bounds}
	 \frac{1}{C}t^{2I_0+1} \leq \delta_0(-t) \leq \frac{k_1}{4^{I_0}}t^{2I_0+1} + \BigO(t^{2I_0+2}), \quad \forall 0 \leq t \ll 1.
\end{equation}
\end{theorem}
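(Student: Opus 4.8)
The plan is to argue directly from the explicit description of $\delta_0$. Combining \eqref{eq.def.delta.0} with the form \eqref{eq.def.Phi.delta} of $\Phi^{(\delta)}$, the inequality $\Phi^{(\delta)}(e^{tM}z)\geq\Phi(z)$ is equivalent to $\tfrac12(1-e^{-2\delta})\leq(\Phi(e^{tM}z)-\Phi(z))/|Ge^{tM}z|^2$, so that, writing
\[
	E(t):=\inf_{z\neq 0}\frac{\Phi(e^{tM}z)-\Phi(z)}{|Ge^{tM}z|^2},
\]
one has $\tfrac12(1-e^{-2\delta_0(-t)})=E(t)$. By \eqref{eq.ellipticity} and \eqref{eq.deriv.first} the function $s\mapsto\Phi(e^{sM}z)$ is nondecreasing, so $E(t)\geq 0$; and once the upper bound below shows $E(t)=\BigO(t^{2I_0+1})$, it follows that $\delta_0(-t)=-\tfrac12\log(1-2E(t))=E(t)+\BigO(t^{4I_0+2})$. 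Thus everything reduces to sandwiching $E(t)$ between $c\,t^{2I_0+1}$ and $\tfrac{k_1}{4^{I_0}}t^{2I_0+1}+\BigO(t^{2I_0+2})$.

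For the upper bound I would test $E(t)$ against a single well-chosen direction. On the unit sphere, $\{I(z)=I_0\}$ is exactly the unit sphere of the subspace $N_{I_0}:=\bigcap_{k<I_0}\{z:\Theta(M^kz)=0\}$ (a subspace because $\Theta$ is semidefinite), hence compact; so the infimum defining $k_1$ is attained at some $z_0$, with $\tfrac{1}{(2I_0+1)!}\binom{2I_0}{I_0}\Theta(M^{I_0}z_0)/|Gz_0|^2=k_1$. Taking $z=e^{-tM/2}z_0$ and writing $f(u)=\Phi(e^{uM}z_0)$, the ratio in $E(t)$ becomes $(f(t/2)-f(-t/2))/|Ge^{-tM/2}z_0|^2$. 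Theorem \ref{thm.subelliptic.escape} gives $f(u)-f(0)=\tfrac{1}{(2I_0+1)!}\binom{2I_0}{I_0}\Theta(M^{I_0}z_0)\,u^{2I_0+1}+\BigO(u^{2I_0+2})$; subtracting the value at $-t/2$ from the value at $t/2$ cancels the even powers, and the odd leading term produces a factor $2(t/2)^{2I_0+1}=4^{-I_0}t^{2I_0+1}$. Dividing by $|Ge^{-tM/2}z_0|^2=|Gz_0|^2+\BigO(t)$ yields $E(t)\leq\tfrac{k_1}{4^{I_0}}t^{2I_0+1}+\BigO(t^{2I_0+2})$. The mechanism is that centering the trajectory $s\mapsto e^{sM}z_0$ about $s=0$, where $\tfrac{d}{ds}\Phi(e^{sM}z_0)=\Theta(e^{sM}z_0)$ vanishes to order $2I_0$, is worth the factor $4^{-I_0}$ over the naive bound $E(t)\leq k_1t^{2I_0+1}$ coming from $z_0$ itself.

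For the lower bound it suffices, since $|Ge^{tM}z|\leq C_1|z|$ uniformly for small $t$ and $\Phi(e^{tM}z)-\Phi(z)=\int_0^t\Theta(e^{sM}z)\,ds$, to show $\int_0^t\Theta(e^{sM}z)\,ds\geq c\,t^{2I_0+1}|z|^2$ for all $z$ and all small $t>0$. Write $\Theta(w)=\|Lw\|^2$ with $L$ real-linear and $\ker L=N_1$, and introduce the filtration $N_0=\Bbb C^n\supseteq N_1\supseteq\cdots\supseteq N_{I_0+1}=\{0\}$ (the last equality being $I_0<\infty$) together with the orthogonal splitting $\Bbb C^n=\bigoplus_{j=0}^{I_0}V_j$, $V_j=N_j\ominus N_{j+1}$. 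One checks that $z\mapsto LM^jz$ is injective on $V_j$ (if $LM^jz=0$ and $z\in N_j$ then $z\in N_{j+1}$) and that $s\mapsto Le^{sM}z_j$ vanishes at $s=0$ to order exactly $j$ when $0\neq z_j\in V_j$; consequently the positive semidefinite form $\langle A_tz,z\rangle:=\int_0^t\|Le^{sM}z\|^2\,ds$ satisfies $|\langle A_tz_i,z_j\rangle|=\BigO(t^{i+j+1})|z_i|\,|z_j|$ for $z_i\in V_i$, $z_j\in V_j$. Letting $D_t$ be the dilation acting as $t^{(2j+1)/2}$ on $V_j$, the rescaled form $\widetilde A_t=D_t^{-1}A_tD_t^{-1}$ then has $\BigO(1)$ entries, and a direct computation (substituting $s=tu$ and passing to the limit) shows that on the unit sphere $\langle\widetilde A_tw,w\rangle\to\int_0^1\bigl\|\sum_j s^j\tfrac{1}{j!}LM^jw_j\bigr\|^2\,ds$ uniformly as $t\to0$, where $w=\sum_jw_j$. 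This limiting form is positive definite — it vanishes only if $LM^jw_j=0$ for every $j$, i.e.\ $w=0$ — so $\widetilde A_t\geq c_0\,\mathrm{Id}$ for $t$ small, whence $A_t\geq c_0D_t^2\geq c_0t^{2I_0+1}\,\mathrm{Id}$ since $t^{2j+1}\geq t^{2I_0+1}$. This uniform quantitative positivity is the step I expect to be the main obstacle: the leading Taylor coefficient $\Theta(M^{I(z)}z)$ degenerates as $z$ approaches the deeper stratum $N_{I_0}$, so no pointwise application of Theorem \ref{thm.subelliptic.escape} gives a uniform lower bound, and it is the filtration-adapted rescaling that converts the ``finite type'' condition $\sum_{k\leq I_0}\Theta(M^k\cdot)>0$ into a quantitative estimate. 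Alternatively, via Proposition \ref{prop.subell.relation} the lower bound may be read off from the classical subelliptic estimate for $q^w(x,D_x)$. The constant $c$ is not asserted to be sharp, in keeping with Example \ref{ex.subell.not.sharp}.
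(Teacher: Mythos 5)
Your proof is correct. The upper bound is essentially the paper's own argument: both test the defining inequality for $\delta_0$ at the centered point $e^{-tM/2}z_0$, with $z_0$ realizing the minimum in $k_1$, and both extract the factor $4^{-I_0}$ from the sign flip of the odd leading Taylor coefficient in Theorem \ref{thm.subelliptic.escape} upon subtracting the values at $\pm t/2$. (Your reformulation $\tfrac12(1-e^{-2\delta_0(-t)})=E(t)$ packages the $\Phi^{(\delta)}$ bookkeeping cleanly, and your identification of $\{|z|=1,\ I(z)=I_0\}$ with the unit sphere of $N_{I_0}$, using $N_{I_0+1}=\{0\}$, justifies the existence of $z_0$ more carefully than the paper's appeal to continuity on $\Bbb{S}^{2n-1}$; the $e^{-tM/2}$ versus $e^{tM/2}$ in your denominator is a harmless slip since both equal $|Gz_0|^2+\BigO(t)$.) Where you genuinely diverge is the lower bound. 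The paper argues by contradiction, following \cite[Prop.~3.2]{Sj2010}: a sequence $(t_k,z_k)\to(0,z_\infty)$ violating the bound forces, after the rescaling $\tilde f_k(s)=f(t_ks,z_k)/t_k^{2I_0+1}$ and uniform convergence to zero of a polynomial of degree $2I_0+1$, the vanishing of $\partial_t^jf(0,z_\infty)$ for all $j\le 2I_0+1$, contradicting $I(z_\infty)\le I_0$ via Theorem \ref{thm.subelliptic.escape}. You instead prove the stronger quantitative statement $\int_0^t\Theta(e^{sM}z)\,ds\ge c\,t^{2I_0+1}|z|^2$ directly, by writing $\Theta=\|L\cdot\|^2$, filtering $\Bbb{C}^n$ by the kernels $N_j$, and showing that the anisotropically rescaled form $D_t^{-1}A_tD_t^{-1}$ converges uniformly on the sphere to the positive definite limit $\int_0^1\bigl\|\sum_j\tfrac{u^j}{j!}LM^jw_j\bigr\|^2du$. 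I checked the load-bearing steps (injectivity of $LM^j$ on $V_j$, the $\BigO(t^{i+j+1})$ off-diagonal estimates, the uniform limit after substituting $s=tu$, and positive definiteness of the limit form) and they are sound. The paper's compactness argument is shorter; yours is constructive, identifies the limiting model form, and in principle yields an explicit constant $C$, at the price of the extra linear-algebraic setup.
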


\begin{proof}
	For the upper bound, let $z_0 \in \Bbb{C}^n$ with $|z_0| = 1$ attain the minimum in the definition of $k_1$; the existence of such a $z_0$ follows from continuity of $\Theta(M^{I_0}z)/|Gz|^2$ on $\Bbb{S}^{2n-1} \subset \Bbb{C}^n$. We abbreviate the leading coefficient in Theorem \ref{thm.subelliptic.escape} as
	\[
		k_0(z) = \frac{1}{(2I+1)!}\binom{2I}{I}\Theta(M^I z),
	\]
	so $k_0(z_0) = k_1$.
	Then from Theorem \ref{thm.subelliptic.escape}, for $s$ sufficiently small we have
	\[
		\Phi(e^{sM}z_0) = \Phi(z_0) + k_1 s^{2I_0+1}+\BigO(s^{2N_0+2}).
	\]
	We then have
	\[
		\Phi(e^{tM/2}z_0) - \Phi(e^{-tM/2}z_0) = \frac{k_1}{2^{2I_0}} t^{2I_0+1} + \BigO(t^{2I_0+2}).
	\]
	Let $\tilde{z}_0 = e^{-tM/2}z_0$ and note both that $|\tilde{z}_0| = 1+\BigO(t)$ and that $G\tilde{z}_0 = Gz_0 + \BigO(t)$. We follow the proof of \eqref{eq.delta.0.deriv} in writing
	\[
		\Phi^{(\delta)}(e^{tM}\tilde{z}_0) - \Phi(\tilde{z}_0) = \frac{k_1}{2^{2I_0}} t^{2I_0+1} - \delta|Gz_0|^2(1+\BigO(t)) + \BigO(\delta^2+t^{2I_0+2})
	\]
	and noting that if $\delta = k_1t^{2I_0+1}+C_1t^{2I_0+2}$ for $C_1$ sufficiently large, then 
	\[
		\Phi^{(\delta)}(e^{tM}\tilde{z}_0) < \Phi(\tilde{z}_0), \quad 0 < t \ll 1.
	\]
	This proves the right-hand inequality in \eqref{eq.subell.delta.0.bounds}.
	
	For the upper bound, we follow the proof of \cite[Prop.~3.2]{Sj2010}.  Define
	\[
		f(t,z) = \Phi(e^{tM}z)-\Phi(z).
	\]
	If the left-hand inequality in \eqref{eq.subell.delta.0.bounds} does not hold, then we must be able to find some sequence $\{(t_k, z_k)\}_{k=1}^\infty$ in $(0, \infty) \times \{|z|=1\}$ converging to $(0, z_\infty)$ for which
	\[
		\lim_{k\to\infty} \frac{f(t_k,z_k)}{t_k^{2I_0+1}} = 0.
	\]
	By our assumption \eqref{eq.ellipticity}, we know that $f(t,z)$ is nondecreasing in $t$, so furthermore
	\begin{equation}\label{eq.subell.f.to.0}
		\lim_{k\to\infty} \sup_{0 \leq t \leq t_k}\frac{f(t,z_k)}{t_k^{2I_0+1}} = 0.
	\end{equation}
	
	Then write
	\[
		\tilde{f}_k(s) = \frac{f(t_ks, z_k)}{t_k^{2I_0+1}}, \quad s \in [0,1],
	\]
	which converges uniformly to zero on $[0,1]$ by \eqref{eq.subell.f.to.0}.  Since
	\[
		\tilde{f}_k(s) = \sum_{j=0}^{2I_0 + 1} \frac{1}{t_k^{2I_0+1-j}j!} \partial_t^j f(0,z_k) s^j + \BigO(t_ks^{2I_0+2}),
	\]
	we conclude that, for all $0 \leq j \leq 2I_0+1$,
	\[
		\partial_t^j f(0, z_\infty) = \lim_{k\to\infty} \partial_t^j f(0, z_k) = 0.
	\]
	By Theorem \ref{thm.subelliptic.escape}, this violates the assumption that $I(z_\infty) \leq I_0 < \infty$.  This contradiction establishes the left-hand side of \eqref{eq.subell.delta.0.bounds} and completes the proof of the theorem.
\end{proof}

\subsection{The case where $h$ vanishes}\label{subsec.orthogonal}

With the weight function $\Phi$ decomposed as in \eqref{eq.decompose.Phi}, we focus on the case $h(z) = 0$. In particular, abandoning the assumption that $M$ is in Jordan normal form, we may assume after a change of variables that
\[
	\Phi(z) = \Psi(z) = \frac{1}{2}|z|^2.
\]
This assumption is convenient because it forces the $\Pi_N$ in \eqref{eq.def.Pi} to be orthogonal projections, and it is relevant because it is satisfied when treating operators like the Fokker-Planck quadratic model in Section \ref{subsubsec.FP}.

Using the tools already introduced, we can see that this assumption allows us to exactly determine the norm of the solution operator $\exp(\tau P)$, its return to equilibrium, and its regularization properties; these are all closely related and are given by the norm of a matrix exponential.  Because, in this special case, we can obtain extremely precise information using only a standard Bargmann transform, we present these and other results with much shorter proofs in \cite{AlVi2014a}.

Necessary conditions for a quadratic operator on $L^2(\Bbb{R}^n)$ to admit a unitary equivalence like in Proposition \ref{prop.which.q} with $\Phi = \Psi$ are discussed in \cite[Thm.~1.4]{Vi2013}.  To avoid complications like in Example \ref{ex.nontrivial.trivial.h}, we assume that $\opnm{Spec} M$ is contained in a proper half-plane.

\begin{proposition}\label{prop.h.vanishes}
Let $q(x,\xi)$ satisfy the conditions in Proposition \ref{prop.which.q} and let $q^w(x,D_x)$ and $p^w(z,D_z) = Mz\cdot iD_z + \frac{1}{2}\opnm{Tr}M$ be related by \eqref{eq.q.p.unitary}.  Let $P = Mz\cdot \partial_z$ act on $H_\Phi$ for $\Phi$ satisfying \eqref{eq.Phi.assumptions}. Assume furthermore that there exists $\theta_0 \in \Bbb{R}$ for which
\[
	\opnm{Spec} M \subset \{\Re e^{i\theta_0}\lambda > 0\}.
\]

Then the following are equivalent:
\begin{enumerate}[(i)]
\item the harmonic part $\Re h(z)$ from $\Phi$ in \eqref{eq.decompose.Phi} is zero;
\item the ground state of $P$ and its adjoint agree, or $\ker P = \ker P^*$;
\item\label{it.orthog.conjugates} the manifolds $\Lambda^\pm$ of $q$ are complex conjugates, $\Lambda^+ = \overline{\Lambda^-}$;
\item\label{it.orthog.q1} conjugation by $\mathcal{U}_\chi$ as in \eqref{eq.q.p.unitary} reduces $q(x,\xi)$ to
\[
	q_1(x,\xi) = (q\circ\chi^{-1})(x,\xi) = \frac{1}{2}M(x-i\xi)\cdot(x+i\xi);
\]
\item the projection $\Pi_0$ is orthogonal; and
\item every projection $\Pi_N$ for $N \in \Bbb{N}$ is orthogonal.
\end{enumerate}
\end{proposition}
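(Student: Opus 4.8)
The plan is to prove the cycle of equivalences by exploiting three natural groupings: the conditions on the Fock side ($\Re h = 0$, agreement of ground states of $P$ and $P^*$, orthogonality of $\Pi_0$, orthogonality of all $\Pi_N$) and the conditions on the real side (the manifolds $\Lambda^\pm$ are conjugate, and the normal form $q_1 = \frac12 M(x-i\xi)\cdot(x+i\xi)$). First I would observe that the real-side conditions transfer to the Fock side through the unitary equivalence \eqref{eq.q.p.unitary}: condition (\ref{it.orthog.q1}) says precisely that one may choose the FBI transform $\mathcal{T}$ so that $\Phi = \Psi$, i.e.\ $\Re h = 0$, because the phase $\varphi$ in \eqref{eq.FBI.initial} is built from $A$ in \eqref{eq.def.FBI.A}, and $H = 0$ (hence $h = 0$) is exactly the case $A = i$, giving $q_1(x,\xi) = \frac12 M(x-i\xi)\cdot(x+i\xi)$ by \eqref{eq.creation.Fock}, \eqref{eq.annihilation.Fock}. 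So (i) $\iff$ (\ref{it.orthog.q1}). For (\ref{it.orthog.conjugates}), I would use that $\Lambda^\pm$ are the $F$-invariant Lagrangian planes from Proposition \ref{prop.which.q}(\ref{it.which.q.subspaces}); $\kappa \circ \chi$ maps them to $\{\zeta = 0\}$ and $\{z = 0\}$ respectively, and chasing through \eqref{eq.def.kappa} one sees $\Lambda^+ = \{(x, Ax)\}$, $\Lambda^- = \{(x, -ix)\}$ up to the real canonical transformation $\chi$, so $\Lambda^+ = \overline{\Lambda^-}$ forces $A = \overline{-i} \cdot (\text{conjugation compatible with } \chi)$, which after unwinding is equivalent to $A = i$, i.e.\ $H = 0$. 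This is essentially the content of \cite[Thm.~1.4]{Vi2013}, which I would cite.

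Next I would handle the Fock-side equivalences directly. For (i) $\iff$ (v): by Lemma \ref{lem.reproducing}, the orthonormal basis is $e_\alpha = \frac{|\det G|}{\sqrt{\pi^n\alpha!}}(Gz)^\alpha e^{-h(z)}$; the projection $\Pi_0$ sends $u$ to its value at $0$ times the constant function, while the orthogonal projection onto constants sends $u$ to $\langle u, e_0\rangle_\Phi e_0$. These agree for all $u$ iff $e_0$ is (a constant multiple of) the constant function $1$, i.e.\ iff $e^{-h(z)}$ is constant, i.e.\ iff $h = 0$ (recall $h$ is a quadratic form, so $e^{-h}$ constant forces $h \equiv 0$). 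For (i) $\iff$ (vi): when $h = 0$ we may take $\Phi = \Psi = \frac12|z|^2$ after the change of variables $\mathcal{V}_G$, and then monomials are orthogonal by Lemma \ref{lem.reproducing}(\ref{it.reproducing.2}), so each $\Pi_N$ — which projects onto polynomials of degree $\leq N$ — is orthogonal; conversely (vi) trivially implies (v). For (i) $\iff$ (ii): $\ker P$ is spanned by the eigenfunction $u_0$ of $P$ with eigenvalue $0$, and since $\opnm{Spec}M$ is in a proper half-plane (so $0$ is the only eigenvalue of $M$ contributing a constant eigenfunction, up to scalars), $\ker P = \opnm{Span}\{1\}$ by Theorem \ref{thm.eigen.core}. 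On the other hand $P^* = (GMG^{-1})^* z\cdot(\partial_z + h'(z))$ after suitable conjugation (cf.\ the adjoint computation in the proof of Proposition \ref{prop.h.o.semigroup} together with the transformation \eqref{eq.trans.W}), and its ground state is a multiple of $e^{-h(z)}$ times a constant — more carefully, $\ker P^*$ is spanned by $\mathcal{W}_{-h}$ applied to the constant, i.e.\ by $e^{-2\Re h(\cdot)}$-type weight considerations; the cleanest route is to note $\ker P^* = \opnm{Span}\{e_0\}$ and compare with $\ker P = \opnm{Span}\{1\}$, so they agree iff $e_0 \in \opnm{Span}\{1\}$ iff $h = 0$.

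The cleanest logical skeleton, then, is: prove (i) $\iff$ (v), (v) $\iff$ (vi), (i) $\iff$ (ii), (i) $\iff$ (\ref{it.orthog.q1}), and (\ref{it.orthog.q1}) $\iff$ (\ref{it.orthog.conjugates}), each of which is a short argument given the machinery above; together these yield all six equivalences. The main obstacle I anticipate is the careful bookkeeping in (i) $\iff$ (\ref{it.orthog.conjugates}) and (i) $\iff$ (\ref{it.orthog.q1}): one must verify that the particular FBI transform $\mathcal{T}$ constructed from $A$ in \eqref{eq.def.FBI.A}, composed with the real metaplectic $\mathcal{U}_\chi$ from Proposition \ref{prop.which.q}, really does conjugate $q^w$ to $P = Mz\cdot\partial_z$ acting on $H_\Phi$ with $\Phi$ having pluriharmonic part $\Re h$ determined by $H$ — and that $\Lambda^+ = \overline{\Lambda^-}$ is equivalent, through the transport formula \eqref{eq.F.transform.K} and the fact that $\chi$ is real (hence commutes with conjugation on $\Bbb{R}^{2n}$), to $H = 0$. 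This is exactly \cite[Thm.~1.4]{Vi2013}, and the honest thing is to invoke it; I would state the dictionary $\Lambda^\pm \leftrightarrow A \leftrightarrow H \leftrightarrow h$ explicitly and refer the reader there for the verification that $\overline{\Lambda^-} = \Lambda^+$ corresponds to $H^* = -H$ together with $H$ symmetric, hence $H = 0$, rather than reproving it. Everything else is a direct consequence of Lemma \ref{lem.reproducing} and Theorem \ref{thm.eigen.core} already in hand.
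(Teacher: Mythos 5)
Your proposal is correct, and its treatment of condition (\ref{it.orthog.q1}) coincides with the paper's: identify $q_1 = \frac{1}{2}M(x-i\xi)\cdot(x+i\xi)$ with the case $G=1$, $H=0$ via \eqref{eq.creation.Fock}--\eqref{eq.annihilation.Fock}, and pass to (\ref{it.orthog.conjugates}) using $\Lambda^\pm(q_1) = \{(x,\pm ix)\}$ together with the invariance of the conjugacy relation under real linear canonical transformations. Where you diverge is that the paper disposes of all the remaining equivalences in one line by citing \cite[Thm.~1.4]{Vi2013} (plus the identification $\ker P = \opnm{span}\{1\}$ from Theorem \ref{thm.eigen.core}), whereas you unpack the Fock-side equivalences (i) $\Leftrightarrow$ (ii) $\Leftrightarrow$ (v) $\Leftrightarrow$ (vi) directly from Lemma \ref{lem.reproducing}: the reproducing kernel $k_0 \propto e^{-h}$ and the basis vector $e_0 \propto e^{-h}$ carry all the information, and each condition reduces to ``$e^{-h}$ is constant''. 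This buys a self-contained proof at the cost of two computations you should make explicit: first, $\ker P^* = \opnm{span}\{e_0\}$ is not immediate --- it requires conjugating $P$ by $\mathcal{U} = \mathcal{W}_{-h}\mathcal{V}_G$ to get $GMG^{-1}z\cdot\partial_z$ minus a holomorphic quadratic multiplication on $H_\Psi$, taking the adjoint there (where the quadratic multiplication becomes a second-order holomorphic differential operator annihilating constants), and using the half-plane hypothesis to see the zero eigenvalue is simple; second, your phrase ``the orthogonal projection onto constants sends $u$ to $\langle u, e_0\rangle_\Phi e_0$'' is wrong when $h \neq 0$ (that is the orthogonal projection onto $\opnm{span}\{e_0\}$, not onto the constants). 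The clean statement is that $\Pi_0 u = \langle u, k_0\rangle_\Phi \cdot 1$, so $\Pi_0 = \Pi_0^*$ if and only if $k_0$ is a constant multiple of $1$, i.e.\ $h = 0$; with that repair the argument goes through.
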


\begin{proof} Apart from (\ref{it.orthog.q1}), the equivalences follow from \cite[Thm.~1.4]{Vi2013}, and its proof, after identifying $\ker P$ with $\opnm{span} \{1\}$ using Theorem \ref{thm.eigen.core}.  That (\ref{it.orthog.q1}) implies (\ref{it.orthog.conjugates}) follows from the fact that $\Lambda^\pm(q_1) = \{(x,\pm ix)\}$ and that (\ref{it.orthog.conjugates}) is invariant under composition with real linear canonical transformations.  Finally, that the other conditions imply (\ref{it.orthog.q1}) is immediate from \eqref{eq.creation.Fock} and \eqref{eq.annihilation.Fock} with $G = 1$ and $H = 0$.
\end{proof}

\begin{theorem}\label{thm.orthog.delta.0}
Let the matrix $M$ and the operators $P$ and $\exp(\tau P)$, acting on $H_\Psi$ for $\Psi(z) = \frac{1}{2}|z|^2$, be as in Proposition \ref{prop.solve.exptP}.  Then, with $\delta_0(\tau)$ from \eqref{eq.def.delta.0},
\begin{equation}\label{eq.orthog.delta.0.norm}
	\delta_0(\tau) = -\log \|e^{\tau M}\|.
\end{equation}
In particular, $\exp(\tau P)$ is bounded if and only if $\|e^{\tau M}\| \leq 1$ and is compact if and only if $\|e^{\tau M}\| < 1$.
\end{theorem}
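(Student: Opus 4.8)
The plan is to reduce everything to the general machinery already in place, specializing $\Phi = \Psi = \frac12|z|^2$, for which the pluriharmonic part vanishes, i.e. $h \equiv 0$, $G = 1$, and $H = 0$ in the decomposition \eqref{eq.decompose.Phi}. With $h = 0$ the weight $\Phi^{(\delta)}$ from \eqref{eq.def.Phi.delta} becomes simply $\Phi^{(\delta)}(z) = \frac{e^{-2\delta}}{2}|z|^2$, so the defining inequality in \eqref{eq.def.delta.0} for $\delta_0(\tau)$ reads
\[
	\frac{e^{-2\delta}}{2}|e^{-\tau M}z|^2 \geq \frac{1}{2}|z|^2, \quad \forall z \in \Bbb{C}^n.
\]

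First I would rewrite this, substituting $y = e^{-\tau M}z$ (a bijection of $\Bbb{C}^n$ since $e^{-\tau M}$ is invertible), as
\[
	e^{-2\delta}|y|^2 \geq |e^{\tau M}y|^2, \quad \forall y \in \Bbb{C}^n,
\]
which holds precisely when $e^{-2\delta} \geq \|e^{\tau M}\|^2$, i.e. when $\delta \leq -\log\|e^{\tau M}\|$. Taking the supremum gives $\delta_0(\tau) = -\log\|e^{\tau M}\|$, which is \eqref{eq.orthog.delta.0.norm}. This is essentially a one-line specialization of the computation already carried out in the proof of Proposition \ref{prop.expansive}, where the estimate $\|Ge^{\tau M}G^{-1}\| \leq e^{-\delta_0}$ is derived; here $G = 1$ makes it an equality rather than a one-sided bound, because the Hermitian part of $\Phi^{(\delta_0)}(e^{-\tau M}z) - \Phi(z)$ is the \emph{entire} quadratic form (there is no pluriharmonic obstruction to being made exactly convex).

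For the "in particular" clause, I would invoke Theorem \ref{thm.boundedness.0}: $\exp(\tau P)$ is bounded if and only if \eqref{eq.Phi.increases} holds, which is exactly the $\delta = 0$ case of the inequality above, hence equivalent to $\delta_0(\tau) \geq 0$, i.e. $-\log\|e^{\tau M}\| \geq 0$, i.e. $\|e^{\tau M}\| \leq 1$. Similarly $\exp(\tau P)$ is compact if and only if \eqref{eq.Phi.increases} is strict on $\{|z|=1\}$; with $h = 0$ and $G = 1$, strictness on the unit sphere of $\frac12(|e^{-\tau M}z|^2 - |z|^2) \geq 0$ forces $\|e^{\tau M}\|^2 < 1$ (if $\|e^{\tau M}\| = 1$ there is a unit vector $z_0$ with $|e^{\tau M}z_0| = |z_0|$, hence equality at $e^{-\tau M}z_0/|e^{-\tau M}z_0|$), and conversely $\|e^{\tau M}\| < 1$ gives strict inequality everywhere on $\{|z|=1\}$. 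I do not anticipate a genuine obstacle here; the only point requiring minor care is the equivalence between strictness of the quadratic inequality on the unit sphere and the strict operator-norm bound, which is immediate from homogeneity and compactness of the sphere. Everything else is a direct substitution into results already proved, so the proof is quite short.
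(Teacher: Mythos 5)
Your proof is correct and follows essentially the same route as the paper: specialize $\Phi=\Psi$ so that $\Psi^{(\delta)}(z)=\frac{e^{-2\delta}}{2}|z|^2$, perform the change of variables to identify $\delta_0(\tau)=-\log\|e^{\tau M}\|$, and then read off boundedness and compactness from the general criterion (the paper cites Theorem \ref{thm.boundedness.delta} at $\delta=0$, which is exactly Theorem \ref{thm.boundedness.0} as you use it). Your explicit verification that strictness of \eqref{eq.Phi.increases} on the unit sphere is equivalent to $\|e^{\tau M}\|<1$ is a correct, slightly more detailed rendering of the same step.
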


\begin{proof}
From the definition \eqref{eq.def.delta.0} of $\delta_0(\tau)$ and the observation that $\Psi^{(\delta)}(z) = \frac{e^{-2\delta}}{2}|z|^2$, we have that
\[
	\delta_0(\tau) = \sup\{\delta \in \Bbb{R} \::\: \forall z \in \Bbb{C}^n,~~ e^{-2\delta}|e^{-\tau M}z|^2 \geq |z|^2\}.
\]
The invertible change of variables $y = e^{\tau M}z$ and some elementary manipulations reveal that
\[
	\delta_0(\tau) = \sup\left\{\delta \in \Bbb{R} \::\: \forall y \in \Bbb{C}^n \backslash \{0\},~~ \delta \leq -\log\frac{|e^{\tau M}y|}{|y|} \right\},
\]
from which \eqref{eq.orthog.delta.0.norm} follows.  The claim about boundedness and compactness follows from Theorem \ref{thm.boundedness.delta}.
\end{proof}

Before turning to exact formulas for return to equilibrium, we consider an example where there exists a gap between the bounds in Theorem \ref{thm.subell.decay}.

\begin{example}\label{ex.subell.not.sharp}
The right-hand bound in Theorem \ref{thm.subell.decay} is not generally sharp, even though Proposition \ref{prop.FP.return} shows that it happens to be true for the Fokker-Planck quadratic model from Section \ref{subsubsec.FP}.  That is to say, the slowest decay for $\Phi(e^{-tM}z)-\Phi(z)$ when $\Theta \geq 0$ may not always come from the worst Taylor expansion indicated by Theorem \ref{thm.subelliptic.escape}.

Let
\[
	M = \left(\begin{array}{ccc} 0 & -b & 0 \\ b & 0 & -a \\ 0 & a & 1\end{array}\right), \quad a,b \in \Bbb{R},
\]
and consider $P = Mz\cdot \partial_z$ acting on $H_\Psi$ with $\Psi(z) = \frac{1}{2}|z|^2$. Note from Theorem \ref{thm.orthog.delta.0} that, as $\delta_0(-t) \to 0$,
\[
	\|e^{-tM}\| = 1 - \delta_0(-t) + \BigO(\delta_0(-t)^2).
\]
We therefore study asymptotics of $\|e^{-tM}\|$ to compare with the bounds in \eqref{eq.subell.delta.0.bounds}.

In the language of Theorem \ref{thm.subell.decay}, $I_0 = 2$ is attained at $(1,0,0)$ for which 
\[
	\Theta(M^2 (1,0,0)) = a^2b^2.
\]
Then the upper bound for $\|e^{-tM}\|-1$ from Theorem \ref{thm.subell.decay} is
\[
	\begin{aligned}
	-\frac{k_1}{4^{I_0}}t^5 + \BigO(t^6) &= \frac{1}{4^{I_0}(2I_0+1)!} \binom{2I_0}{I_0}\Theta(M^2(1,0,0))t^5+\BigO(t^6)
	\\ &= -\frac{a^2 b^2}{320} t^5 + \BigO(t^6)
	\end{aligned}
\]

On the other hand, an optimization argument similar to the argument in the proof of Proposition \ref{prop.FP.return} leads us to the vector
\[
	v = \left(1, \frac{1}{2}bt, \frac{1}{12}abt^3\right),
\]
for which
\[
	|e^{-tM}v|^2 - |v|^2 = -\frac{1}{360}a^2b^2 t^5 + \BigO(t^6).
\]
Dividing (harmlessly) by $|v| = 1+\BigO(t)$ and taking the square root, which halves the coefficient of $t^5$, gives
\[
	\|e^{-tM}\| = 1-\frac{a^2 b^2}{720}t^5 + \BigO(t^6).
\]
Therefore, while the optimal power of $t$ in $\|e^{-tM}\|-1$ is $2I_0+1 = 5$ from Theorem \ref{thm.subell.decay}, the coefficient of $t^5$ does not necessarily come from a curve passing through a point $z$ where $I(z) = I_0$.
\end{example}

Finally, having shown in Theorem \ref{thm.orthog.delta.0} that $\delta_0(-t)$ and $\|e^{-tM}\|$ are closely related for the standard weight $\Phi = \Psi$,  we show that the same principle applies to return to equilibrium of any order.

\begin{theorem}\label{thm.orthog.return}
Let the matrix $M$ and the operators $P$ and $\exp(\tau P)$, acting on $H_\Psi$ for $\Psi(z) = \frac{1}{2}|z|^2$, be as in Proposition \ref{prop.solve.exptP}. Then, recalling definition \ref{eq.def.Pi} of $\Pi_N$, if $\|e^{\tau M}\| \leq 1$ then for any $N \in \Bbb{N}$ we have
\begin{equation}\label{eq.orthog.return}
	\|\exp(\tau P)(1-\Pi_N)\| = \|e^{\tau M}\|^{N+1}.
\end{equation}
and
\begin{equation}\label{eq.orthog.norm.1}
	\|\exp(\tau P)\| = 1.
\end{equation}
\end{theorem}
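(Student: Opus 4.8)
The plan is to prove Theorem \ref{thm.orthog.return} by reducing the statement about $\exp(\tau P)$ on $H_\Psi$ to a clean statement about the matrix exponential $e^{\tau M}$ acting on $\Bbb{C}^n$, using the orthogonal decomposition of $H_\Psi$ into homogeneous polynomials. Recall that with $\Phi = \Psi = \frac12|z|^2$ the monomials $f_\alpha(z) = z^\alpha/\sqrt{\pi^n\alpha!}$ from \eqref{eq.def.f.alpha} form an orthonormal basis, and crucially $\exp(\tau P)$ preserves the degree of a homogeneous polynomial: since $\exp(\tau P)u(z) = u(e^{\tau M}z)$, it maps the finite-dimensional space $\mathcal{H}_m$ of homogeneous polynomials of degree exactly $m$ into itself. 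The spaces $\mathcal{M}_N^\Psi = (1-\Pi_N)H_\Psi$ of \eqref{eq.def.MN} are precisely $\bigoplus_{m \geq N+1}\mathcal{H}_m$, and these are mutually orthogonal, so $\|\exp(\tau P)(1-\Pi_N)\| = \sup_{m \geq N+1}\|\exp(\tau P)|_{\mathcal{H}_m}\|$. Thus everything reduces to computing the norm of the degree-$m$ piece, and \eqref{eq.orthog.norm.1} is the case $N = -1$ (or just the $m=0,1,2,\dots$ supremum, where $m=0$ gives norm $1$).

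First I would establish that $\|\exp(\tau P)|_{\mathcal{H}_m}\| = \|e^{\tau M}\|^m$ when $\|e^{\tau M}\| \leq 1$. The key observation is that $\mathcal{H}_m$ is naturally the $m$-th symmetric tensor power of $(\Bbb{C}^n)^*$ — identifying the monomial basis with symmetrized tensors — and the operator $u \mapsto u(e^{\tau M}\cdot)$ on $\mathcal{H}_m$ is the $m$-th symmetric tensor power of the operator $(e^{\tau M})^\top$ on $(\Bbb{C}^n)^*$, equivalently of $e^{\tau M}$ up to transpose. I would make this concrete by passing to the Bargmann/Fock picture: on $H_\Psi$ the adjoint of the multiplication-by-$z_j$ operator is $\partial_{z_j}$, and a change-of-variables operator $C_F: u \mapsto u(Fz)$ restricted to $\mathcal{H}_m$ has the property that the inner product structure is exactly that of the symmetric power with the standard Hermitian metric on $\Bbb{C}^n$ (this is a standard fact, but I would verify the normalization using $\|z^\alpha\|_\Psi^2 = \pi^n\alpha!$). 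The norm of the $m$-th symmetric tensor power of a matrix $A$ on a Hilbert space is $\|A\|^m$ — this follows from the singular value decomposition of $A$: if $A$ has singular values $s_1 \geq \dots \geq s_n$ with orthonormal singular vector bases, then $A^{\otimes_{\mathrm{sym}} m}$ is diagonal in the induced symmetrized basis with entries $\prod s_{j_k}$, the largest being $s_1^m = \|A\|^m$, and the symmetrized basis vectors remain orthogonal (possibly with different norms, but the operator is still diagonal with respect to this orthogonal system so the norm is the largest diagonal entry divided appropriately — one must be slightly careful that the symmetrization does not create off-diagonal terms, which it does not because distinct monomials in the singular-vector coordinates are orthogonal).

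Given $\|\exp(\tau P)|_{\mathcal{H}_m}\| = \|e^{\tau M}\|^m$, the conclusion is immediate: when $\|e^{\tau M}\| \leq 1$, the sequence $\|e^{\tau M}\|^m$ is nonincreasing in $m$, so $\sup_{m \geq N+1}\|e^{\tau M}\|^m = \|e^{\tau M}\|^{N+1}$, giving \eqref{eq.orthog.return}, and the supremum over all $m \geq 0$ is $\|e^{\tau M}\|^0 = 1$, giving \eqref{eq.orthog.norm.1}. (Note the hypothesis $\|e^{\tau M}\| \leq 1$ is exactly the boundedness criterion from Theorem \ref{thm.orthog.delta.0}, so the operator is genuinely bounded and these norm computations are legitimate; the value $1$ in \eqref{eq.orthog.norm.1} is attained on constants.)

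The main obstacle I anticipate is getting the symmetric-tensor-power norm identity stated and justified cleanly with the correct inner product. The subtlety is that the natural monomial basis $\{z^\alpha\}$ is orthogonal but \emph{not} orthonormal on $H_\Psi$ (the norms are $\sqrt{\pi^n\alpha!}$), so one cannot naively say "$C_F|_{\mathcal{H}_m}$ is the matrix $F^{\otimes m}$ restricted to symmetric tensors in the obvious basis." The honest statement is that under the identification $\mathcal{H}_m \cong \mathrm{Sym}^m(\Bbb{C}^n)$ where $\mathrm{Sym}^m$ carries the Hermitian metric \emph{induced} from the standard metric on $\Bbb{C}^n$ (for which the symmetrized tensors $e_{\alpha} := \mathrm{sym}(e_1^{\otimes\alpha_1}\otimes\cdots)$ have squared norm $\alpha!/m!$ times a universal constant), the operator becomes $\mathrm{Sym}^m(e^{\tau M})$, and then the SVD argument applies verbatim. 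I would either cite this as a standard fact about Bargmann spaces (it appears in Folland's book, which is already referenced as \cite{FoBook}) or include a half-page verification; since the paper elsewhere uses $z_j^* = \partial_{z_j}$ freely and this identity is of the same elementary character, a brief argument referencing the SVD of $e^{\tau M}$ and the fact that powers of the largest singular value dominate should suffice. Everything else — degree preservation, orthogonality of $\mathcal{M}_N$, monotonicity of $t \mapsto c^m$ for $c \leq 1$ — is routine.
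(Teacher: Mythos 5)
Your proposal is correct and follows essentially the same route as the paper: both reduce via the singular value decomposition $U_1e^{\tau M}U_2^* = \Sigma$ and the unitarity of $\mathcal{V}_U$ for unitary $U$ to the diagonal dilation $u \mapsto u(\Sigma z)$, which acts diagonally on the orthogonal monomial basis with eigenvalues $\prod_j \sigma_j^{\alpha_j}$, maximized at $\|e^{\tau M}\|^{N+1}$ over $|\alpha|\geq N+1$. Your repackaging via homogeneous components $\mathcal{H}_m$ and symmetric tensor powers is a cosmetic variant of the paper's argument, which instead identifies the conjugated operator as the positive self-adjoint $\exp(\mathcal{Q}_{\log\Sigma})$ and equates its norm with its largest eigenvalue on $\mathcal{M}_{N+1}$.
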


\begin{proof}
For $\tau$ fixed, let $U_1, U_2$ be unitary matrices such that
\[
	U_1 e^{\tau M} U_2^* = \Sigma
\]
where $\Sigma$ is a diagonal matrix with entries $\{\sigma_j\}_{j=1}^n$ equal to the singular values of $e^{\tau M}$.  Note that, for $U$ a unitary matrix, the change of variables $\mathcal{V}_U$ from \eqref{eq.def.V} takes $H_\Psi$ to $H_\Psi$.  Therefore
\[
	\mathcal{V}_{U_2}^*\exp(\tau P)\mathcal{V}_{U_1}u(z) = u(\Sigma z)
\]
acting on $H_\Psi$.

This operator is of the form $\exp(\mathcal{Q}_{\log\Sigma})$ as in Proposition \ref{prop.h.o.semigroup}, which also gives that this operator is self-adjoint.  We recall that $\Pi_N$ is orthogonal, so
\[
	\|\exp(\tau P)(1-\Pi_N)\| = \|\exp(\tau P)|_{\mathcal{M}_{N+1}}\|
\]
for $\mathcal{M}_{N+1}$ from \eqref{eq.def.MN}.  Since the changes of variables $\mathcal{V}_{U_1}$ and $\mathcal{V}_{U_2}$ preserve the spaces $\mathcal{M}_{N+1}$, we deduce that
\[
	\|\exp(\tau P)(1-\Pi_N)\| = \|\exp(\mathcal{Q}_{\log \Sigma})|_{\mathcal{M}_{N+1}}\|.
\]

By Theorem \ref{thm.eigen.core}, or simply checking on the orthonormal basis $\{f_\alpha\}$ from \eqref{eq.def.f.alpha}, we see that
\[
	\opnm{Spec} \exp(\mathcal{Q}_{\log \Sigma})|_{\mathcal{M}_{N+1}} = \left\{\prod_{j=1}^n \sigma_j^{\alpha_j} \::\: \alpha \in \Bbb{N}^n,~~ |\alpha| = N+1\right\}.
\]
This set is contained in $(0,1]$, since singular values are nonnegative, $e^{\tau M}$ is invertible, and the largest $\sigma_j$ is $\|e^{\tau M}\|$ which we assumed was at most $1$.  Therefore the largest eigenvalue of $\exp (\mathcal{Q}_{\log \Sigma})|_{\mathcal{M}_{N+1}}$ is $\|e^{\tau M}\|^{N+1}$.  Since $(\exp \mathcal{Q}_{\log \Sigma})|_{\mathcal{M}_{N+1}}$ is a positive definite self-adjoint operator, its largest eigenvalue is its norm, completing the proof \eqref{eq.orthog.return}.  Naturally, \eqref{eq.orthog.norm.1} follows upon omitting the projection $1-\Pi_N$.
\end{proof}

As mentioned in Remark \ref{rem.exact.norm}, we understand both the value of the norm and the return to equilibrium for our solution operators acting on $H_\Psi$.  We can therefore indirectly deduce the norms of embedding operators of the type considered in Proposition \ref{prop.embedding} between spaces $H_\Phi$ where the pluriharmonic part $-\Re h(z)$ vanishes; recall from \eqref{eq.decompose.Phi} that this means that $\Phi(z) = \frac{1}{2}|Gz|^2$ for some invertible matrix $G$.

\begin{corollary}
Let $G_1, G_2 \in GL_n(\Bbb{C})$ be invertible matrices, and let $\Psi(z) = \frac{1}{2}|z|^2$.  Then the embedding
\[
	\iota : H_{\Psi(G_1\cdot)} \ni u(z) \mapsto u(z) \in H_{\Psi(G_2\cdot)}
\]
is bounded if and only if $\|G_1G_2^{-1}\| \leq 1$ in which case
\[
	\|\iota\| = |\det G_1G_2^{-1}|.
\]
\end{corollary}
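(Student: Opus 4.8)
The plan is to reduce the problem to the standard weight $\Psi(z)=\tfrac12|z|^2$ by conjugating with the unitary change-of-variables operators $\mathcal{V}_F$ from \eqref{eq.def.V}, and then to read off the answer from the sharp statements of Theorems \ref{thm.orthog.delta.0} and \ref{thm.orthog.return}. First I would record that the two weights involved, $\Phi_1(z)=\Psi(G_1z)=\tfrac12|G_1z|^2$ and $\Phi_2(z)=\Psi(G_2z)=\tfrac12|G_2z|^2$, are real-quadratic and strictly convex because $G_1,G_2$ are invertible, hence satisfy \eqref{eq.Phi.assumptions}, so Proposition \ref{prop.embedding} applies. By that proposition $\iota$ is bounded if and only if $\Phi_2(z)\ge\Phi_1(z)$ for all $z\in\Bbb{C}^n$; the substitution $z=G_2^{-1}w$ turns this into $|G_1G_2^{-1}w|^2\le|w|^2$ for every $w$, i.e.\ $\|G_1G_2^{-1}\|\le1$. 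This settles the boundedness criterion.

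For the value of the norm, assume $\|G_1G_2^{-1}\|\le1$ and set $F=G_1G_2^{-1}\in GL_n(\Bbb{C})$. Using the unitaries $\mathcal{V}_{G_1}:H_\Psi\to H_{\Psi(G_1\cdot)}$ and $\mathcal{V}_{G_2}:H_\Psi\to H_{\Psi(G_2\cdot)}$ from \eqref{eq.def.V}, together with $\mathcal{V}_{G_2}^*=\mathcal{V}_{G_2^{-1}}$, a one-line computation gives
\[
	\mathcal{V}_{G_2}^*\,\iota\,\mathcal{V}_{G_1}u(z)=|\det F|\,u(Fz)=|\det F|\,C_Fu(z),
\]
with $C_F$ the change-of-variables operator of \eqref{eq.def.C.F} acting on $H_\Psi$. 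Since $\mathcal{V}_{G_1},\mathcal{V}_{G_2}$ are unitary, $\|\iota\|=|\det F|\,\|C_F\|_{\mathcal{L}(H_\Psi)}$, so it remains to prove $\|C_F\|_{\mathcal{L}(H_\Psi)}=1$ whenever $F$ is an invertible contraction. Here I would note that $C_F=\exp(P)$ in the sense of Proposition \ref{prop.solve.exptP} for $P=Mz\cdot\partial_z$ with any matrix $M$ satisfying $e^M=F$ (such a logarithm exists since $F$ is invertible over $\Bbb{C}$), and $\|e^M\|=\|F\|\le1$; Theorem \ref{thm.orthog.delta.0} then confirms $C_F\in\mathcal{L}(H_\Psi)$ and \eqref{eq.orthog.norm.1} of Theorem \ref{thm.orthog.return} gives precisely $\|C_F\|_{\mathcal{L}(H_\Psi)}=1$. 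Combining, $\|\iota\|=|\det F|=|\det G_1G_2^{-1}|$.

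The one genuinely nontrivial ingredient is the identity $\|C_F\|_{\mathcal{L}(H_\Psi)}=1$: the crude weight comparison behind Proposition \ref{prop.embedding} only yields $\|C_F\|\le|\det F|^{-1}$, which is far from sharp, so one really does need the exact norm coming from Theorem \ref{thm.orthog.return} (equivalently, that $C_F$ acts as a symmetric tensor power of the contraction $F^\top$ on each space of homogeneous polynomials of fixed degree). The remaining points are bookkeeping: checking that the unitaries $\mathcal{V}_{G_1},\mathcal{V}_{G_2}$ are composed in the correct order and that the maximal domains match, both immediate from the definitions. If desired, the lower bound $\|\iota\|\ge|\det G_1G_2^{-1}|$ can also be obtained independently from \eqref{eq.change.Phi.bound} by evaluating $\exp(\Phi_1(w)-\Phi_2(w))$ at $w=0$, but the route above produces both inequalities simultaneously.
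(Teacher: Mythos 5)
Your proof is correct and follows essentially the same route as the paper: both conjugate $\iota$ by the unitaries $\mathcal{V}_{G_1}$ and $\mathcal{V}_{G_2}$ to reduce to $|\det G_1G_2^{-1}|\,C_F$ acting on $H_\Psi$ with $F=G_1G_2^{-1}$, and both extract the sharp norm $\|C_F\|_{\mathcal{L}(H_\Psi)}=1$ from Theorem \ref{thm.orthog.return}. The only cosmetic differences are that the paper carries out the singular value decomposition of $F$ explicitly inside the corollary's proof (writing $C_F$ as $\exp(\mathcal{Q}_{\log\Sigma})$), whereas you realize $C_F$ as $\exp(P)$ for $P=(\log F)z\cdot\partial_z$, and that you settle the boundedness criterion separately via Proposition \ref{prop.embedding}.
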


\begin{proof}
Let $U_1, U_2$ be unitary matrices such that
\[
	U_1 G_1 G_2^{-1} U_2^* = \Sigma
\]
for $\Sigma$ the diagonal matrix with entries the singular values of $G_1G_2^{-1}$.  Then, using the change of variables operators from \eqref{eq.def.V},
\[
	\mathcal{V}_{U_2}^* \mathcal{V}_{G_2}^* \iota \mathcal{V}_{G_1}\mathcal{V}_{U_1} u(z) = |\det G_1 G_2^{-1}|u(\Sigma z)
\]
is an operator on $H_\Psi$.  Since this operator is equal to $|\det G_1 G_2^{-1}|\exp(\mathcal{Q}_{\log \Sigma})$ as in Proposition \ref{prop.h.o.semigroup}, by Theorem \ref{thm.orthog.return} it is bounded if and only if
\[
	\|e^{\log \Sigma}\| = \|\Sigma\| = \|G_1G_2^{-1}\| \leq 1,
\]
in which case its norm is 
\[
	\|\iota\| = |\det G_2^{-1}G_1| \|\exp \mathcal{Q}_{\log \Sigma}\|_{\mathcal{L}(H_\Psi)} = |\det G_2^{-1}G_1|.
\]
\end{proof}

\appendix

\section{Equivalence of weak ellipticity conditions}\label{sec.appendix.subell}

\begin{proof}[Proof of Proposition \ref{prop.subell.relation}]
	Throughout, we regard the quadratic forms $q$ and $p$ as well as the canonical transformation $\mathcal{K}$ and the point $(x,\xi)$ as fixed.
	
	It is more convenient in what follows to allow complex variables and deal with the full matrix $F$ instead of its real and imaginary parts.  To begin, we show that
	\begin{equation}\label{eq.subell.equiv.J.sigma}
		J(x,\xi) = \min\{k \in \Bbb{N} \::\: \Re \sigma(F^k(x,\xi),F^{k+1}(x,\xi)) \neq 0\}.
	\end{equation}
	Note that this is a natural extension of $\Re q(F^k(x,\xi))$ except that $q$ is ordinarily viewed as a function on $\Bbb{R}^{2n}$.  We will see that replacing $(\Im F)^k$ with $F^k$ has no effect (beyond a sign change) so long as $k \leq J(x,\xi)$.

	Where $J(x,\xi) = 0$, the equality \eqref{eq.subell.equiv.J.sigma} follows from \eqref{eq.real.semidef} which implies that $(\Re q)^{-1}(\{0\}) = \ker \Re F$.  We proceed by showing by induction that, for any $k \in \Bbb{N}$,
	\begin{equation}\label{eq.subell.equiv.ind.matrix}
		(x,\xi) \in \ker \Re F(\Im F)^j, \quad j = 0,1, \dots, k
	\end{equation}
	if and only if
	\begin{equation}\label{eq.subell.equiv.ind.sigma}
		\Re \sigma(F^j(x,\xi),F^{j+1}(x,\xi)) = 0, \quad j = 0,1, \dots, k.
	\end{equation}
	Assume that \eqref{eq.subell.equiv.ind.matrix} and \eqref{eq.subell.equiv.ind.sigma} are equivalent for some $k \geq 0$ fixed and that \eqref{eq.subell.equiv.ind.matrix} or \eqref{eq.subell.equiv.ind.sigma} is true for $k+1$; therefore both \eqref{eq.subell.equiv.ind.matrix} and \eqref{eq.subell.equiv.ind.sigma} are true for $k$. Expanding
	\[
		\Re \sigma((\Re F + i\Im F)^{k+1}(x,\xi), (\Re F + i\Im F)^{k+2}(x,\xi)),
	\]
	we see by \eqref{eq.subell.equiv.ind.matrix} that every term where $\Re F$ is applied to $(\Im F)^j(x,\xi)$, for some $0 \leq j \leq k$, vanishes. As a result,
	\[
		\begin{aligned}
		\sigma(F^{k+1}(x,\xi),F^{k+2}(x,\xi)) = & i^{2k+2}\sigma((\Im F)^{k+1}(x,\xi), \Re F(\Im F)^{k+1}(x,\xi)) 
			\\ & + i^{2k+3}\sigma((\Im F)^{k+1}(x,\xi), (\Im F)^{k+2}(x,\xi)).
		\end{aligned}
	\]
	Taking the real part, we see that whenever \eqref{eq.subell.equiv.ind.matrix} holds,
	\begin{equation}\label{eq.subell.value.1}
		\begin{aligned}
		\Re \sigma(F^{k+1}(x,\xi),& F^{k+2}(x,\xi)) 
		\\ &= (-1)^{k+1}\sigma((\Im F)^{k+1}(x,\xi), \Re F(\Im F)^{k+1}(x,\xi)) 
		\\ &= (-1)^{k+1}\Re q((\Im F)^{k+1}(x,\xi)),
		\end{aligned}
	\end{equation}
	a quantity which is zero if and only if $(\Im F)^{k+1}(x,\xi) \in \ker \Re F$.  This proves the equivalence of \eqref{eq.subell.equiv.ind.matrix} and \eqref{eq.subell.equiv.ind.sigma} and therefore proves \eqref{eq.subell.equiv.J.sigma}.

	The formulation in \eqref{eq.subell.equiv.ind.sigma} is convenient since it involves the real part of a function $\sigma(F^k \cdot, F^{k+1}\cdot)$ which changes simply when $q$ is composed with a real or complex linear canonical transformation. Recall that $\mathcal{K}$ is the (complex linear) canonical transformation such that $p = (q\circ \mathcal{K}^{-1})$ where $p(z,\zeta) = (Mz)\cdot (i\zeta)$. Recall also from \eqref{eq.F.transform.K} that $F(p) = \mathcal{K}F(q)\mathcal{K}^{-1}$ and the simple form of the fundamental matrix $F(p)$ in \eqref{eq.F.p}. We let $z$ be determined by the canonical transformation relation 
	\[
		\mathcal{K}(x,\xi) = (z,-2i\Phi'_z(z)),
	\]
	as in \eqref{eq.subell.reln.K}. A direct computation using the bilinear form \eqref{eq.def.Phi.symmetric} and the fact that $\mathcal{K}$ is canonical shows that
	\begin{equation}\label{eq.subell.value.2}
		\begin{aligned}
		\Re \sigma(F(q)^k(x,\xi), &F(q)^{k+1}(x,\xi)) 
		\\ &= \Re \sigma \left(\mathcal{K}^{-1}(\mathcal{K}F(q)\mathcal{K}^{-1})^k\mathcal{K}(x,\xi), \mathcal{K}^{-1}(\mathcal{K}F(q)\mathcal{K}^{-1})^{k+1}\mathcal{K}(x,\xi)\right)
		\\ &= \Re \sigma\left(F(p)^k(z,  -2i\Phi'_z(z)),F(p)^{k+1}(z, -2i\Phi'_z(z))\right)
		\\ &= \Re \left(2^{-2k+1} M^{2k+1}z\cdot \Phi'_z(z)\right)
		\\ &= 2^{-2k+1} \Phi(M^{2k+1}z,z).
		\end{aligned}
	\end{equation}

	At this point, with the same $z$ from \eqref{eq.subell.reln.K}, we have that
	\[
		J(x,\xi) = \min\{k \in \Bbb{N} \::\: \Phi(M^{2k+1}z,z) \neq 0\}.
	\]
	To complete the proof, we establish that
	\begin{equation}\label{eq.subell.equiv.Phi}
		\Phi(M^{2j+1}z,z) = 0,\quad j = 0,1,\dots k
	\end{equation}
	if and only if
	\begin{equation}\label{eq.subell.equiv.Theta}
		\Theta(M^j z) = \Phi(M^{j+1}z,M^j z) = 0, \quad j=0,1,\dots, k.
	\end{equation}
	Again this is obvious for $k=0$, so we proceed by assuming that \eqref{eq.subell.equiv.Phi} and \eqref{eq.subell.equiv.Theta} are equivalent for some $k \geq 0$ fixed and that \eqref{eq.subell.equiv.Phi} or \eqref{eq.subell.equiv.Theta} holds for $k + 1$, which implies that \eqref{eq.subell.equiv.Phi} and \eqref{eq.subell.equiv.Theta} hold for $k$.  We compute using \eqref{eq.Theta.symmetric} that
	\[
		\Phi(M^{2k+3}z,z)  = -\Phi(M^{2k+2}z, Mz) + \Theta(M^{2k+2}z,z),
	\]
	and continuing and using the symmetry of $\Phi(z,\zeta)$, we have
	\[
		2\Phi(M^{2k+3}z,z) = \sum_{j=0}^{2k+2} (-1)^{j}\Theta(M^{2k+2-j}z,M^jz).
	\]
	As in the proof of Theorem \ref{thm.subelliptic.escape}, the Cauchy-Schwarz inequality for the positive semidefinite form $\Theta(z,\zeta)$, along with the induction hypothesis, shows that all $\Theta$ terms vanish except for the middle one, $j = k+1$. Therefore
	\begin{equation}\label{eq.subell.value.3}
		\Phi(M^{2k+3}z,z) = \frac{(-1)^{k+1}}{2}\Theta(M^{k+1}z),
	\end{equation}
	and this completes the proof relating $J(x,\xi)$ to $I(z)$.  The relation \eqref{eq.subell.equiv.value} follows from \eqref{eq.subell.value.1}, \eqref{eq.subell.value.2}, and \eqref{eq.subell.value.3}.
\end{proof}

\section{Small-time asymptotics for the Fokker-Planck model}

Here, we compute the small-time asymptotics for the matrix exponential corresponding to a Fokker-Planck operator in Section \ref{subsubsec.FP}.

\begin{proposition}\label{prop.FP.return}
Let
\[
	M_{a,0} = \left(\begin{array}{cc} 0 & -a \\ a & 1\end{array}\right), \quad a \in \Bbb{R}.
\]
Then, as $t \to 0^+$,
\[
	\|e^{-tM_{a,0}}\| = 1 - \frac{a^2}{12}t^3 + \BigO(t^4).
\]
\end{proposition}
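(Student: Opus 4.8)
The plan is to compute $\|e^{-tM_{a,0}}\|$ as the largest singular value of $E(t) := e^{-tM_{a,0}}$, that is, $\|e^{-tM_{a,0}}\|^2 = \lambda_{\max}\big(S(t)\big)$ with $S(t) = E(t)^\top E(t)$ the symmetric positive definite Gram matrix. Since $M_{a,0}^\top + M_{a,0} = \begin{pmatrix} 0 & 0 \\ 0 & 2\end{pmatrix}$ has a vanishing $(1,1)$ entry, $\lambda_{\max}(S(t)) = 1 + o(t)$, so the task is to locate and compute the first nonvanishing correction; it will turn out to sit at order $t^3$.

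First I would Taylor-expand $E(t) = I - tM + \frac{t^2}{2}M^2 - \frac{t^3}{6}M^3 + \BigO(t^4)$, with $M := M_{a,0}$, using the explicit powers
\[
	M^2 = \begin{pmatrix} -a^2 & -a \\ a & 1-a^2 \end{pmatrix}, \qquad M^3 = \begin{pmatrix} -a^2 & a^3-a \\ a-a^3 & 1-2a^2 \end{pmatrix},
\]
and then assemble $S(t) = E(t)^\top E(t)$ entrywise. The crucial entry is the top-left one: a cancellation at order $t^2$ between the two squared entries of the first column of $E(t)$ (namely $-a^2t^2 + a^2t^2 = 0$ in $E_{11}(t)^2 + E_{21}(t)^2$) produces $S_{11}(t) = 1 - \frac{2a^2}{3}t^3 + \BigO(t^4)$; the remaining entries are needed only to lower order, $S_{22}(t) = 1 - 2t + \BigO(t^2)$ and $S_{12}(t) = a t^2 + \BigO(t^3)$.

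Then I would extract $\lambda_{\max}$ from the explicit $2\times 2$ eigenvalue formula, written in the form
\[
	\lambda_{\max}(S) = S_{11} + \Big(\sqrt{d^2 + S_{12}^2} - d\Big), \qquad d := \tfrac12(S_{11} - S_{22}),
\]
which is convenient because $\tfrac12(S_{11}+S_{22}) + d = S_{11}$. Since $d = t + \BigO(t^2) > 0$ for small $t > 0$ and $S_{12}^2 = \BigO(t^4)$, one has $\sqrt{d^2 + S_{12}^2} - d = S_{12}^2/(\sqrt{d^2+S_{12}^2}+d) = \frac{S_{12}^2}{2d} + \BigO(t^5) = \frac{a^2}{2}t^3 + \BigO(t^4)$, so that $\lambda_{\max}(S(t)) = 1 - \frac{2a^2}{3}t^3 + \frac{a^2}{2}t^3 + \BigO(t^4) = 1 - \frac{a^2}{6}t^3 + \BigO(t^4)$. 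Taking the square root yields $\|e^{-tM_{a,0}}\| = 1 - \frac{a^2}{12}t^3 + \BigO(t^4)$.

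The only genuine difficulty is the bookkeeping in the second step: because $S_{11}(t)$ has no $t$ or $t^2$ term, the entries of $E(t)$ must be carried accurately to order $t^3$ and one must track the order-$t^2$ cancellation carefully. As a cross-check, and an alternative route that sidesteps the $2\times 2$ formula, one can run an optimization argument as in Example \ref{ex.subell.not.sharp}: the lower bound $\|e^{-tM_{a,0}}\|^2 \ge |E(t)v(t)|^2/|v(t)|^2$ follows by testing the near-optimal unit vector proportional to $v(t) = (1,\frac{a}{2}t)^\top$ (the leading part of the top eigenvector of $S(t)$), and the matching upper bound follows from $\lambda_{\max}(S(t)) \le S_{11}(t) + \frac{S_{12}(t)^2}{S_{11}(t)-S_{22}(t)}$ together with the third-order data computed above.
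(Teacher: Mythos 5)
Your computation is correct, and it is essentially the paper's argument: both amount to a Taylor expansion of the Gram form $|e^{-tM}v|^2$ to order $t^3$, exploiting the order-$t^2$ cancellation in the first-column entries, and then extracting the top eigenvalue. The only (cosmetic) difference is that you read off $\lambda_{\max}$ from the explicit $2\times 2$ eigenvalue formula, whereas the paper maximizes the Rayleigh quotient directly by restricting to $v=(1,v_2)$ with $v_2=\BigO(t)$ and completing the square to find the optimal witness $v_2=at/2$ — the same near-optimal vector you cite as a cross-check.
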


\begin{proof}
We write, with $v = (v_1, v_2)$,
\[
	\begin{aligned}
	|e^{tM_a}v|^2 &= \sum_{j,k=0}^\infty \frac{t^{j+k}}{j!k!} \langle M_a^j v, M_a^k v \rangle
	\\ &= |v|^2 + t(\langle M_a v, v\rangle + \langle v, M_a v\rangle)
	\\ &\quad +t^2\left(\frac{1}{2}\langle M_a^2 v, v\rangle + \langle M_a v, M_a v\rangle + \frac{1}{2}\langle v, M_a^2 v\rangle \right)
	\\ &\quad + t^3\left(\frac{1}{6} \langle M_a^3 v, v\rangle + \langle M_a^2 v, M_a v\rangle + \langle M_a v, M_a^2 v\rangle + \frac{1}{6}\langle v, M_a^3 v\rangle\right) + \BigO(t^4).
	\end{aligned}
\]
We re-arrange the inner products by putting all matrices on the left-hand sides of inner products, so for instance the coefficient of $t^3$ becomes
\begin{multline*}
	\left\langle \left(\frac{1}{6}(M_a^3 + (M_a^3)^*) + \frac{1}{2}(M_a^*M_a^2 + (M_a^2)^*M_a)\right)v, v\right\rangle \\ = \left\langle \left(\begin{array}{cc} 2a^2/3 & a \\ a & (4-2a^2)/3\end{array}\right)v,v\right\rangle.
\end{multline*}

We conclude that
\begin{equation}\label{eq.FP.return.v1.tv2}
	\begin{aligned}
	|e^{tM_a}v|^2 &= |v|^2 + 2t|v_2|^2 + 2t^2(a\Re(v_1\overline{v_2}) + |v_2|^2) 
	\\ &\quad + t^3\left(\frac{2a^2}{3}|v_1|^2 + 2a\Re(v_1\overline{v_2}) - \frac{2}{3}(a^2-2)|v_2|^2\right) + \BigO(t^4).
	\end{aligned}
\end{equation}

In order to optimize, note that the second term $2t|v_2|^2$ must be much smaller than $|v|^2$.  In fact, to have
\[
	|e^{-tM_a}v|^2 = |v|^2 + \BigO(t^3|v|^2),
\]
we need to have $v_2 = \BigO(tv_1)$.  Multiplying by a complex number with modulus one, we may assume that $v_1 = 1$, so under these assumptions
\[
	|e^{-tM_a}v|^2 = |v|^2 - 2t|v_2|^2 + 2t^2a \Re \overline{v_2} - \frac{2a^2}{3}t^3 + \BigO(t^4).
\]
We then observe that $-2t|v_2|^2 + 2t^2a\Re \overline{v_2}$ is maximized when $v_2 = at/2$.

We conclude that the optimal witness for small-time decay is
\[
	|e^{-tM_a}(1, at/2)|^2 = |(1, at/2)|^2 - \frac{a^2}{6}t^3 + \BigO(t^4).
\]
Dividing by the norm, which is harmless since $|(1,at/2)| = 1+\BigO(t)$, and taking a square root, using the Taylor expansion $\sqrt{1-x} = 1-x/2+\BigO(x^2)$, we obtain the conclusion of the proposition.
\end{proof}

\bibliographystyle{acm}
\bibliography{MicrolocalBibliography2}

\end{document}